\def \bw{w_{0}}
\def \bmu{\boldsymbol \mu}
\def \balpha{\boldsymbol \alpha}
\def \R{\mathbb R}
\def \E{\mathbb E}
\def \P{\mathbb P}
\providecommand{\U}[1]{\protect\rule{.1in}{.1in}}
\newcommand{\bmf}[1]{\boldsymbol{#1}}
\newtheorem{theorem}{Theorem}
\newtheorem{corollary}[theorem]{Corollary}
\newtheorem{definition}[theorem]{Definition}
\newtheorem{lemma}[theorem]{Lemma}
\newtheorem{proposition}[theorem]{Proposition}
\newtheorem{remark}[theorem]{Remark}
\def\EE{\mathbb E}
\def\FF{\mathbb F}
\def\NN{\mathbb N}
\def\PP{\mathbb P}
\def\RR{\mathbb R}
\def\SS{\mathbb S}
\def\ZZ{\mathbb Z}
\def\balpha{\boldsymbol{\alpha}}
\def\bbeta{\boldsymbol{\beta}}
\def\bmu{\boldsymbol{\mu}}
\def\t{\tilde}
\def\bw{\omega_{0}}
\def\bU{{\boldsymbol U}}
\def\bW{{\boldsymbol W}}
\def\bX{{\boldsymbol X}}
\def\bY{{\boldsymbol Y}}
\def\bZ{{\boldsymbol Z}}
\def\cA{{\mathcal A}}
\def\cF{{\mathcal F}}
\def\cK{{\mathcal K}}
\def\cL{{\mathcal L}}
\def\cP{{\mathcal P}}
\def\cU{{\mathcal U}}
\def\cV{{\mathcal V}}
\begin{document}
\title{Restoring Uniqueness to Mean-Field Games by Randomizing the Equilibria}
\author{Fran\c{c}ois Delarue$^1$}
\date{}
\maketitle

\begin{center}
{\footnotesize Laboratoire J.-A. Dieudonn\'e,  

Universit\'e de Nice Sophia-Antipolis and UMR CNRS 7351, 
\vspace{-1.2pt}

Parc Valrose, 06108 Nice Cedex 02, France.}
\end{center}
\footnotetext[1]{\texttt{delarue@unice.fr}}

\begin{abstract}
We here address the question of restoration of 
uniqueness in mean-field games deriving from 
deterministic differential games with a large number 
of players. The general strategy for restoring uniqueness 
is inspired from earlier similar results on ordinary and stochastic differential equations. It consists in randomizing the equilibria
through an external noise. 

As a main feature, we choose the external noise as an infinite dimensional Ornstein-Uhlenbeck process. We first investigate existence and uniqueness of a solution to the noisy system made of the mean-field game forced by the
Ornstein-Uhlenbeck process. We also show how such a noisy system can be interpreted as the limit version of a stochastic differential game with a large number of players.
\end{abstract}

\section{Introduction}
The theory of mean-field games has encountered 
a tremendous success since it was introduced 
in 2006 by two independent groups, 
Lasry and Lions \cite{MFG1,MFG2,MFG3} on the one hand and 
Huang, Caines and Malham\'e
\cite{HuangCainesMalhame2,HuangCainesMalhame4}
 on the other hand. 

The purpose of mean-field games is to provide an asymptotic formulation for differential games involving a large number of players interacting with one another in a mean-field way.
The standard writing of mean-field games
consists in a forward-backward system involving 
a forward Fokker-Planck equation describing the state of the 
population in equilibrium and a backward Hamilton-Jacobi-Bellman describing the optimal cost to a typical player when the population is in equilibrium. This goes back to the earlier works 
of Lasry and Lions, see 
\cite{MFG1,MFG2,MFG3}, and to the subsequent series of lectures
 by Lions
 at the Coll\`ege de France, see 
\cite{Lions,Lionsvideo} together with the lecture notes \cite{Cardaliaguet} of Cardaliaguet.
This approach, referred to as ``the PDE approach'', 
 fits both the cases when the underlying differential games are deterministic or stochastic; in the deterministic case, the PDEs involved in the representation are 
first-order PDEs, whilst they are second-order PDEs in the stochastic framework. As pointed out in several works
 by Carmona and Delarue, 
 see
\cite{CarmonaDelarue_sicon,CarmonaDelarue_master,CarmonaDelarue_book_I,CarmonaDelarue_book_II,CarmonaDelarueLachapelle},
the problem may be reformulated in a purely Lagrangian form, using, instead of a forward-backward system of two PDEs, 
a forward-backward system of two 
ordinary or 
stochastic differential equations
of the McKean-Vlasov type, the name ``McKean-Vlasov'' emphasizing the fact that the coefficients of the equations depend upon the statistical distribution of the solution. In that case, 
the differential equations appearing in the representation 
are ordinary or stochastic 
according to the deterministic or stochastic nature of the differential game;
when the equations are ordinary, randomness manifests in the dynamics through the initial condition only.

Quite remarkably, the forward-backward structure is 
common 
to both formulations, the PDE one, 
in which equations are deterministic but set in infinite dimension,
and the Lagrangian one, in which equations are finite dimensional 
but of the McKean-Vlasov type. The forward-backward nature of the problem is a crucial feature
in the analysis of mean-field games  since 
forward-backward systems are known to be hard to solve:
Roughly speaking, Cauchy-Lipschitz theory 
for forward-backward systems of differential equations
holds in small time only, even when the differential equations are finite dimensional.
In arbitrary time, existence or uniqueness of solutions may fail, in which case the whole system is said to develop singularities in finite time. The typical example for such a phenomenon is provided by the inviscid one-dimensional backward Burgers equation: Solutions may be represented through characteristics that describe the motion of a representative particle. These characteristics solve the forward equation of the forward-backward system representing the Burgers equation; meanwhile, the backward equation describes the dynamics of the velocity of the particle, which remains constant along the motion of the particle. It is well known that, for some choices of the terminal condition, the forward paths may split, 
such a splitting phenomenon being usually referred to as a ``shock''. In this regard, one interesting question
is to decide of the right continuation 
of the forward paths once singularities have emerged and uniqueness has been lost. Anyhow, and quite remarkably, 
the existence of shocks is deeply connected with the form of the 
terminal condition and, under an appropriate monotonicity
assumption on the terminal condition, singularities cannot show up and existence and uniqueness hold true in arbitrary time.

The picture for solving mean-field games is quite similar. 
Sufficient conditions are known under which a solution 
(say for instance a solution to one of the two formulations)
does exist in arbitrary time, but, except in small time, uniqueness may not be guaranteed in most 
of the cases. We refer to the original papers \cite{MFG1,MFG2,MFG3}, to the video lectures \cite{Lions}, 
to the lecture notes \cite{Cardaliaguet}
and 
to the two-volume book \cite{CarmonaDelarue_book_I,CarmonaDelarue_book_II} 
for a review on the general strategy used to solve a mean-field game. 
We also refer to the subsequent papers 
\cite{Cardaliaguet_local_coupling,CardaliaguetGraber,CardaliaguetGraberPorrettaTonon,CardaliaguetMeszarosSantambrogio}
for 
other strategies, in connection with the theory of mean-field control problem,
and to
\cite{GomesPimentellog,GomesPimentelSanchezSub,GomesPimentelSanchezSuper}
 for the analysis of more intricated cases. 
For the small time analysis, we also refer to 
\cite{HuangCainesMalhame2} and to \cite[Chapter 4]{CarmonaDelarue_book_I} and \cite[Chapter 5]{CarmonaDelarue_book_II}. Existence of a solution to the Lagrangian formulation may be found in 
\cite{CarmonaDelarue_ecp,CarmonaDelarue_sicon,CarmonaDelarueLachapelle}, see also \cite[Chapter 4]{CarmonaDelarue_book_I}. 
Regarding uniqueness in arbitrary time, 
things are as follows. Similar to the analysis of the Burgers equation, 
uniqueness is know to hold when the coefficients satisfy a suitable monotonicity condition with respect to 
the distribution of the population. The most popular monotonicity property
used in this direction is due to Lasry and Lions, see once again \cite{MFG1,MFG2,MFG3}, and is usually referred to as the Lasry-Lions monotonicity condition.
However, as emphasized in \cite{Ahuja} and in \cite[Chapters 4 and 5]{CarmonaDelarue_book_I}, other forms of monotonicity may
be used.  

In analogy with our short description of the forward-backward system associated with Burgers' equation, the forward-backward system 
used for representing a mean-field game
(whatever the formulation) reads as the system of characteristics 
of some partial differential equation. In the framework  
of mean-field games, this partial differential equation is called the ``master equation'' of the game, the word ``master'' emphasizing the fact that 
the equation encapsulates all the 
information that is necessary to describe the equilibria of the game. 
This equation was investigated first by Lions in his lectures at the Coll\`ege de France and then by Gangbo and Swiech 
\cite{GangboSwiech} in small time, 
and by 
Chassagneux, Crisan and Delarue
\cite{ChassagneuxCrisanDelarue}
and by Cardaliaguet, Delarue, Lasry and Lions
\cite{CardaliaguetDelarueLasryLions}
 in arbitrary time. In the latter reference, it is shown to play a crucial role in the justification of the passage to the limit, 
from games with finitely many players to mean-field games. 
In arbitrary time, analysis of the equation is performed under 
the additional assumption that coefficients satisfy the Lasry-Lions monotonicity condition. We refer to   
\cite[Chapters 5 and 6]{CarmonaDelarue_book_II} 
for another point of view on 
the results contained in
\cite{ChassagneuxCrisanDelarue,CardaliaguetDelarueLasryLions}
and to
\cite{BensoussanFrehseYam,BensoussanFrehseYam2,
BensoussanFrehseYam3,CarmonaDelarue_master,GomesSaude,GomesVoskanyan,KolokoltsovTroeva} for other and more heuristic approaches.

In the current paper, we consider the case when the Lasry-Lions monotonicity condition may fail, the question being to find a strategy to restore uniqueness. Pursuing the same parallel as before, we observe that, somehow, a similar program has been investigated for the Burgers equation: Adding a Laplace operator in front of the Burgers equation permits to restore the existence and uniqueness of a classical solution in arbitrary time (as opposed to the inviscid case, for which 
the existence of a classical solution may fail). From the Lagrangian point of view, the additional Laplace operator reads as a Brownian motion that forces the motion of the underlying particle. Similar to the viscous version of the Burgers equation, the stochastically forced forward-backward system 
describing the ``random characteristics'' of the viscous Burgers equation is know to be uniquely solvable, see Delarue \cite{Delarue02}. In a way, ``noise restores uniqueness in the Lagrangian formulation''. Our goal here is
to adapt this strategy to mean-field games. 

The idea of restoring uniqueness by means of a random forcing has been extensively studied in probability theory. It goes back to the earlier work of Zvonkin \cite{Zvonkin} on the solvability of one-dimensional stochastic differential equations driven by non-Lipschitz continuous drifts. Several people also contributed to the subject and investigated the higher dimensional framework, among which Veretennikov \cite{veretennikov}, Flandoli, Russo and Wolf
\cite{fla:rus:wol:03,fla:rus:wol:04}, Krylov and R\"ockner 
\cite{kry:roc:05}, Davie \cite{dav:07}... 
Similar questions have been also addressed in the framework of infinite dimensional stochastic differential equations, see for instance Flandoli, Gubinelli and Priola \cite{FlandoliGubinelliPriola} and the monograph by Flandoli \cite{Flandoli}. In any case, the idea is to force in a convenient way the Lagrangian dynamics in order to restore uniqueness of solutions.  
Transposed to mean-field games theory, the question is here 
to find a suitable randomly forced version of the original mean-field games in order to guarantee uniqueness of the equilibria. 

Here is our main result: For a certain class of coefficients, we 
manage to restore uniqueness to mean-field games --deriving from a deterministic differential game--
 by means of a stochastic forcing. The stochastic forcing mostly consists in an infinite dimensional Ornstein-Uhlenbeck process. The reason why it is chosen of infinite dimension is well-understood. Roughly speaking, the stochastic forcing is indeed intended to 
act on the elements of the ``infinite dimensional manifold'' formed by 
 the $d$-dimensional probability measures 
with a finite second-order moment, which is usually called ``the Wasserstein space''
($d$ is the state dimension of a typical player). Here,
 probability measures  
 are
 used to describe the state of the population, whilst the limitation to probability measures with a finite second-order moment is a convenient assumption which permits to benefit from the Hilbertian  
structure of any $L^2$ space constructed above the Wasserstein space. 
Returning to the description of the forcing applied to the mean-field system, it is then well-understood that, in order to capture all the ``possible
tangent directions'' to the manifold at any point of it, it is necessary to use a noise of infinite dimension. In order to bypass any description of the differential geometry on the space of probability measures, we use the approach introduced by Lions in his lectures: We lift equilibria from 
the space of probability measures to a well-chosen space of square-integrable random variables and then use, as we just alluded to, 
the Hilbertian structure of this $L^2$ space. Fortunately, the Lagrangian description of mean-field games gives a canonical way to realize such a lift. Our strategy 
then consists in forcing the dynamics of the random variables 
representing the equilibria. In other words, our goal is to force a differential equation defined on an $L^2$ space. A convenient way to do so is to force the modes of the solution along an orthonormal basis of $L^2$. For instance, when $L^2$ is chosen as the space $L^2({\mathbb S}^1;\RR^d)$ of square-integrable Borel mappings from ${\mathbb S}^1$ to $\RR^d$, where ${\mathbb S}^1$ denotes the one-dimensional torus, it suffices to force the Fourier modes of 
square-integrable $\RR^d$-valued functions defined on ${\mathbb S}^1$. 
It is then a standard fact from the theory of stochastic partial differential equations that the Ornstein-Ulhenbeck 
process
has nice smoothing properties on $L^2({\mathbb S}^1;\RR^d)$, which is the key feature for restoring uniqueness.  

In addition to proving existence and uniqueness of a solution to 
the noisy version of the original mean-field game, we
 also show that the randomly forced version may be interpreted as 
 the limit of a game with a large number of agents. As a main feature, the finite game
 not only exhibits mean field interactions, which is well expected, but also local interactions to nearest neighbours, which 
 is certainly a new point in the literature on mean-field games; from a mathematical point of view, 
 local interactions arise from the discretization of the operator driving the 
 additional infinite dimensional 
 Ornstein-Ulhenbeck process. The route we take to connect the finite and the infinite regimes 
 is to prove that, from any equilibrium to the limiting problem, we can construct an approximate Nash 
 equilibrium to the finite system. Although this way of doing is pretty standard in the theory of 
 mean-field games, it turns out to be more challenging in our setting because of 
 the additional local interactions. Of course, another route would consist 
 in proving that any (say closed loop) equilibria to the finite player system do converge to the limiting equilibrium as the number 
 of players grows up. It turns out to be a pretty difficult question in the framework of mean-field games; 
 in this framework, 
 the only generic approach 
that has been known to handle the convergence of closed loop equilibria is due to  
\cite{CardaliaguetDelarueLasryLions}
and is based on the aforementioned master equation. 
We guess that a similar approach could be implemented here and we hope to address it in a future work. In fact, 
a form of master equation is already addressed in the paper: We prove that the equilibrium strategy (in the limiting regime)
can be put in a feedback form and we show that the feedback function, which may be regarded as a function from 
$L^2({\mathbb S}^1;\RR^d)$ into itself, is a mild solution to a system of nonlinear equations on 
$L^2({\mathbb S}^1;\RR^d)$, driven by the second-order operator generated by the Ornstein-Ulhenbeck process inserted in the dynamics; the latter system reads as 
a kind of master equation for our problem. We just say a ``kind of'' because the usual master equation for mean field games 
is the equation satisfied by the value function and not by the feedback function. In the standard mean field game regime, 
both are explicitly connected since the feedback function is the derivative of the value function with respect to the so-called 
``private state variable''. Things are slightly different in our setting and we prefer to work, in the noisy regime, with the feedback function directly. At the end of the day, our guess is that, to plug our own version of the master equation into the machinery 
developed by 
\cite{CardaliaguetDelarueLasryLions}, we would need the feedback 
function to be more regular than what we show below. Once again, this question is deferred to another work. 

Another interesting prospect that we would like to investigate is the zero noise limit: We guess that any limit of the solutions (to the noisy system), as the intensity of the forcing decreases to $0$, 
should generate a   
randomized equilibrium to the original mean-field game. 
We are not aware of similar results in the theory of mean-field games, except maybe 
in the case investigated by 
 Foguen \cite{Foguen}. There, restoration of uniqueness is investigated 
for linear-quadratic mean-field games. In comparison with the general case we handle here, 
linear-quadratic mean-field games present the main advantage to have parametrized solutions: Equilibria are Gaussian and are thus parametrized by their mean and variance and thus live in 
a finite-dimensional subspace of the space of probability measures. 
In this case, it suffices to use a finite dimensional noise to restore uniqueness, 
which is precisely what is done in 
\cite{Foguen}; then, it seems that, for some linear-quadratic mean-field games, zero noise limits could be addressed by using arguments 
similar 
to \cite{BaficoBaldi}. Once again, we hope to make this point clear in a future work in collaboration with Foguen. 

Lastly, we emphasize the fact that all these questions should be revisited 
for mean-field games deriving from stochastic differential games with idiosyncratic noises. We believe that part of the technology developed in the paper could be 
recycled in this framework, except for the fact, due to the simultaneous presence of two sources of noise --the idiosyncratic one and 
the external one used to restore uniqueness--, the formulation of the randomized version of the game should require a modicum of care. We make this fact clear in the text.

The paper is organized as follows. We present 
in Section
\ref{se:mollified:MFG}
the randomized version of the game. Main results are exposed in 
Section \ref{se:main}. The proof of existence and uniqueness of a solution to the randomized game
is given in Section \ref{se:proof}. Connection with finite games is 
addressed in Section 
\ref{se:construction:approximate:proof}.

\section{Mollified/Randomized MFG}
\label{se:mollified:MFG}
We first present the original Mean-Field Game (MFG for short) 
and then describe the ``mollified'' or ``randomized'' version 
that is expected to be uniquely solvable. 

Throughout the article, $d$ is an integer greater than 1 and ${\mathcal P}_{2}(\R^d)$ denotes the space of probability measures over $\R^d$. It is equipped with the $2$-Wasserstein distance (see for instance \cite{Villani_AMS,Villani,CarmonaDelarue_book_I}):
\begin{equation*}
\begin{split}
\forall \mu,\nu \in {\mathcal P}_{2}(\R), 
\quad
W_{2}(\mu,\nu)
&= 
\inf_{\pi}
\biggl( \int_{\R^d \times \RR^d} \vert x-y \vert^2 d \pi(x,y) 
\biggr)^{1/2},
\end{split}
\end{equation*}
where the infimum in the last line is taken over all the probability measures $\pi \in {\mathcal P}_{2}(\R^d \times \R^d)$ that 
have $\mu$ and $\nu$ as respective marginals.

\subsection{Original problem}
\label{subse:original:pb}
We start with a simple MFG consisting of the following matching 
problem:
\begin{enumerate}
\item Given a 
probability space $(\Omega,{\mathcal A},\P)$
and a 
flow of
probability measures $\bmu=(\mu_{t})_{t \in [0,T]}$
on $\R^d$, consider the optimization problem
\begin{equation*}
J^{\bmu}({\boldsymbol \alpha}) =  \E \Bigl[ 
g\bigl(X_{T}^{\balpha},\mu_{T}
\bigr)
+
\int_{0}^T 
\bigl(
f\bigl(X_{t}^{\balpha},\mu_{t}\bigr) + \frac12 \vert \alpha_{t} \vert^2
\bigr) dt 
\Bigr],
\end{equation*}
over controlled dynamics of the form
\begin{equation}
\label{eq:ode:x}
dX_{t}^{\balpha} = b(X_{t}^{\balpha},\mu_{t}) dt + \alpha_{t} dt, 
\end{equation}
with
the initial condition $X_{0}^{\alpha}=X_{0}$, $X_{0}$  
being a random variable from $\Omega$ to $\R^d$
with $\mu_{0}$ as distribution.
\item Find $(\mu_{t})_{t \in [0,T]}$ in such a way that the 
flow of marginal measures of the optimal path
$(X_{t}^\star)_{t \in [0,T]}$
 in the above optimization 
problem satisfies 
\begin{equation}
\label{eq:match:mfg}
\mu_{t} = {\mathcal L}\bigl(X_{t}^\star\bigr), \quad t \in [0,T].
\end{equation}
\end{enumerate}
Here, $\balpha$ is called the control and is a jointly-measurable mapping 
\begin{equation*}
\balpha : [0,T] \times \Omega \ni (t,\omega) 
\mapsto \alpha_{t}(\omega) \in \R^d,
\end{equation*}
satisfying
\begin{equation*}
\E \int_{0}^T \vert \alpha_{t} \vert^2 dt < \infty.
\end{equation*}
The coefficient $b : \R^d \times {\mathcal P}_{2}(\R^d) \rightarrow \R^d$ is called the drift.  It is
assumed to be jointly Lipschitz continuous, so that
\eqref{eq:ode:x} is uniquely solvable for any realization $\omega \in \Omega$ and the solution $\bX : [0,T] \times \Omega \ni (t,\omega) 
\mapsto X_{t}(\omega) \in \R^d$ is also jointly-measurable. The coefficients $g : 
\R^d \rightarrow \R$ and $f : \RR^d \times \cP_{2}(\RR^d) \rightarrow \RR$ are called cost functionals. They are assumed be jointly continuous on $\RR^d \times \cP_{2}(\RR^d)$. Throughout the paper, we assume them to 
be at most of quadratic growth in the sense that, 
for some constant $C \geq 0$,
\begin{equation*}
\vert f(x,\mu) \vert + \vert g(x,\mu) \vert \leq C \bigl( 1 + \vert x \vert^2 
+ M_{2}(\mu)^2 \bigr), \quad x \in \R^d, \quad \mu \in \cP_{2}(\RR^d),
\end{equation*}
where $M_{2}(\mu)^2 = \int_{\R^d} \vert x \vert^2 d\mu(x)$. 
In particular, it is well checked that the expectation in the definition of the cost $J$ makes sense.  

\begin{remark}
All the coefficients are here assumed to be time homogeneous. This is for simplicity only and the results given below can be extended quite easily to the time-inhomogeneous framework. 
Similarly, the fact that $f$ is a quadratic function of $\alpha$ is for convenience only; we could handle more general 
running costs of the form $f(x,\mu,\alpha)$ that are uniformly convex in $\alpha$, see for instance 
\cite[Chapters 3 and 4]{CarmonaDelarue_book_I}. However, the fact that $b$ is linear in $\alpha$ is really crucial for our purpose, at least if we want to make use, as we do below, of the sufficient version of the Pontryagin principle.

Another possible generalization 
would be to insert a Brownian motion in the dynamics \eqref{eq:ode:x}, in which case the mean-field game would be called ``stochastic'' or 
``second-order''. However, the approach developed 
below
for restoring uniqueness of solutions 
does not apply to that case, see Remark 
\ref{rem:why:not:sto}
below. We hope to address this question in a future work. 
\end{remark}

Usually,
solutions to the matching problem 
\eqref{eq:match:mfg}
may be characterized in two ways. The original one 
is to characterize the optimization problem in the first item above through a first order Hamilton-Jacobi-Bellman equation (HJB for short):
\begin{equation}
\label{eq:hjb}
\begin{split}
\partial_{t} u(t,x) + b(x,\mu_{t}) \cdot \partial_{x} u(t,x) + 
f(x,\mu_{t}) - \frac12 \vert \partial_{x} u(t,x) \vert^2 = 0,
\end{split}
\end{equation}
for $(t,x) \in [0,T] \times \R^d$, with $u(T,x) = g(x,\mu_{T})$
as boundary condition. Here, the function 
$u : [0,T] \times \R^d \rightarrow \R$ is understood as the value function of the optimization problem (in the environment 
$\bmu=(\mu_{t})_{0 \leq t \leq T}$). Given the value function, 
it is known that the optimal control process in the optimization problem reads (at least formally since the gradient below may not exist or may only exist as a multi-valued mapping):
\begin{equation*}
\balpha^\star = \bigl( \alpha_{t}^\star = - \partial_{x} u(t,X_{t}^{\star}) \bigr)_{0 \leq t\leq T},
\end{equation*}
where $(X_{t}^{\star})_{0 \leq t \leq T}$ now denotes the 
solution of the ordinary differential equation:
\begin{equation*}
dX_{t}^{\star} = \Bigl( b\bigl(X_{t}^{\star},\mu_t\bigr) -
\partial_{x} u \bigl(t,X_{t}^{\star}
\bigr)\Bigr) dt, \quad t \in [0,T].
\end{equation*}
It is now easy to implement analytically the fixed point condition in the second item above. 
Under the identification 
$(\mu_{t} = {\mathcal L}(X_{t}^{\star}))_{0 \leq t \leq T}$,
the flow $\bmu = (\mu_{t})_{0 \leq t \leq T}$
must solve the nonlinear Fokker-Planck equation:
\begin{equation}
\label{eq:fp}
\partial_{t} \mu_{t} + \partial_{x}
\Bigl(
\bigl( 
b(x,\mu_{t})
- \partial_{x} u(t,x)
\bigr)
\mu_{t}
\Bigr)
=  0, \quad (t,x) \in [0,T] \times \R^d.
\end{equation}
with the initial condition $\mu_{0}$ for the population. 
The forward-backward system made of \eqref{eq:hjb} and 
\eqref{eq:fp} is usually called the MFG system of PDEs. 
We refer to aforementioned references \cite{Cardaliaguet,Lions}
for further details.
\vspace{5pt}

Another strategy for characterizing the equilibria is to use the Pontryagin principle. Under appropriate conditions, we know that the optimal paths 
 of the control problem $\inf_{\balpha} J^{\bmu}(\balpha)$ 
 in the first item of the above definition of an MFG equilibrium solve
 the forward-backward system of two ODEs:
\begin{equation}
\label{eq:original:smp}
\begin{split}
&dX_{t}^{\star} = \Bigl( b\bigl(X_{t}^{\star},\mu_{t} \bigr) - Y_{t}^{\star} \Bigr) dt,
\\
&dY_{t}^{\star} = \Bigl( -\partial_{x} b \bigl( X_{t}^{\star},\mu_{t}
\bigr) Y_{t}^{\star}
- \partial_{x} f \bigl( X_{t}^{\star},\mu_{t}
\bigr) 
\Bigr) dt, 
\end{split}
\end{equation}
with the 
initial condition $X_{0}^\star = X_{0}$
and the 
terminal condition 
$Y_{T} = \partial_{x} g(X_{T}^\star,\mu_{T})$. 
Implementing the matching condition 
\eqref{eq:match:mfg}
in the second item of the definition of an MFG equilibrium, 
we deduce that equilibria of the MFG must solve the 
forward-backward system of the McKean-Vlasov type:
\begin{equation}
\label{eq:original:mkv}
\begin{split}
&dX_{t}^{\star} = \Bigl( b\bigl(X_{t}^{\star},
{\mathcal L}(X_{t}^{\star})
 \bigr) - Y_{t}^{\star} \Bigr) dt,
\\
&dY_{t}^{\star} = \Bigl( -\partial_{x} b \bigl( X_{t}^{\star},{\mathcal L}(X_{t}^{\star})
\bigr) Y_{t}^{\star}
- \partial_{x} f \bigl( X_{t}^{\star},{\mathcal L}(X_{t}^{\star})
\bigr) 
\Bigr) dt, 
\end{split}
\end{equation}
with the terminal condition $Y_{T} = \partial_{x} g(X_{T}^\star,\cL(X_{T}^\star))$. Under suitable convexity properties of the coefficients in the variable $x$, which we spell out in Subsection 
\ref{subse:standing:assumptions}
below, the system 
\eqref{eq:original:mkv} is not only a necessary condition 
satisfied by any equilibria of the mean-field game
but is also a sufficient condition. In this framework,  
\eqref{eq:original:mkv} characterizes the equilibria of the game. 
This is precisely this system that we force stochastically below. 
\vspace{5pt}

Throughout the article, we focus on this specific convex regime when the 
Pontryagin principle is both a necessary and a sufficient condition of optimality. Although it demands strong
assumptions on the structure of the coefficients in the spatial variable 
$x$, this so-called ``convex regime'' turns out to be especially useful for our purposes: 
It provides a sharp framework under which, for a given input $\bmu=(\mu_{t})_{0 \leq t \leq T}$, 
the system
\eqref{eq:original:smp}
is uniquely solvable for any initial condition and its solution 
is stable under perturbation of the initial condition and perturbation of the 
input. It is worth mentioning that, even in this strong setting, it still makes sense to address the restoration of uniqueness for the mean-field game, since  
the McKean-Vlasov 
forward-backward system \eqref{eq:original:mkv} may not be uniquely solvable.
Clearly, 
we shall appreciate having a sharp
 framework for solving the control problem $\inf_{\balpha} J^{\bmu}(\balpha)$ 
 as it will 
permit to 
focus on the 
difficulties that are exclusively related with the non-uniqueness of the MFG equilibria.

\subsection{Reformulation}
\label{subse:reformulation}
In order to proceed, we first notice that
$(\Omega,{\mathcal A},\P)$ may chosen as the probability space
$({\mathbb S}^1,{\mathcal B}({\mathbb S}^1),\textrm{\rm Leb}_{1}
)$, where 
$\textrm{\rm Leb}_{1}$ is the Lebesgue measure. 
In this regard, we recall 
from 
\cite{BlackwellDubins} that there exists a measurable function 
$\Psi : {\mathbb S}^1 \times {\mathcal P}(\R^d) \rightarrow 
\R^d$ 
such that, for every probability $\mu$ on $\R^d$, 
$[0,1] \ni u \mapsto \Psi(u,\mu)$ is a random variable with $\mu$ as distribution.

With such a convention, the control ${\boldsymbol \alpha}$ is understood 
as a jointly-measurable mapping
\begin{equation*}
{\boldsymbol \alpha} : [0,T] \times {\mathbb S}^1 
\ni (t,x) \mapsto \alpha_{t}(x) \in \R^d,
\end{equation*}
and the cost functional may be rewritten as 
\begin{equation}
\label{eq:Jmu:reformulation:1}
\begin{split}
J^{\bmu}({\boldsymbol \alpha})
 &= 
 \int_{{\mathbb S}^1}
 g
 \bigl(x,
{\mathcal L}(X_{T}^{\boldsymbol \alpha})
 \bigr) d{\mathcal L}(X_{T}^{\boldsymbol \alpha})(x)
 \\
&\hspace{15pt} 
+
 \int_{0}^T
\biggl[  
\int_{{\mathbb S}^1}
 f
 \bigl(x,
{\mathcal L}(X_{t}^{\boldsymbol \alpha})
 \bigr) d{\mathcal L}(X_{t}^{\boldsymbol \alpha})(x)
 + 
 \frac12 \int_{{\mathbb S}^1}
\vert \alpha_{t}(x)\vert^2 dx 
\biggr] dt. 
\end{split}
\end{equation}
With this reformulation, we introduce   
the $L^2$ spaces $L^2({\mathbb S}^1) = L^2({\mathbb S}^1,
{\mathcal B}({\mathbb S}^1),
\textrm{\rm Leb}_{1})$ and $L^2({\mathbb S}^1;\RR^d) \cong [L^2({\mathbb S}^1)]^d$. 
A key fact is that the functions 
\begin{equation*}
e^{0} : {\mathbb S}^1 \ni x \mapsto 1,
\quad
e^{n,+} : {\mathbb S}^1 \ni x \mapsto 
\sqrt{2} \cos\bigl( 2 \pi n x), \quad 
e^{n,-} : 
{\mathbb S}^1 \ni x \mapsto 
\sqrt{2} \sin\bigl( 2 \pi n x), \quad n \in \NN^*, 
\end{equation*}
form an orthonormal basis of $L^2({\mathbb S}^1)$. 
In particular, for any element $\ell \in L^2({\mathbb S}^1)$, we call 
$\ell^{0}$, $\ell^{n,+}$ and ${\ell}^{n,-}$, $n \in \NN^*$, the different weights 
of $\ell$;
we use the same notation when $\ell \in L^2(\SS^1;\RR^d)$, in which case
$\ell^{0}$, $\ell^{n,+}$ and ${\ell}^{n,-}$ are vectors of size $d$.
 Then, we may write
\begin{equation*}
\int_{{\mathbb S}^1}
\vert \alpha_{t}(x) \vert^2
dx = \vert \alpha_{t}^0 \vert^2 + \sum_{n \in \NN^*}
\bigl( \vert \alpha_{t}^{n,+} \vert^2 
+ 
\vert \alpha_{t}^{n,-} \vert^2
\bigr),
\end{equation*}
which we shall often summarize into 
\begin{equation*}
\int_{{\mathbb S}^1}
\vert \alpha_{t}(x) \vert^2
dx = \sum_{n \in \NN}
\vert \alpha_{t}^{n,\pm} \vert^2,
\end{equation*}
with the convention that $\alpha^{0,+}=\alpha^0$ and 
$\alpha^{0,-}=0$.

Moreover, given a mapping $h : \R^d \rightarrow \R$, at most of linear growth,
we may consider the mapping
\begin{equation*}
{\mathfrak h}_{0} : L^2\bigl({\mathbb S}^1;\RR^d) \ni \ell \mapsto {\mathfrak h}_{0}(\ell) = 
\int_{{\mathbb S}^1} h \bigl( \ell(x) \bigr) dx.
\end{equation*}
Then, we observe that the cost functional $J^{\bmu}$ may be rewritten:
\begin{equation}
\label{eq:Jmu:reformulation:2}
J^{\bmu}(\balpha) = 
{\mathfrak g}_{0}\bigl(X_{T}^{\balpha}(\cdot),\mu_{T}\bigr)
+
\int_{0}^T \Bigl\{ 
{\mathfrak f}_{0}\bigl(X_{t}^{\balpha}(\cdot),\mu_{t}\bigr) + \frac{1}{2} 
\Bigl( \vert \alpha^0_{t} \vert^2 +
\sum_{n \in \NN}
\bigl(
 \vert \alpha_{t}^{n,+}\vert^2 
+ 
 \vert \alpha_{t}^{n,-}\vert^2 
 \bigr) \Bigr) \Bigr\} dt,
\end{equation}
where ${\boldsymbol X}^{\balpha}(\cdot)=(X_{t}^{\balpha}(\cdot))_{0 \leq t \leq T}$
(pay attention to the dot we put in the notation to emphasize the fact that the path has functional values)
 is
a path with values in $L^2({\mathbb S}^1;\RR^d)$ 
in such a way that 
$\bX(\cdot)=\bX^{\balpha}(\cdot)$ (we get rid of the superscript $\balpha$ to simplify the notations) satisfies
\begin{equation}
\label{eq:modes:deterministic:X}
\dot{X}_{t}^{n,\pm} = {\mathfrak b}^{n,\pm}\bigl(X_{t}(\cdot),\mu_{t}\bigr)  + \alpha_{t}^{n,\pm}, \quad t \in [0,T], \quad n \in \NN,
\end{equation}
where, for $\ell \in L^2({\mathbb S}^1;\R^d)$ and $\mu \in {\mathcal P}_{2}(\R^d)$, 
\begin{equation*}
{\mathfrak b}^{n,\pm}(\ell,\mu)=
\int_{{\mathbb S}^1} b\bigl(\ell(x),\mu \bigr) e^{n,\pm}(x) dx, \quad 
n \in \NN,
\end{equation*}
denote the modes of 
${\mathfrak b}(\ell,\mu)$.
%

\subsection{Enlarged problem}
\label{se:1:subse:enlarged}
A strategy for restoring uniqueness to mean-field games now consists in 
forcing the modes 
$(\bX^{n,\pm})_{n \in \NN}$ introduced in the previous paragraph. 
To do so, we need to disentangle the two sources of noise that 
will manifest in the construction of the new mean field game: On the one hand, the initial condition is still defined as a square-integrable random variable on the torus $\SS^1$
(equipped with the collection ${\mathcal L}(\SS^1)$  of Lebesgue sets); on the other hand, 
we need another space for carrying the random forcing acting on the nodes
$(\bX^{n,\pm})_{n \in \NN}$. 

Having this picture of our general strategy in mind, we now enlarge the probability space and consider 
$\Omega= {\mathbb S}^1 \times \Omega_{0}$, 
where $(\Omega_{0},{\mathcal A}_{0},\FF_{0}=({\mathcal F}_{0,t})_{t \in [0,T]},\PP_{0})$ is a complete filtered probability 
space equipped with a collection $(\bW^0,(\bW^{n,+},\bW^{n,-})_{n \in \NN^*})$ of  
$\FF_{0}$-Brownian motions of dimension
$d$. The filtration $\FF_{0}$ satisfies the usual conditions. 

We then equip $\Omega$ with the completion ${\mathcal A}$ of ${\mathcal L}(\SS^1) \otimes {\mathcal A}_{0}$
and with the completion $\PP$ of $\textrm{\rm Leb}_{1} \otimes \PP_{0}$. We call
$\FF$ the completion of the filtration $({\mathcal L}(\SS^1) \otimes {\mathcal F}_{0,t})_{t \in [0,T]}$ and 
we denote by 
 $\xi$ the identity mapping on $\SS^1$
 (which is extended in a canonical way to $\Omega$). 
Despite the fact that $\Omega$ has been enlarged, we keep 
the same notations as above for 
${\mathfrak h}_{0}(\ell)$ and $\ell^{n,\pm}$
whenever $\ell$ is an element of $L^2({\mathbb S}^1;\RR^d)$. In particular,
whenever $X$ is a square-integrable random variable 
defined on $\Omega$, 
we may consider, for $\PP_{0}$-almost every $\omega_{0} \in \Omega_{0}$, the random variable $X(\cdot,\omega_{0})$ on 
${\mathbb S}^1$ and then ${\mathfrak h}_{0}(X(\cdot,\omega_{0}))$ and $X^{n,\pm}(\cdot,\omega_{0})$.
Recall indeed from the version of Fubini's theorem for completion 
of product spaces that, for $\PP_{0}$-almost every 
$\omega_{0}$, 
$X(\cdot,\omega_{0})$ is a square-integrable random variable 
on 
$({\mathbb S}^1,{\mathcal L}(\SS^1))$, see Lemma 
\ref{lem:measurability} for more details. 
\vspace{5pt}

The question now is to explain how to use 
the collection 
$(\bW^0,(\bW^{n,\pm})_{n \in \NN^*})$ in order to construct
a uniquely solvable
 randomized mean field game. A na\"ive way would consist in forcing each mode process
$\bX^{n,\pm} = (X_{t}^{n,\pm})_{0 \leq t \leq T}$ in 
\eqref{eq:modes:deterministic:X}, for $n \in \NN$, by the corresponding Wiener process $\bW^{n,\pm}$ (with the same convention as above that
$\bX^{0}$ and $\bW^{0}$ are understood 
as $\bX^{0,+}$ and $\bW^{0,+}$). 
However, it is a well-known fact that the solution 
\begin{equation*} 
X_{t}(\cdot,\omega_{0}) = \sum_{n \in \NN} X_{t}^{n,\pm}(\cdot,\omega_{0}) e^{n,\pm}(\cdot), \quad t \in [0,T],
\end{equation*}
would not belong to $L^2({\mathbb S}^1;\RR^d)$.

In order to render the modes 
$((X_{t}^{n,\pm})_{0 \leq t \leq T})_{n \in \NN}$ square summable, 
we may 
force \eqref{eq:modes:deterministic:X}
 by another $\FF_{0}$-semi-martingale process $\bU^{n,\pm}$ 
such that 
\begin{equation}
\label{eq:square-integrability:U}
\EE_{0} \Bigl[ \sup_{0 \leq t \leq T} \Bigl( \sum_{n \in \NN}
\vert U^{n,\pm}_{t} \vert^2 
\Bigr) \Bigr] < \infty, 
\end{equation}
namely
\begin{equation}
\label{eq:sde:U}
dX_{t}^{n,\pm} = \Bigl( 
{\mathfrak b}^{n,\pm} \bigl(X_{t}(\cdot), \mu_{t} \bigr)
+ \alpha_{t}^{n,\pm}
 \Bigr)dt 
  +  dU_{t}^{n,\pm}, 
 \quad t \in [0,T], \quad n \in \NN.
\end{equation}
Assume for instance that 
\begin{equation}
\label{eq:square-integrability:any X:0}
 \int_{0}^T
 \sum_{n \in \NN} 
 \Bigl( \sup_{x \in \SS^1}
 \vert {\mathfrak b}^{n,\pm}(x,\mu_{t}) \vert^2
\Bigr)
 dt < \infty. 
 \end{equation}
 Then,
\begin{equation}
\label{eq:square-integrability:any X}
\EE_{0} \Bigl[ \sup_{0 \leq t \leq T} \Bigl( \sum_{n \in \NN}
\vert X^{n,\pm}_{t} \vert^2 
\Bigr) \Bigr] < \infty, 
\end{equation} 
and we can regard
\begin{equation*}
X_{t}(\cdot,\omega_{0}) = \sum_{n \in \NN} X_{t}^{n,\pm}(\omega_{0}) e^{n,\pm}(\cdot), \quad t \in [0,T],
\end{equation*}
as a process with values in $L^2(\SS^1;\RR^d)$. 
\vskip 4pt

In this regard, the following lemma 
(see for instance 
\cite[Chapter 2]{CarmonaDelarue_book_II} for similar considerations)
makes clear the connection between random variables 
from $\Omega$ into $\RR$ and random variables 
from $\Omega_{0}$ into $L^2(\SS^1)$:
\begin{lemma}
\label{lem:measurability}
Assume that $X$ is a square-integrable $\RR^d$-valued random variable on $\Omega$. Then, for $\PP_{0}$ almost every 
$\omega_{0} \in \Omega_{0}$, $\SS^1 \ni x \mapsto X(x,\omega_{0}) \in L^2(\SS^1;\RR^d)$; moreover, we can construct a random
variable $X(\cdot)$ on $\Omega_{0}$ with values in $L^2(\SS^1;\RR^d)$, such that, for $\PP_{0}$-almost every $\omega_{0} \in \Omega_{0}$, 
$\SS^1 \ni x \mapsto X(x,\omega_{0})$ coincides in $L^2(\SS^1;\RR^d)$ with the realization of the variable 
$X(\cdot)$ at $\omega_{0}$. Conversely, 
given a random variable $X(\cdot)$ from $\Omega_{0}$ to $L^2(\SS^1;\RR^d)$, we can construct a random
variable $X$ on $\Omega$ such that, for $\PP_{0}$-almost every $\omega_{0} \in \Omega_{0}$, 
$\SS^1 \ni x \mapsto X(x,\omega_{0})$ coincides in $L^2(\SS^1;\RR^d)$ with the realization of the variable 
$X(\cdot)$ at $\omega_{0}$. 
%
\end{lemma}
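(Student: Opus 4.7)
The plan is to reduce everything to the Fourier decomposition of $L^2(\SS^1;\RR^d)$ along the orthonormal basis $(e^0, e^{n,\pm})_{n \in \NN^*}$ introduced earlier, and to invoke Fubini's theorem for the completion of product $\sigma$-algebras (together with Pettis' theorem to handle the separable Hilbert-space-valued measurability).

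For the first (``disintegration'') direction, I would start from the identity
\begin{equation*}
\int_{\Omega} \vert X \vert^2 \, d\PP \;=\; \int_{\Omega_{0}} \int_{\SS^1} \vert X(x,\omega_{0}) \vert^2 \, dx \, d\PP_{0}(\omega_{0}),
\end{equation*}
valid by Fubini's theorem for the completed product measure $\textrm{\rm Leb}_{1} \otimes \PP_{0}$ applied to the $\cA$-measurable function $|X|^2$; finiteness of the left-hand side forces, for $\PP_{0}$-almost every $\omega_{0}$, the partial map $x \mapsto X(x,\omega_{0})$ to belong to $L^2(\SS^1;\RR^d)$ (after redefining it on a null set of $\omega_{0}$'s). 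It remains to upgrade this pointwise-in-$\omega_{0}$ statement to the measurability of $\omega_{0} \mapsto X(\cdot,\omega_{0})$ as an $L^2(\SS^1;\RR^d)$-valued random variable. Since $L^2(\SS^1;\RR^d)$ is separable, the Borel $\sigma$-algebra is generated by the functionals $\ell \mapsto \langle \ell, e^{n,\pm}\rangle$, and so by Pettis' theorem it suffices to check that, for every $n$, the scalar map $\omega_{0} \mapsto \int_{\SS^1} X(x,\omega_{0}) e^{n,\pm}(x) dx$ is $\cA_{0}$-measurable. This last assertion is again a consequence of Fubini (applied to $Xe^{n,\pm}$).

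For the converse direction, I would expand the given $L^2(\SS^1;\RR^d)$-valued random variable $X(\cdot)$ in the Fourier basis, setting $X^{n,\pm}(\omega_{0}) := \langle X(\omega_{0}),e^{n,\pm}\rangle$. Each $X^{n,\pm}$ is a $\cA_{0}$-measurable $\RR^d$-valued random variable since it is the image of the separable-Hilbert-valued variable $X(\cdot)$ under a continuous linear form. Define the partial sums
\begin{equation*}
X_{N}(x,\omega_{0}) \;=\; X^{0}(\omega_{0}) + \sum_{n=1}^{N} \bigl( X^{n,+}(\omega_{0}) e^{n,+}(x) + X^{n,-}(\omega_{0}) e^{n,-}(x) \bigr),
\end{equation*}
which are jointly Borel-measurable on $\SS^1 \times \Omega_{0}$ as products of measurable functions. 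For every $\omega_{0}$ outside a $\PP_{0}$-null set, $(X_{N}(\cdot,\omega_{0}))_{N}$ converges in $L^2(\SS^1;\RR^d)$ to $X(\omega_{0})$; extracting a deterministic subsequence $(N_{k})_{k}$ along which convergence holds $\PP_{0}\otimes \textrm{\rm Leb}_{1}$-almost everywhere (using a standard diagonal argument together with the $L^2$-convergence), I would then define $X(x,\omega_{0}) := \limsup_{k \to \infty} X_{N_{k}}(x,\omega_{0})$ whenever this lim-sup is finite, and $0$ otherwise. This yields a jointly-measurable $\RR^d$-valued random variable on $(\Omega,\cA,\PP)$ which, by construction, coincides $\textrm{\rm Leb}_{1}$-a.e.\ with $X(\omega_{0})$ for $\PP_{0}$-almost every $\omega_{0}$.

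The main obstacle is really bookkeeping with the completions: the $\sigma$-algebra $\cA$ is the completion of $\cL(\SS^1)\otimes \cA_{0}$ (itself already a completion on the $\SS^1$ factor), so to invoke the sectionwise form of Fubini one must first replace the $\cA$-measurable variable $X$ by a $\cB(\SS^1)\otimes \cA_{0}$-measurable version agreeing with it $\PP$-a.s., and then discard a $\PP_{0}$-null set of $\omega_{0}$'s. Once this routine but careful step is carried out, the Pettis and Fourier arguments above go through without further difficulty.
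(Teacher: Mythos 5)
Your proposal is essentially the paper's own proof: both directions rest on Fubini's theorem for the completed product $\sigma$-algebra, together with the Fourier expansion of $L^2(\SS^1;\RR^d)$ along the basis $(e^{n,\pm})_n$. The only differences are cosmetic: you invoke Pettis' theorem to justify that $L^2(\SS^1;\RR^d)$-valued measurability reduces to scalar measurability of the Fourier modes, where the paper simply asserts this equivalence; and in the converse direction you pass to a deterministic subsequence converging $\PP_0\otimes\textrm{\rm Leb}_1$-a.e.\ and take a $\limsup$, whereas the paper observes directly that the partial sums $(X^n)_n$ form a Cauchy sequence in $L^2(\Omega,\cA,\PP;\RR^d)$ (for which it quietly uses that $X(\cdot)$ is square-integrable, a hypothesis you should also keep explicit, since your ``standard diagonal argument'' for the a.e.\ subsequence likewise needs either that integrability or a truncation on the sets $\{\|X(\cdot)\|_{L^2}\le m\}$). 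Neither variation changes the substance of the argument.
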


\begin{proof}
The proof is pretty straightforward. 
Given a square-integrable $\RR^d$-valued random variable on $\Omega$, 
Fubini's theorem for completion 
of product spaces says that, for $\PP_{0}$-almost every 
$\omega_{0}$, 
$\SS^1 \ni x \mapsto 
X(x,\omega_{0})$ is a square-integrable random variable 
on 
$({\mathbb S}^1,{\mathcal L}(\SS^1))$. In particular, 
for $\PP_{0}$-almost every $\omega_{0}$, we can define 
$X^{n,\pm}(\omega_{0}) = \int_{\SS^1} X(x,\omega_{0}) e^{n,\pm}(x) dx$. Each 
$X^{n,\pm}$ is a random variable (on $\Omega_{0}$). We then let
\begin{equation*}
X(\cdot) = \sum_{n \in \NN} X^{n,\pm} e^{n,\pm}(\cdot).
\end{equation*}
Noticing that a mapping $\chi(\cdot)$ from $\Omega_{0}$ into $L^2(\SS^1;\RR^d)$ is measurable with respect to 
a $\sigma$-field ${\mathcal G}$ if and only if its modes $(\chi^{n,\pm})_{n \in \NN}$ are measurable 
with respect to ${\mathcal G}$, we deduce that $X(\cdot)$ is a random variable 
from $\Omega_{0}$ to $L^2(\SS^1;\RR^d)$. 

Conversely, if we are given a square integrable
random variable $X(\cdot)$ from $\Omega_{0}$ into $L^2(\SS^1;\RR^d)$, then we can define 
$(X^{n,\pm})_{n \in \NN}$ as random variables with values in $\RR^d$. We then let 
\begin{equation*}
X^n(x,\omega_{0}) = \sum_{k=0}^n X^{k,\pm}(\omega_{0}) e^{k,\pm}(x), \quad n \in \NN.
\end{equation*}
Obviously, we can identify $X^n$ (seen as a random variable on $\Omega$ with values in $\RR^d$) with 
$X^n(\cdot)$ (seen as a random variable on $\Omega_{0}$ with values in $L^2(\SS^1;\RR^d)$). 
It is clear that $X^n(\cdot)$ converges to $X(\cdot)$ in 
$L^2(\Omega_{0},{\mathcal A}_{0},\PP_{0};L^2(\SS^1;\RR^d))$
and $X^n$ has a limit $\tilde X$ in $L^2(\Omega,{\mathcal A},\PP;\RR^d)$. 
We then identify $\tilde X(\cdot)$ with $X(\cdot)$. 
\end{proof}

 Importantly, observe that we can proceed similarly with processes. 
For instance, we can associate, with any $\FF$-progressively-measurable process with values in $\RR^d$, an 
$\FF_{0}$-progressively-measurable process with values in $L^2(\SS^1;\RR^d)$, and conversely. Indeed, if
${\boldsymbol X}=(X_{t})_{0 \leq t \leq T}$ is an $\FF$-progressively-measurable $\RR^d$-valued process on $\Omega$ satisfying $\EE \int_{0}^T \vert X_{t} \vert^2 dt < \infty$, 
then it can be approximated in $L^2([0,T] \times \Omega)$ by simple processes of the form 
\begin{equation*}
\biggl( X^n_{t} = \sum_{i=0}^{n-1} X^{n,i} {\mathbf 1}_{(t_{i},t_{i+1}]}(t) \biggr)_{0 \le t \le T},
\quad n \in {\mathbb N},
\end{equation*}
where $0=t_{0}<\dots<t_{n}=T$ is a subdivision of $[0,T]$ and $X^{n,i}$, for each $i \in \{0,\cdots,n-1\}$,
is ${\mathcal F}_{t_{i}}$ measurable. Then, 
by Lemma 
\ref{lem:measurability}, we can associate with each $X^{n,i}$ an ${\mathcal F}_{0,t_{i}}$-measurable random variable 
$X^{n,i}(\cdot)$ from $\Omega_{0}$ into $\RR^d$. Letting
\begin{equation*}
\biggl( X^n_{t}(\cdot) = \sum_{i=0}^{n-1} X^{n,i}(\cdot) {\mathbf 1}_{(t_{i},t_{i+1}]}(t) \biggr)_{0 \le t \le T},
\end{equation*}
the sequence $({\boldsymbol X}^n(\cdot)=(X^n_{t}(\cdot))_{0 \le t \le T})_{n \in {\mathbb N}}$
is Cauchy in $L^2([0,T] \times \Omega_{0};L^2(\SS^1;\RR^d))$. 
The limit ${\boldsymbol X}(\cdot)=(X_{t}(\cdot))_{0 \le t \le T}$
is $\FF_{0}$-progressively-measurable and, for almost every $t \in [0,T]$,
for almost every $\omega_{0} \in \Omega_{0}$,
the realization of $X_{t}(\cdot)$ coincides with 
$\SS^1 \ni x \mapsto X_{t}(x,\omega_{0})$. 

Conversely, if we are given an $\FF_{0}$-progressively-measurable
${\boldsymbol X}(\cdot)=(X_{t}(\cdot))_{0 \le t \le T}$
 from $\Omega_{0}$ into $L^2(\SS^1;\RR^d)$
satisfying 
$\EE_{0} \int_{0}^T \|  X_{t}(\cdot) \|_{L^2(\SS^1;\RR^d)}^2 
dt < \infty$, then we can construct ${\boldsymbol X}=(X_{t})_{0 \le t \le T}$ as 
the limit in $L^2([0,T] \times \Omega;\RR^d)$ of the sequence of processes
 \begin{equation*}
\biggl( \biggl( 
(x,\omega_{0}) \mapsto
\sum_{k=1}^n X_{t}^{k,\pm}(\omega_{0}) e^{k,\pm}(x) \biggr)_{0 \leq t \leq T}
\biggr)_{n \in \NN}
 \end{equation*}
Clearly, 
 ${\boldsymbol X}=(X_{t})_{0 \le t \le T}$ is $\FF$-progressively-measurable and, 
for almost every $t \in [0,T]$,
for almost every $\omega_{0} \in \Omega_{0}$,
the realization of $X_{t}(\cdot)$ coincides with 
$\SS^1 \ni x \mapsto X_{t}(x,\omega_{0})$. 

Given processes ${\boldsymbol X}$ and ${\boldsymbol X}(\cdot)$ as we just considered, we can define 
\begin{equation*}
{\boldsymbol \chi} = \biggl( \chi_{t} = \int_{0}^t X_{s} ds \biggr)_{0 \le t \le T}, 
\quad 
\textrm{\rm and}
\quad
{\boldsymbol \chi}(\cdot) = \biggl( \chi_{t}(\cdot) = 
\sum_{n \in \NN}
\int_{0}^t X_{s}^{n,\pm} e^{n,\pm} ds
\biggr)_{0 \le t \le T}. 
\end{equation*}
Then, it is pretty easy to check that, for almost every $\omega_{0} \in \Omega_{0}$, 
for all $t \in [0,T]$, the function $\SS^1 \ni x \mapsto \chi_{t}(x,\omega_{0})$
coincides with the realization of $\chi_{t}(\cdot)$ at $\omega_{0}$.

\subsection{Randomized MFG}
\label{subsec:randomized}
%

With the same assumption as in 
\eqref{eq:square-integrability:U} for the collection of semi-martingales 
$(\bU^{n,\pm}=(U^{n,\pm}_{t})_{0 \leq t \leq T})_{n \in \NN}$,
we consider
the following (informally defined) \textit{randomized} MFG
in lieu of the original MFG presented in Subsection
\ref{subse:original:pb}:
\begin{enumerate}
\item 
Given an ${\mathcal F}_{0,0}$-measurable random variable ${\mathscr V}$ from $\Omega_{0}$ into $\cP_{2}(\R^d)$, 
with ${\mathbb E}_{0}[M_{2}({\mathscr V})^2] < \infty$, 
 and
an ${\mathbb F}_{0}$-adapted flow of random measures $\bmu=(\mu_{t})_{0 \le t \le T}$
on $\RR^d$ with continuous paths from $[0,T]$ into $\cP_{2}(\RR^d)$
such that $\PP_{0}(\mu_{0}={\mathscr V})=1$, consider the following cost functional
\begin{equation*}
\begin{split}
J^{\bmu}({\boldsymbol \alpha }) &= \int_{\Omega_{0}}
\biggl[ 
{\mathfrak g}_{0} \bigl( X_{T}(\cdot,\bw),\mu_{T}(\bw)
\bigr)
\\
&\hspace{15pt}+
\int_{0}^T \Bigl( {\mathfrak f}_{0}\bigl( X_{t}(\cdot,\bw) ,\mu_{t}(\bw)\bigr) 
+ \frac{1}{2}
\sum_{n \in \NN}
\vert \alpha^{n,\pm}_{t}(\bw) \vert^2
\Bigr) dt
\biggr] d\P_{0}(\bw),
\end{split}
\end{equation*}
over controlled dynamics of the form
\begin{equation}
\label{eq:sde}
dX_{t}^{n,\pm} = \Bigl( 
{\mathfrak b}^{n,\pm} \bigl(X_{t}(\cdot), \mu_{t} \bigr)
+ \alpha_{t}^{n,\pm}
 \Bigr)dt 
  +  dU_{t}^{n,\pm}, 
 \quad t \in [0,T], \quad n \in \NN,
\end{equation}
where $(X_{0}^{n,\pm})_{n \in \NN}$
denote the modes of 
a random variable $X_{0}(\cdot)$ with values in 
$L^2(\SS^1;\RR^d)$ such that, $\PP_{0}$-almost everywhere, 
$\textrm{\rm Leb}_{1} \circ X_{0}(\cdot)^{-1}
= {\mathscr V}$. Such a random variable exists: it suffices to 
take 
$X_{0}(\cdot) : \Omega_{0} \ni \omega_{0}
\mapsto \Psi(\xi,{\mathscr V}(\omega_{0})) \in L^2({\mathbb S}^1;\RR^d)$
(see the first lines of 
Subsection
\ref{subse:reformulation}
for the definition of $\Psi$)
and
with the same convention as above that 
$\bX^{0,-}$ is identically zero. Here 
the controls $((\alpha^{n,\pm}_{t})_{0 \leq t \leq T})_{n \in \NN}$ are required to be progressively-measurable
with respect to the filtration ${\mathbb F}_{0}$ and to satisfy:
\begin{equation}
\label{eq:bound:summable:alpha}
\sum_{n \in \NN} {\mathbb E}_{0} \int_{0}^T 
\vert \alpha_{t}^{n,\pm} \vert^2 dt < \infty.
\end{equation} 
\item Find $\bmu = (\mu_{t} : \Omega_{0} \ni \bw \mapsto \mu_{t}(\bw))_{t \in [0,T]}$ such that, with probability $1$ under $\P_{0}$, 
for all $t \in [0,T]$, 
\begin{equation}
\label{eq:matching:enlarged}
\mu_{t}(\bw) 
= \textrm{Leb}_{1} \circ X_{t}^\star(\cdot,\bw)^{-1}, 
\end{equation}
where $\bX^\star(\cdot)=(X_{t}^\star(\cdot))_{0 \leq t \leq T})$ is the optimal path 
in the optimization problem $\inf_{\balpha} J^{\bmu}(\balpha)$. 
\end{enumerate}
\vskip 4pt

\noindent Recalling 
\eqref{eq:square-integrability:any X:0}, observe that we can provide
a simple assumption on 
${\mathfrak b}$ such that, for $X$ as in 
\eqref{eq:sde}, 
\begin{equation*} 
X_{t}(\cdot) = \sum_{n \in \NN} X_{t}^{n,\pm} e^{n,\pm}(\cdot), \quad t \in [0,T],
\end{equation*}
makes sense as a process from $\Omega_{0}$ into $L^2({\mathbb S}^1;\RR^d)$. 
In this regard, \eqref{eq:sde}
just
says that each Fourier mode of the state variable $X_{t}(\cdot)$ in the space $L^2({\mathbb S}^1;\RR^d)$ is forced by the corresponding 
$(\bU^{n,\pm})_{n \in \NN}$. 
\vspace{5pt}

Of course, the choice of $(\bU^{n,\pm})_{n \in \NN}$ is the key point in our analysis. In full analogy with $\bmu$, we shall define it 
as the solution of a fixed point involving the 
optimal trajectory of the new optimization problem $\inf_{\balpha} J^{\mu}(\balpha)$ introduced right above, namely we choose 
each $\bU^{n,\pm}=(U^{n,\pm}_{t})_{0 \leq t \leq T}$ as  
\begin{equation}
\label{eq:U:as:fixed:point}
U^{n,\pm}_{t} = - (2\pi n)^2
\int_{0}^t X^{\star n,\pm}_{s} ds + W^{n,\pm}_{t}, \quad t \in [0,T], 
\quad n \in \NN.
\end{equation}
Under this choice, the optimal trajectory of 
the optimization problem $\inf_{\balpha} J^{\bmu}(\balpha)$
in environment $\bmu$ (as already explained, sufficient conditions will be given below so that an optimal path exists and is unique) takes the form:
\begin{equation}
\label{eq:sde:with:noise}
dX_{t}^{\star n,\pm} = \Bigl( {\mathfrak b}^{n,\pm}\bigl(X_{t}(\cdot),\mu_{t}\bigr) 
+ \alpha^{\star n,\pm}_{t} - (2 \pi n)^2 X_{t}^{\star n,\pm}
\Bigr) dt + dW^{n,\pm}_{t}, \quad t \in [0,T], 
\end{equation}
where $\balpha^{\star}$ is the optimal control. Here the rationale for choosing the dissipative 
factor $-(2 \pi n)^2$ in the dynamics 
is twofold. First, the fact that the series 
of the inverses of the factors, that is 
$\sum_{n \in \NN^*} (2 \pi n)^{-2}$, converge 
will permit us to prove, under suitable assumptions, 
that the modes of $\bX^\star$ are square-summable. 
Second, the
 factors 
$-(2 \pi n)^2$
appear in the formal computation:
\begin{equation*}
\partial^2_{x} X_{t}^\star(\cdot) = \sum_{n \in \NN} X_{t}^{\star n,\pm} \partial_{x}^2 e_{t}^{n,\pm}(\cdot)
= - \sum_{n \in \NN^*} (2 \pi n)^2 X_{t}^{\star n,\pm}  e_{t}^{n,\pm}(\cdot),
\end{equation*}
where $X_{t}^\star(\cdot) = \sum_{n \in \NN} X_{t}^{\star n,\pm} e^{n,\pm}(\cdot)$,
which prompts us
to reformulate  
\eqref{eq:sde:with:noise}
 as the controlled SPDE:
\begin{equation}
\label{eq:spde}
\partial_{t} X_{t}^\star(x) = b(X_{t}(x),\mu_{t})  + 
\alpha_{t}^\star(x) + \partial^2_{x} X_{t}^\star(x)  +
\dot{W}_{t}(x), \quad t \in [0,T], \quad x \in {\mathbb S}^1. 
\end{equation}
The notation $\dot{W}$ denotes a space-time white noise, namely
\begin{equation} 
\label{eq:white:noise}
W_{t}(\cdot) = \sum_{n \in \NN} W^{n,\pm}_{t} e^{n,\pm}(\cdot), 
\quad t \in [0,T], \ x \in {\mathbb S}^1,
\end{equation}
is a cylindrical Wiener process with values in $L^2({\mathbb S}^1;\RR^d)$, 
meaning that, for any $f \in L^2({\mathbb S}^1;\RR^d)$, the process 
\begin{equation*}
\biggl( \int_{{\mathbb S}^1} f(x) \cdot W_{t}(dx) 
= \sum_{n \in \NN}  f^{n,\pm}
\cdot
W^{n,\pm}_{t} 
\biggr)_{t \in [0,T]}
\end{equation*}
is a Brownian motion with $\int_{{\mathbb S}^1} \vert f(x) \vert^2 dx$
as variance.
\vspace{5pt}

So, choosing $\bU$ as in 
\eqref{eq:U:as:fixed:point}
is especially convenient for reformulating the dynamics of 
the equilibrium as the solution of an SPDE. In this regard, 
a crucial fact in the subsequent analysis will be played by the 
structure of the SPDE, which is close to that 
of an Ornstein-Ulhenbeck (OU) process with values in $L^2(\SS^1;\RR^d)$. 

If the modes of $\bX^\star(\cdot)$ satisfy
\begin{equation*}
\EE_{0} \Bigl[ \sup_{0 \leq t \leq T} \Bigl( \sum_{n \in \NN}
\vert X^{\star n,\pm}_{t} \vert^2 
\Bigr) \Bigr] < \infty, 
\end{equation*}
it is then obvious
from 
\eqref{eq:square-integrability:any X:0}, 
\eqref{eq:bound:summable:alpha},
\eqref{eq:U:as:fixed:point}
and
\eqref{eq:sde:with:noise} 
 that $\bU$ satisfy 
\eqref{eq:square-integrability:U}, which proves that 
\eqref{eq:square-integrability:any X} holds for 
any $\balpha$. 
%
%
%
%
\vspace{5pt}

In order to reconstruct the dynamics satisfied by $\bX$ for any controlled $\balpha$, we may focus on the difference $\bX - \bU$. Clearly, $\bX-\bU$
satisfies a controlled ODE with random coefficients:
\begin{equation*}
d \bigl( X_{t}^{n,\pm} - U_{t}^{n,\pm} \bigr) = 
\bigl[ 
{\mathfrak b}^{n,\pm}\bigl(X_{t}(\cdot),\mu_{t}\bigr)
+ \alpha_{t}^{n,\pm} \bigr] dt, 
\quad t \in [0,T], \quad n \in \NN,
\end{equation*}
so that
\begin{equation*}
d \bigl( X_{t} - U_{t} \bigr) = 
\bigl[ 
b(X_{t},\mu_{t})
+ \alpha_{t}  \bigr] dt, 
\quad t \in [0,T],
\end{equation*}
with $0$ as initial condition.

%
%
%
\vspace{5pt}

So, we end up with the following definition: 

\begin{definition}
\label{def:randomized:MFG}

Given a square integrable ${\mathcal F}_{0,0}$-measurable random 
variable 
$X_{0}(\cdot)$ from $\Omega_{0}$ into $L^2(\SS^1;\RR^d)$, 
we call a solution of the randomized MFG 
a pair of $\FF_{0}$-progressively measurable and 
$L^2(\SS^1;\RR^d)$-valued 
processes 
$\bX^\star (\cdot)=(X_{t}^\star(\cdot))_{0 \leq t \leq T}$, with 
$X_{0}^\star(\cdot)=X_{0}(\cdot)$ as initial condition, 
and $\balpha^\star(\cdot) = (\alpha_{t}^\star(\cdot))_{0 \leq t \leq T}$, 
satisfying the integrability conditions 
\begin{equation*}
\begin{split}
&\EE_{0} \Bigl[ \sup_{0 \leq t \leq T} \| X_{t}^\star(\cdot) \|^2
\Bigr] < \infty,
\\
&\EE_{0} \Bigl[ \int_{0}^T \| \alpha_{t}^\star(\cdot) \|^2 dt
\Bigr] < \infty,
\end{split}
\end{equation*}
and 
satisfying the system 
\eqref{eq:sde:with:noise}, 
such that, under the notations
\begin{equation*}
\begin{split}
&\mu_{t}(\bw) = \textrm{\rm Leb}_{1} \circ X_{t}^\star(\cdot,\bw)^{-1},
\quad \bw \in \Omega_{0}, 
\\
&U_{t}^{n,\pm} = - (2 \pi n)^2 
\int_{0}^t X_{s}^{\star n,\pm} ds + W_{t}^{n,\pm}, \quad t \in [0,T], \quad n \in \NN,
\end{split}
\end{equation*}
the process $\balpha^\star$ (regarded as an $\FF$-progressively measurable process with values from $\Omega$ into $\RR^d$)
is an optimal control of 
the optimal control problem with random coefficients consisting in minimizing
\begin{equation}
\label{eq:bar:sde:cost:functional}
\bar J^{\bmu}(\balpha) = 
\EE \biggl[ 
g\bigl(U_{T}+ \bar X_{T}^{\balpha},\mu_{T}\bigr) 
+ \int_{0}^T
\Bigl( f( U_{t} + \bar X_{t}^{\balpha},\mu_{t}) + \frac12 \vert \alpha_{t} \vert^2
\Bigr) dt
\biggr],
\end{equation}
over $\FF$-progressively measurable processes 
$\balpha$ satisfying 
\begin{equation*}
\EE \int_{0}^T \vert \alpha_{t} \vert^2 dt < \infty,
\end{equation*}
where 
$\bar{\bX}^{\balpha}$
solves 
\begin{equation}
\label{eq:bar:sde:control:problem}
d \bar{X}_{t}^{\balpha} = \Bigl( b\bigl( U_{t} + \bar X_{t}^{\balpha},\mu_{t}\bigr)  + \alpha_{t} \Bigr) dt, \quad t \in [0,T],
\end{equation}
with $\bar{X}_{0}^{\balpha} = X_{0}$ as initial condition ($X_{0}$ being regarded 
as an $\RR^d$-valued random variable
on $\Omega$). 
\end{definition}

\begin{remark}
Definition 
\ref{def:randomized:MFG}
provides another interpretation of the randomization of the equilibria. 
It says everything works as if we kept the same MFG as before, but 
with random coefficients obtained by 
an additive perturbation of the original ones. 
\end{remark}

\begin{remark}
\label{rem:why:not:sto}
The reader may now understand the reason why we have limited our result to the case of deterministic (instead of stochastic) 
differential equations. Our strategy is indeed clear: We enclose the private (or idiosyncratic) noise 
underpinning the initial condition of the 
representative player in the torus; the infinite dimensional noise ${\boldsymbol W}(\cdot)$ (which reads 
as a ``common noise'') then acts on the modes of the initial condition. If we had to do so
with a stochastic differential game, we should enclose the 
whole private random signal (e.g., a Brownian motion) in the torus, but, then, adaptability conditions would be a delicate issue
to handle. In fact, our guess is that, to respect the adaptability constraints, 
the forcing procedure has to be slightly different (and in fact less straightforward than it is here). 
 \end{remark}

\subsection{Infinite dimensional McKV forward-backward system}
We now observe that, for a given ${\mathbb F}_{0}$-progressively measurable random flow $\bmu=(\mu_{t})_{0 \leq t \leq T}$ as in the first item of the randomized MFG problem
defined in 
\eqref{eq:sde}--\eqref{eq:matching:enlarged}, the optimal paths (whenever they exist)  
should be given by the stochastic Pontryagin principle, 
see for instance 
\cite{Peng90,Pham_book,YongZhou}, see also 
\cite{CarmonaDelarue_book_I}. 
Here, the 
stochastic Pontryagin principle
takes the form of 
the following forward-backward system of SDEs:
\begin{equation}
\label{eq:infinite:sde}
\begin{split}
&dX_{t}^{\star,n,\pm} = \Bigl( {\mathfrak b}^{n,\pm}(
X_{t}^{\star}(\cdot),
\mu_{t}) - Y_{t}^{\star,n,\pm}
\Bigr) dt + dU_{t}^{n,\pm}, 
\\
&dY_{t}^{\star,n,\pm} = 
\Bigl( - 
\sum_{k \in \NN}
D_{n,\pm} 
{\mathfrak b}^{k,\pm}(
X_{t}^{\star}(\cdot),\mu_{t} ) Y_{t}^{\star,k,\pm}
- D_{n,\pm} {\mathfrak f}_{0}
(
X_{t}^{\star}(\cdot),\mu_{t})
\Bigr) dt  + \sum_{k \in \NN}
Z_{t}^{\star,n,k,\pm} dW_{t}^{k,\pm},
\end{split}
\end{equation}
for $t \in [0,T]$,
with the terminal condition 
$Y_{T}^{\star,n,\pm}
= D_{n,\pm} {\mathfrak g}_{0}
(X_{T}^{\star}(\cdot),\mu_{T})$, for 
all
$n \in \NN$. 
Above, 
$(X_{t}^{\star,n,\pm})_{0 \leq t \leq T}$
and
$(Y_{t}^{\star,n,\pm})_{0 \leq t \leq T}$
take values in $\R^d$ and 
$(Z_{t}^{\star,n,\pm})_{0 \leq t \leq T}$
takes values in 
$\R^{d \times d}$; also, 
we have denoted by $D$ 
the Fr\'echet derivative on $L^2({\mathbb S}^1;\RR^d)$
and by 
$D_{n,\pm} \, \bullet = \langle e^{n,\pm}(\cdot), D \, \bullet \rangle_{L^2(\SS^1;\RR^d)}$ the 
$d$-dimensional 
derivative in the direction $e^{n,\pm}$. 
Of course, in the notation $D_{n,\pm} {\mathfrak h}(\ell,\mu)$, with 
${\mathfrak h}$ matching ${\mathfrak b}^{k,\pm}$, 
${\mathfrak f}_{0}$ or 
${\mathfrak g}_{0}$, the operator $D$ acts on the first coordinate 
depending on $\ell \in L^2({\mathbb S}^1;\RR^d)$. In the notation 
$D_{n,\pm} 
{\mathfrak b}^{k,\pm}(
X_{t}^{\star}(\cdot),\mu_{t} ) Y_{t}^{\star,k,\pm}$, 
$D_{n,\pm} 
{\mathfrak b}^{k,\pm}(
X_{t}^{\star}(\cdot),\mu_{t} )$ is implicitly regarded as 
a square matrix with columns 
$(D_{n,\pm} 
{\mathfrak b}_{j}^{k,\pm}(
X_{t}^{\star}(\cdot),\mu_{t} ))_{1 \leq j \leq d}$, 
so that the whole reads 
as 
$\sum_{j=1}^d 
D_{n,\pm} 
{\mathfrak b}_{j}^{k,\pm}(
X_{t}^{\star}(\cdot),\mu_{t} ) (Y_{t}^{\star,k,\pm})_{j}$. 
We shall check properly that all the derivatives make sense in our framework. 
Lastly, 
in \eqref{eq:infinite:sde}, $X_{t}^{\star}(\cdot)$ is 
a shorten notation for the function in 
$L^2({\mathbb S}^1;\RR^d)$:
\begin{equation*}
X_{t}^{\star}(\cdot) = \sum_{n \in \NN} X_{t}^{\star,n,\pm} e^{n,\pm}(\cdot). 
\end{equation*}

For the time being, we do not establish rigorously the derivation of the stochastic Pontryagin principle. We shall address this question in Proposition 
\ref{prop:SMP}. Meanwhile, we observe that,
inserting the fixed point condition \eqref{eq:U:as:fixed:point}, 
\eqref{eq:infinite:sde}
 may be rewritten as 
\begin{equation}
\label{eq:mkv:sde:0}
\begin{split}
&dX_{t}^{\star,n,\pm} = \Bigl( {\mathfrak b}^{n,\pm}\bigl(
X_{t}^{\star}(\cdot),
\mu_{t}\bigr) - Y_{t}^{\star,n,\pm}
- (2 \pi n)^2 X_{t}^{\star,n,\pm}
\Bigr) dt + dW_{t}^{n,\pm}, 
\\
&dY_{t}^{\star,n,\pm} = 
\Bigl( - 
\sum_{k \in \NN}
D_{n,\pm} 
{\mathfrak b}^{k,\pm}\bigl(
X_{t}^{\star}(\cdot),\mu_{t} \bigr) Y_{t}^{\star,k,\pm}
- D_{n,\pm} {\mathfrak f}_{0}
\bigl(
X_{t}^{\star}(\cdot),\mu_{t}
\bigr)
\Bigr) dt  + \sum_{k \in \NN}
Z_{t}^{\star,n,k,\pm} dW_{t}^{k,\pm},
\end{split}
\end{equation}
for $t \in [0,T]$,
with the terminal condition 
$Y_{T}^{\star,n,\pm}
= D_{n,\pm} {\mathfrak g}_{0}
(X_{T}^{\star}(\cdot),\mu_{T})$, 
for all
$n \in \NN$.
\vskip 5pt

Of course, nothing guarantees \textit{a priori} that the modes 
in \eqref{eq:mkv:sde:0} are square summable. So, we impose, 
in the definition of a solution to \eqref{eq:mkv:sde:0}, that the modes are indeed square summable.

\begin{definition}
\label{def:infinite:sde}
Given a square integrable ${\mathcal F}_{0,0}$-measurable random 
variable 
$X_{0}(\cdot)$ from $\Omega_{0}$ into $L^2(\SS^1;\RR^d)$, we call a solution 
to \eqref{eq:mkv:sde:0} a countable 
collection of 
${\mathbb F}_{0}$-progressively measurable
processes 
$((X_{t}^{n,\pm})_{0\leq t \leq T})_{n \in \NN}$,
$((Y_{t}^{n,\pm})_{0 \leq t \leq T})_{n \in \NN}$,
$((Z_{t}^{n,k,\pm})_{0 \leq t \leq T})_{n, k \in \NN}$, 
such that 
\begin{equation*}
\begin{split}
&\sum_{n \in \NN}
{\mathbb E}
\Bigl[ 
\sup_{0 \leq t \leq T} 
\bigl( 
\vert X_{t}^{n,\pm} \vert^2
+
\vert Y_{t}^{n,\pm} \vert^2
\bigr)
\Bigr]
+ 
{\mathbb E}
\biggl[
\sum_{k,n \in \NN}
\int_{0}^T \vert Z_{t}^{n,k,\pm} \vert^2
dt
\biggr] < \infty,
\end{split}
\end{equation*}
satisfying, with probability 1, 
 \eqref{eq:mkv:sde:0} (and the associated terminal condition) with 
 the initial condition $X_{0}^{n,\pm}$
 for all $n \in \NN$, as given by the modes of $X_{0}(\cdot)$.   

Then, we  can define
${\mathbb F}_{0}$-adapted and continuous
 processes 
$(X_{t}(\cdot))_{0 \leq t \leq T}$ 
and $(Y_{t}(\cdot))_{0 \leq t \leq T}$
with values in $L^2({\mathbb S}^1;\RR^d)$ such that, with 
probability 1, for all $t \in [0,T]$,
\begin{equation*}
X_{t}(\cdot) = \sum_{n \in \NN}
X_{t}^{n,\pm} e^{n,\pm}(\cdot), 
\quad 
Y_{t}(\cdot) = \sum_{n \in \NN}
Y_{t}^{n,\pm} e^{n,\pm}(\cdot).
\end{equation*}
\end{definition}

Implementing the matching condition 
\eqref{eq:matching:enlarged} in the formulation of the enlarged problem, we understand that, whenever they exist, fixed points
should solve a McKean-Vlasov SDE of the conditional type.
Similar to \eqref{eq:mkv:sde:0}, this McKean-Vlasov SDE must be infinite dimensional. 
In analogy with 
\eqref{eq:original:mkv}
and with the same notation as in \eqref{eq:matching:enlarged}, it takes the form:
\begin{equation}
\label{eq:mkv:sde:1}
\begin{split}
&dX_{t}^{\star,n,\pm} = \Bigl( {\mathfrak b}^{n,\pm}\bigl(
X_{t}^{\star}(\cdot),\textrm{\rm Leb}_{1} \circ (X_{t}^{\star}(\cdot))^{-1}
\bigr) - Y_{t}^{\star,n,\pm}
- (2 \pi n)^2 X_{t}^{\star,n,\pm}
\Bigr) dt + dW_{t}^{n,\pm}, 
\\
&dY_{t}^{\star,n,\pm} = 
\Bigl( - 
\sum_{k \in \NN}
D_{n,\pm} 
{\mathfrak b}^{k,\pm}\bigl(
X_{t}^{\star}(\cdot),\textrm{\rm Leb}_{1} \circ (X_{t}^{\star}(\cdot))^{-1}\bigr) Y_{t}^{\star,k,\pm}
\\
&\hspace{50pt} - D_{n,\pm} {\mathfrak f}_{0}
\bigl(
X_{t}^{\star}(\cdot),\textrm{\rm Leb}_{1} \circ (X_{t}^{\star}(\cdot))^{-1}\bigr)
\Bigr) dt \phantom{\sum_{{}^{-}}}
 + \sum_{k \in \NN}
Z_{t}^{\star,n,k,\pm} dW_{t}^{k,\pm},
\end{split}
\end{equation}
for $t \in [0,T]$,
with the terminal condition 
$Y_{T}^{\star,n,\pm}
= D_{n,\pm} {\mathfrak g}_{0}
(X_{T}^{\star}(\cdot),\textrm{\rm Leb}_{1} \circ (X_{T}^{\star}(\cdot))^{-1})$, 
for all
$n \in \NN$.

Letting 
\begin{equation}
\label{eq:coeff:mkv:infinite}
\begin{split}
&{\mathfrak B}(\ell) = {\mathfrak b}\bigl(\ell,\textrm{Leb}_{1} \circ \ell^{-1}\bigr),
\\
&{\mathfrak H}(\ell,h) = 
\sum_{k \in \NN}
\sum_{j=1}^d
D {\mathfrak b}_{j}^{k,\pm}\bigl(\ell,\textrm{Leb}_{1} \circ \ell^{-1}\bigr) \bigl( 
h^{k,\pm} \bigr)^j
+ D {\mathfrak f}_{0}
\bigl(\ell,\textrm{Leb}_{1} \circ \ell^{-1}\bigr),
\\
&{\mathfrak G}(\ell) = D{\mathfrak g}_{0}\bigl(\ell,\textrm{Leb}_{1} \circ \ell^{-1}\bigr),\phantom{\sum_{k \in \NN}}
\end{split}
\end{equation}
for any two $\ell,h \in L^2({\mathbb S}^1;\RR^d)$, 
\eqref{eq:mkv:sde:1}
may be
 written as  
\begin{equation}
\label{eq:mkv:sde:complicated}
\begin{split}
&dX_{t}^{\star n,\pm} = \Bigl( {\mathfrak B}^{n,\pm}\bigl(
X_{t}^\star(\cdot)\bigr) - Y_{t}^{\star n,\pm}
- (2 \pi n)^2 X_{t}^{\star n,\pm}
\Bigr) dt + dW_{t}^{n,\pm}, \quad n \in \NN,
\\
&dY_{t}^{\star n,\pm} = 
- {\mathfrak H}^{n,\pm}\bigl(X_{t}^\star(\cdot),Y_{t}^\star(\cdot)\bigr)
dt + \sum_{k \in \NN}
Z_{t}^{\star n,k,\pm} dW_{t}^{k,\pm},
\end{split}
\end{equation}
for $t \in [0,T]$,
with the terminal condition 
$Y_{T}^{\star n,\pm}
= {\mathfrak G}^{n,\pm}(X_{T}^\star(\cdot))$, for all $n \in \NN$.
\vskip 5pt

This permits to give a similar definition 
to Definition \ref{def:infinite:sde}:
\begin{definition}
\label{def:mkv:sde}
Given a square integrable ${\mathcal F}_{0,0}$-measurable random 
variable 
$X_{0}(\cdot)$ from $\Omega_{0}$ into $L^2(\SS^1;\RR^d)$, we call a solution 
to \eqref{eq:mkv:sde:complicated}
 (or \eqref{eq:mkv:sde:1}), a countable 
collection of 
${\mathbb F}_{0}$-progressively measurable
processes 
$((X_{t}^{n,\pm})_{0\leq t \leq T})_{n \in \NN}$,
$((Y_{t}^{n,\pm})_{0 \leq t \leq T})_{n \in \NN}$,
$((Z_{t}^{n,k,\pm})_{0 \leq t \leq T})_{n \in \NN, k \in \NN}$, 
such that 
\begin{equation*}
\begin{split}
&\sum_{n \in \NN}
{\mathbb E}
\Bigl[ 
\sup_{0 \leq t \leq T} 
\bigl( 
\vert X_{t}^{n,\pm} \vert^2
+
\vert Y_{t}^{n,\pm} \vert^2
\bigr)
\Bigr]
+ 
{\mathbb E}
\biggl[
\sum_{k,n \in \NN}
\int_{0}^T \vert Z_{t}^{n,k,\pm} \vert^2
dt
\biggr] < \infty,
\end{split}
\end{equation*}
satisfying, with probability 1, 
 \eqref{eq:mkv:sde:complicated} (and the associated terminal condition) with 
 the initial condition $X^{\star n,\pm} = X_{0}^{n,\pm}$
 for all $n \in \NN$.  

Then, we  can define
${\mathbb F}_{0}$-adapted and continuous
 processes 
$(X_{t})_{0 \leq t \leq T}$ 
and $(Y_{t})_{0 \leq t \leq T}$
with values in $L^2({\mathbb S}^1;\RR^d)$ such that, with 
probability 1, for all $t \in [0,T]$,
\begin{equation*}
X_{t}(\cdot) = \sum_{n \in \NN}
X_{t}^{n,\pm} e^{n,\pm}(\cdot), 
\quad 
Y_{t}(\cdot) = \sum_{n \in \NN}
Y_{t}^{n,\pm} e^{n,\pm}(\cdot).
\end{equation*}
\end{definition}

\subsection{Standing assumptions}
\label{subse:standing:assumptions}

Throughout the paper, we assume that 
\vskip 4pt

\noindent {\bf Assumption (A).} The coefficient $b$ is assumed to be 
independent of $x$ and to be 
bounded and Lipschitz continuous 
on ${\mathcal P}_{2}(\R^d)$ --${\mathcal P}_{2}(\R^d)$
being equipped with the $2$-Wasserstein distance--. 
The coefficients $f$ and $g$ are differentiable in $x$, and $\partial_{x} f$ and $\partial_{x} g$ are  
 bounded and Lipschitz continuous on $\R^d \times {\mathcal P}_{2}(\R^d)$.  Moreover, 
 for any $\mu \in {\mathcal P}_{2}(\R^d)$,
 the functions $\R^d \ni x \mapsto 
f(x,\mu)$ and 
$\R^d \ni x \mapsto g(x,\mu)$ are convex.
\vskip 5pt

Importantly, notice that, under assumption \textbf{A}, the coefficients in
\eqref{eq:coeff:mkv:infinite} take the simplest form:
\begin{equation}
\label{eq:mkv:sde:coefficients}
\begin{split}
&{\mathfrak B}(\ell)  = {\mathfrak b}_{0}(\ell) e_{0}(\cdot), \quad \textrm{with}
\
{\mathfrak b}_{0}(\ell)
=
b \bigl( \textrm{Leb}_{1} \circ \ell^{-1}
\bigr),
\\
&{\mathfrak H}(\ell,h) 
= {\mathfrak F}(\ell), \quad 
\textrm{with}
\ 
{\mathfrak F}(\ell)
=
D {\mathfrak f}_{0}\bigl(\ell,\textrm{Leb}_{1} \circ \ell^{-1}
\bigr).
\end{split}
\end{equation}
In particular, the system 
\eqref{eq:mkv:sde:complicated}
becomes (removing the stars in the labels):
\begin{equation}
\label{eq:mkv:sde}
\begin{split}
&dX_{t}^{n,\pm} = \Bigl( {\mathbf 1}_{(n,\pm)=(0,+)}
{\mathfrak b}_{0}\bigl(
X_{t}(\cdot)\bigr) - Y_{t}^{n,\pm}
- (2 \pi n)^2 X_{t}^{n,\pm}
\Bigr) dt + dW_{t}^{n,\pm}, \quad n \in \NN,
\\
&dY_{t}^{n,\pm} = 
- {\mathfrak F}^{n,\pm}\bigl(X_{t}(\cdot)\bigr)
dt + \sum_{k \in \NN}
Z_{t}^{n,k,\pm} dW_{t}^{k,\pm},
\end{split}
\end{equation}
for $t \in [0,T]$,
with the terminal condition 
$Y_{T}^{n,\pm}
= {\mathfrak G}^{n,\pm}(X_{T}(\cdot))$, for all $n \in \NN$.
\vspace{5pt}

In order to fully legitimate 
the existence of the Fr\'echet derivatives 
of ${\mathfrak f}_{0}$ and ${\mathfrak g}_{0}$ 
in the direction $\ell$, we may invoke the following lemma, the proof of which is quite straightforward and is left to the reader: 
\begin{lemma}
\label{lem:diff}
For a continuously differentiable Lipschitz function 
$F : \R^d \rightarrow \R$ (so that $F$ is at most of linear growth),
define ${\mathfrak F}_{0} : 
L^2({\mathbb S}^1;\R^d) \ni \ell \mapsto \int_{{\mathbb S}^1}
F(\ell(x)) dx$. 
Then, ${\mathfrak F}_{0}$ is Fr\'echet differentiable and 
\begin{equation*}
D {\mathfrak F}_{0}(\ell)  = \nabla F \circ \ell.
\end{equation*}
\end{lemma}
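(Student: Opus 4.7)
The plan is to verify Fr\'echet differentiability directly from the definition, relying on the Lipschitz property of $F$ (which forces $\nabla F$ to be bounded) together with dominated convergence.

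First I check that the candidate derivative is well defined as an element of the dual of $L^2({\mathbb S}^1;\R^d)$. Since $F$ is Lipschitz, $\nabla F$ is bounded on $\R^d$, so $\nabla F \circ \ell \in L^\infty({\mathbb S}^1;\R^d) \subset L^2({\mathbb S}^1;\R^d)$ for every $\ell \in L^2({\mathbb S}^1;\R^d)$, and the linear form $h \mapsto \int_{{\mathbb S}^1} \nabla F(\ell(x)) \cdot h(x)\, dx$ is bounded on $L^2({\mathbb S}^1;\R^d)$.

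Next, for $\ell,h \in L^2({\mathbb S}^1;\R^d)$, I use the fundamental theorem of calculus to write
\begin{equation*}
{\mathfrak F}_{0}(\ell + h) - {\mathfrak F}_{0}(\ell) - \int_{{\mathbb S}^1} \nabla F(\ell(x)) \cdot h(x)\, dx
= \int_{{\mathbb S}^1} \int_{0}^{1} \bigl[ \nabla F(\ell(x) + s h(x)) - \nabla F(\ell(x)) \bigr] \cdot h(x) \, ds \, dx.
\end{equation*}
By Cauchy--Schwarz applied to the $(s,x)$-integral, the absolute value of the right-hand side is bounded by
\begin{equation*}
\|h\|_{L^2({\mathbb S}^1;\R^d)} \cdot \biggl( \int_{{\mathbb S}^1} \int_{0}^{1} \bigl| \nabla F(\ell(x) + s h(x)) - \nabla F(\ell(x)) \bigr|^2 ds\, dx \biggr)^{1/2}.
\end{equation*}

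It then suffices to show that the second factor tends to $0$ as $\|h\|_{L^2({\mathbb S}^1;\R^d)} \to 0$. The main (and only) technical point is that $L^2$-convergence does not give pointwise convergence directly, so I argue by subsequences: given any sequence $h_k \to 0$ in $L^2$, extract a further subsequence along which $h_k(x) \to 0$ for almost every $x \in {\mathbb S}^1$. Continuity of $\nabla F$ yields $\nabla F(\ell(x) + s h_k(x)) \to \nabla F(\ell(x))$ pointwise in $(s,x)$ on this subsequence, while the uniform bound $|\nabla F(\ell + s h_k) - \nabla F(\ell)| \le 2 \|\nabla F\|_\infty$ provides an integrable majorant on the finite-measure space $[0,1] \times {\mathbb S}^1$. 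Dominated convergence delivers the desired convergence along the subsequence, and since this holds for every subsequence, the whole net converges. This proves
$$\frac{1}{\|h\|_{L^2}} \Bigl| {\mathfrak F}_{0}(\ell + h) - {\mathfrak F}_{0}(\ell) - \int_{{\mathbb S}^1} \nabla F(\ell) \cdot h\, dx \Bigr| \longrightarrow 0$$
as $\|h\|_{L^2} \to 0$, which is exactly Fr\'echet differentiability at $\ell$ with derivative $\nabla F \circ \ell$. No step is really an obstacle; the only subtlety to keep in mind is the subsequence extraction, needed because $h \to 0$ in $L^2$ does not imply pointwise convergence.
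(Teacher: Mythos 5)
Your proof is correct, and it fills in a gap the paper itself leaves open: the author explicitly states that "the proof ... is quite straightforward and is left to the reader," so there is no reference argument to compare against. The three ingredients you deploy are exactly the ones needed: boundedness of $\nabla F$ (a consequence of $F$ being $C^1$ and Lipschitz) to make the candidate derivative a bona fide element of $L^2({\mathbb S}^1;\R^d)$; the mean-value/fundamental-theorem representation of the remainder combined with Cauchy--Schwarz on $[0,1]\times{\mathbb S}^1$ to peel off a factor of $\|h\|_{L^2}$; and the subsequence-of-a-subsequence trick, together with dominated convergence on the finite measure space $[0,1]\times{\mathbb S}^1$, to get around the fact that $L^2$ convergence of $h$ does not imply pointwise convergence. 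That last point is the only genuine subtlety, and you handle it correctly. One small remark: you could alternatively avoid the subsequence extraction by invoking the continuity of the Nemytskii-type superposition map $\ell \mapsto \nabla F \circ \ell$ from $L^2$ to $L^2$ (which holds precisely because $\nabla F$ is continuous and bounded), but your direct argument is entirely elementary and equally rigorous.
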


In particular, we have the following expression for ${\mathfrak F}$ (and similarly for ${\mathfrak G}$):
\begin{equation*}
{\mathfrak F} : L^2(\SS^1;\R^d) \ni \ell \mapsto \bigl( 
\SS^1 \ni x \mapsto 
\partial_{x} f\bigl(\ell(x),\textrm{\rm Leb}_{1} \circ \ell^{-1} \bigr)
\bigr),
\end{equation*}
and then
\begin{equation*}
{\mathfrak F}^{n,\pm}(\ell) = \int_{\SS^1} 
\partial_{x} f\bigl(\ell(x),\textrm{\rm Leb}_{1} \circ \ell^{-1} \bigr)
\bigr) e^{n,\pm}(x) dx.
\end{equation*}

The introduction of Assumption {\bf (A)} --namely asking $b$ to be independent of $x$ and $f$
and $g$ to be convex in $x$--
is fully justified by our desire to use
the Pontryagin principle as 
a sufficient condition of optimality. Generally speaking, it requires the underlying Hamiltonian to be convex, 
which is indeed the case 
under Assumption {\bf (A)} even though it could be slightly relaxed: We could certainly allow $b$ to be linear in $x$; 
we could also think of 
allowing the derivatives of $f$ and $g$ to be at most of linear growth, but
this seems a more challenging question. 
So, 
under Assumption {\bf (A)}, 
the Pontryagin principle
is not only a necessary but also a sufficient condition for the original control problem 
 described in Subsection
\ref{subse:original:pb}; in 
particular, the 
McKean-Vlasov equation 
\eqref{eq:original:mkv}
characterizes equilibria of the original (non-randomized) mean-field game.
The following proposition is to check that this fact remains true in our randomized framework:

\begin{proposition}
\label{prop:SMP}
Given a square integrable ${\mathcal F}_{0,0}$-measurable random 
variable 
$X_{0}(\cdot)$ from $\Omega_{0}$ into $L^2(\SS^1;\RR^d)$, 
any solution 
to \eqref{eq:mkv:sde}
is a solution of the randomized matching problem
defined in Definition 
\ref{def:randomized:MFG}.
Conversely, any solution 
to the randomized matching problem
provides a solution to 
\eqref{eq:mkv:sde}.

In particular, the randomized matching problem is uniquely solvable if and only if 
the McKean-Vlasov equation
\eqref{eq:mkv:sde}
is uniquely solvable. 
\end{proposition}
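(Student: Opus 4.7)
The plan is to invoke the stochastic Pontryagin principle, which under Assumption \textbf{(A)} is both a necessary and a sufficient condition of optimality for the control problem \eqref{eq:bar:sde:cost:functional}--\eqref{eq:bar:sde:control:problem}. Because $b$ is independent of $x$ and $f,g$ are convex in $x$, the reduced Hamiltonian
$$H(x,\mu,y,\alpha) = \bigl(b(\mu)+\alpha\bigr)\cdot y + f(x,\mu) + \tfrac12 |\alpha|^2$$
is jointly convex in $(x,\alpha)$, and $g(\cdot,\mu)$ is convex. The minimizer in $\alpha$ is $\alpha^\star=-y$, and the adjoint equation, on the enlarged probability space $(\Omega,\mathcal{A},\PP)$, reads
$$dY_t = -\partial_x f\bigl(U_t+\bar X_t^\star,\mu_t\bigr)\,dt + Z_t\,dW_t, \qquad Y_T = \partial_x g\bigl(U_T+\bar X_T^\star,\mu_T\bigr).$$
The whole argument then reduces to translating this Pontryagin FBSDE, which lives on $\Omega$, into the mode-by-mode system \eqref{eq:mkv:sde}, which lives on $\Omega_{0}$, by means of Lemma \ref{lem:measurability}.

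For the forward implication, starting from $(X^\star,Y^\star,Z^\star)$ solving \eqref{eq:mkv:sde}, I would set $\mu_t := \textrm{\rm Leb}_{1}\circ X_t^\star(\cdot)^{-1}$, define $U_t^{n,\pm}$ by \eqref{eq:U:as:fixed:point}, and choose the candidate control $\alpha_t^{\star\, n,\pm} := -Y_t^{\star\, n,\pm}$. The square-summability built into Definition~\ref{def:mkv:sde} yields exactly the integrability \eqref{eq:bound:summable:alpha} in Definition~\ref{def:randomized:MFG}, and by the correspondence after Lemma~\ref{lem:measurability} the modes reassemble into $L^2(\SS^1;\RR^d)$-valued processes $X^\star(\cdot), Y^\star(\cdot), \alpha^\star(\cdot)$. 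Since under Assumption \textbf{(A)} the simplified coefficients \eqref{eq:mkv:sde:coefficients} reduce \eqref{eq:mkv:sde} to the Pontryagin FBSDE above, the sufficient Pontryagin principle (convexity of $H$ in $(x,\alpha)$ and of $g$ in $x$) implies that $\balpha^\star$ minimizes $\bar J^{\bmu}$, so the pair $(\bX^\star(\cdot),\balpha^\star(\cdot))$ solves the randomized matching problem.

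For the reverse implication, I would start from a solution $(\bX^\star(\cdot),\balpha^\star(\cdot))$ of the randomized matching problem, freeze $\mu_t := \textrm{\rm Leb}_{1}\circ X_t^\star(\cdot)^{-1}$, and apply the \emph{necessary} Pontryagin principle to the resulting stochastic control problem with random coefficients. Standard BSDE theory on $(\Omega,\mathcal{A},\PP)$ (Lipschitz drivers and square-integrable terminal condition by Assumption \textbf{(A)}) provides a unique adjoint $(Y,Z)$ such that $\alpha_t^\star = -Y_t$; projecting onto the basis $(e^{n,\pm})_{n\in\NN}$, using Lemma~\ref{lem:diff} to identify the Fréchet derivatives of $\mathfrak{f}_0$ and $\mathfrak{g}_0$ with $\partial_x f$ and $\partial_x g$ composed with $\ell$, and reinserting the definition of $U_t^{n,\pm}$, yields exactly the mode system \eqref{eq:mkv:sde} together with the summability of Definition~\ref{def:mkv:sde}. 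The equivalence of unique solvability is then a formal consequence of these two directions.

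The step I expect to be the main obstacle is the rigorous derivation of the Pontryagin principle in this mixed framework, where the control $\balpha$ is naturally defined on $\Omega$ while the ``tangent'' structure of the matching condition is best read in the mode system on $\Omega_0$. Concretely, one must check that the first-order expansion of $\bar J^{\bmu}(\balpha^\star+\varepsilon\beta)$ is valid for admissible perturbations $\beta$ (progressively measurable on $\Omega$ with $\EE\int_0^T|\beta_t|^2\,dt<\infty$), that the resulting variational inequality matches the mode-by-mode adjoint, and that the countably many $Z^{\star\,n,k,\pm}$ reassemble into a well-defined martingale representation term in $L^2(\SS^1;\RR^d)$. All of this is a careful transcription of \cite[Chapters~3--4]{CarmonaDelarue_book_I} via the identification of Lemma~\ref{lem:measurability}, but the bookkeeping between the two probabilistic frameworks is where the delicate part lies.
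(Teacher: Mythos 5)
Your proposal is correct and follows essentially the same route as the paper: the forward direction uses the sufficiency of the Pontryagin condition under Assumption \textbf{(A)} (the paper spells this out by hand as the integration-by-parts plus convexity computation rather than citing it abstractly), and the reverse direction applies the necessary Pontryagin principle to the control problem \eqref{eq:bar:sde:cost:functional}--\eqref{eq:bar:sde:control:problem} with random coefficients, followed by the mode-wise translation via Lemma \ref{lem:measurability}. You have also correctly located the genuine bookkeeping difficulty in passing between processes on $\Omega$ and mode systems on $\Omega_{0}$, which is precisely where the paper invests its care.
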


\begin{proof}
\textit{First Step.}
Assume first that the McKean-Vlasov equation
\eqref{eq:mkv:sde} has a solution, which we denote by $((X_{t}^{n,\pm})_{n \in \NN},(Y_{t}^{n,\pm})_{n \in \NN},(Z_{t}^{n,k,\pm})_{n,k \in \NN})_{0 \leq t \leq T}$. Denote by $(X_{t}(\cdot))_{0 \leq t \leq T}$
and $(Y_{t}(\cdot))_{0 \leq t \leq T}$ the associated $L^2({\mathbb S}^1;\RR^d)$-valued processes as in Definition 
\ref{def:mkv:sde} and let
\begin{equation*}
\mu_{t} = \textrm{Leb}_{1} \circ X_{t}(\cdot)^{-1}, \quad t \in [0,T].  
\end{equation*}
Since the mapping $L^2({\mathbb S}^1;\RR^d) \ni \ell 
\mapsto \textrm{Leb}_{1} \circ \ell^{-1}
\in {\mathcal P}_{2}(\R^d)$ is continuous, 
each $\mu_{t}$ is a random variable with values in 
${\mathcal P}_{2}(\R^d)$ and the process 
$(\mu_{t})_{0 \leq t\leq T}$ is ${\mathbb F}_{0}$-adapted.
Following 
\eqref{eq:U:as:fixed:point},
we also let (pay attention that we dropped the symbol $\star$ in the notation for the solution of the McKean-Vlasov equation):
\begin{equation*}
U_{t}^{n,\pm} = - (2 \pi n)^2 
\int_{0}^t X_{s}^{n,\pm} ds + W_{t}^{n,\pm}, \quad t 
\in [0,T], \quad n\in \NN. 
\end{equation*}
Observe that $\bU^{n,\pm}$ is also given by
\begin{equation*}
U_{t}^{n,\pm} = X_{t}^{n,\pm} - X_{0}^{n,\pm} - \int_{0}^t 
\bigl[ {\mathbf 1}_{(n,\pm)=(0,+)}
b \bigl( \mu_{s} \bigr) ds
- Y_{s}^{n,\pm} \bigr] ds, \quad t 
\in [0,T], \quad n\in \NN,
\end{equation*}
from which we deduce that 
\begin{equation*}
\EE_{0} \Bigl[ \sup_{0 \leq t \leq T} \sum_{n \in \NN} \vert U_{t}^{n,\pm} \vert^2 \Bigr] < \infty. 
\end{equation*}
\vskip 4pt

Consider now an $\RR^d$-valued control $\balpha=(\alpha_{t})_{0 \leq t \leq T}$ as in 
\eqref{eq:bar:sde:control:problem}
and denote by $(\bar X_{t}^{\balpha})_{0 \leq t \leq T}$ the solution to \eqref{eq:bar:sde:control:problem}, namely
\begin{equation*}
d \bar X_{t}^{\balpha} = \bigl[ b(\mu_{t}) + \alpha_{t} \bigr] dt, 
\quad t \in [0,T]. 
\end{equation*}
Thanks to Lemma 
\ref{lem:measurability}, we can regard 
$\balpha$ and
$\bar \bX^{\balpha}$ as $\FF_{0}$-progressively measurable processes
$\balpha(\cdot)$ and  
$\bar \bX^{\balpha}(\cdot)$ from $\Omega_{0}$ to $L^2(\SS^1)$. 
Since $\balpha$ is fixed, we just note $\bar \bX$ for $\bar \bX^{\balpha}$. 
Then, 
the modes of $\bX(\cdot)$ satisfy:
\begin{equation*}
d \bar{X}_{t}^{n,\pm} = \bigl( 
{\mathbf 1}_{(n,\pm)=(0,+)}
b ( \mu_{t}) 
+ \alpha_{t}^{n,\pm}
 \bigr)dt,
 \quad t \in [0,T],
\end{equation*}
where $(\alpha^{n,\pm}_{t})_{0 \leq t \leq T}$ denotes the modes of 
$\balpha(\cdot)$. 
%
%
%
%
Letting $(\hat{X}_{t}^{n,\pm} = \bar X_{t}^{n,\pm} + U_{t}^{n,\pm})_{0 \leq t \leq T}$, we get 
\begin{equation*}
d
\bigl(
\hat X_{t}^{n,\pm} 
-
X_{t}^{n,\pm}
\bigr)
=  \bigl(  \alpha_{t}^{n,\pm}
+ Y_{t}^{n,\pm}
\bigr)  dt, 
 \quad t \in [0,T],
\end{equation*}
with $X_{0}^{\balpha,n,\pm} 
-
X_{0}^{n,\pm}=0$, for all $n \in \NN$.  

Now, using the notation ``$\cdot$'' for the inner product in $\R^d$,  
\begin{equation*} 
\begin{split}
d \Bigl[ Y_{t}^{n,\pm}
\cdot \bigl(
\hat X_{t}^{n,\pm} 
-
X_{t}^{n,\pm}
\bigr)
\Bigr]
&= \bigl(  \alpha_{t}^{n,\pm}
+ Y_{t}^{n,\pm}
\bigr) \cdot Y_{t}^{n,\pm} dt
 \\
 &\hspace{15pt}
-
D_{n,\pm} {\mathfrak f}_{0}\bigl(X_{t}(\cdot),\mu_{t}\bigr)
\cdot
\bigl(
\hat X_{t}^{n,\pm} 
-
X_{t}^{n,\pm}
\bigr)  dt 
+ dM_{t}^{n,\pm},
 \end{split}
\end{equation*}
where $(M_{t}^{n,\pm})_{0 \leq t \leq T}$ is a square-integrable ${\mathbb F}_{0}$-martingale. Taking  expectation, we deduce that 
\begin{equation*}
\begin{split}
&{\mathbb E}_{0}
\bigl[ D_{n,\pm} {\mathfrak g}_{0}\bigl(X_{T}(\cdot),\mu_{T}\bigr) 
\cdot \bigl(
\hat X_{T}^{n,\pm} 
-
X_{T}^{n,\pm}
\bigr)
\bigr]
\\
&\hspace{15pt}
= \E_{0} \int_{0}^T \Bigl[ \bigl(  \alpha_{t}^{n,\pm}
+ Y_{t}^{n,\pm}
\bigr) 
\cdot
Y_{t}^{n,\pm}
- D_{n,\pm} {\mathfrak f}_{0}\bigl(X_{t}(\cdot),\mu_{t}\bigr)
\cdot
\bigl(
\hat X_{t}^{n,\pm} 
-
X_{t}^{n,\pm}
\bigr) \Bigr]  dt.
\end{split}
\end{equation*}
Summing over $n \in \NN$ (which is licit in our framework), we deduce that 
\begin{equation*}
\begin{split}
&{\mathbb E}_{0}
\bigl[ \bigl\langle D {\mathfrak g}_{0}\bigl(X_{T}(\cdot),\mu_{T}\bigr), 
\bigl(
\hat X_{T}(\cdot)
-
X_{T}(\cdot)
\bigr)
\bigr\rangle_{L^2(\SS^1;\RR^d)}
\bigr]
\\
&= \E_{0} \int_{0}^T \Bigl[
\bigl\langle
 \bigl(  \alpha_{t}(\cdot)
+ Y_{t}(\cdot)
\bigr) ,
Y_{t}(\cdot)
\bigr\rangle_{L^2(\SS^1;\RR^d)}
-
\bigl\langle
 D {\mathfrak f}_{0}\bigl(X_{t}(\cdot),\mu_{t}\bigr),
\bigl(
X_{t}^{\balpha}(\cdot) 
-
X_{t}(\cdot)
\bigr)
\bigr\rangle_{L^2(\SS^1;\RR^d)}
 \Bigr]  dt,
\end{split}
\end{equation*}
where, as usual, we have let $\hat X_{t}(\cdot)
= \sum_{n \in \NN} \hat X_{t}^{n,\pm} e^{n,\pm}(\cdot)$.
Observing that, for two random variables 
$\chi(\cdot)$ and $\chi'(\cdot)$ with values in 
$L^2(\SS^1;\RR^d)$, ${\mathbb E}_{0} [\langle \chi(\cdot),\chi'(\cdot)
\rangle_{L^2(\SS^1;\RR^d}] = {\mathbb E} [ \chi \cdot \chi']$, where, 
in the last term, $\chi$
and $\chi'$ are regarded as $\RR^d$-valued random variables, 
we deduce from  Lemma 
\ref{lem:diff} that
\begin{equation*}
\begin{split}
&{\mathbb E}
\bigl[
 \partial_{x} g(X_{T},\mu_{T}) \cdot
\bigl(
\hat X_{T}
-
X_{T}
\bigr)
\bigr]
= \E \int_{0}^T \Bigl[ 
\bigl(  \alpha_{t}
+ Y_{t}
\bigr)
\cdot 
Y_{t}
-
 \partial_{x} f(X_{t},\mu_{t})
\cdot \bigl(
\hat{X}_{t} 
-
X_{t}
\bigr) \Bigr]  dt.
\end{split}
\end{equation*}
Therefore,
\begin{equation*}
\begin{split}
\bar J^{\bmu}(\balpha) 
- 
\bar  J^{\bmu}(-{\boldsymbol Y}) 
&= {\mathbb E}_{0}
\Bigl[ 
g(\hat X_{T},\mu_{T})
- 
g(X_{T},\mu_{T})
- \partial_{x} g 
(X_{T},\mu_{T})\cdot
\bigl( \hat X_{T} - X_{T}
\bigr)
\\
&\hspace{15pt}
+ \int_{0}^T 
\Bigl( \frac12 \bigl\vert \alpha_{t} + Y_{t} \bigr\vert^2
+  
f(\hat X_{t},\mu_{t})
- 
f(X_{t},\mu_{t})
- \partial_{x} f 
(X_{t},\mu_{t})
\cdot
\bigl( \hat X_{t} - X_{t}
\bigr)
\Bigr) dt 
\Bigr].
\end{split}
\end{equation*}
Since $g$ and $f$ are convex,
we deduce that the right-hand side above is non-negative, which shows that 
$-\bY$ is an optimal control for $\bar{J}^{\bmu}$, that is to 
say $\bX$ and $-\bY$ form a randomized equilibrium. 
\vskip 5pt

\textit{Second Step.}
We now turn to the converse. 
Assume that a pair $(\bX^\star(\cdot),\balpha^\star(\cdot))$ satisfies
Definition  
\ref{def:randomized:MFG}. 
Then, 
we regard
the optimization problem 
$\inf_{\balpha} \bar{J}^{\bmu}(\balpha)$
defined in  
\eqref{eq:bar:sde:cost:functional}--\eqref{eq:bar:sde:control:problem}
as a standard optimization problem in random environment. 
By the standard stochastic Pontryagin principle (up to a straightforward adaptation due to the fact that the noise is infinite dimensional),
we know that a necessary condition of optimality for 
some control process $\balpha$ --the corresponding path being denoted by 
$\bar{\bX}^{\balpha}$-- is that the solution of the adjoint backward equation 
\begin{equation}
\label{eq:sde:adjoint:smp:0} 
d \bar{Y}_{t} = - \partial_{x} f(U_{t}+\bar{X}_{t}^{\balpha},\mu_{t}) dt + \sum_{n \in \NN} Z_{t}^{n,\pm} dW_{t}^{n,\pm}, \quad t \in [0,T],
\end{equation}
with $\bar{Y}_{T} = \partial_{x} g(U_{T} + \bar{X}_{T}^{\balpha},\mu_{T})$ as terminal condition coincides with $-\balpha$, namely
\begin{equation}
\label{eq:sde:adjoint:smp} 
\bar Y_{t} = - \alpha_{t}, \quad t \in [0,T]. 
\end{equation}
Now, if, as required, we have a control process $\balpha^\star(\cdot)$ (with values in $L^2(\SS^1;\RR^d)$) with 
$\bX^\star(\cdot)$ as associated path (also with values in $L^2(\SS^1;\RR^d)$) such that 
$\balpha^\star$
(when regarded as a process with values in $\RR^d$, see Lemma
\ref{lem:measurability})
 minimizes $\bar{J}^{\bmu}$ 
 in 
 \eqref{eq:bar:sde:cost:functional}
 when $\bU(\cdot)$ is given by 
\eqref{eq:U:as:fixed:point}
and $\bmu$ by 
\eqref{eq:matching:enlarged}, then,
following the discussion right after Lemma 
\ref{lem:measurability}, 
we can  
identify
the path 
of
$\bX^\star - \bU$ (seen as an $\RR^d$-valued process on $\Omega$) 
with the path of 
$\bar \bX^{\balpha^\star}$. Also, we can define $\bY^\star$
(also seen as an $\RR^d$-valued process)
 through 
\eqref{eq:sde:adjoint:smp};
it solves an equation of the same type as 
\eqref{eq:sde:adjoint:smp:0}.
Computing the modes
of $\bX^\star(\cdot)$ and $\bY^\star(\cdot)$,  we get 
that 
$(\bX^\star(\cdot),\bY^\star(\cdot))$
is a solution of the McKean-Vlaosv equation 
\eqref{eq:mkv:sde}. 
If the latter one is at most uniquely solvable, this shows that there is 
at most one MFG equilibrium.
\end{proof}

\section{Main results}
\label{se:main}
We here expose the main results of the paper. 
Proofs will given next. 

\subsection{Existence and uniqueness}
The first main result of the paper
(whose proof is deferred to Section 
\ref{se:proof}) is 

\begin{theorem}
\label{thm:existence:uniqueness}
Under 
Assumption \textbf{\bf (A)},
\eqref{eq:mkv:sde}
is uniquely solvable for any initial
condition in the form of a  
square-integrable ${\mathcal F}_{0,0}$-measurable 
random variable  
$X_{0}(\cdot)$ from $\Omega_{0}$ to $L^2(\SS^1;\RR^d)$. 
\end{theorem}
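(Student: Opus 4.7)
The strategy is to view \eqref{eq:mkv:sde} as a genuine forward-backward SDE on the separable Hilbert space $H = L^2(\SS^1;\RR^d)$ driven by a cylindrical Wiener process, with the operator $A$ acting diagonally on Fourier modes as $Ae^{n,\pm} = (2\pi n)^2 e^{n,\pm}$. Under Assumption \textbf{(A)}, Lemma \ref{lem:diff} together with the boundedness of $b$, $\partial_x f$, $\partial_x g$ implies that the nonlinear coefficients $\mathfrak{B}, \mathfrak{F}, \mathfrak{G}: H \to H$ are bounded and Lipschitz continuous: for instance,
\begin{equation*}
\|\mathfrak{F}(\ell)\|_H^2 = \int_{\SS^1} \bigl| \partial_x f\bigl(\ell(x), \textrm{\rm Leb}_1 \circ \ell^{-1}\bigr)\bigr|^2 dx \leq \|\partial_x f\|_\infty^2.
\end{equation*}
The analytic semigroup $S_t = e^{-tA}$ is contractive on $H$, and since $\sum_{n\ge 1}(2\pi n)^{-2}<\infty$ the stochastic convolution $W^A_t = \int_0^t S_{t-s}\,dW_s$ is a continuous $H$-valued Ornstein-Uhlenbeck process. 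The forward equation thus takes the mild form
\begin{equation*}
X_t(\cdot) = S_t X_0(\cdot) + \int_0^t S_{t-s}\bigl(\mathfrak{B}(X_s(\cdot)) - Y_s(\cdot)\bigr)ds + W^A_t(\cdot).
\end{equation*}

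The first step is local existence and uniqueness on a short interval $[T-\delta,T]$ via a standard Picard fixed-point argument on the Banach space of $\FF_0$-progressively measurable $H$-valued pairs $(X,Y)$, where the backward component uses the martingale representation theorem in $\FF_0$ to produce $Z$. The contraction combines the Lipschitz continuity of $\mathfrak{B}, \mathfrak{F}, \mathfrak{G}$, the contractivity of $S_t$, and Burkholder-Davis-Gundy in $H$. The deterministic and time-uniform a priori bound $\|Y_t(\cdot)\|_H \leq \|\mathfrak{G}\|_\infty + T\|\mathfrak{F}\|_\infty$, which follows from the conditional-expectation representation $Y_t = \EE_t[\mathfrak{G}(X_T)+\int_t^T \mathfrak{F}(X_s)ds]$, lets the length $\delta$ be chosen independently of the initial condition.

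To extend the construction to $[0,T]$ and to obtain global uniqueness, I would introduce the decoupling field $v : [0,T]\times H \to H$, characterized on the interval of local solvability by $Y_t(\cdot) = v(t, X_t(\cdot))$ and satisfying, in mild form, the semilinear evolution equation
\begin{equation*}
\partial_t v + \mathcal{L} v + \bigl(\mathfrak{B}(\ell) - v(t,\ell)\bigr)\cdot Dv = -\mathfrak{F}(\ell), \qquad v(T,\ell) = \mathfrak{G}(\ell),
\end{equation*}
where $\mathcal{L}$ is the infinite-dimensional OU generator associated with $(A,S_t)$. Convexity of $f$ and $g$ in $x$ (transferred by Lemma \ref{lem:diff} into a partial monotonicity of $\mathfrak{F}$ and $\mathfrak{G}$ at frozen measure) combined with Bismut-Elworthy-Li type smoothing of $S_t$ in every Fourier direction should yield a uniform-in-$T$ Lipschitz bound on $v(t,\cdot)$. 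Global continuation of the local solution then follows, and uniqueness is automatic because any two solutions give rise to the same decoupling field along their trajectories.

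The crux of the proof, and the hardest step, is the global Lipschitz estimate on $v$. Convexity in $x$ alone cannot close the estimate because of the non-monotone dependence of $\mathfrak{F}, \mathfrak{G}$ on $\textrm{\rm Leb}_1 \circ \ell^{-1}$, and the Lasry-Lions condition is deliberately not assumed; the missing monotonicity must therefore be compensated by a genuine regularizing gain from the cylindrical noise. This is precisely why every Fourier mode has to be excited: only then is the smoothing of $S_t$ available in all directions of $H$, and only then can one hope to control $Dv$ by a Bismut-Elworthy-Li formula independently of the horizon $T$. Quantifying this smoothing in the presence of the nonlinear coupling $(\mathfrak{B}(\ell)-v)\cdot Dv$ is the delicate technical core of the argument.
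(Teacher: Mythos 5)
Your overall strategy matches the paper's: small-time Picard, decoupling field $v$, mild form of the master-type equation for $v$ driven by the Ornstein--Uhlenbeck generator, Bismut--Elworthy--Li type smoothing ($\|D\mathcal{P}_t \mathcal{V}\| \lesssim t^{-1/2}\|\mathcal{V}\|_\infty$) to close a Gronwall-with-singular-kernel loop on $\sup_\ell \|Dv(t,\ell)\|$, and iteration. However, you misidentify the ingredient that closes the Lipschitz estimate. Convexity of $f,g$ in $x$ is used in the paper \emph{only} to make the stochastic Pontryagin principle a sufficient condition of optimality (Proposition \ref{prop:SMP}), i.e.\ to connect \eqref{eq:mkv:sde} to the game; it plays no role whatsoever in the a priori bound on $Dv$. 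What actually closes the estimate is the \emph{boundedness} of $\mathfrak{F} = D\mathfrak{f}_0$ and $\mathfrak{G} = D\mathfrak{g}_0$ (coming from boundedness of $\partial_x f, \partial_x g$), which gives a uniform-in-time, deterministic $L^2(\SS^1;\RR^d)$-bound on $v$ (Lemma \ref{lem:bound:pp}). That bound feeds into the mild form: the nonlinear transport term $\langle D\langle v(s,\cdot),h\rangle,\, \mathfrak{B}-v(s,\cdot)\rangle$ is a product of $Dv$ with the \emph{bounded} quantity $\mathfrak{B}-v$, so $\mathcal{P}_{s-t}$ acting on it is controlled, after the $1/\sqrt{s-t}$ gain, by $\sup_\ell\|Dv(s,\ell)\|$ itself, and the singular Gronwall lemma (Lemma \ref{lem:4:2:5}) closes. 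Trying to exploit a ``partial monotonicity of $\mathfrak{F},\mathfrak{G}$ at frozen measure'' is a dead end here, and you in fact half-acknowledge this in your last paragraph.

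The second, more serious gap is that your proposal applies $D$ to the mild equation for $v$ without explaining why $v(t,\cdot)$ is Fr\'echet differentiable in the first place. In infinite dimensions there is no off-the-shelf smoothness theory for the decoupling field of a fully coupled FBSDE. The paper handles this via a Galerkin truncation \eqref{eq:galerkin}--\eqref{eq:mkv:sde:N}: each truncated system is a genuinely finite-dimensional, uniformly non-degenerate FBSDE (after absorbing the $-(2\pi n)^2$ friction into an exponential change of variable), to which the classical theory of \cite{DelarueGuatteri} applies and yields $C^1$ decoupling fields $\cU^{(N)}$. The gradient estimate of Lemma \ref{lem:lemma:ibp:UN} is then established uniformly in $N$, and the stability bound \eqref{eq:stability:fbsde:galerkin} together with Lemma \ref{lem:convergence:gradients:galerkin} passes it to the limit. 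Without a regularization step of this kind your ``should yield a uniform Lipschitz bound'' remains formal: the object you differentiate has not been shown to exist as a differentiable map, so the central computation cannot even start.
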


\subsubsection*{Comparison with the case without noise}
It is worth comparing 
Theorem \ref{thm:existence:uniqueness}
with solvability results for the original mean-field game.
Existence of a solution under Assumption {\bf (A)} to 
\eqref{eq:original:mkv} was investigated by Carmona and Delarue
\cite{CarmonaDelarue_sicon}, see also 
\cite[Chapters 3 and 4]{CarmonaDelarue_book_I}, by adapting the analytical techniques developed by Lasry and Lions, see
\cite{MFG1,MFG2,MFG3,Cardaliaguet}. 
Uniqueness is known to hold under the so-called monotonicity condition due to Lasry and Lions:
\begin{enumerate}
\item $b$ is independent of the measure argument $\mu$; 
since $b$ is here assumed to be independent of $x$, it is thus constant;
\item for any two $\mu,\mu' \in {\mathcal P}_{2}(\R^d)$,
\begin{equation*}
\begin{split}
&\int_{\R^d} \bigl( 
f(x,\mu) - f(x,\mu')
\bigr) d \bigl( \mu - \mu' \bigr) (x) \geq 0,
\qquad \int_{\R^d} \bigl( 
g(x,\mu) - g(x,\mu')
\bigr) d \bigl( \mu - \mu' \bigr) (x) \geq 0.
\end{split}
\end{equation*}
\end{enumerate}
Conversely, we can provide explicit examples for which uniqueness fails under 
Assumption {\bf (A)}. 
Choose for instance $d=1$, $b \equiv 0$, $f \equiv 0$
and $g(x,\mu) = x g(\bar{\mu})$, where
$\bar{\mu}$ is understood as the mean of $\mu$ 
when $\mu \in {\mathcal P}_{2}(\R)$, 
with $g$ being non-increasing. 
Then, taking the mean in 
\eqref{eq:original:mkv}, we get
\begin{equation*}
\begin{split}
&d \E [X_{t}^{\star}] =  - \E [ Y_{t}^{\star}] dt,
\\
&d \E [ Y_{t}^{\star}] = 0,
\quad \E [ Y_{T}^\star ]
= g \bigl( 
\E [ X_{T}^\star ]
\bigr),
\end{split}
\end{equation*}
which coincides with the system of characteristics associated with 
the inviscid Burgers equation, which we alluded to in introduction:
\begin{equation*}
\partial_{x} u(t,x) - u(t,x) 
\partial_{x} u(t,x) = 0, \quad u(T,x) = g(x), \quad x \in \R. 
\end{equation*}
Choosing for instance $g(x) = - x$ for $\vert x \vert \leq 1$
and $g(x) = - \textrm{sign}(x)$ for $\vert x \vert \geq 1$, we know that uniqueness fails to the above forward-backward system when $T>1$ 
and $\E[X_{0}^\star]=0$
(it is easily checked that 
$((\E[X_{t}^\star],\E[Y_{t}^\star])=(0,0))_{0 \leq t \leq T}$, 
$((\E[X_{t}^\star],\E[Y_{t}^\star])=(t,-1))_{0 \leq t \leq T}$, 
$((\E[X_{t}^\star],\E[Y_{t}^\star])=(-t,1))_{0 \leq t \leq T}$
are solutions). This shows that 
noise in the mollified version 
\eqref{eq:mkv:sde} indeed restores uniqueness. 
\vspace{5pt}

\subsection{Master equation}
In our analysis, we shall use the fact 
that \eqref{eq:mkv:sde} is connected with some infinite dimensional PDE. 
Provided that existence and uniqueness hold true, the 
system \eqref{eq:mkv:sde} must admit a decoupling field
${\mathcal U} : [0,T] \times L^2({\mathbb S}^1;\RR^d) \rightarrow L^2({\mathbb S}^1;\R^d)$
such that, with probability 1, 
\begin{equation*}
Y_{t}(\cdot) = {\mathcal U}\bigl(t,X_{t}(\cdot)\bigr), \quad t \in [0,T],
\end{equation*}
or, equivalently,
\begin{equation*}
Y_{t}^{n,\pm} = {\mathcal U}^{n,\pm}\bigl(t,X_{t}(\cdot)\bigr), \quad t \in [0,T], \quad n\in \NN,
\end{equation*}
where $({\mathcal U}^{n,\pm})_{n \in \NN}$ denotes the Fourier modes of ${\mathcal U}$. 

Construction of the decoupling field is a standard procedure in the theory of forward-backward processes. 
We provide a short account here and we refer to \cite[Chapter 4]{CarmonaDelarue_book_I} for further details.
Given $t \in [0,T]$
and $\ell \in L^2({\mathbb S}^1;\RR^d)$, consider 
\eqref{eq:mkv:sde} but with $X_{t} = \ell$ as initial condition 
at time $t$ (or equivalently
$X_{t}^{n,\pm} = \ell^{n,\pm}$). Note the solution 
$((X_{s}^{n,\pm;t,\ell})_{n \in \NN},
(Y_{s}^{n,\pm;t,\ell})_{n \in \NN},(Z_{s}^{n,k,\pm;t,\ell})_{n,k \in \NN})_{0 \leq t \leq T}$
and define accordingly 
the processes
$(X_{s}^{t,\ell},Y_{s}^{t,\ell})_{t \leq s \leq T}$
from $\Omega_{0}$ into $L^2(\SS^1;\RR^d) \times L^2(\SS^1;\RR^d)$
as in the discussion right after Lemma \ref{lem:measurability}. 
By changing the filtration ${\mathbb F}_{0}$
into the augmented filtration generated by 
$(W^{n,\pm}_{s}-W^{n,\pm}_{t})_{n \in \NN, t \leq s \leq T
}$, we deduce that 
$Y_{t}^{t,\ell}$ is almost surely deterministic, which permits to let
\begin{equation}
\label{eq:decoupling}
{\mathcal U}(t,\ell) = Y_{t}^{t,\ell}. 
\end{equation}
Given this definition, we prove next that 
\begin{lemma}
\label{lem:decoupling:field}
For any initial condition $X_{0}(\cdot) \in L^2(\Omega_{0},{\mathcal F}_{0,0},\P_{0};L^2(\SS^1;\R^d))$, it holds, with probability 1 under $\P_{0}$,
\begin{equation}
\label{eq:repres:formula}
 Y_{t}(\cdot) ={\mathcal U}\bigl(t,X_{t}(\cdot)\bigr), \quad t \in [0,T]. 
\end{equation}
\end{lemma}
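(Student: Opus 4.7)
The plan is to combine the uniqueness from Theorem~\ref{thm:existence:uniqueness} with a flow/pasting argument and then pass to the limit from simple to arbitrary initial conditions. Because, under Assumption~\textbf{(A)}, the coefficients $\mathfrak{b}_0$, $\mathfrak{F}$ and $\mathfrak{G}$ are genuinely functionals of the spatial profile $\ell \in L^2(\mathbb{S}^1;\mathbb{R}^d)$ (the law $\textrm{Leb}_1 \circ \ell^{-1}$ being a deterministic function of $\ell$), the system \eqref{eq:mkv:sde} is really an infinite-dimensional FBSDE on $L^2(\mathbb{S}^1;\mathbb{R}^d)$, so the usual Markovian/flow property of FBSDEs should apply cleanly.

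First, I would record that $\mathcal{U}(t,\cdot)$, as defined in \eqref{eq:decoupling}, is measurable and in fact continuous on bounded subsets of $L^2(\mathbb{S}^1;\mathbb{R}^d)$. Both properties should come as a by-product of the uniqueness proof of Theorem~\ref{thm:existence:uniqueness}: the stability estimate controlling $\sup_{t\le s\le T}\mathbb{E}_0\bigl[\|X_s^{t,\ell}-X_s^{t,\ell'}\|^2+\|Y_s^{t,\ell}-Y_s^{t,\ell'}\|^2\bigr]$ in terms of $\|\ell-\ell'\|^2$ implies the continuity of $\ell \mapsto \mathcal{U}(t,\ell) = Y_t^{t,\ell}$, while the deterministic character of $\mathcal{U}(t,\ell)$ uses that, under the augmented filtration generated by $(W^{n,\pm}_s - W^{n,\pm}_t)_{n\in\NN,\, s\ge t}$, the solution $(X_s^{t,\ell}, Y_s^{t,\ell})_{s\in[t,T]}$ is adapted and independent of $\mathcal{F}_{0,t}$, hence its value at the initial time $s=t$ is a.s.\ constant.

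Next, fix $t\in[0,T]$ and a simple $\mathcal{F}_{0,t}$-measurable initial condition $\xi(\cdot)=\sum_{i=1}^N \ell_i\mathbf{1}_{A_i}$ with $\ell_i\in L^2(\mathbb{S}^1;\mathbb{R}^d)$ and $(A_i)_i$ a partition of $\Omega_0$. The FBSDE \eqref{eq:mkv:sde} posed on $[t,T]$ from $\xi(\cdot)$ can be solved separately on each $A_i$: conditionally on $A_i$, the increments $(W^{n,\pm}_s-W^{n,\pm}_t)_{s\ge t}$ remain Brownian and independent of $A_i$, so the unique solution on $A_i$ coincides $\mathbb{P}_0$-a.s.\ with the deterministic-initial-condition solution $(X_s^{t,\ell_i},Y_s^{t,\ell_i})_{s\in[t,T]}$ (defined with respect to the shifted filtration). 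Evaluating at $s=t$ and summing over $i$ yields $Y_t = \sum_i \mathcal{U}(t,\ell_i)\mathbf{1}_{A_i} = \mathcal{U}(t,\xi(\cdot))$ almost surely. This step crucially uses the uniqueness granted by Theorem~\ref{thm:existence:uniqueness} and a measurable selection to glue the restrictions.

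Finally, given an arbitrary initial condition $X_0(\cdot)$, let $(X_s(\cdot),Y_s(\cdot),Z_s(\cdot))_{0\le s\le T}$ be the associated solution and fix $t\in[0,T]$. Approximate $X_t(\cdot)$ in $L^2(\Omega_0,\mathcal{F}_{0,t},\P_0;L^2(\mathbb{S}^1;\mathbb{R}^d))$ by simple $\mathcal{F}_{0,t}$-measurable random variables $\xi_n(\cdot)$. Denote by $(X^n_s(\cdot),Y^n_s(\cdot))_{t\le s\le T}$ the solution of \eqref{eq:mkv:sde} on $[t,T]$ started from $\xi_n(\cdot)$ at time $t$. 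By the previous step, $Y^n_t(\cdot)=\mathcal{U}(t,\xi_n(\cdot))$ a.s., and by the stability estimate of Theorem~\ref{thm:existence:uniqueness} (applied between the two initial conditions $\xi_n(\cdot)$ and $X_t(\cdot)$ at time $t$), $Y^n_t(\cdot)\to Y_t(\cdot)$ in $L^2$; continuity of $\mathcal{U}(t,\cdot)$ gives $\mathcal{U}(t,\xi_n(\cdot))\to\mathcal{U}(t,X_t(\cdot))$, so that $Y_t(\cdot)=\mathcal{U}(t,X_t(\cdot))$ a.s. Doing this for each rational $t\in[0,T]\cap\mathbb{Q}$ and then using the $\P_0$-a.s.\ continuity of $s\mapsto Y_s(\cdot)$ (together with continuity of $\mathcal{U}$ in both variables, up to an inspection of the uniqueness argument) extends the identity to all $t\in[0,T]$ simultaneously with probability one.

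The main obstacle is establishing the quantitative stability/continuity of $\mathcal{U}$ with respect to the $L^2(\mathbb{S}^1;\mathbb{R}^d)$-initial condition, since this requires controlling the full infinite system \eqref{eq:mkv:sde} uniformly across modes and exploiting the dissipative factors $-(2\pi n)^2$ of the Ornstein--Uhlenbeck forcing; once this stability is in hand, the simple-to-general approximation scheme is routine.
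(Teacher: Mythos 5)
Your proposal takes essentially the same route as the paper: the identity is verified first for deterministic initial data (which is just the definition of $\mathcal{U}$), then for simple $\mathcal{F}_{0,t}$-measurable initial conditions of the form $\sum_i \ell_i\mathbf{1}_{A_i}$ by restricting to each atom and invoking uniqueness of the FBSDE started from $\ell_i$, and finally for general square-integrable initial data by an $L^2$-approximation argument combined with the stability estimate \eqref{eq:FB:stability} and the Lipschitz continuity of $\mathcal{U}$ in the space variable. The paper inserts an intermediate step through initial conditions with compactly supported law before treating the fully general case, and reduces the ``for all $t$'' claim to $t=0$ by viewing $t$ as the new initial time, whereas you approximate directly by simple random variables and then cover all $t$ simultaneously using rationals plus pathwise continuity of $\mathbf{Y}(\cdot)$ — both are routine rephrasings of the same argument.
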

Provided that ${\mathcal U}$ is smooth enough, 
it must satisfy, by a formal application of It\^o's formula
\begin{equation*}
\begin{split}
dY_{t}^{n,\pm} &=
\biggl( \partial_{t} {\mathcal U}^{n,\pm}\bigl(t,X_{t}(\cdot)\bigr)
+ 
\Bigl\langle D {\mathcal U}^{n,\pm}\bigl(t,X_{t}(\cdot)\bigr),{\mathfrak B}\bigl(X_{t}(\cdot)\bigr) 
- Y_{t}(\cdot) + \partial^2_{x} X_{t}(\cdot)
\Bigr\rangle_{L^2({\mathbb S}^1;\RR^d)}
\\
&\hspace{45pt}
+ \frac12 \textrm{Trace}
\Bigl[ D^2 {\mathcal U}^{n,\pm}\bigl(t,X_{t}(\cdot)\bigr)
\bigr]
\biggr) dt
\\
&\hspace{15pt}
+ \Bigl\langle D {\mathcal U}^{n,\pm}\bigl(t,X_{t}(\cdot)\bigr),d W_{t}(\cdot)\Bigr\rangle_{L^2({\mathbb S}^1;\RR^d)},
\end{split}
\end{equation*}
where ${\boldsymbol W}(\cdot)$ denotes the white noise
defined in 
\eqref{eq:white:noise}.

Identifying with the backward equation 
in 
\eqref{eq:mkv:sde}, we deduce that 
${\mathcal U}$ should be a solution of the infinite dimensional system of infinite dimensional PDEs
(on $L^2({\mathbb S}^1;\RR^d)$):
\begin{equation}
\label{eq:se:2:PDE:infinite}
\begin{split}
& \partial_{t} {\mathcal U}^{n,\pm}(t,\ell)
+ 
\bigl\langle \partial^2_{x} D {\mathcal U}^{n,\pm}(t,\ell), 
  \ell
\bigr\rangle_{L^2({\mathbb S}^1;\RR^d)}
+ \frac12 \textrm{Trace}
\bigl[ D^2 {\mathcal U}^{n,\pm}(t,\ell)
\bigr]
\\
&\hspace{15pt}
+ 
\bigl\langle D {\mathcal U}^{n,\pm}(t,\ell),{\mathfrak B}(\ell)
\bigr\rangle_{L^2({\mathbb S}^1;\RR^d)}
- \bigl\langle {\mathcal U}(t,\ell), D{\mathcal U}^{n,\pm}(t,\ell) 
\bigr\rangle_{L^2({\mathbb S}^1;\RR^d)}
+ {\mathfrak F}^{n,\pm}\bigl(\ell,{\mathcal U}(t,\ell)\bigr) = 0,
\end{split}
\end{equation}
with ${\mathcal U}^{n,\pm}(T,\cdot) = {\mathfrak G}^{n,\pm}$.
The operator
\begin{equation*}
L {\mathfrak h}(\ell)
 = \bigl\langle \partial^2_{x} D {\mathfrak h}(\ell), 
  \ell
\bigr\rangle_{L^2({\mathbb S}^1;\RR^d)}
+ \frac12 \textrm{Trace}
\bigl[ D^2 {\mathfrak h}(\ell)
\bigr], \quad \ell \in L^2({\mathbb S}^1;\RR^d),
\end{equation*}
is called the Ornstein-Uhlenbeck operator
on $L^2({\mathbb S}^1;\RR^d)$ 
driven by the unbounded linear operator $\partial^2_{x}$ acting 
on $L^2({\mathbb S}^1;\RR^d)$. 
It is associated with 
the semi-group 
 $({\mathcal P}_{t})_{t \geq 0}$ 
generated by the Ornstein-Ulhenbeck process on 
$L^2({\mathbb S}^1;\RR^d)$, namely,  
for a bounded measurable function $\cV$ from $L^2({\mathbb S}^1;\R^d)$ into 
$\RR$, $\cP_{t} \cV$ maps $L^2(\SS^1;\R^d)$ into $\RR$:
\begin{equation}
\label{eq:SG}
{\mathcal P}_{t} {\mathcal V} : L^2({\mathbb S}^1) \ni 
\ell \mapsto {\mathbb E}_{0} \bigl[
{\mathcal V}(U_{t}^\ell)
\bigr], 
\end{equation}
where, for $\ell \in L^2(\SS^1;\R^d)$, 
$\bU^{\ell}(\cdot) = (U_{t}^{\ell}(\cdot))_{0 \leq t \leq T}$ is the solution of the
OU equation on $L^2(\SS^1;\RR^d)$ (constructed on $(\Omega_{0},\FF_{0},\PP_{0})$):
\begin{equation*}
dU_{t}^{\ell}(\cdot) = \partial^2_{x} U_{t}^{\ell}(\cdot) 
dt + d W_{t}(\cdot), \quad t \in [0,T] \ ; \quad U_{0}^{\ell} = \ell. 
\end{equation*}

Although there exist several results on infinite dimensional 
nonlinear PDEs (see for instance 
\cite{MR1840644,MR3674558,MR1731796}), it seems that systems of 
type \eqref{eq:se:2:PDE:infinite}
have not been considered so far. We thus prove in Section 
\ref{se:proof}
the following tailored-made solvability result:

\begin{theorem}
\label{thm:pde}
Under Assumption {\bf (A)}, the decoupling field ${\mathcal U}$
of \eqref{eq:mkv:sde} is a mild solution of 
the system of PDEs
\eqref{eq:se:2:PDE:infinite}, namely, for all $n \in \NN$:
\begin{equation*}
\label{eq:mild:pde:0}
\begin{split}
{\mathcal U}^{n,\pm}(t,\cdot)
&= {\mathcal P}_{T-t}
\Bigl(
 D_{n,\pm} {\mathfrak g}_{0}(\cdot,
\textrm{\rm Leb}_{1} \circ \cdot^{-1})
\Bigr)
\\
&\hspace{15pt}
+ \int_{t}^T 
{\mathcal P}_{s-t}
\Bigl[
 D_{n,\pm} {\mathfrak f}_{0}(\cdot,
\textrm{\rm Leb}_{1} \circ \cdot^{-1})
+
\bigl\langle
{\mathfrak B}(\cdot) - 
{\mathcal U}(s,\cdot), D {\mathcal U}^{n,\pm}(s,\cdot)
\bigr\rangle_{L^2({\mathbb S}^1;\RR^d)}
\Bigr] ds.
\end{split}
\end{equation*}
Moreover, the function ${\mathcal U}$ is Lipschitz continuous in the direction $\ell \in L^2(\SS^1;\RR^d)$, uniformly 
in time $t \in [0,T]$.
\end{theorem}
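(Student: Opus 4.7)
The statement has two parts, a Lipschitz bound for $\mathcal U$ and a mild formulation, which I address in turn.

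\emph{Lipschitz estimate.} Fix $t\in[0,T]$ and two initial states $\ell_1,\ell_2\in L^2(\SS^1;\RR^d)$, and denote by $(X^i,Y^i,Z^i)$ the corresponding solutions of \eqref{eq:mkv:sde} started at $(t,\ell_i)$, provided by Theorem \ref{thm:existence:uniqueness}. Set $\Delta X=X^1-X^2$, $\Delta Y=Y^1-Y^2$. Because the two solutions share the same cylindrical noise, $\Delta X$ has no Brownian component, so It\^o's formula applied to the pairing $\langle \Delta X_s,\Delta Y_s\rangle_{L^2(\SS^1;\RR^d)}$ produces a clean energy identity. Under Assumption {\bf (A)} four facts combine: the terminal pairing $\langle \Delta X_T, \Delta Y_T\rangle_{L^2}$ is non-negative by convexity of $g$ in $x$; the contribution of the backward driver $-\mathfrak F(X^i)$, tested against $\Delta X$, is non-positive by convexity of $f$ in $x$; the $-\Delta Y^{n,\pm}$ term appearing in the forward drift contributes exactly $-\mathbb E_0\int_t^T\|\Delta Y_s\|_{L^2}^2\,ds$; and the OU cross term together with the Lipschitz mean-field drift $b$ are absorbed by a Gr\"onwall argument on the forward equation. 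These four ingredients yield
\begin{equation*}
\mathbb E_0\|\Delta Y_t\|_{L^2}^2\leq C\|\ell_1-\ell_2\|_{L^2}^2
\end{equation*}
with $C$ independent of $t$. Recalling that $\mathcal U(t,\ell)=Y_t^{t,\ell}$ is deterministic (by the change of filtration performed around \eqref{eq:decoupling}), this gives the required uniform Lipschitz bound.

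\emph{Mild formulation.} The plan is to regularise the coefficients $b,f,g$ by smooth cylindrical Gaussian convolutions $b^\varepsilon,f^\varepsilon,g^\varepsilon$ preserving Assumption {\bf (A)}, and denote the associated decoupling field (still well-defined by Theorem \ref{thm:existence:uniqueness}) by $\mathcal U^\varepsilon$. For smooth cylindrical coefficients a finite-dimensional reduction together with classical theory of FBSDEs with smooth coefficients (see for instance \cite[Chapter 4]{CarmonaDelarue_book_I}) shows that $\mathcal U^\varepsilon$ is of class $C^{1,2}$ and satisfies the PDE \eqref{eq:se:2:PDE:infinite} classically: apply It\^o's formula to $\mathcal U^{\varepsilon,n,\pm}(s,X_s^\varepsilon(\cdot))$ with $X^\varepsilon$ the solution of the forward equation in environment $\mu^\varepsilon$, and identify drifts with the backward equation by uniqueness of the Doob--Meyer decomposition. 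Treating $L$ as the linear generator and $\langle D\mathcal U^{\varepsilon,n,\pm}, \mathfrak B^\varepsilon-\mathcal U^\varepsilon\rangle + \mathfrak F^{\varepsilon,n,\pm}$ as source, Duhamel's formula immediately yields the mild equation for $\mathcal U^\varepsilon$. The uniform Lipschitz estimate from the first step (which applies equally to $\mathcal U^\varepsilon$ with constants independent of $\varepsilon$), combined with a stability estimate for FBSDEs in the coefficients, gives $\mathcal U^\varepsilon\to \mathcal U$ locally uniformly.

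\emph{Main obstacle.} The delicate point is passing to the limit $\varepsilon\to 0$ in the nonlinear term
\begin{equation*}
\mathcal P_{s-t}\bigl\langle \mathfrak B^\varepsilon(\cdot)-\mathcal U^\varepsilon(s,\cdot),\, D\mathcal U^{\varepsilon,n,\pm}(s,\cdot)\bigr\rangle_{L^2(\SS^1;\RR^d)},
\end{equation*}
because $D\mathcal U^{\varepsilon,n,\pm}$ is only bounded, uniformly in $\varepsilon$, and has no classical limit. The crucial input is the smoothing property of the OU semigroup on $L^2(\SS^1;\RR^d)$: a Bismut--Elworthy--Li type integration by parts shifts the derivative from $\mathcal U^{\varepsilon,n,\pm}$ onto the OU kernel, expressing $\mathcal P_{s-t}\langle h(\cdot), D\varphi(\cdot)\rangle(\ell)$ as $\mathbb E_0[\varphi(U_{s-t}^\ell)\,\Xi_{s-t}^{\ell,h}]$, where $\Xi_{s-t}^{\ell,h}$ is a Malliavin-type stochastic integral against the driving cylindrical Wiener process. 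The summability $\sum_{n\in\NN^*}(2\pi n)^{-2}<\infty$ is exactly what guarantees integrability of the weight $\Xi$ on $L^2(\SS^1;\RR^d)$. Once the derivative is off $\mathcal U^{\varepsilon,n,\pm}$, dominated convergence finishes the passage to the limit.
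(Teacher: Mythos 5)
The Lipschitz estimate in your first step does not hold, and the failure is instructive because it touches the very point the paper is built around. You pair $\Delta X$ with $\Delta Y$, invoke convexity of $g$ in $x$ to get $\mathbb E_0\langle \Delta X_T,\Delta Y_T\rangle_{L^2} \ge 0$, invoke convexity of $f$ in $x$ for the interior, and absorb the rest by Gr\"onwall. But for the McKean--Vlasov system the two solutions live in \emph{different} measure environments: pointwise, $\Delta Y_T(x) = \partial_x g\bigl(X^1_T(x),\mu^1_T\bigr) - \partial_x g\bigl(X^2_T(x),\mu^2_T\bigr)$ with $\mu^i_T = \textrm{\rm Leb}_1\circ(X^i_T(\cdot))^{-1}$, and convexity of $g(\cdot,\mu)$ in $x$ for \emph{frozen} $\mu$ gives no sign at all once $\mu^1_T\ne \mu^2_T$. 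The same obstruction appears in the $f$ term. What your identity would actually prove, if it closed, is uniqueness under the Lasry--Lions monotonicity condition, which Assumption \textbf{(A)} pointedly does not include; the example in Section \ref{se:main} with $g(x,\mu)=x\,g(\bar\mu)$, $g$ non-increasing, is a case where $\langle\Delta X_T,\Delta Y_T\rangle$ is strictly negative along some pairs of solutions. In addition, the OU cross term $\langle \partial_x^2\Delta X,\Delta Y\rangle_{L^2}$ cannot be absorbed by a Gr\"onwall bound on the forward equation in $L^2$: the operator $\partial_x^2$ is unbounded, and neither $\Delta X\in H^2$ nor $\Delta Y\in H^1$ is available.

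The paper's Lipschitz bound therefore comes from a different mechanism: the smoothing of the OU semigroup, Lemma \ref{lem:ibp}, not convexity. The system is first reduced to finite dimension by the Galerkin truncation \eqref{eq:galerkin}--\eqref{eq:mkv:sde:N}, where the non-degenerate FBSDE theory of \cite{DelarueGuatteri} already gives differentiability of $\cU^{(N)}$ and the mild identity \eqref{eq:galerkin:mild:formulation}, with no need for an additional smoothing of the coefficients. Differentiating that mild identity, using the bound $\|D\cP_t\cV\|\le Ct^{-1/2}\|\cV\|_\infty$ to shift the derivative off $\cU^{(N)}$ inside the quadratic transport term, and closing with the singular Gr\"onwall Lemma \ref{lem:4:2:5} gives the $N$-uniform Lipschitz estimate of Lemma \ref{lem:lemma:ibp:UN}. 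The Bismut--Elworthy--Li integration by parts that you only deploy at the very end of your proposal \emph{is} Lemma \ref{lem:ibp}: you have the right tool, but it has to enter already at the level of the Lipschitz bound. Once it does, the passage to the limit in the mild formulation is what Lemma \ref{lem:convergence:gradients:galerkin} and Corollary \ref{cor:time:regularity:decoupling:field} deliver. Your Gaussian-mollification variant might be pushed through the second half --- the two regularization schemes are morally equivalent once the finite-dimensional reduction is made explicit (plain mollification leaves the problem infinite-dimensional; it is the cylindrical/Fourier truncation that does the reduction) --- but the whole proposal is hostage to the first step, which as written does not hold.
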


\subsubsection*{Comparison with the case without noise} Once again, it is worth comparing 
Theorem \ref{thm:pde}
with results obtained for the original mean-field game. 
Under the Lasry-Lions monotonicity condition (say with $b \equiv 0$) and appropriate
regularity assumptions on the coefficients,
it is proven in Chassagneux, Crisan and Delarue 
\cite{ChassagneuxCrisanDelarue}
(see also 
\cite{CardaliaguetDelarueLasryLions}
for the periodic case and 
\cite[Chapter 5]{CarmonaDelarue_book_II}
for another point of view on \cite{ChassagneuxCrisanDelarue})
that 
there exists a function
\begin{equation*}
V : [0,T] \times \R^d \times {\mathcal P}_{2}(\R^d) 
\rightarrow \R,
\end{equation*}
such that the function 
\begin{equation*}
[0,T] \times \R 
\times L^2({\mathbb S}^1;\R^d)
\ni (t,x,\ell) 
\mapsto V \bigl( t,x,\textrm{Leb}_{1} \circ \ell^{-1}
\bigr)
\end{equation*}
is differentiable 
 and satisfies the so-called master equation
\begin{equation}
\label{eq:master:equation:V}
\begin{split}
&\partial_{t} V(t,x,\mu)
- \frac12 \vert \partial_{x} V(t,x,\mu) \vert^2
- \int_{\R}
\partial_{\mu} V(t,x,\mu)(v) \partial_{x} V(t,v,\mu) d \mu(v) 
 + f(x,\mu) 
= 0,
\end{split}
\end{equation} 
for $(t,x,\mu) \in [0,T] \times \R^d \times \cP_{2}(\R^d)$,
with $V(T,x,\mu) = g(x,\mu)$, 
where 
$\partial_{\mu} V$ is understood as follows. 
The Fr\'echet derivative of $\ell \mapsto V(t,x,\textrm{\rm Leb}_{1} \circ \ell^{-1})$ in the direction $\ell$ takes the 
form
\begin{equation}
\label{eq:wasserstein}
D \bigl[ 
V \bigl( t,x,\textrm{Leb}_{1} \circ \cdot^{-1}
\bigr)
\bigr]_{\cdot = \ell}
= \partial_{\mu} V\bigl(t,x,\textrm{Leb}_{1} \circ \ell^{-1}\bigr)
(\ell(\cdot)),
\end{equation}
for some function $\partial_{\mu} V(t,x,\mu)(\cdot) \in L^2(\R^d,\mu;\R^d)$
with $\mu = \textrm{Leb}_{1} \circ \ell^{-1}$. It is also shown in \cite{ChassagneuxCrisanDelarue} that 
$\partial_{x} V$ and $\partial_{\mu} V$ 
are differentiable in $x$ (provided that $f$ and $g$ are sufficiently smooth). 
Therefore,
\begin{equation}
\label{eq:master:derivee}
\begin{split}
&\partial_{t} \bigl( \partial_{x}V(t,x,\mu)
\bigr)
- \partial_{x} \bigl( \partial_{x} V(t,x,\mu)
\bigr) \partial_{x} V(t,x,\mu)
\\
&\hspace{15pt}-
\int_{\R}
\partial_{x} \partial_{\mu} V(t,x,\mu)(v) \partial_{x} V(t,v,\mu)d \mu(v) 
+ \partial_{x} f(x,\mu) 
= 0,
\end{split}
\end{equation}
for $(t,x,\mu) \in [0,T] \times \R^d \times {\mathcal P}_{2}(\RR^d)$, 
with $\partial_{x} V(T,x,\mu) = \partial_{x} g(x,\mu)$. 

Define now
\begin{equation*}
{\mathcal V} : [0,T] \times L^2({\mathbb S}^1;\RR^d)
\ni (t,\ell) \mapsto 
\bigl( 
{\mathbb S}^1
\ni x \mapsto
\partial_{x} V\bigl(t,\ell(x),\textrm{Leb} \circ
\ell^{-1} \bigr)
\in \R^d \bigr) \in L^2({\mathbb S}^1;\R^d). 
\end{equation*}
Notice that the right-hand side indeed belongs to 
$L^2(\SS^1;\RR^d)$ if 
$\partial_{x} V$ is at most of linear growth in $x$, see 
the aforementioned references. 
On the model of 
\eqref{eq:se:2:PDE:infinite}, 
compute
\begin{equation*}
\begin{split}
D {\mathcal V}^{n,\pm}(t,\ell)
= D 
\Bigl( L^2({\mathbb S}^1;\RR^d) \ni h \mapsto 
 \int_{{\mathbb S}^1}
\partial_{x} V\bigl(t,h(x),\textrm{Leb} \circ
h^{-1} \bigr) e^{n,\pm}(x) dx \Bigr)_{\vert h = \ell}.
\end{split}
\end{equation*}
By \eqref{eq:wasserstein}
and following Lemma 
\ref{lem:diff} (provided again that we have enough regularity), we have
\begin{equation*}
\begin{split}
D {\mathcal V}^{n,\pm}(t,\ell)(x)
&= 
\partial_{x}^2 V\bigl(t,\ell(x),\textrm{Leb} \circ
\ell^{-1} \bigr) e^{n,\pm}(x) 
+
 \int_{{\mathbb S}^1}
\partial_{x} \partial_{\mu} V\bigl(t,\ell(v),\textrm{Leb} \circ
\ell^{-1} \bigr)\bigl( \ell(x) \bigr)  e^{n,\pm}(v) dv,
\end{split}
\end{equation*}
so that 
\begin{equation*}
\begin{split}
&\bigl\langle {\mathcal V}(t,\ell), D{\mathcal V}^{n,\pm}(t,\ell)
\bigr\rangle_{L^2({\mathbb S}^1;\RR^d)}
\\
&=
\int_{{\mathbb S}^1}
\partial_{x}^2 V\bigl(t,\ell(x),\textrm{Leb} \circ
\ell^{-1} \bigr)
\partial_{x} V\bigl(t,\ell(x),\textrm{Leb} \circ
\ell^{-1} \bigr)
 e^{n,\pm}(x) dx
\\
&\hspace{15pt} +
 \int_{{\mathbb S}^1}
 \int_{{\mathbb S}^1}
\partial_{x} \partial_{\mu} V\bigl(t,\ell(v),\textrm{Leb} \circ
\ell^{-1} \bigr)\bigl( \ell(x) \bigr)
\partial_{x} V\bigl(t,\ell(x),\textrm{Leb} \circ
\ell^{-1} \bigr)
  e^{n,\pm}(v) dv dx. 
\end{split}
\end{equation*}
Going back to 
\eqref{eq:master:derivee}, changing $x$ into $\ell(x)$
with $x \in {\mathbb S}^1$, choosing $\mu = \textrm{\rm Leb} \circ \ell^{-1}$,
multiplying by $e^{n,\pm}(x)$
and taking the integral over ${\mathbb S}^1$, we
can write
\begin{equation}
\label{eq:se:2:PDE:infinite:2}
\begin{split}
\partial_{t} {\mathcal V}^{n,\pm}(t,\ell)
- \bigl\langle {\mathcal V}(t,\ell), D{\mathcal V}^{n,\pm}(t,\ell) 
\bigr\rangle_{L^2({\mathbb S}^1;\R^d)} 
+ 
\int_{{\mathbb S}^1}
\partial_{x} f \bigl( \ell(x), \textrm{Leb}_{1} \circ \ell^{-1}
\bigr) e^{n,\pm}(x) dx 
 = 0,
\end{split}
\end{equation}
with ${\mathcal V}^{n,\pm}(T,\cdot) = {\mathfrak G}^{n,\pm}$, 
which is the \textrm{inviscid} analogue of 
\eqref{eq:se:2:PDE:infinite}.
Put it differently, 
\eqref{eq:se:2:PDE:infinite}
reads as a second-order version of 
\eqref{eq:se:2:PDE:infinite:2}; equivalently, Theorems 
\ref{thm:existence:uniqueness}
and 
\ref{thm:pde}
read as a regularization result for the master equation \textit{via} an infinite dimensional Ornstein-Ulhenbeck operator. 

\begin{remark}
\label{rem:master:equation}
The reader may wonder why, in the statement of Theorem \ref{thm:pde}, we focus on the equation satisfied by the feedback function and 
not on the equation satisfied by the value function. Indeed, it is worth noting that, in the standard theory of mean-field 
games, the so-called ``master equation'' is the equation for the value function, 
as exemplified in 
\eqref{eq:master:equation:V} (therein, $V$ identifies with the value of the mean-field game). 

In fact, the main reason is that it looks simpler. Indeed, our analysis is based upon the auxiliary
control problem 
\eqref{eq:bar:sde:cost:functional}--\eqref{eq:bar:sde:control:problem}, which is --and this is the key feature--
driven by random coefficients (not only the measure-valued process $\bmu$ is random but also 
the process ${\boldsymbol U}$ depends on $\omega_{0}$). In this framework, 
the Pontryagin principle provides a very robust approach: Except for 
the additional martingale term in the backward equation
\eqref{eq:sde:adjoint:smp:0} in the proof of Proposition 
\ref{prop:SMP}, it has a standard structure; and, in fact, the martingale structure plays almost no role 
in the overall discussion. This is the reason why we use this approach here; and, as a result, this explains why the master equation 
we get is an equation for the feedback function. 

Of course, once the feedback function is given, the value function is easily recovered. 
They are two strategies to do so. The first one is to regard the optimal cost 
$\bar J^{\bmu}(\balpha^\star)$ 
in 
\eqref{eq:bar:sde:cost:functional}
when the initial condition $(t,X_{t}^\star(\cdot))$ varies 
in $[0,T] \times L^2(\SS^1;\RR^d)$; equivalently, 
this amounts to consider 
$\int_{\RR^d} V(t,x,\mu)d\mu(x)$ in 
\eqref{eq:master:equation:V}.
Here the resulting function would satisfy 
a linear PDE on $[0,T] \times L^2(\SS^1;\RR^d)$, but the coefficients would depend on the feedback function. Pay attention that, as a mean-field game is not an optimization problem, this equation could not be regarded as an autonomous Hamilton-Jacobi-Bellman equation deriving from an optimal control problem in infinite dimension. 
Another strategy is to disentangle 
the initial state of 
$\bar {\boldsymbol X}^{\balpha}$ in \eqref{eq:bar:sde:control:problem}
from the initial condition
$X_{0}(\cdot) \in L^2(\SS^1;\RR^d)$ for ${\boldsymbol X}^\star(\cdot)$, which is exactly what is done for standard mean-field games. 
In fact, by doing so, we
first
compute,
with $\bar X_{0}^{\balpha}=x \in \RR^d$ as initial condition, 
 the optimal value  
of the optimal control problem \eqref{eq:bar:sde:cost:functional}--\eqref{eq:bar:sde:control:problem}
in the random environment formed by ${\boldsymbol X}^\star(\cdot)$; since the environment is uniquely defined in terms of 
 $X_{0}(\cdot)$ (this is Theorem 
 \ref{thm:existence:uniqueness}), the optimal value is a mere function of $x$ and $X_{0}(\cdot)$. 
 Using the same notation as in 
\eqref{eq:master:equation:V}, this should be 
``our'' $V(0,x,X_{0}(\cdot))$ (here $t=0$ because 
\eqref{eq:bar:sde:cost:functional}--\eqref{eq:bar:sde:control:problem}
is initialized at time $0$, but it is pretty easy to adapt the argument to any initial time 
$t$); then $\partial_{x} V(0,X_{0}(\cdot),X_{0}(\cdot))$ should coincide with ${\mathcal U}(0,X_{0}(\cdot))$. 
 
It is worth noting that, following the usual approach to mean-field games based on the MFG PDE system, we could directly 
address 
the optimal value  
of the optimal control problem \eqref{eq:bar:sde:cost:functional}--\eqref{eq:bar:sde:control:problem}
in an arbitrary environment
${\boldsymbol X}^\star(\cdot)$ (before we know that it is an equilibrium)
 and then look for an equilibrium by solving a fixed point obtained by plugging the 
 resulting optimal feedback in the dynamics of ${\boldsymbol X}^\star(\cdot)$. Basically, this would require to 
 write down the stochastic Hamilton-Jacobi-Bellman equation associated with 
\eqref{eq:bar:sde:cost:functional}--\eqref{eq:bar:sde:control:problem}
in the arbitrary environment ${\boldsymbol X}^\star(\cdot)$; this is the point where we feel that using the Pontryagin principle is simpler.

\end{remark}

\subsection{Interpretation as an asymptotic game}
\label{subse:connection:game:statement}

Classical
 MFGs arise as 
 asymptotic versions of games with 
 a large number of players.
 Similarly, 
 a natural question here is to
address the interpretation of the 
randomized MFG defined above 
as the limiting version of a large game (with finitely many players).  
Generally speaking, there are two ways to make the connection 
between mean-field games and finite games: 
The first one 
is to prove that equilibria of the finite games (if they do exist) 
converge to a solution of the limiting mean-field game, 
see for instance 
 \cite{CardaliaguetDelarueLasryLions}
 for the convergence of closed-loop equilibria
 and 
 \cite{Lacker_limits}
 for the convergence of open-loop equilibria; the second one is to prove that 
 any solution to the limiting game 
 induces a sequence of approximate Nash equilibria to the corresponding 
 finite games, 
 see for instance
\cite{Cardaliaguet,CarmonaDelarue_sicon,HuangCainesMalhame2}
for earlier references in that direction. It turns out that, for standard mean-field games, the second approach is (much) 
easier to implement than the first one; for that reason, this is that one that we try to adapt below, see however 
Remark 
\ref{rem:convergence:directe} about the possible implementation of the first approach. 

In comparison with the standard case, there are two main differences between our framework and the aforementioned references. The first one is that 
the limiting system is perturbed by an infinite dimensional noise, which 
should be called 
``an infinite dimensional common noise''. 
This terminology is frequently used in the theory of MFGs to emphasize the fact that 
the law of the population feels the realization of the noise,
as opposed to more standard cases where the law of the population 
is defined as the average over all the possible realizations of the noise, see for instance 
\cite{CardaliaguetDelarueLasryLions,CarmonaDelarueLacker} and the book \cite{CarmonaDelarue_book_II}. 
The second feature is the presence of local interactions due to the Laplacian in the dynamics
\eqref{eq:mkv:sde} (see also the SPDE 
\eqref{eq:spde}). 

In order to describe the corresponding finite games, we proceed as follows.
We consider $N A_{N}$ particles (with state in $\RR^d$) that are uniformly distributed 
all along the $N$ roots of unity of order $N$, 
with exactly $A_{N}$ particles per root, where 
$A_{N} \in \NN^*$.  
States of the $N A_{N}$ particles at time $t$ are denoted by 
$(X_{t}^{k,j})_{k=0,\cdots,N-1;j=1,\cdots,A_{N}}$. 
The index $k$ is understood as a label for the position (or the site) of the 
particle $(k,j)$
on the unit circle: 
it is located at point with angle $2\pi k /N$. 
In particular (and it is important for 
the sequel), the set of indices
for the location of the site
may be identified with ${\mathbb Z}/N {\mathbb Z}$; sometimes, we
thus use the notation $X_{t}^{k+\ell N,j}$
for $X_{t}^{k,j}$,
for $k \in \{0,\dots,N-1\}$ and $\ell \in {\mathbb Z}$. 
In the notation $X_{t}^{k,j}$, 
$j$ stands for the label of the particle at the site $k$, since that there are 
$A_{N}$ particles at the site $k$.

The dynamics of each particle is controlled, each particle $(k,j)$ having dynamics of the form
 \begin{equation*}
 d X_{t}^{k,j} =
 \Bigl( b \bigl( \bar{\mu}_{t}^N 
 \bigr) + \alpha_{t}^{k,j}
 + N^2
 \bigl( \bar X^{k+1}_{t} + \bar X^{k-1}_{t} - 2 \bar X^{k}_{t}
 \bigr) 
 \Bigr) dt + \sqrt N dB_{t}^{k},
 \end{equation*}
with $$\bar X^{k}_{t} = \frac1{A_{N}} \sum_{j=1}^{A_{N}} X^{k,j}_{t},$$
 and
$X^{k,j}_{0}=\bar{X}^k_{0}$ for all $j \in \{1,\cdots,A_{N}\}$, where 
 $(\bar X_{0}^{k})_{k=0,\cdots,N-1}$
 are given by the following finite volume approximation of $X_{0}(\cdot)$ (which is here assumed to be independent of 
 $\omega_{0}$):
 \begin{equation*}
 \bar X_{0}^k = N \int_{k/N}^{(k+1)/N}
 X_{0}(x) dx, \quad k =0,\cdots,N-1,
 \end{equation*}
whilst the noises 
 $(\bmf{B}^{k}= (B_{t}^{k})_{0 \leq t \leq T})_{k=0,\cdots,N-1}$ are independent $d$-dimensional Brownian motions on 
 the interval $[0,T]$ with the following definition:
 \begin{equation*}
 B_{t}^k = \sqrt{N} \int_{k/N}^{(k+1)/N}
 W_{t}(dx). 
 \end{equation*}
The random variables 
$(\bar X_{0}^{k})_{k=0,\cdots,N-1}$ are
thus constructed on 
the space $(\SS^1,{\mathcal L}(\SS^1),\textrm{\rm Leb}_{1})$
whilst the processes 
$(\bmf{B}^{k}= (B_{t}^{k})_{0 \leq t \leq T})_{k=0,\cdots,N-1}$
are constructed on the space $(\Omega_{0},\cA_{0},\PP_{0})$, as defined in 
Subsection \ref{se:1:subse:enlarged}. 

Above $\bar{\mu}^N_{t}$ is the empirical distribution 
 \begin{equation*}
 \bar{\mu}^N_{t} = 
 \frac{1}{N A_{N}}
 \sum_{k=0}^{N-1}
 \sum_{j=1}^{A_{N}}
 \delta_{X_{t}^{k,j}}. 
 \end{equation*}
Processes $(\balpha^{k,j}= (\alpha^{k,j}_{t})_{0 \leq t \leq T})_{k=0,\cdots,N-1;j=1,\cdots,A_{N}}$
 are controls with values in $\RR^d$; they are progressively-measurable 
 with respect to 
 the filtration generated by the cylindrical white noise $(W_{t}(\cdot))_{0 \le t \le T}$. 
 Controls are required to satisfy
 \begin{equation*} 
\EE \int_{0}^T  \vert \alpha_{t}^{k,j} \vert^2 dt < \infty.
\end{equation*}
\vskip 4pt

We assign to player
$(k,j)$ the following cost functional 
\begin{equation*}
J^{k,j}\bigl((\balpha^{k',j'})_{k'=0,\cdots,N-1;j'=1,\cdots,A_{N}}\bigr)
= \E \Bigl[ g\bigl(X_{T}^{k,j},\bar{\mu}^N_{T}\bigr)
+ \int_{0}^T 
\Bigl(
f\bigl(X_{t}^{k,j}, \bar{\mu}^N_{t}\bigr)
+ \frac12 \vert \alpha_{t}^{k,j} \vert^2
\Bigr) dt \Bigr].
\end{equation*}
\vspace{5pt}

Recall that we call an open-loop Nash equilibrium a tuple 
$(\balpha^{\star k,j}=(\alpha^{\star k,j}_{t})_{0 \leq t \leq T})_{k=0,\cdots,N-1;j=1,\cdots,A_{N}}$ such that, 
for any $(k_{0},j_{0}) \in \{0,\cdots,N-1\} \times \{1,\cdots,A_{N}\}$, 
for any control $\balpha^{k_{0},j_{0}} = (\alpha^{k_{0},j_{0}}_{t})_{0 \leq t \leq T}$,
$J^{k_{0},j_{0}}((\bbeta^{k,j})_{k=0,\cdots,N-1;j=1,\cdots,A_{N}}) \geq J^{k_{0},j_{0}} ((\balpha^{\star k,j})_{k=0,\cdots,N-1;j=1,\cdots,A_{N}})$, where 
$\bbeta^{k,j} = \balpha^{\star k,j}$ if $(k,j) \not = (k_{0},j_{0})$
and $\bbeta^{k_{0},j_{0}} = \balpha^{k_{0},j_{0}}$. 

The following statement shows that we can construct an approximated Nash equilibrium from the solution to problem 
\eqref{eq:mkv:sde:0} (compare for instance with 
\cite{Cardaliaguet,CarmonaDelarue_sicon,HuangCainesMalhame2}
and \cite[Chapter 6]{CarmonaDelarue_book_II}).

\begin{theorem}
\label{thm:approx:nash}
On top of 
Assumption {\bf (A)}, assume that 
$f$ and $g$ are Lipschitz continuous in $\mu$, uniformly in $x$. 
Assume also that
the sequence $(A_{N})_{N \in \NN^*}$ tends to $\infty$ with $N$. 
For a (deterministic) initial condition $X_{0}(\cdot) \in L^2(\SS^1;\RR^d)$, 
call $({\boldsymbol X}(\cdot),{\boldsymbol Y}(\cdot),{\boldsymbol Z}(\cdot))$
the solution to 
\eqref{eq:mkv:sde}.
Then, there exists a sequence of positive reals 
$(\varepsilon_{N})_{N \in \NN^*}$ converging to $0$ as $N$ tends to $\infty$ such that, with 
\begin{equation*}
\alpha^{\star k,j}_{t} = N \int_{(k-1)/N}^{k/N}
Y_{t}(x) dx, \quad t \in [0,T],
\end{equation*}
for all $k \in \{0,\cdots,N-1\}$ and $j \in \{1,\cdots,A_{N}\}$,
it holds, for any $k_{0} \in \{0,\cdots,N-1\}$ and 
$j_{0} \in \{1,\cdots,A_{N}\}$, 
and 
for any control ${\boldsymbol \alpha}^{k_{0},j_{0}}= (\alpha^{k_{0},j_{0}}_{t})_{0 \leq t \leq T}$,
\begin{equation*}
J^{k_{0},j_{0}}\bigl((\bbeta^{k,j})_{k=0,\cdots,N-1;j=1,\cdots,A_{N}}\bigr) \geq J^{k_{0},j_{0}} \bigl((\balpha^{\star k,j} _{k=0,\cdots,N-1;j=1,\cdots,A_{N}}\bigr)
- \varepsilon_{N},
\end{equation*}
where
$\bbeta^{k,j} = \balpha^{\star k,j}$ if $(k,j) \not = (k_{0},j_{0})$
and $\bbeta^{k_{0},j_{0}} = \balpha^{k_{0},j_{0}}$.
\end{theorem}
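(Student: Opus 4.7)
The plan is to follow the standard route for proving that an MFG equilibrium induces approximate Nash equilibria in the corresponding finite games. The steps are: (i) establish a propagation of chaos when every player uses the MFG-derived feedback, so that the empirical environment $\bar{\mu}^{N}$ converges to $\bmu$; (ii) show that a single deviation perturbs this environment by $o(1)$, reducing the deviator's problem to the auxiliary single-player problem \eqref{eq:bar:sde:cost:functional}--\eqref{eq:bar:sde:control:problem}; (iii) invoke the sufficient Pontryagin principle from the proof of Proposition \ref{prop:SMP} to rule out profitable deviations. The specific features to deal with here are the infinite-dimensional common noise entering through the $(\bmf{B}^{k})_{k}$ and the stiff nearest-neighbour interactions generated by $N^{2}(\bar{X}^{k+1}+\bar{X}^{k-1}-2\bar{X}^{k})$.

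When every player follows $\balpha^{\star k,j}$, the $A_{N}$ particles sharing a given site have identical controls, identical initial conditions $\bar{X}^{k}_{0}$, and identical driving noises $\sqrt{N}\bmf{B}^{k}$; hence $X^{k,j}_{t}=\bar{X}^{k}_{t}$ almost surely and the site averages $(\bar{X}^{k}_{t})_{k=0,\dots,N-1}$ solve exactly the finite-volume scheme of mesh $1/N$ for the limiting SPDE \eqref{eq:spde}. Using semi-group estimates for the discrete OU operator $N^{2}\Delta_{N}$, whose eigenvalues $-4N^{2}\sin^{2}(\pi k/N)$ approximate those of $\partial_{x}^{2}$ on $L^{2}(\SS^{1};\R^{d})$, together with the Lipschitz continuity of $\mathfrak{B}$ and of the decoupling field ${\mathcal U}$ granted by Theorem \ref{thm:pde}, I would show that the piecewise-constant function built from $(\bar{X}^{k}_{t})_{k}$ converges to $X_{t}(\cdot)$ in $L^{2}(\SS^{1};\R^{d})$ uniformly in $t$. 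This yields $\sup_{t}\EE[W_{2}^{2}(\bar{\mu}^{N}_{t},\mu_{t})]\to 0$ with $\mu_{t}=\textrm{Leb}_{1}\circ X_{t}(\cdot)^{-1}$, and by Lipschitz continuity of $f,g$ in $\mu$, the cost $J^{k,j}$ evaluated at the equilibrium profile converges, uniformly in $(k,j)$, to the limiting cost from Definition~\ref{def:randomized:MFG}.

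For the deviation analysis, fix $(k_{0},j_{0})$ and a deviating control $\balpha^{k_{0},j_{0}}$. Since the running cost is quadratic in the control, I may restrict without loss to controls with $\EE\int_{0}^{T}|\alpha^{k_{0},j_{0}}_{t}|^{2}dt$ bounded independently of $N$ (otherwise the deviator's cost already exceeds the equilibrium one by much more than $\varepsilon_{N}$). A single deviation modifies the deviator's own state $X^{k_{0},j_{0}}$ by an order-one amount, but it disturbs the site average $\bar{X}^{k_{0}}$ only by $O(1/A_{N})$ in $L^{2}$; this perturbation is dissipated across sites by $N^{2}\Delta_{N}$ and remains $O(1/A_{N})$ globally, so that $\bar{\mu}^{N}$ is modified by $O(1/(NA_{N}))$ in $W_{2}$. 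Since $A_{N}\to\infty$, the deviator asymptotically faces the random environment $(\bmu,{\boldsymbol U})$ of Definition \ref{def:randomized:MFG}, and its cost is within $o(1)$ of the value of the single-player stochastic control problem \eqref{eq:bar:sde:cost:functional}--\eqref{eq:bar:sde:control:problem}. Since the MFG-optimal control minimizes the latter by the sufficient Pontryagin principle under Assumption \textbf{(A)}, no deviation can improve upon the equilibrium cost by more than some $\varepsilon_{N}\to 0$.

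The main obstacle is the uniform convergence of the finite-volume scheme towards the SPDE \eqref{eq:spde}: the limiting $\bY(\cdot)$ has in general only $L^{2}$ regularity in $x$, so the approximations $N\!\int_{(k-1)/N}^{k/N}Y_{t}(x)dx$ converge to $Y_{t}(\cdot)$ only in a spatially-averaged sense, and upgrading this to a sharp $L^{2}$-estimate on the finite-volume solutions requires exploiting the smoothing effect of the discrete OU semi-group. A closely related and equally delicate issue is to propagate these stability estimates \emph{uniformly} in deviations, since a deviating control is only bounded in $L^{2}$: the convex structure of the Hamiltonian under Assumption~\textbf{(A)} and the truncation argument above should suffice, but the bookkeeping is intricate because the deviator's state is driven partly by the stiff discrete Laplacian rather than by the clean OU noise of the limiting problem.
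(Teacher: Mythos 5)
Your outline is broadly right (discretization of the forward dynamics, control of the deviation's back-reaction, Pontryagin sufficiency, and truncation of controls with large $L^{2}$-norm), and it overlaps substantially with the paper's proof. The paper organizes the argument a little differently: rather than showing that the deviator's cost in the finite game converges to the value of the limiting auxiliary problem \eqref{eq:bar:sde:cost:functional}--\eqref{eq:bar:sde:control:problem} and then invoking Proposition~\ref{prop:SMP}, it runs the convexity/Pontryagin computation directly on the $N$-player game with the MFG costate $\bar Y^{k_{0}}$ inserted as a near-adjoint, and bounds the remainders $\delta^{N}_{t}$, $\delta'_{N}$ explicitly. This bookkeeping difference is harmless; both routes reduce to the same key estimates.

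The genuine gap is the treatment of the stiff local coupling. You assert that the single-player deviation, which disturbs the site average $\bar\chi^{k_{0}}$ by $O(1/A_{N})$, is ``dissipated across sites by $N^{2}\Delta_{N}$ and remains $O(1/A_{N})$ globally.'' But this is exactly what has to be proven, and it is where the whole difficulty of the local interactions sits. The Pontryagin remainder --- or, in your decomposition, the discrepancy between the discrete coupling the deviator faces and the OU forcing $U$ of Definition~\ref{def:randomized:MFG} --- contains the quantity $\sup_{x}\bigl\vert\Delta^{(N)}(\bar X^{\star}_{t}-\bar\chi_{t})(x)\bigr\vert$. Applying $\Delta^{(N)}$, which is of size $N^{2}$, to an $O(1/A_{N})$ perturbation gives \emph{a priori} $O(N^{2}/A_{N})$, which does not vanish unless $A_{N}\gg N^{2}$, and the theorem requires nothing of the sort beyond $A_{N}\to\infty$. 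The rescue is a genuinely quantitative fact (the paper's Proposition~\ref{stability:interaction}): since the injected control discrepancy $\bar\varrho_{s}=(\gamma_{s}+\bar Y^{k_{0}}_{s})/A_{N}$ is supported on a single cell $[k_{0}/N,(k_{0}+1)/N)$, the time-integral of $\Delta^{(N)}e^{(t-s)\Delta^{(N)}}$ applied to it is $O(1)$ uniformly in $N$ thanks to the explicit Fourier decay $\sum_{n}\sin^{2}(\pi n/N)/(\pi n)^{2}\sim 1/N$, so that the whole quantity is $O(1/A_{N})$ with no loss from the $N^{2}$ factor. Your proposal does not produce this calculation and gives no alternative for it, which leaves the central estimate unproved. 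As a minor remark, your concern that $Y_{t}(\cdot)$ has only $L^{2}$ spatial regularity is not an actual obstruction: the discretization estimate (Theorem~\ref{thm:discretization}) is proven in $\sup_{x}\EE_{0}[\cdot]$ norm using the explicit heat-kernel structure and never needs pointwise regularity of $Y$, while boundedness and $L^{2}$-Lipschitz continuity of the decoupling field (Lemma~\ref{lem:bound:pp} and Theorem~\ref{thm:pde}) are what are actually used.
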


\begin{remark}
\label{rem:convergence:directe}
Theorem 
\ref{thm:approx:nash} must be regarded as a way to connect the 
problem 
\eqref{eq:mkv:sde:0} with a game of the same flavor as what appears in standard mean field game theory. 
In this regard, the assumption that $b$, $f(0,\cdot)$ and $g(0,\cdot)$ are at most of linear growth (with respect to $M_{2}(\mu)$)
is mostly for convenience. Also, 
it must be emphasized that it is not the only way to make the connection. Another way would be to construct an approximate Nash equilibrium in a closed-loop form, as usually done in mean field games. We assert that it should be indeed possible provided that we let:
\begin{equation*}
\alpha^{\star k,j}_{t} = N \int_{(k-1)/N}^{k/N}
{\mathcal U}\bigl(t,\bar{X}_{t}(\cdot)\bigr)(x) dx, \quad t \in [0,T],
\end{equation*}
with the notation
\begin{equation*}
\bar{X}_{t}(\cdot) = \sum_{k=0}^{N-1} \bar{X}_{t}^k {\mathbf 1}_{[k/N,(k+1)/N)}(\cdot)
=
\frac1N
\sum_{k=0}^{N-1}
\sum_{j=1}^N  X_{t}^{k,j} {\mathbf 1}_{[k/N,(k+1)/N)}(\cdot), \quad t \in [0,T],
\end{equation*}
which means that 
\begin{equation*}
 d X_{t}^{k,j} =
 \biggl( b \bigl( \bar{\mu}_{t}^N 
 \bigr) + 
 N \int_{(k-1)/N}^{k/N}
{\mathcal U}\bigl(t,\bar{X}_{t}(\cdot)\bigr)(x) dx
 + N^2
 \bigl( \bar X^{k+1}_{t} + \bar X^{k-1}_{t} - 2 \bar X^{k}_{t}
 \bigr) 
 \biggr) dt + \sqrt N dB_{t}^{k}.
 \end{equation*}

As the paper is already quite long, we feel better to focus on the construction of an approximated Nash equilibrium over open-loop form controls only, which is in fact slightly simpler. 

Another strategy would be to address the convergence of the Nash equilibria of the finite player game (if they do exist) to the solution of 
\eqref{eq:mkv:sde:0}. Describing the dynamics of the equilibria to the finite player game by means of Pontryagin's principle and then using the master equation 
\eqref{eq:se:2:PDE:infinite}, we could indeed implement the same strategy as that used in 
\cite{CardaliaguetDelarueLasryLions} for standard mean field games, but this would require first to improve Theorem 
\ref{thm:pde}
and to prove further regularity properties of ${\mathcal U}$. Again, we feel better to postpone this equation to further works. 

Last, we mention that the condition $A_{N} \rightarrow \infty$ is absolutely crucial. It is must be regarded as a way to freeze the influence of the local interaction in the dynamics between the particles; this is the key fact to restore a mean field limit despite the local interactions. 
\end{remark}

\section{Proofs of Theorems 
\ref{thm:existence:uniqueness}
and 
\ref{thm:pde}}
\label{se:proof}

We now prove Theorems 
\ref{thm:existence:uniqueness}
and 
\ref{thm:pde}.

\subsection{Small time analysis}
We start with the case when $T$ is small enough. 

\begin{theorem}
\label{eq:small:time}
There exists a constant $c$, only depending on the Lipschitz constant of the coefficients 
${\mathfrak b}_{0}$, ${\mathfrak F} = D {\mathfrak f}_{0}$
and ${\mathfrak G} = D {\mathfrak g}_{0} $ such that, 
for $T \leq c$, the system 
\eqref{eq:mkv:sde}
is uniquely solvable for any initial 
condition 
$X_{0}(\cdot) \in L^2(\Omega_{0},{\mathcal F}_{0,0},\P_{0};L^2(\SS^1;\R^d))$. 
This permits to define the decoupling field 
${\mathcal U}$ 
as in 
\eqref{eq:decoupling}. 
It maps 
$L^2({\mathbb S}^1;\R^d)$ into itself.
Then, there exists a constant $\Lambda$, only depending 
on the bound of the coefficients
${\mathfrak b}_{0}$, ${\mathfrak F} = D {\mathfrak f}_{0}$ such that, 
for $T \leq c$,
\begin{equation*}
\sup_{0 \leq t \leq T}
\sup_{\ell \in L^2({\mathbb S}^1;\R^d)}
\| {\mathcal U}(t,\ell) 
\|_{L^2({\mathbb S}^1;\R^d)} \leq 
\sup_{\ell \in L^2({\mathbb S}^1;\R^d)}
\| {\mathfrak G}(\ell) 
\|_{L^2({\mathbb S}^1;\R^d)}
+ \Lambda T^2. 
\end{equation*}
Moreover, there exists a constant $C$, only depending 
on the Lipschitz constant of the coefficients 
${\mathfrak b}_{0}$, ${\mathfrak F} = D {\mathfrak f}_{0}$
and ${\mathfrak G} = D {\mathfrak g}_{0}$ such that, 
for $T \leq c$, 
for any $t \in [0,T]$,
${\mathcal U}(t,\cdot)$ is $C$ Lipschitz continuous. In particular, 
${\mathcal U}$ satisfies Lemma \ref{lem:decoupling:field}.
\end{theorem}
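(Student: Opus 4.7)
The plan is a Banach fixed-point (Picard) argument adapted to the forward-backward structure of \eqref{eq:mkv:sde}, followed by extraction of the decoupling field from the resulting unique solution. I would work in the Hilbert space ${\mathcal S}^2_T$ of $\mathbb{F}_0$-progressively measurable, $L^2(\mathbb{S}^1;\mathbb{R}^d)$-valued processes $\bY$ equipped with the norm $\|\bY\|_{{\mathcal S}^2_T}^2 = \mathbb{E}_0 \int_0^T \|Y_t(\cdot)\|^2_{L^2(\mathbb{S}^1;\mathbb{R}^d)} dt$. Given an input $\bY \in {\mathcal S}^2_T$, first solve the forward equation in \eqref{eq:mkv:sde} as a mild SPDE
\[
X_t(\cdot) = S_t X_0(\cdot) + \int_0^t S_{t-s}\bigl[\mathfrak{B}(X_s(\cdot)) - Y_s(\cdot)\bigr] ds + \int_0^t S_{t-s}\, dW_s(\cdot),
\]
where $(S_t)_{t \geq 0}$ is the heat semigroup on $L^2(\mathbb{S}^1;\mathbb{R}^d)$ generated by $\partial_x^2$ (its $n$th Fourier multiplier being $e^{-(2\pi n)^2 t}$, matching \eqref{eq:mkv:sde}). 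The stochastic convolution is genuinely $L^2(\mathbb{S}^1;\mathbb{R}^d)$-valued thanks to the summability $\sum_{n\geq 1}(2\pi n)^{-2} < \infty$, and Lipschitz continuity of $\mathfrak{B}$ yields a unique continuous mild solution. Then define $\Phi(\bY) = \hat\bY$ by solving the linear backward equation
\[
\hat Y_t(\cdot) = \mathbb{E}_0\Bigl[\,\mathfrak{G}(X_T(\cdot)) + \int_t^T \mathfrak{F}(X_s(\cdot))\, ds\,\Big|\,\mathcal{F}_{0,t}\Bigr],
\]
with $\bZ$ produced by the martingale representation against the cylindrical Wiener process.

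For the contraction, given inputs $\bY^1,\bY^2$ and associated forwards $\bX^1,\bX^2$, contractivity of $S_\cdot$ on $L^2(\mathbb{S}^1;\mathbb{R}^d)$, the Lipschitz bound on $\mathfrak{B}$ and Gr\"onwall yield $\mathbb{E}_0 \sup_{t\leq T}\|X^1_t-X^2_t\|^2 \leq C_1 \|\bY^1-\bY^2\|^2_{{\mathcal S}^2_T}$. Then Lipschitz continuity of $\mathfrak{G},\mathfrak{F}$, conditional Jensen and Doob give $\|\Phi(\bY^1)-\Phi(\bY^2)\|^2_{{\mathcal S}^2_T} \leq C_2 T\,\mathbb{E}_0\sup_{s\leq T}\|X^1_s-X^2_s\|^2$, so the composition contracts with coefficient of positive order in $T$. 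Taking $c$ such that $C_1 C_2 c < 1$ --- and $c$ depends only on the Lipschitz constants of $\mathfrak{B},\mathfrak{F},\mathfrak{G}$ --- makes $\Phi$ strictly contracting for $T \leq c$, and the Banach theorem delivers the unique solution to \eqref{eq:mkv:sde}.

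Now extract the decoupling field. For fixed $(t,\ell) \in [0,T] \times L^2(\mathbb{S}^1;\mathbb{R}^d)$, the system \eqref{eq:mkv:sde} run on $[t,T]$ with the deterministic initial state $\ell$ is driven only by the future increments $(W^{n,\pm}_s - W^{n,\pm}_t)_{s\geq t}$; enlarging to the filtration they generate makes $Y_t^{t,\ell}$ almost-surely deterministic, and we set ${\mathcal U}(t,\ell) := Y_t^{t,\ell}$. The supremum bound follows directly from the backward representation
\[
\bigl\|{\mathcal U}(t,\ell)\bigr\|_{L^2} \leq \bigl\|\mathbb{E}_0\mathfrak{G}(X_T^{t,\ell})\bigr\|_{L^2} + \Bigl\|\mathbb{E}_0\!\int_t^T \mathfrak{F}(X_s^{t,\ell})\,ds\Bigr\|_{L^2},
\]
where the first term is at most $\sup_\ell\|\mathfrak{G}(\ell)\|_{L^2}$ and the second is controlled by the sup bound on $\mathfrak{F}$ combined with the forward displacement estimate from the contraction step, the remaining time factors being absorbed into $\Lambda$ to produce the stated bound. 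Lipschitz continuity of ${\mathcal U}(t,\cdot)$ is obtained by reapplying the stability estimate of the contraction with two initial conditions $\ell,\ell'$ in place of two inputs: one gets $\|{\mathcal U}(t,\ell)-{\mathcal U}(t,\ell')\|_{L^2} \leq C\|\ell-\ell'\|_{L^2}$ with $C$ uniform in $T \leq c$ and $t \in [0,T]$. Lemma \ref{lem:decoupling:field} then follows from uniqueness: for a general initial condition $X_0(\cdot)$, conditioning the FBSDE on $\mathcal{F}_{0,t}$ and invoking uniqueness on $[t,T]$ started from $X_t(\omega_0)$ with the shifted noise yields $Y_t = {\mathcal U}(t,X_t)$ $\mathbb{P}_0$-almost surely.

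The main obstacle I expect is the analytic setup of the forward mild SPDE in $L^2(\mathbb{S}^1;\mathbb{R}^d)$ together with the compatibility between two viewpoints on the $\bZ$-process: on one side the mode-by-mode backward SDEs of Definition \ref{def:mkv:sde} with the summability condition $\mathbb{E}\sum_{k,n}\int_0^T |Z^{n,k,\pm}_t|^2 dt < \infty$, and on the other the martingale representation in the $L^2(\mathbb{S}^1;\mathbb{R}^d)$-valued backward SDE. Verifying that the cylindrical stochastic convolution $\int_0^t S_{t-s}\,dW_s(\cdot)$ is genuinely $L^2(\mathbb{S}^1;\mathbb{R}^d)$-valued (via $\sum_n (2\pi n)^{-2} < \infty$) and that the mode-decomposition of the resulting $Z$-process yields the required summability is the genuinely non-textbook step; once that infrastructure is in place, the small-time contraction, the bound on ${\mathcal U}$, its Lipschitz continuity and Lemma \ref{lem:decoupling:field} are all standard consequences.
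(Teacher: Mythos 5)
Your approach matches the paper's: a Picard fixed point in small time, then extraction of the decoupling field by running the system from a deterministic initial condition at time $t$, with the Lipschitz bound coming from the stability estimate. The packaging differs cosmetically: the paper iterates the map $(\bX,\bY)\mapsto(\tilde\bX,\tilde\bY)$ on the pair, with $\tilde\bX$ driven by the input $\bY$ and $\tilde\bY$ driven by the input $\bX$, using the $\sup$-in-time norm; you iterate on $\bY$ alone via the sequential composition forward-solve then backward-solve, with an $L^2$-in-time norm, and you formulate the forward as a mild SPDE rather than working mode by mode. These are equivalent (a fixed point of one is a fixed point of the other, and the eigenbasis $\{e^{n,\pm}\}$ diagonalizes $\partial_x^2$), so the substance is the same; what the mode-by-mode route buys the paper is that the square-summability $\mathbb{E}\sum_{n,k}\int_0^T|Z_t^{n,k,\pm}|^2\,dt<\infty$ required by Definition~\ref{def:mkv:sde} falls out of the contraction estimate without extra work, whereas your mild-SPDE-plus-martingale-representation viewpoint needs the extra identification you flag in your final paragraph.

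Two small remarks. First, your computation gives $\|\mathcal{U}(t,\ell)\|\leq\sup_\ell\|\mathfrak{G}(\ell)\|+\Lambda T$, not the stated $\Lambda T^2$; inspecting the paper's own inequality \eqref{eq:bound:Y:L2}, the $T^2$ there sits in a bound on the \emph{squared} norm $\sum_n|\tilde Y_t^{n,\pm}|^2$, so on the non-squared norm the exponent should indeed be $T$. This looks like a slip in the paper's statement rather than a gap in your argument, and it is harmless downstream (Lemma~\ref{lem:bound:pp} and the long-time iteration only use boundedness). Second, your verification of Lemma~\ref{lem:decoupling:field} is too terse: ``conditioning on $\mathcal{F}_{0,t}$'' does not literally apply to a coupled forward-backward system, and passing from deterministic $\ell$ to general random $X_0(\cdot)$ requires an explicit approximation. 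The paper does this in three stages: first for simple initial conditions $\sum_i\mathbf{1}_{A_i}\ell_i$ by disjointness, then for compactly supported laws by density together with the stability estimate \eqref{eq:FB:stability}, then for general $X_0(\cdot)$ via Fourier cut-off. Your uniform Lipschitz bound on $\mathcal{U}$ is exactly what makes this density argument close, so the ingredients are present, but you should actually run the approximation.
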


\begin{remark}
\label{rem:changement:condition:terminale:T:petit}
We let the reader check that the above result remains true if ${\mathfrak G}$ is not given as
the gradient of ${\mathfrak g}_{0}$, but is a general bounded and Lipschitz continuous function
 from $L^2(\SS^1;\RR^d)$ into itself. 
 \end{remark}

\begin{proof}
The proof is quite standard in the finite dimensional 
framework. We give the sketch of it, insisting on 
the differences between the infinite-dimensional and finite-dimensional cases. 
\vskip 5pt

\textit{First step.} Existence and uniqueness in small time follow from the application of Picard's fixed point theorem. 
We consider 
the space ${\mathcal S}$ of 
processes $(\bX(\cdot),\bY(\cdot))=(X_{t}(\cdot),Y_{t}(\cdot))_{0 \leq t \leq T}$ 
with values in $L^2({\mathbb S}^1;\RR^d) \times 
L^2({\mathbb S}^1;\RR^d)$,
that are
${\mathbb F}_{0}$-adapted
with continuous paths 
and that satisfy
\begin{equation*}
{\mathbb E}_{0}
\bigl[
\sup_{0 \leq t \leq T}
\bigl(
\| X_{t}(\cdot) \|_{L^2(\SS^1;\RR^d)}^2
+ 
\| Y_{t}(\cdot) \|_{L^2(\SS^1;\RR^d)}^2
\bigr) 
\bigr] < \infty. 
\end{equation*}
Given the initial condition $X_{0}(\cdot) \in L^2(\Omega_{0},{\mathcal F}_{0,0},\P_{0};L^2(\SS^1;\R^d))$, 
we then call $\Phi$ the function that maps 
$(\bX(\cdot),\bY(\cdot))=(X_{t}(\cdot),Y_{t}(\cdot))_{0 \leq t \leq T}$ onto 
the pair 
$(\tilde{\bX}(\cdot),\tilde{\bY}(\cdot)) = (\tilde{X}_{t}(\cdot),\tilde{Y}_{t}(\cdot))_{0 \leq t \leq T}$
satisfying 
\begin{equation*}
\begin{split}
&d\tilde X_{t}^{n,\pm}
= 
\Bigl( 
{\mathbf 1}_{(n,\pm)=(0,+)}
{\mathfrak b}_{0}\bigl(\tilde X_{t}(\cdot)\bigr) 
- Y_{t}^{n,\pm} - (2 \pi n)^2 
 \tilde X_{t}^{n,\pm}
 \Bigr) dt + dW_{t}^{n,\pm},
 \\
 &d \tilde{Y}_{t}^{n,\pm}
 = 
 - D_{n,\pm} {\mathfrak f}_{0}\bigl(X_{t}(\cdot),\textrm{Leb}_{1}
 \circ X_{t}(\cdot)^{-1}\bigr) dt 
 + \sum_{k \in \NN} \tilde{Z}^{n,k,\pm} dW_{t}^{k,\pm},
 \end{split}
\end{equation*}
with the terminal condition 
$\tilde{Y}_{T}^{n,\pm}
=
D_{n,\pm} {\mathfrak g}_{0}(X_{T}(\cdot),\textrm{Leb}_{1}
 \circ X_{T}(\cdot)^{-1})$. 
Obviously,  
the backward equation may be rewritten under the form:
 \begin{equation*}
 \begin{split}
 \tilde{Y}_{t}^{n,\pm}
&= {\mathbb E}_{0}
\Bigl[ 
 D_{n,\pm} {\mathfrak g}_{0}
 \bigl(X_{T}(\cdot), \textrm{Leb}_{1}
 \circ X_{T}(\cdot)^{-1}\bigr)
+ \int_{t}^T
D_{n,\pm} {\mathfrak f}_{0}
\bigl(X_{s}(\cdot),\textrm{Leb}_{1}
 \circ X_{s}(\cdot)^{-1}\bigr)
ds
\, 
\big\vert \, {\mathcal F}_{0,t}
\Bigr]. 
\end{split}
 \end{equation*}
 Taking the square and summing over $n \in \NN$, we deduce
 that
 \begin{equation*}
 \begin{split}
\sum_{n \in \NN}
 \vert \tilde{Y}^{n,\pm}_{t}
 \vert^2
 &
 \leq 
 \sum_{n \in \NN}
 \E_{0}
 \Bigl[ 
  \bigl\vert 
 D_{n,\pm} {\mathfrak g}_{0}
 \bigl(X_{T}(\cdot), \textrm{Leb}_{1}
 \circ X_{T}(\cdot)^{-1}\bigr)
 \bigr\vert^2 
 \\
 &\hspace{15pt} + T 
 \int_{t}^T
  \bigl\vert
 D_{n,\pm} {\mathfrak f}_{0}
 \bigl(X_{s}(\cdot), \textrm{Leb}_{1}
 \circ X_{s}(\cdot)^{-1}\bigr)
 \bigr\vert^2 
 ds \, \big\vert \, {\mathcal F}_{0,t}
 \Bigr].
 \end{split}
\end{equation*}
Since 
$D {\mathfrak f}_{0}(\cdot,\textrm{Leb}_{1} \circ \cdot^{-1})$
and
$D {\mathfrak g}_{0}(\cdot,\textrm{Leb}_{1} \circ \cdot^{-1})$
are bounded, we deduce that 
 \begin{equation}
 \label{eq:bound:Y:L2}
\sum_{n \in \NN}
 \vert \tilde{Y}^{n,\pm}_{t}
 \vert^2
 \leq \sup_{\ell \in L^2({\mathbb S}^1;\R^d)}
\|
 D {\mathfrak g}_{0}(\ell,\textrm{Leb}_{1} \circ \ell^{-1})
\|_{L^2({\mathbb S}^1;\RR^d)}
+ \Lambda T^2,
\end{equation}
for some deterministic $\Lambda \geq 0$. 

Consider now 
another input $
(\bX'(\cdot),\bY'(\cdot))
=(X_{t}'(\cdot),Y_{t}'(\cdot))_{0 \leq t \leq T}$
in ${\mathcal S}$ and call 
$(\tilde \bX{}'(\cdot),\tilde \bY{}'(\cdot))=(\tilde X_{t}'(\cdot),\tilde Y_{t}'(\cdot))_{0 \leq t \leq T}$
its image by $\Phi$. 
By the same argument as above, using in addition 
B\"urkholder-Davis-Gundy inequalities,
we get 
\begin{equation*}
\begin{split}
&{\mathbb E}_{0}
\bigl[ \sup_{0 \leq t \leq T}
\| \tilde{Y}_{t}(\cdot) - \tilde{Y}_{t}'(\cdot)
\|_{L^2({\mathbb S}^1;\R^d)}^2
\bigr]
\\
&\leq 
{\mathbb E}_{0}
\Bigl[ 
\bigl\| D {\mathfrak g}_{0}
\bigl({X}_{T}(\cdot),\textrm{Leb}_{1} \circ X_{T}(\cdot)^{-1}\bigr)
-
D {\mathfrak g}_{0}
\bigl({X}_{T}'(\cdot),\textrm{Leb}_{1} \circ X_{T}'(\cdot)^{-1}\bigr)
\bigr\|_{L^2({\mathbb S}^1;\R^d)}^2
\Bigr]
\\
&\hspace{15pt} + T
\int_{0}^T
{\mathbb E}_{0}
\Bigl[ 
\bigl\| D {\mathfrak f}_{0}
\bigl({X}_{s}(\cdot),\textrm{Leb}_{1} \circ X_{s}(\cdot)^{-1}\bigr)
-
D {\mathfrak f}_{0}
\bigl({X}_{s}'(\cdot),\textrm{Leb}_{1} \circ X_{s}'(\cdot)^{-1}\bigr)
\bigr\|_{L^2({\mathbb S}^1;\R^d)}^2
\Bigr]
ds.
\end{split}
\end{equation*}
Observe that 
$D {\mathfrak f}_{0}$ 
and 
$D {\mathfrak g}_{0}$ 
are Lipschitz continuous (from $L^2(\SS^1;\R^d)$ into itself). 
Deduce 
that there exists a constant $C \geq 0$,
only depending on the Lipschitz constants of the coefficients, 
such that, for $T \leq 1$, 
\begin{equation}
\label{eq:fixed:point:1}
\begin{split}
&{\mathbb E}_{0}
\bigl[ \sup_{0 \leq t \leq T}
\| \tilde{Y}_{t}(\cdot) - \tilde{Y}_{t}'(\cdot)
\|_{L^2({\mathbb S}^1;\R^d)}^2
\bigr]
\leq 
C
\sup_{0 \leq t \leq T}
{\mathbb E}_{0}
\bigl[ 
\| {X}_{t}(\cdot)  - {X}_{t}'(\cdot)
\|_{L^2({\mathbb S}^1;\R^d)}^2
\bigr]. 
\end{split}
\end{equation}
Proceeding in a similar way with the forward equation
and using the fact that the factor $(2 \pi n)^2$ in the dynamics is 
affected with a sign minus (so that it is a friction term), we get 
\begin{equation}
\label{eq:fixed:point:2}
\begin{split}
&{\mathbb E}_{0}
\bigl[ \sup_{0 \leq t \leq T}
\| \tilde{X}_{t}(\cdot) - \tilde{X}_{t}'(\cdot)
\|_{L^2({\mathbb S}^1;\R^d)}^2
\bigr]
\leq 
C T
\sup_{0 \leq t \leq T}
{\mathbb E}_{0}
\bigl[ 
\| {Y}_{t}(\cdot) - {Y}_{t}'(\cdot)
\|_{L^2({\mathbb S}^1;\R^d)}^2
\bigr]. 
\end{split}
\end{equation}
We easily deduce that 
$\Phi$ is a contraction in small time, 
which shows the existence of a unique fixed point. 
This shows that the system \eqref{eq:mkv:sde}
is uniquely solvable when $T \leq c$, for a constant 
$c$ that only depends on the Lipschitz constants of the coefficients. 
\vskip 4pt

\textit{Second step.}
Now that existence and uniqueness are known to hold true, 
we can define the decoupling field ${\mathcal U}$ in a standard way.
The key point is to observe that the system
\eqref{eq:mkv:sde}, when regarded under the initial condition 
$X_{t} = \ell$ at time $t \in [0,T]$ for some $\ell \in L^2(\SS^1;\R^d)$,
is also uniquely solvable when $T \leq c$
and that its solution, denoted by $((\bX^{t,\ell,n,\pm}=(X^{t,\ell,n,\pm}_{s})_{t \leq s \leq T})_{n \in \NN},(\bY^{t,\ell,n,\pm}=(Y^{t,\ell,n,\pm}_{s})_{t \leq s \leq T})_{n \in \NN},(\bZ^{t,\ell,n,\pm,k,\pm}=(Z^{t,\ell,n,\pm,k,\pm}_{s})_{t \leq s \leq T})_{n \in \NN,k \in \NN})$ is adapted with respect to 
the completion of the filtration generated by the collection of Wiener processes 
$((W^0_{s}-W^0_{t})_{t \leq s \leq T},((W^{n,\pm}_{s}- W^{n,\pm}_{t})_{t \leq s \leq T})_{n \in \NN^*})$. 
In particular, for each $n \in \NN$, the random variable $Y_{t}^{n,\pm,t,\ell}$
is almost surely deterministic. We then let 
\begin{equation*}
\cU^{n,\pm}(t,\ell) = Y_{t}^{t,\ell,n,\pm},
\end{equation*}
and 
\begin{equation*}
\cU(t,\ell) = \sum_{n \in \NN} \cU^{n,\pm}(t,\ell) e^{n,\pm}(\cdot)
\in L^2(\SS^1;\RR^d), \quad t \in [0,T], \quad \ell \in L^2(\SS^1;\R^d).
\end{equation*}
The bound for $\cU$ is a straightforward consequence of 
 \eqref{eq:bound:Y:L2}. 

As for the Lispchitz constant of ${\mathcal U}$, it follows again from a straightforward adaptation 
of 
\eqref{eq:fixed:point:1} and 
\eqref{eq:fixed:point:2}.
Indeed, for any two solutions 
$({\boldsymbol X}(\cdot),{\boldsymbol Y}(\cdot))$
and
$({\boldsymbol X}'(\cdot),{\boldsymbol Y}'(\cdot))$
to \eqref{eq:mkv:sde}, we have
\begin{equation*}
\begin{split}
&{\mathbb E}_{0}
\bigl[ \sup_{0 \leq t \leq T}
\| {Y}_{t}(\cdot) - {Y}_{t}'(\cdot)
\|_{L^2({\mathbb S}^1;\R^d)}^2
\bigr]
\\
&\leq 
C
\sup_{0 \leq t \leq T}
{\mathbb E}_{0}
\bigl[ 
\| {X}_{t}(\cdot)  - {X}_{t}'(\cdot)
\|_{L^2({\mathbb S}^1;\R^d)}^2
\bigr]
\\
&\leq 
C 
\Bigl( 
{\mathbb E}_{0}
\bigl[
\| {X}_{0}(\cdot)  - {X}_{0}'(\cdot)
\|_{L^2({\mathbb S}^1;\R^d)}^2
\bigr]
+
T
\sup_{0 \leq t \leq T}
{\mathbb E}_{0}
\bigl[ 
\| {Y}_{t}(\cdot) - {Y}_{t}'(\cdot)
\|_{L^2({\mathbb S}^1;\R^d)}^2
\bigr] 
\Bigr),
\end{split}
\end{equation*}
and then, for $T$ small enough,
\begin{equation}
\label{eq:FB:stability}
\begin{split}
&{\mathbb E}_{0}
\bigl[ \sup_{0 \leq t \leq T}
\| {Y}_{t}(\cdot) - {Y}_{t}'(\cdot)
\|_{L^2({\mathbb S}^1;\R^d)}^2
\bigr]
\leq 
C
{\mathbb E}_{0}
\bigl[
\| {X}_{0}(\cdot)  - {X}_{0}'(\cdot)
\|_{L^2({\mathbb S}^1;\R^d)}^2
\bigr].
\end{split}
\end{equation}
By performing the analysis on the interval $[t,T]$ 
instead of $[0,T]$ and by choosing $\bX(\cdot) = \bX^{t,\ell}(\cdot)= \sum_{n \in \NN}
\bX^{t,\ell,n,\pm} e^{n,\pm}(\cdot)$ and 
$\bX'(\cdot) = \bX^{t,\ell'}(\cdot)= \sum_{n \in \NN}
\bX^{t,\ell',n,\pm} e^{n,\pm}(\cdot)$
for two $\ell,\ell' \in L^2(\SS^1;\R^d)$, we deduce that ${\mathcal U}$ is Lipschitz continuous in the space variable.  

It remains to check that Lemma 
\ref{lem:decoupling:field} is satisfied. 
The argument is standard in the finite dimensional case, see for instance 
\cite{Delarue02};  
as for the infinite dimensional case, we refer 
to \cite[Chapter 5]{CarmonaDelarue_book_II}. So, we just provide a sketch of the proof. 
In fact, by regarding $t$ in the formula
\eqref{eq:repres:formula}
 as 
the initial time of the forward process, 
it suffices to focus on the case $t=0$ and to prove that, for any $X_{0}(\cdot) \in L^2(\Omega, {\mathcal F}_{0,0},\PP;L^2(\SS^1;\RR^d))$,
the unique solution 
$({\boldsymbol X}(\cdot),{\boldsymbol Y}(\cdot))$
to \eqref{eq:mkv:sde} satisfies
\begin{equation*}
Y_{0}(\cdot) = {\mathcal U}\bigl(0,X_{0}(\cdot) \bigr),
\end{equation*}
which is already known to be true 
when $X_{0}(\cdot)$ is deterministic, that is $X_{0}(\cdot) = \ell \in L^2(\SS^1;\RR^d)$.
It is easily checked that it remains true when $X_{0}(\cdot)$ is 
a random variable of the form 
\begin{equation}
\label{eq:discrete}
X_{0}(\cdot) = \sum_{i=1}^n {\mathbf 1}_{A_{i}} \ell_{i},
\end{equation} 
with $A_{i} \in {\mathcal F}_{0,0}$ and $\ell_{i} \in L^2(\SS^1;\RR^d)$ for all 
$i \in \{1,\cdots,n\}$; indeed, in that case, $Y_{0}(\cdot) =  
\sum_{i=1}^n {\mathbf 1}_{A_{i}} Y_{0}^{0,\ell_{i}}(\cdot)$. 
When the support of 
the law of $X_{0}(\cdot)$ is included in a compact subset of $L^2(\SS^1;\RR^d)$, 
we can approximate $X_{0}(\cdot)$ in $L^2(\Omega, {\mathcal F}_{0,0},\PP;L^2(\SS^1;\RR^d))$
by a sequence 
of random variables of the form \eqref{eq:discrete}. Using the 
fact that the representation formula \eqref{eq:repres:formula} holds true along the 
approximation sequence and 
using the
stability property \eqref{eq:FB:stability}, we deduce that the representation formula holds true when 
the law of $X_{0}(\cdot)$ is compactly supported. When $X_{0}(\cdot)$ is 
a general element in $L^2(\Omega, {\mathcal F}_{0,0},\PP;L^2(\SS^1;\RR^d))$, 
we can play the same game: We
can approximate $X_{0}(\cdot)$ by a sequence of compactly supported initial conditions of the form 
$(\sum_{k=0}^n \vartheta_{n}(X_{0}^{k,\pm}) e^{k,\pm})_{n \in \NN}$, where 
$(\vartheta_{n})_{n \in \NN}$ is a sequence of cut-off functions from $\RR^d$ into itself converging to the identity
uniformly on compact sets. 
\end{proof}

\subsection{Road map to existence and uniqueness in arbitrary time}
Our strategy for proving existence and uniqueness in arbitrary time is completely inspired from the finite dimensional case. 
The point is to apply iteratively Theorem 
\ref{eq:small:time} and to provide an \textit{a priori} bound for the Lipschitz constant of the decoupling field $\cU$ that holds true all along the induction. We refer to \cite{Delarue02} for a complete description of the induction procedure in the finite dimensional case. 

\subsubsection*{Change of measure}
Below, we mostly focus on the derivation of the \textit{a priori} bound 
for the Lipschitz constant of $\cU$. We start with the following observation.
For $T \leq c$ as in the statement 
of Theorem 
\ref{eq:small:time}, we can define the probability $\tilde{\P}_{0}$ 
on $\Omega_{0}$
by 
\begin{equation*}
\begin{split}
\frac{d \tilde{\P}_{0}}
{d \P_{0}}
&= \exp \biggl( - \int_{0}^T
\bigl\langle {\mathfrak B}(X_{t}(\cdot)) - 
Y_{t}(\cdot) \bigr) , dW_{t}
\bigr\rangle_{L^2(\SS^1;\R^d)}
- \frac{1}{2}
\int_{0}^T 
\bigl\| {\mathfrak B}(X_{t}(\cdot)) - 
Y_{t}(\cdot) \bigr\|_{L^2({\mathbb S}^1;\R^d)}^2
dt \biggr)
\\
&=
\exp \biggl( 
- \sum_{n \in \NN}
\int_{0}^T 
\bigl( 
{\mathbf 1}_{(n,\pm)=(0,+)} {\mathfrak b}_{0}(X_{t}(\cdot)) - 
Y_{t}^{n,\pm} \bigr) \cdot dW_{t}^{n,\pm}
\\
&\hspace{45pt} - 
\frac12 \sum_{n \in \NN}
\int_{0}^T 
\bigl\vert 
{\mathbf 1}_{(n,\pm)=(0,+)}
{\mathfrak b}_{0}(X_{t}(\cdot)) - 
Y_{t}^{n,\pm} \bigr\vert^2 dt
\biggr),
\end{split}
\end{equation*}
where ${\mathfrak B}$
is as in \eqref{eq:coeff:mkv:infinite}. 
Since 
${\mathfrak b}_{0}$
is bounded
and 
$\bY$ satisfies
 \eqref{eq:bound:Y:L2}, $\t \PP_{0}$ is a probability measure equivalent to $\PP_{0}$. Observe in particular that, for any $p \geq 1$,
 \begin{equation}
 \label{eq:bound:Girsanov:density}
\EE_{0}
\Bigl[ \Bigl( \frac{d \tilde{\PP}_{0}}{d \PP_{0}} \Bigr)^p \Bigr] < \infty.
\end{equation}
Of course, the bound  \eqref{eq:bound:Y:L2} remains true
under $\tilde{\PP}_{0}$. Observe also from the identity
\begin{equation*}
\sum_{k \in \NN}
\int_{0}^T Z^{n,k,\pm}_{s} dW_{s}^{k,\pm}
= Y_{T}^{n,\pm} - Y_{0}^{n,\pm} + \int_{0}^T
D {\mathfrak f}_{0}^{n,\pm}\bigl(X_{t}(\cdot),\textrm{\rm Leb}_{1}
\circ X_{t}(\cdot)^{-1}\bigr) dt
\end{equation*}
that, for any $p \geq 1$, 
\begin{equation*}
\EE_{0} \biggl[ \biggl( \sum_{k \in \NN} \int_{0}^T \vert Z^{n,k,\pm}_{s} \vert^2 
ds\biggr)^p \biggr] < \infty.
\end{equation*}
By  \eqref{eq:bound:Girsanov:density}, the same is true under $\t \PP_{0}$, that is 
\begin{equation}
\label{eq:bound:dZ:Girsanov}
\t \EE_{0} \biggl[ \biggl( \sum_{k \in \NN} \int_{0}^T \vert Z^{n,k,\pm}_{s} \vert^2 
ds\biggr)^p \biggr] < \infty.
\end{equation}
Now, we let 
\begin{equation*}
\tilde{W}^{n,\pm}_{t} = W^{n,\pm}_{t} + \int_{0}^t
\bigl( {\mathbf 1}_{(n,\pm)=(0,+)} {\mathfrak b}_{0}(X_{s}(\cdot)) - 
Y_{s}^{n,\pm} \bigr)
ds, \quad t \in [0,T].
\end{equation*}
Under $\tilde{\PP}_{0}$, 
the processes 
$((\tilde{W}^{n,\pm}_{t})_{0 \leq t \leq T})_{n \in \NN}$
are independent Brownian motions and
the forward component of 
the solution
to \eqref{eq:mkv:sde}
satisfies 
\begin{equation*}
d {X}_{t}^{n,\pm}
= - (2 \pi n)^2 {X}_{t}^{n,\pm}
dt + d\tilde{W}_{t}^n, \quad t \in [0,T],
\end{equation*}
and is thus an Ornstein-Ulhenbeck process, with $X_{0}^{n,\pm}$ as initial condition. Also, under $\tilde{\P}_{0}$, the backward equation takes 
the form:
\begin{equation}
\label{eq:girsanov:bsde}
\begin{split}
d Y^{n,\pm}_{t}
&= 
\Bigl[ 
- D_{n,\pm} {\mathfrak f}_{0}\bigl(X_{t}(\cdot) ,
\textrm{Leb}_{1} \circ X_{t}(\cdot)^{-1}
\bigr) 
- 
\sum_{k \in \NN}
Z_{t}^{n,k,\pm}
\bigl(
{\mathbf 1}_{(k,\pm)=(0,+)}
{\mathfrak b}_{0}
\bigl(X_{t}(\cdot)\bigr) - Y_{t}^{k,\pm}
\bigr) 
\Bigr]
dt 
\\
&\hspace{15pt} + 
\sum_{k \in \NN}
Z_{t}^{n,k,\pm} d \tilde{W}_{t}^{k,\pm}.
\end{split}
\end{equation}
By \eqref{eq:bound:dZ:Girsanov}, the drift has finite moments of any order under $\t \PP_{0}$.

According to the standard theory of
backward SDEs (or, equivalently, by a formal application of It\^o's formula), we expect
\begin{equation}
\label{eq:gradient}
Z_{t}^{n,k,\pm}
= D_{k,\pm} {\mathcal U}^{n,\pm}(t,X_{t}(\cdot))
\quad  \P_{0} \ (\textrm{or} \ \tilde{\P}_{0}) \ \textrm{almost everywhere}. 
\end{equation}
Initializing the process $(X_{s})_{0 \leq s \leq T}$
at some $\ell \in L^2({\mathbb S}^1;\R^d)$
and at some $t \in [0,T]$
and
taking the expectation 
in 
\eqref{eq:girsanov:bsde}
under $\tilde{\P}_{0}$, we conjecture (and this in fact the purpose of Theorem 
\ref{thm:pde} to make the statement clear) that:
\begin{equation}
\label{eq:mild:pde}
\begin{split}
{\mathcal U}^{n,\pm}(t,\cdot)
&= {\mathcal P}_{T-t}
\Bigl(
 D_{n,\pm} {\mathfrak g}_{0}(\cdot,
\textrm{Leb}_{1} \circ \cdot^{-1})
\Bigr)
\\
&\hspace{15pt}
+ \int_{t}^T 
{\mathcal P}_{s-t}
\Bigl[
 D_{n,\pm} {\mathfrak f}_{0}(\cdot,
\textrm{Leb}_{1} \circ \cdot^{-1})
+
\bigl\langle
D {\mathcal U}^{n,\pm}(s,\cdot) ,
{\mathfrak B}(\cdot) - 
{\mathcal U}(s,\cdot) 
\bigr\rangle_{L^2({\mathbb S}^1;\R^d)}
\Bigr] ds,
\end{split}
\end{equation}
where, differently from 
\eqref{eq:girsanov:bsde}, 
 we used the more compact notation ${\mathfrak B}$
for the drift coefficient. 
Here the notation
$\langle
D {\mathcal U}^{n,\pm}(s,\cdot),
{\mathfrak B}(\cdot) - 
{\mathcal U}(s,\cdot)
\rangle_{L^2({\mathbb S}^1;\R^d)}$ may be slightly confusing and should be understood as a function from 
$L^2({\mathbb S}^1;\R^d)$ into $\RR^d$ defined by:
\begin{equation*}
\begin{split}
\big\langle
 D {\mathcal U}^{n,\pm}(s,\cdot),
 {\mathfrak B}(\cdot) - 
{\mathcal U}(s,\cdot)
\big\rangle_{L^2({\mathbb S}^1;\R^d)}
: L^2({\mathbb S}^1;\R^d) \ni \ell
&\mapsto  
\big\langle
 D {\mathcal U}^{n,\pm}(s,\ell),
 {\mathfrak B}(\ell) - 
{\mathcal U}(s,\ell)
\big\rangle_{L^2({\mathbb S}^1;\R^d)}
\\
&= \sum_{k \in \NN}  D_{k,\pm}
{\mathcal U}^{n,\pm}(s,\ell)
\bigl( 
{\mathfrak B}^{k,\pm}(\ell)
-
{\mathcal U}^{k,\pm}(s,\ell)
\bigr),
\end{split}
\end{equation*}
the summand in the right-hand side reading as the product of a matrix 
of size $d \times d$ by 
a vector of size $d$.
Identity 
\eqref{eq:mild:pde}
is the cornerstone of the \textit{a priori} bound on the Lipschitz constant of ${\mathcal U}$ (in space).

\subsubsection*{Galerkin approximation} The problem with the formula 
\eqref{eq:mild:pde}
is that 
we do not know yet whether 
${\mathcal U}$ is Fr\'echet differentiable. 
In order to proceed, we take advantage of the 
stability properties of 
the solutions to 
\eqref{eq:mkv:sde}
in small time, which can be shown by a mere variation of 
the arguments used in the proof of Theorem
\ref{eq:small:time}. Indeed, 
we can 
use a Galerkin approximation and approximate 
${\mathfrak F}=D {\mathfrak f}_{0}$
and ${\mathfrak G}=D {\mathfrak g}_{0}$ by 
coefficients
${\mathfrak F}^{(N)}$
and 
${\mathfrak G}^{(N)}$
 with a truncated Fourier expansion, namely
\begin{equation}
\label{eq:galerkin}
\begin{split}
&{\mathfrak B}^{(N)}(\ell) 
= {\mathfrak B}
\Bigl( \sum_{k=0}^N \ell^{k,\pm} e^{k,\pm}(\cdot)
\Bigr)
\\
&{\mathfrak F}^{(N),n,\pm}(\ell) 
= {\mathfrak F}^{n,\pm}
\Bigl( \sum_{k=0}^N \ell^{k,\pm} e^{k,\pm}(\cdot)
\Bigr) {\mathbf 1}_{\{ n \leq N\}}, 
\quad 
{\mathfrak G}^{(N),n,\pm}(\ell) 
= {\mathfrak G}^{n,\pm}
\Bigl( \sum_{k=0}^N \ell^{k,\pm} e^{k,\pm}(\cdot)
\Bigr) {\mathbf 1}_{\{ n \leq N\}},
\end{split}
\end{equation} 
for $n \in \NN$, 
where we 
refer to \eqref{eq:mkv:sde:coefficients}
for the definitions of 
${\mathfrak F}$
and 
${\mathfrak G}$. 
It is clear that ${\mathfrak F}^{(N)}$ and 
${\mathfrak G}^{(N)}$ 
are bounded by the same constants 
as 
${\mathfrak F}$ and 
${\mathfrak G}$
and satisfy the same Lipschitz property. 
 Therefore, we can solve, for $T \leq c$
 with the same $c$ as in 
 Theorem
 \ref{eq:small:time},
  the forward backward system
 \begin{equation}
\label{eq:mkv:sde:N}
\begin{split}
&dX_{t}^{(N),n,\pm} = \Bigl( {\mathfrak B}^{(N),n,\pm}\Bigl(
\sum_{k=0}^N
X_{t}^{(N),k,\pm} e^{k,\pm}(\cdot)
\Bigr) - Y_{t}^{(N),n,\pm}
- (2 \pi n)^2 X_{t}^{(N),n,\pm}
\Bigr) dt + dW_{t}^{n,\pm}, 
\\
&dY_{t}^{(N),n,\pm} =
- {\mathfrak F}^{(N),n,\pm}\Bigl(
\sum_{k=0}^N
X_{t}^{(N),k,\pm}e^{k,\pm}(\cdot)
\Bigr)
dt + \sum_{k \in \NN}
Z_{t}^{(N),n,k,\pm} dW_{t}^{k,\pm},
\quad n \in \NN,
\end{split}
\end{equation}
with
$X_{0}^{(N),n,\pm}=X_{0}^{n,\pm}$
as initial condition 
and
$Y_{T}^{(N),n,\pm} = {\mathfrak G}^{(N),n,\pm}(\sum_{k=0}^N
X_{T}^{(N),k,\pm} e^{k,\pm}(\cdot)
)$
as
terminal condition.  
Observe in particular that $\bY^{(N),n,\pm}$ and $\bZ^{(N),n,\pm}$
are null for $n>N$.
Denoting by ${\mathcal U}^{(N)}$
the corresponding decoupling field, it is then well-checked that 
${\mathcal U}^{(N)}(t,\ell)$, for $\ell \in L^2({\mathbb S}^1;\R^d)$,
is a function of $(\ell^{n,\pm})_{0 \leq n \leq N}$ only, meaning that 
\begin{equation}
\label{eq:definition:cU(N)}
{\mathcal U}^{(N)}(t,\ell)
= {\mathcal U}^{(N)}
\Bigl(t,\sum_{k=0}^N
\ell^{k,\pm} e^{k,\pm}(\cdot)
\Bigr).
\end{equation}
Also, ${\mathcal U}^{(N),n,\pm}$ is zero when $n >N$. 

In words, the system \eqref{eq:mkv:sde:N} reduces to a finite dimensional 
system of $2N+1$ equations (i.e. up to the order $n = N$) on $\RR^{(2N+1)d}$.
By standard results
for non-degenerate forward-backward equations, 
see for instance \cite{DelarueGuatteri} (in order to fit the framework of the latter paper, 
notice that 
the linear term $-(2 \pi n)^2 X_{t}^{(N),n,\pm}$ can be easily removed by 
considering $\exp( (2 \pi n)^2 t)X_{t}^{(N),n,\pm}$ instead of 
$X_{t}^{(N),n,\pm}$),
we know that ${\mathcal U}^{(N)}(t,\cdot)$ is differentiable 
in $(\ell^{n,\pm})_{0\leq n \leq N}$
for $t<T$ and that, for $n \leq N$, 
\eqref{eq:gradient}
holds true 
with $Z^{n,k,\pm}$ replaced by 
$Z^{(N),n,k,\pm}$
and ${\mathcal U}^{n,\pm}$ replaced
by 
${\mathcal U}^{(N),n,\pm}$.

By stability in small time of the solutions to 
\eqref{eq:mkv:sde} (the proof of which works on the model of the proof of Theorem \ref{eq:small:time}), we can check that, for $T \leq c$,
\begin{equation}
\label{eq:stability:fbsde:galerkin}
\begin{split}
&\EE_{0} \Bigl[ \sup_{0 \leq t \leq T} 
\Bigl(
\| X_{t}(\cdot) - X_{t}^{(N)}(\cdot) \|^2_{L^2(\SS^1;\R^d)} + 
\| Y_{t}(\cdot) - Y_{t}^{(N)} (\cdot) \|^2_{L^2(\SS^1;\R^d)}
\Bigr) 
\Bigr]
\\
&\hspace{40pt} + \EE_{0} 
 \biggl[
\sum_{n,k \in \NN} 
 \int_{0}^T
 \vert Z_{t}^{n,k,\pm} - Z_{t}^{(N),n,k,\pm} \vert^2
 dt 
 \biggr]
 \\
 &\hspace{15pt} \leq
\EE_{0}
\biggl[
\bigl\| \bigl( {\mathfrak G}^{(N)} - {\mathfrak G} \bigr)\bigl(X_{T}(\cdot)\bigr) 
\bigr\|^2_{L^2(\SS^1;\R^d)}
\\
&\hspace{40pt}
+
\int_{0}^T 
\Bigl(
\bigl\| \bigl( {\mathfrak B}^{(N)} - {\mathfrak B} \bigr)\bigl(X_{t}(\cdot)\bigr) 
\bigr\|^2_{L^2(\SS^1;\R^d)}
+
\bigl\| \bigl( {\mathfrak F}^{(N)} - {\mathfrak F} \bigr)\bigl(X_{t}(\cdot)\bigr) 
\bigr\|^2_{L^2(\SS^1;\R^d)}
\Bigr)
 dt
 \biggr].
\end{split}
\end{equation}
Observe now that, for all $\ell \in L^2(\SS^1;\R^d)$, 
\begin{equation}
\label{eq:FN-F}
\begin{split}
\| {\mathfrak F}^{(N)}(\ell) - {\mathfrak F}(\ell) \|_{L^2(\SS^1;\R^d)}^2
&=
\sum_{n = 0}^N
\Bigl\vert 
{\mathfrak F}^{n,\pm}
\Bigl( 
\sum_{n=0}^N \ell^{n,\pm} e^{n,\pm}(\cdot)
\Bigr)
 - {\mathfrak F}^{n,\pm}(\ell)
 \Bigr\vert^2
+ \sum_{n \geq N+1}
\bigl\vert {\mathfrak F}^{n,\pm}(\ell)\bigr\vert^2
\\
&\leq 
\Bigl\| 
{\mathfrak F}
\Bigl( 
\sum_{n=0}^N \ell^{n,\pm} e^{n,\pm}(\cdot)
\Bigr)
 - {\mathfrak F}(\ell)
 \Bigr\|_{L^2(\SS^1;\R^d)}^2
+ \sum_{n \geq N+1}
\bigl\vert {\mathfrak F}^{n,\pm}(\ell)\bigr\vert^2
\\
&\leq C \sum_{n \geq N+1} | \ell^{n,\pm} \vert^2
+ \sum_{n \geq N+1}
\bigl\vert {\mathfrak F}^{n,\pm}(\ell)\bigr\vert^2,
\end{split}
\end{equation}
from which we get that the left-hand side tends to $0$. 
Proceeding in the same way with 
${\mathfrak B}^{(N)}- {\mathfrak B}$
and 
${\mathfrak G}^{(N)}-{\mathfrak G}$
and combining with Lebesgue's dominated convergence theorem, 
we deduce that the right-hand side in 
\eqref{eq:stability:fbsde:galerkin} tends to $0$ as $N$ tends to $\infty$. We deduce that the left-hand side also tends to $0$. 
And then, 
\begin{equation*}
\lim_{N \rightarrow +\infty}
{\mathcal U}^{(N)}(t,\ell) 
=
{\mathcal U}(t,\ell), 
\quad t \in [0,T], \ \ell \in L^2({\mathbb S}^1;\R^d),
\end{equation*}
and, for a given initial condition $\ell$ in 
\eqref{eq:mkv:sde:N},
\begin{equation*}
\lim_{N \rightarrow +\infty}
\E_{0}
\sum_{n,k \in \NN}
\int_{0}^T 
\vert Z_{s}^{\ell;(N),n,k,\pm}
-Z_{s}^{\ell;n,k,\pm}
\vert^2 
ds = 0, 
\quad t \in [0,T], \ \ell \in L^2({\mathbb S}^1;\R^d),
\end{equation*}
where we added the superscript $\ell$ 
in the notations to emphasize the fact that 
$X_{0}^{(N)}(\cdot)$ and $X_{0}(\cdot)$ were both equal
to $\ell$.
This says that, to prove 
\eqref{eq:gradient}
and 
the statement of
Theorem \ref{thm:pde}, 
we can focus first on the Galerkin approximation and then pass to the limit
as $N$ tends to $+\infty$. We shall come back to this point later on.

\subsubsection*{Smoothing estimates for the OU semi-group}
The long time analysis relies on the smoothing properties of the OU semi-group $(\cP_{t})_{t \geq 0}$ we introduced earlier, see 
\eqref{eq:SG}.

The following lemma is standard in the literature, see for instance
\cite[Section 5]{MR1731796}, see also 
\cite{MR1840644}. It will play a key role in the proof of Theorem \ref{thm:existence:uniqueness}.
\begin{lemma}
\label{lem:ibp}
Let $\cV$ be a bounded and measurable function from $L^2(\SS^1;\RR^d)$ into $\RR$. Then, for any $t
\in (0,T]$, 
${\mathcal P}_{t} {\mathcal V}$
is Fr\'echet differentiable and, for all $\ell \in L^2(\SS^1;\RR^d)$, 
\begin{equation*}
\bigl\|
D
{\mathcal P}_{t} {\mathcal V}(\ell)
\bigr\|_{L^2({\mathbb S}^1;\R^d)} \leq C t^{-1/2}
\EE_{0} \bigl[ \vert \cV(U_{t}^\ell)
\vert^2 \bigr]^{1/2}
\leq C t^{-1/2} 
\| {\mathcal V}\|_{\infty},
\end{equation*}
for a constant $C$ independent of $t \in (0,T]$. 
If $\cV$ is Lipschitz continuous on $L^2(\SS^1;\R^d)$, then, 
for any $t \in ( 0,T]$ and any $\ell \in L^2(\SS^1;\R^d)$,
\begin{equation*}
\bigl\|
D
{\mathcal P}_{t} {\mathcal V}(\ell)
\bigr\|_{L^2({\mathbb S}^1;\R^d)} 
\leq 
\textrm{\rm Lip}({\mathcal V}),
\end{equation*}
where 
$\textrm{\rm Lip}({\mathcal V})$ is the Lipschitz constant of ${\mathcal V}$. 
%
%
\end{lemma}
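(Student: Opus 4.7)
The plan is to exploit the fact that $U_t^\ell$ is explicitly a Gaussian random element of $L^2(\SS^1;\RR^d)$ and to apply a Cameron--Martin / Bismut type formula. Writing the OU equation in Fourier modes, each mode decouples:
\begin{equation*}
U_t^{\ell,n,\pm} = e^{-(2\pi n)^2 t}\,\ell^{n,\pm} + \int_0^t e^{-(2\pi n)^2 (t-s)}\, dW_s^{n,\pm}, \qquad n \in \NN.
\end{equation*}
Hence $U_t^\ell = S_t \ell + Z_t$, where $S_t = e^{t\partial_x^2}$ has eigenvalues $e^{-(2\pi n)^2 t}$ (with eigenvalue $1$ on mode $0$) and $Z_t$ is a centered Gaussian vector with covariance operator $Q_t$ whose eigenvalues are $\lambda_n(t) = (1-e^{-2(2\pi n)^2 t})/(2(2\pi n)^2)$ for $n\geq 1$ and $\lambda_0(t)=t$. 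Summability of $\sum_n \lambda_n(t)$ confirms that $U_t^\ell$ does take values in $L^2(\SS^1;\RR^d)$.

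First I would compute the operator $\Lambda_t := Q_t^{-1/2} S_t$. Its eigenvalue on mode $n\ge 1$ is
\begin{equation*}
\frac{e^{-(2\pi n)^2 t}}{\sqrt{\lambda_n(t)}} \;=\; \frac{1}{\sqrt{t}}\sqrt{\frac{2 a\, e^{-2a}}{1-e^{-2a}}}, \qquad a := (2\pi n)^2 t,
\end{equation*}
and $1/\sqrt t$ on mode $0$. The function $a \mapsto 2a\,e^{-2a}/(1-e^{-2a})$ is continuous on $(0,\infty)$, has limit $1$ as $a \to 0^+$ and decays to $0$ as $a\to\infty$, hence is uniformly bounded. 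This gives the key smoothing bound $\|\Lambda_t\|_{\mathrm{op}} \leq C\, t^{-1/2}$ on $L^2(\SS^1;\RR^d)$, uniformly in $t\in(0,T]$.

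Next I would establish Fr\'echet differentiability of $\mathcal{P}_t \mathcal{V}$ via Cameron--Martin. For $h\in L^2(\SS^1;\RR^d)$, the law of $U_t^{\ell+h}$ is a translate of the law of $U_t^\ell$ by $S_t h$. Since $S_t h \in \mathrm{Range}(Q_t^{1/2})$ by Step~1, the two Gaussian laws are equivalent and the Radon--Nikodym derivative is the standard exponential martingale in the direction $\Lambda_t h$. Expanding and differentiating at $h=0$ (first for bounded continuous $\mathcal{V}$, then extending to bounded measurable $\mathcal{V}$ by monotone class, using that the right-hand side below makes sense for any bounded measurable $\mathcal{V}$) yields the Bismut--Elworthy--Li type identity
\begin{equation*}
\bigl\langle D\mathcal{P}_t\mathcal{V}(\ell), h \bigr\rangle_{L^2(\SS^1;\RR^d)} = \EE_0\!\left[ \mathcal{V}(U_t^\ell)\,\bigl\langle \Lambda_t h,\, Q_t^{-1/2}(U_t^\ell - S_t \ell)\bigr\rangle_{L^2(\SS^1;\RR^d)}\right].
\end{equation*}
The random variable $Q_t^{-1/2}(U_t^\ell - S_t \ell)$ is a standard cylindrical Gaussian in $L^2(\SS^1;\RR^d)$, so Cauchy--Schwarz combined with the Step~2 bound gives
\begin{equation*}
\bigl\| D\mathcal{P}_t\mathcal{V}(\ell) \bigr\|_{L^2(\SS^1;\RR^d)} \leq \|\Lambda_t\|_{\mathrm{op}}\, \EE_0[\mathcal{V}(U_t^\ell)^2]^{1/2} \leq C\,t^{-1/2}\,\EE_0[\mathcal{V}(U_t^\ell)^2]^{1/2},
\end{equation*}
which implies the first claimed inequality, with the $\|\mathcal{V}\|_\infty$ bound as an immediate corollary.

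For the Lipschitz case, the pathwise argument is much cleaner: by the explicit form of $U_t^\ell$, the noise cancels in the difference and $U_t^{\ell+h}(\omega_0) - U_t^\ell(\omega_0) = S_t h$ for $\PP_0$-almost every $\omega_0$. Therefore
\begin{equation*}
\bigl| \mathcal{P}_t\mathcal{V}(\ell+h) - \mathcal{P}_t\mathcal{V}(\ell) \bigr| \leq \mathrm{Lip}(\mathcal{V})\, \|S_t h\|_{L^2(\SS^1;\RR^d)} \leq \mathrm{Lip}(\mathcal{V})\, \|h\|_{L^2(\SS^1;\RR^d)},
\end{equation*}
since the OU drift is dissipative and $\|S_t\|_{\mathrm{op}}\leq 1$. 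Combining this with the differentiability already established gives the second estimate. The main obstacle is really only the first part, where one must justify the passage from formal differentiation under the expectation to the Cameron--Martin identity for a bounded measurable (possibly non-smooth) integrand $\mathcal V$; this is handled by an approximation argument, smoothing $\mathcal V$ through $\mathcal P_\varepsilon$ itself (which is well-defined since $U_t^\ell$ is Gaussian and non-degenerate on every finite-dimensional mode projection) and then passing to the limit $\varepsilon \downarrow 0$ using dominated convergence.
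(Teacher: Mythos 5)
Your proof is correct and reproduces the standard Cameron--Martin/Bismut--Elworthy--Li argument for the non-degenerate Ornstein--Uhlenbeck semigroup, which is exactly what the paper defers to (the lemma is stated with a citation to Da Prato--Zabczyk and no proof), and your argument for the Lipschitz bound via pathwise cancellation of the noise is the same one-line argument the paper gives after the lemma. One small caveat: the final step you sketch, smoothing $\mathcal V$ by $\mathcal P_\varepsilon$ itself, is both unnecessary and would be circular (the smoothness of $\mathcal P_\varepsilon \mathcal V$ is precisely what the lemma asserts); the Cameron--Martin representation $\mathcal P_t\mathcal V(\ell + h) = \EE_0\bigl[\mathcal V(U_t^\ell)\exp\bigl(\langle\Lambda_t h, Q_t^{-1/2}Z_t\rangle - \tfrac12\|\Lambda_t h\|^2\bigr)\bigr]$ already holds for every bounded \emph{measurable} $\mathcal V$, so one obtains Fr\'echet differentiability directly by Taylor-expanding the exponential density in $h$ under the expectation and estimating the remainder by $\|\mathcal V\|_\infty \cdot O(\|\Lambda_t h\|^2)$, without any preliminary regularization of $\mathcal V$.
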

The second inequality in the statement is just a consequence of the fact that the function 
$L^2(\SS^1;\RR^d) \ni \ell \mapsto {\mathbb E}[{\mathcal V}(U_{t}^{\ell})]$
is $\textrm{\rm Lip}(\cV)$-Lipschitz continuous. 

\subsection{Analysis of the Galerkin approximation}
For a given fixed $T>0$, we consider the Galerkin approximation of the coefficients, 
as defined in 
\eqref{eq:galerkin}, together with the corresponding Galerkin approximation of the forward-backward system, 
as defined in \eqref{eq:mkv:sde:N}. 

As we already explained, the system \eqref{eq:mkv:sde:N} is already known to be uniquely solvable, for any given initial condition for 
$\bX^{(N)}$, whatever the time duration $T$ is. 
Also, we know from 
\cite{DelarueGuatteri} that the decoupling field
$\cU^{(N)}$, when  regarded as a function from 
$[0,T] \times \RR^{(2N+1)d}$ into $\RR^{(2N+1)d}$ 
 satisfies a system of $(2N+1)$ PDEs in dimension $(2N+1)d$.
 By identifying the Fr\'echet derivative $D \cU^{(N)}$ 
 of $\cU^{(N)}$ with the derivatives in $\RR^{2N+1}$ through the
 formula:
 \begin{equation*}
 D \cU^{(N)}(t,\ell) = \sum_{n=0}^N \partial_{\ell^{n,\pm}}
 \cU^{(N)}\Bigl(t, 
 \sum_{k=0}^N \ell^{k,\pm} e^{k,\pm}(\cdot)
 \Bigr)
  e^{n,\pm}(\cdot), 
 \end{equation*}
the system of PDEs satisfied by the decoupling field of 
\eqref{eq:mkv:sde:N}
coincides, in the mild form, with 
\eqref{eq:mild:pde}, but with 
$D {\mathfrak f}_{0}$ and $D {\mathfrak g}_{0}$
and replaced by 
${\mathfrak F}^{(N)}$ and ${\mathfrak G}^{(N)}$. 
Namely, we have:
\begin{equation}
\label{eq:galerkin:mild:formulation}
\begin{split}
{\mathcal U}^{(N),n,\pm}(t,\cdot)
&= {\mathcal P}_{T-t}
\bigl(
  {\mathfrak G}^{(N),n,\pm}
 \bigr)
\\
&\hspace{15pt}
+ \int_{t}^T 
{\mathcal P}_{s-t}
\Bigl[
{\mathfrak F}^{(N),n,\pm}(\cdot)
+
\bigl\langle
 D {\mathcal U}^{(N),n,\pm}(s,\cdot),
{\mathfrak B}^{(N)}(\cdot) - 
{\mathcal U}^{(N)}(s,\cdot)\bigr\rangle_{L^2({\mathbb S}^1;\R^d)}
\Bigr] ds,
\end{split}
\end{equation}
the identity holding true in $L^2(\SS^1;\R^d)$, for any $t \in [0,T]$. 
\vspace{4pt}

Following 
 \eqref{eq:bound:Y:L2}, we claim first:
\begin{lemma}
\label{lem:bound:pp}
There exists a constant $C$ such that, for all $N \in \NN^*$, 
\begin{equation*}
\sup_{t \in [0,T]} \sup_{\ell \in L^2(\SS^1;\RR^d)}
\| \cU^{(N)}(t,\ell) \|_{L^2(\SS^1;\RR^d)} \leq C. 
\end{equation*}
\end{lemma}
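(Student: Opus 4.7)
The plan is to exploit the key structural simplification provided by Assumption \textbf{(A)}, namely \eqref{eq:mkv:sde:coefficients}: since $b$ is independent of $x$, the driver of the backward equation in the Galerkin system \eqref{eq:mkv:sde:N} reduces to $-{\mathfrak F}^{(N),n,\pm}(X_t^{(N)}(\cdot))$ and thus depends neither on $Y^{(N)}$ nor on $Z^{(N)}$. Consequently, the backward equation is purely linear, and for any initial condition $X_t^{(N)}(\cdot) = \ell$ at time $t$, the standard martingale representation gives, for every $n \in \NN$,
\begin{equation*}
Y_t^{(N),n,\pm} = \EE_{0}\Bigl[ {\mathfrak G}^{(N),n,\pm}\bigl(X_T^{(N)}(\cdot)\bigr) + \int_t^T {\mathfrak F}^{(N),n,\pm}\bigl(X_s^{(N)}(\cdot)\bigr)\, ds \, \Big| \, \cF_{0,t} \Bigr].
\end{equation*}
Choosing the initial time equal to $t$ and freezing the value $X_t^{(N)}(\cdot) = \ell$, the left-hand side is almost surely deterministic and coincides with $\cU^{(N),n,\pm}(t,\ell)$, exactly as in the construction of the decoupling field in the proof of Theorem \ref{eq:small:time}.

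Next, I would apply conditional Jensen (or simply $(a+b)^2 \le 2a^2+2b^2$ together with Cauchy--Schwarz on the time integral) and sum the squares of the modes to obtain
\begin{equation*}
\| \cU^{(N)}(t,\ell) \|_{L^2(\SS^1;\RR^d)}^2 \le 2\, \EE_{0}\bigl[ \| {\mathfrak G}^{(N)}(X_T^{(N)}(\cdot)) \|_{L^2(\SS^1;\RR^d)}^2 \bigr] + 2(T-t) \int_t^T \EE_{0}\bigl[ \| {\mathfrak F}^{(N)}(X_s^{(N)}(\cdot)) \|_{L^2(\SS^1;\RR^d)}^2 \bigr] ds.
\end{equation*}
It therefore suffices to show that ${\mathfrak F}^{(N)}$ and ${\mathfrak G}^{(N)}$ are uniformly bounded in $L^2(\SS^1;\RR^d)$, independently of $N$ and of their argument. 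This is the only non-cosmetic step: using the definition \eqref{eq:galerkin}, one has $\| {\mathfrak F}^{(N)}(\ell') \|_{L^2}^2 = \sum_{n=0}^N |{\mathfrak F}^{n,\pm}(P_N \ell')|^2 \le \| {\mathfrak F}(P_N \ell') \|_{L^2}^2$ with $P_N$ the projection onto the first $2N+1$ Fourier modes, and by Lemma \ref{lem:diff} together with Assumption \textbf{(A)},
\begin{equation*}
\| {\mathfrak F}(\ell') \|_{L^2(\SS^1;\RR^d)}^2 = \int_{\SS^1} \bigl| \partial_x f\bigl(\ell'(x), \textrm{\rm Leb}_1 \circ \ell'^{-1}\bigr)\bigr|^2 dx \le \| \partial_x f \|_\infty^2,
\end{equation*}
and similarly for ${\mathfrak G}^{(N)}$ with $\| \partial_x g \|_\infty$. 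Combining these bounds yields $\| \cU^{(N)}(t,\ell) \|_{L^2(\SS^1;\RR^d)}^2 \le 2 \| \partial_x g \|_\infty^2 + 2 T^2 \| \partial_x f \|_\infty^2$, which is the desired uniform bound.

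There is no real obstacle here: the statement is essentially a restatement of the bound \eqref{eq:bound:Y:L2} obtained in the proof of Theorem \ref{eq:small:time}, now at the level of the Galerkin approximation rather than of the full system. The decisive point is that under Assumption \textbf{(A)}, the backward driver is exogenous to $(Y^{(N)},Z^{(N)})$, so no Gronwall argument or coupling with the forward process through the decoupling field is needed; the bound is the pure a priori estimate for a linear BSDE with bounded generator and bounded terminal datum, and the constant $C$ depends only on $T$, $\| \partial_x f \|_\infty$ and $\| \partial_x g \|_\infty$.
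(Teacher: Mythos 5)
Your proof is correct and is essentially the argument the paper has in mind: the paper does not give a separate proof of this lemma, but simply points to \eqref{eq:bound:Y:L2} (the a priori bound obtained in the proof of Theorem~\ref{eq:small:time}) together with the remark after \eqref{eq:galerkin} that ${\mathfrak F}^{(N)}$ and ${\mathfrak G}^{(N)}$ share the same sup-bounds as ${\mathfrak F}$ and ${\mathfrak G}$. You have spelled out that implicit reasoning: under Assumption \textbf{(A)} the generator of the backward component in \eqref{eq:mkv:sde:N} is independent of $(Y^{(N)},Z^{(N)})$, so taking conditional expectation, applying Jensen/Cauchy--Schwarz, summing over modes, and invoking the uniform bounds $\|{\mathfrak F}^{(N)}\|\le\|\partial_x f\|_\infty$, $\|{\mathfrak G}^{(N)}\|\le\|\partial_x g\|_\infty$ gives the claim with an explicit constant $2\|\partial_x g\|_\infty^2 + 2T^2\|\partial_x f\|_\infty^2$. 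No gap; you correctly rely on the existence of the solution to \eqref{eq:mkv:sde:N} over $[0,T]$, which the paper guarantees via \cite{DelarueGuatteri}, and the argument requires neither Gronwall nor the decoupling field, exactly as you observe.
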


The following lemma provides a uniform bound for the Fr\'echet derivative of 
the Galerkin approximation:
\begin{lemma}
\label{lem:lemma:ibp:UN}
There
exists a constant $C$ independent of $N$ 
such that, for all $t \in [0,T)$ and 
all $N \in \NN^*$,
\begin{equation*}
\sup_{\ell \in L^2(\SS^1;\RR^d)}
\vvvert D \cU^{(N)}(t,\ell)
\vvvert_{L^2(\SS^1;\R^d) \times L^2(\SS^1;\R^d)} 
\leq C,
\end{equation*}
where
\begin{equation*}
\vvvert D \cU^{(N)}(t,\ell)
\vvvert_{L^2(\SS^1;\R^d) \times L^2(\SS^1;\R^d)} 
=
 \sup_{h \in L^2({\mathbb S}^1;\RR^d) :  \| h\|_{L^2({\mathbb S}^1;\RR^d)} \leq 1} 
 \Bigl\|
  D \bigl[  \langle \cU^{(N)}(t,\cdot), h \rangle_{L^2({\mathbb S}^1;\RR^d)}
  \bigr]_{\vert \cdot = \ell}
  \Bigr\|_{L^2({\mathbb S}^1;\RR^d)},
 \end{equation*}
 the notation $D [ \varphi(\cdot)]_{\vert \cdot =\ell}$ indicating 
 the fact that the differential is computed with respect to the argument 
 $\cdot$ and then taken at point $\ell$.  
\end{lemma}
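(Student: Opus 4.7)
The plan is to differentiate the mild formulation \eqref{eq:galerkin:mild:formulation} and exploit the smoothing properties of the Ornstein--Uhlenbeck semi-group provided by Lemma \ref{lem:ibp}. Because ${\mathcal U}^{(N)}$ is vector-valued, I test against an arbitrary direction $h \in L^2({\mathbb S}^1;\R^d)$ with $\|h\|_{L^2}\leq 1$ and set $\psi^{(N)}(t) := \sup_{\ell \in L^2({\mathbb S}^1;\R^d)} \vvvert D {\mathcal U}^{(N)}(t,\ell)\vvvert$. For each fixed $N$, the system \eqref{eq:mkv:sde:N} reduces to a non-degenerate finite-dimensional forward-backward SDE, and the regularity theory cited after its statement (see \cite{DelarueGuatteri}) yields Lipschitz continuity of the decoupling field uniformly in $t \in [0,T]$; hence $\psi^{(N)}$ is bounded on $[0,T]$ for each fixed $N$. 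The whole point is to turn this qualitative finiteness into a quantitative estimate independent of $N$.

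Testing \eqref{eq:galerkin:mild:formulation} against $h$ yields
\begin{equation*}
\langle {\mathcal U}^{(N)}(t,\ell),h\rangle_{L^2}
= {\mathcal P}_{T-t}\bigl[G^{h}\bigr](\ell)
 + \int_t^T {\mathcal P}_{s-t}\bigl[\Phi^{h}_{s}\bigr](\ell)\, ds,
\end{equation*}
where $G^{h}(\ell') := \langle {\mathfrak G}^{(N)}(\ell'),h\rangle_{L^2}$ and
\begin{equation*}
\Phi^{h}_{s}(\ell') := \langle {\mathfrak F}^{(N)}(\ell'),h\rangle_{L^2}
+ D\bigl[\langle {\mathcal U}^{(N)}(s,\cdot),h\rangle_{L^2}\bigr]_{\cdot=\ell'}\bigl[{\mathfrak B}^{(N)}(\ell') - {\mathcal U}^{(N)}(s,\ell')\bigr].
\end{equation*}
The terminal contribution is straightforward: $G^{h}$ is Lipschitz on $L^2({\mathbb S}^1;\R^d)$ with constant controlled by $\mathrm{Lip}({\mathfrak G})\|h\|$, uniformly in $N$ (since ${\mathfrak G}^{(N)}$ inherits the Lipschitz constant of ${\mathfrak G}$ via the composition in \eqref{eq:galerkin}), so the Lipschitz-preserving part of Lemma \ref{lem:ibp} gives a uniform-in-$N$ bound $\|D[{\mathcal P}_{T-t} G^{h}]_{\cdot=\ell}\|_{L^2} \leq C$.

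For the integral term, $\Phi^{h}_{s}$ is not a priori Lipschitz, but it is bounded in a self-referential way:
\begin{equation*}
\|\Phi^{h}_{s}\|_{\infty} \leq \|h\|\Bigl(\|{\mathfrak F}\|_{\infty} + \psi^{(N)}(s)\bigl(\|{\mathfrak B}\|_{\infty} + \sup_{\ell'}\|{\mathcal U}^{(N)}(s,\ell')\|_{L^2}\bigr)\Bigr) \leq C\bigl(1+\psi^{(N)}(s)\bigr),
\end{equation*}
where I have combined the boundedness provided by Assumption \textbf{(A)} with the uniform $L^{\infty}$-bound of Lemma \ref{lem:bound:pp}. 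The smoothing part of Lemma \ref{lem:ibp} then yields $\|D[{\mathcal P}_{s-t}\Phi^{h}_{s}]_{\cdot=\ell}\|_{L^2} \leq C(s-t)^{-1/2}\bigl(1+\psi^{(N)}(s)\bigr)$, and taking the supremum over $\|h\|\leq 1$ and over $\ell$ produces the weakly singular integral inequality
\begin{equation*}
\psi^{(N)}(t) \leq C + C\int_t^T (s-t)^{-1/2}\bigl(1+\psi^{(N)}(s)\bigr)\, ds,
\end{equation*}
valid uniformly in $N \in \NN^*$ and $t \in [0,T]$. The main obstacle is precisely the appearance of this singular kernel, forced by the fact that the bilinear term $\langle D {\mathcal U}^{(N)}, {\mathfrak B}^{(N)} - {\mathcal U}^{(N)}\rangle$ is only accessible through the smoothing (rather than the Lipschitz) side of Lemma \ref{lem:ibp}. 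Since the exponent $1/2$ is strictly less than $1$ and $\psi^{(N)}$ is a priori bounded on $[0,T]$ for each fixed $N$, a standard Henry--Gronwall lemma converts this inequality into the desired uniform estimate $\psi^{(N)}(t) \leq C'$, with $C'$ independent of $N$ and $t$.
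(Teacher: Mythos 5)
Your proof is correct and follows essentially the same route as the paper: test the mild formulation \eqref{eq:galerkin:mild:formulation} against $h$ with $\|h\|\le 1$, use the Lipschitz-preserving estimate of Lemma \ref{lem:ibp} for the terminal part and the $(s-t)^{-1/2}$-smoothing estimate for the drift part (which carries the nonlinear $D\cU^{(N)}$-dependent terms, bounded via Assumption {\bf (A)} and Lemma \ref{lem:bound:pp}), and close with the singular Gronwall inequality of Lemma \ref{lem:4:2:5}. The only cosmetic difference is that you bundle ${\mathfrak F}^{(N)}$, the ${\mathfrak b}_0^{(N)}$-term and the $-\cU^{(N)}$-term into a single function $\Phi^h_s$ before applying the smoothing estimate, whereas the paper treats the three pieces separately as $T_2$, $T_3$ plus the running-cost part of $T_1$; both give the identical integral inequality. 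You also make explicit one point the paper leaves implicit, namely that the a priori finiteness of $\psi^{(N)}$ on $[0,T]$ (coming from \cite{DelarueGuatteri} for each fixed $N$) is what licenses the application of the singular Gronwall lemma, which requires bounded measurable input; this is a useful clarification and not a gap.
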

\begin{proof}
We start from 
\eqref{eq:galerkin:mild:formulation}. For $h \in L^2({\mathbb S}^1)$,
\begin{equation}
\label{eq:estimation:fine:1}
\begin{split}
\big\langle  {\mathcal U}^{(N)}(t,\cdot), h \big\rangle
&= {\mathcal P}_{T-t}
\Bigl[ 
\bigl\langle{\mathfrak G}^{(N)}(\cdot),h \rangle_{L^2({\mathbb S}^1;\RR^d)}
\Bigr]
+ \int_{t}^T {\mathcal P}_{s-t}
\Bigl[ \big\langle {\mathfrak F}^{(N)}(\cdot),
h
\big\rangle_{L^2(\SS^1;\RR^d)}
\Bigr] ds
\\
&\hspace{15pt}
+
\int_{t}^T {\mathcal P}_{s-t}
\Bigl[  {\mathfrak b}_{0}^{(N)}(\cdot)  \cdot 
D_{0} \langle {\mathcal U}^{(N)}(s,\cdot),
h \rangle_{L^2({\mathbb S}^1;\R^d)}
\Bigr] ds
\\
&\hspace{15pt}
- \int_{t}^T {\mathcal P}_{s-t}
\Bigl[ \big\langle D \langle {\mathcal U}^{(N)}(s,\cdot),h
\rangle_{L^2({\mathbb S}^1;\R^d)}, {\mathcal U}^{(N)}(s,\cdot)
 \big\rangle_{L^2({\mathbb S}^1;\R^d)}
\Bigr] ds
\\
&= T_{1} + T_{2} + T_{3}. 
\end{split}
\end{equation}
Apply now 
Lemma 
\ref{lem:ibp}
when $\| h \|_{L^2({\mathbb S}^1;\RR^d)} \leq 1$. 
Deduce that
\begin{equation*}
\begin{split}
&\sup_{\ell \in L^2({\mathbb S}^1;\R^d)}
\sup_{\| h \|_{L^2({\mathbb S}^1;\R^d)} \leq 1}
\bigl\| 
\bigl[D \big\langle {\mathcal U}^{(N)}(t,\cdot),h \big\rangle_{L^2({\mathbb S}^1;\RR^d)}
\bigr]_{\vert \cdot=\ell}
\bigr\|_{L^2(\SS^1;\R^d)} 
\\
&\leq
C \biggl\{
\sup_{\| h \|_{L^2({\mathbb S}^1;\R^d)} \leq 1}
\textrm{\rm Lip}\Bigl(  \bigl\langle {\mathfrak G}^{(N)}(\cdot), h
\bigr\rangle_{L^2(\SS^1;\R^d)}
\Bigr)
\\
&\hspace{15pt}
+
\int_{t}^T 
\frac{1}{\sqrt{s-t}}
\sup_{\ell \in L^2({\mathbb S}^1;\R^d)}
\sup_{\| h \|_{L^2({\mathbb S}^1;\R^d)} \leq 1}
\Big|{\mathfrak b}_{0}^{(N)}(\ell)
\cdot
\bigl[ D_{0} \langle {\mathcal U}^{(N)}(s,\cdot),h \rangle_{L^2({\mathbb S}^1)}
\bigr]_{\vert \cdot = \ell}
 \Big|
ds
\\
&\hspace{15pt}
+
\int_{t}^T 
\frac{1}{\sqrt{s-t}}
\sup_{\ell \in L^2({\mathbb S}^1;\R^d)}
\sup_{\| h \|_{L^2({\mathbb S}^1;\R^d)} \leq 1}
\Big|
\big\langle D \bigl[ \langle {\mathcal U}^{(N)}(s,\cdot),h \rangle_{L^2({\mathbb S}^1;\RR^d)}
\bigr]_{\vert \cdot = \ell}, {\mathcal U}^{(N)}(s,\ell) \big\rangle_{L^2({\mathbb S}^1;\R^d)}
\Big|
ds
\\
&\hspace{15pt}
+
\int_{t}^T 
\frac{1}{\sqrt{s-t}}
\sup_{\ell \in L^2({\mathbb S}^1;\R^d)}
\big\|
  {\mathfrak F}^{(N)}(\ell) 
\big\|_{L^2({\mathbb S}^1;\R^d)} ds \biggr\},
\end{split}
\end{equation*}
for a constant $C$ whose value may change from line to line. 
Recall now that 
\begin{equation*}
\sup_{\| h \|_{L^2({\mathbb S}^1;\R^d)} \leq 1}
\textrm{\rm Lip}\Bigl(  \bigl\langle {\mathfrak G}^{(N)}(\cdot), h
\bigr\rangle_{L^2(\SS^1;\R^d)}\Bigr) \leq C,
\end{equation*} 
 and that 
\begin{equation*}
\begin{split}
&\sup_{\ell \in L^2({\mathbb S}^1;\R^d)}
\Bigl\{
\big\|
  {\mathfrak F}^{(N)}(\ell) 
\big\|_{L^2({\mathbb S}^1;\R^d)}, 
\sup_{t \in [0,T]}
\big\|
  {\cU}^{(N)}(t,\ell) 
\big\|_{L^2({\mathbb S}^1;\R^d)}
\Bigr\}
\leq C.
\end{split}
\end{equation*}
We deduce that
\begin{equation*}
\begin{split}
&\sup_{\ell \in L^2({\mathbb S}^1;\R^d)}
\sup_{\| h \|_{L^2({\mathbb S}^1;\R^d)} \leq 1}
\bigl\| D \bigl[  \big\langle {\mathcal U}^{(N)}(t,\cdot),h\big\rangle_{L^2({\mathbb S}^1;\R^d)}
\bigr]_{\vert \cdot = \ell}
\bigr\|_{L^2(\SS^1;\R^d)} 
\\
&\leq
C
+
\int_{t}^T 
\frac{C}{\sqrt{s-t}}
\sup_{\ell \in L^2({\mathbb S}^1;\R^d)}
\sup_{\| h \|_{L^2({\mathbb S}^1;\R^d)} \leq 1}
\Big\|
D \bigl[ \langle {\mathcal U}^{(N)}(s,\cdot),h \rangle_{L^2({\mathbb S}^1;\R^d)}
\bigr]_{\vert \cdot = \ell}
 \Big\|_{L^2(\SS^1;\R^d)}
ds.
\end{split}
\end{equation*}
By a variant of Gronwall's lemma, see Lemma 
\ref{lem:4:2:5} right below, we complete the proof. 
\end{proof}

Using a similar argument, we claim:
\begin{lemma}
\label{lem:convergence:gradients:galerkin}
For any  
compact subset $\cK \subset L^2(\SS^1;\RR^d)$,
there exist a constant 
$C$ and real $\varepsilon >0$,
such that, 
for all $t \in [0,T]$ and all $N,M \in {\mathbb N}^*$, 
\begin{equation*}
\begin{split}
&\sup_{\ell \in \cK}
\bigl\vvvert 
D {\mathcal U}^{(N)}(t,\ell)-
D {\mathcal U}^{(M)}(t,\ell)
\bigr\vvvert_{L^2(\SS^1;\R^d) \times L^2(\SS^1;\R^d)} 
\\ 
 &\leq 
\frac{C}{\sqrt{T-t}}
\biggl[
\Bigl( \sup_{\ell \in \cK}
\sum_{n >N \wedge M} \vert \ell^{n,\pm} \vert^2 
+ 
\sup_{h \in \cK^{\varepsilon}}
\sum_{n >N \wedge M} \vert {\mathfrak F}^{n,\pm}(h) \vert^2 
+ 
\sup_{h \in \cK^{\varepsilon}}
\sum_{n >N \wedge M} \vert {\mathfrak G}^{n,\pm}(h) \vert^2 
\\
&\hspace{100pt} +
\sup_{s \in [{0},T]}
\sup_{h \in \cK^{\varepsilon}}
\bigl\|
\bigl(
{\mathcal U}^{(N)}-
{\mathcal U}^{(M)}
\bigr)
(s,h)
 \bigr\|_{L^2({\mathbb S}^1)}^2
+
\sup_{\ell \in \cK}
 \sup_{r \in [0,T]} \PP \bigl( U_{r}^\ell \not \in \cK^{\varepsilon} \bigr)
\Bigr)^{1/2} \biggr].
\end{split}
\end{equation*}
 where
\begin{equation*}
\begin{split}
&\vvvert 
D {\mathcal U}^{(N)}(t,\ell)-
D {\mathcal U}^{(M)}(t,\ell)
\vvvert_{L^2(\SS^1;\R^d) \times L^2(\SS^1;\R^d)}
\\
&=
\sup_{h \in L^2(\SS^1;\RR^d) : \| h \|_{L^2(\SS^1;\R^d)} \leq 1}
\bigl\| 
D \bigl[ \langle {\mathcal U}^{(N)}(t,\cdot)-
{\mathcal U}^{(M)}(t,\cdot), h \rangle
\bigr]_{\cdot = \ell}
\bigr\|_{L^2(\SS^1;\R^d)},
\end{split}
\end{equation*} 
and
\begin{equation*}
\sup_{\ell \in \cK}
 \sup_{r \in [0,T]} \PP_{0} \bigl( U_{r}^\ell \not \in \cK^{\varepsilon} \bigr)
\Bigr) \leq \varepsilon.
\end{equation*}
\end{lemma}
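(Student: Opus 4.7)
The plan is to follow the template of the proof of Lemma \ref{lem:lemma:ibp:UN}, but applied to the difference $\cU^{(N)}-\cU^{(M)}$, with careful localization to the compact set $\cK$ and its fattening $\cK^{\varepsilon}$. We assume without loss of generality that $N\leq M$, so that tails are indexed by $n>N\wedge M = N$. We write the mild formulation \eqref{eq:galerkin:mild:formulation} at orders $N$ and $M$, subtract them, test against an arbitrary $h\in L^2(\SS^1;\RR^d)$ with $\|h\|\leq 1$, and then apply $D[\,\cdot\,]_{\vert\cdot=\ell}$. Using Lemma \ref{lem:ibp}, the semigroup derivative $D\cP_r$ produces a factor $Cr^{-1/2}$ in front of an $L^2(\PP_{0})$ norm of the integrand $\cV_{N,M}(s,\cdot)$. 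The key trick is then to bound $\|D\cP_{r}\cV_{N,M}(\ell)\|\leq Cr^{-1/2}\EE_{0}[|\cV_{N,M}(U_{r}^{\ell})|^2]^{1/2}$ and to split the expectation according to whether $U_{r}^{\ell}\in\cK^{\varepsilon}$ or not, using the uniform bound $\|\cV_{N,M}\|_{\infty}\leq C$ (inherited from Lemma \ref{lem:bound:pp} and Lemma \ref{lem:lemma:ibp:UN}) on the ``out'' part and the $\sup_{\cK^{\varepsilon}}$ norms appearing in the statement on the ``in'' part. The parameter $\varepsilon$ is simply chosen so that both the fattening size and the escape probability $\sup_{\ell\in\cK}\sup_{r\in[0,T]}\PP_{0}(U_{r}^{\ell}\notin\cK^{\varepsilon})$ are at most $\varepsilon$; such an $\varepsilon$ exists by tightness of the OU flow starting in a compact set of $L^2(\SS^1;\RR^d)$.

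The integrand decomposes into four pieces, coming from: (i) the terminal term $\langle{\mathfrak G}^{(N)}-{\mathfrak G}^{(M)},h\rangle$, which by \eqref{eq:FN-F} on $\cK^{\varepsilon}$ is controlled by the two tail sums involving $\sum_{n>N}|\ell^{n,\pm}|^2$ and $\sum_{n>N}|{\mathfrak G}^{n,\pm}|^2$; (ii) the term $\langle{\mathfrak F}^{(N)}-{\mathfrak F}^{(M)},h\rangle$, controlled analogously by the tails in $|\ell^{n,\pm}|^2$ and $|{\mathfrak F}^{n,\pm}|^2$; (iii) the term $\langle {\mathfrak B}^{(N)}-{\mathfrak B}^{(M)},D\cU^{(N)}\rangle-\langle\cU^{(N)}-\cU^{(M)},D\cU^{(N)}\rangle$, which is bounded using Lemma \ref{lem:lemma:ibp:UN} together with the tail of ${\mathfrak B}$ (trivial in our setting, cf.\ \eqref{eq:mkv:sde:coefficients}) and the sup-norm difference $\sup_{\cK^{\varepsilon}}\|\cU^{(N)}-\cU^{(M)}\|$; and (iv) the Gronwall-type piece $\langle{\mathfrak B}^{(M)}-\cU^{(M)},D\cU^{(N)}-D\cU^{(M)}\rangle$, in which the unknown $\vvvert D\cU^{(N)}-D\cU^{(M)}\vvvert$ re-appears, multiplied by the bounded coefficient ${\mathfrak B}^{(M)}-\cU^{(M)}$.

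Piecing this together, and introducing the quantity
\begin{equation*}
\Psi(t)=\sup_{\ell\in\cK}\vvvert D\cU^{(N)}(t,\ell)-D\cU^{(M)}(t,\ell)\vvvert_{L^2(\SS^1;\RR^d)\times L^2(\SS^1;\RR^d)},
\end{equation*}
one arrives at an inequality of the form
\begin{equation*}
\Psi(t)\leq \frac{C}{\sqrt{T-t}}\,\cR_{N,M}^{1/2}+C\int_{t}^{T}\frac{\Psi(s)}{\sqrt{s-t}}\,ds,
\end{equation*}
where $\cR_{N,M}$ is precisely the sum of the five tail/escape quantities inside the square brackets of the statement. Here the propagation of $\Psi$ in the integral comes exclusively from piece (iv), while the $(T-t)^{-1/2}$ prefactor comes from the terminal term (i), with the integrals arising from (ii)--(iii) absorbed into the same prefactor using $\int_{t}^{T}(s-t)^{-1/2}ds\leq 2\sqrt{T-t}$. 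Applying the singular Gronwall lemma already invoked in the proof of Lemma \ref{lem:lemma:ibp:UN} (Lemma \ref{lem:4:2:5}) then yields the announced estimate.

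The main obstacle I anticipate is the careful handling of the localization on $\cK^{\varepsilon}$ in piece (iii), and more precisely in the term $\langle{\mathfrak B}^{(N)}-{\mathfrak B}^{(M)},D\cU^{(N)}\rangle$: the difference ${\mathfrak B}^{(N)}-{\mathfrak B}^{(M)}$ evaluated at $U_{r}^{\ell}$ is not a priori small unless one knows $U_{r}^{\ell}\in\cK^{\varepsilon}$, so one must estimate $\cE=\EE_{0}[|({\mathfrak B}^{(N)}-{\mathfrak B}^{(M)})(U_{r}^{\ell})|^2]$ as
\begin{equation*}
\cE\leq \sup_{h\in\cK^{\varepsilon}}\bigl\|({\mathfrak B}^{(N)}-{\mathfrak B}^{(M)})(h)\bigr\|^{2}+4\|{\mathfrak B}\|_{\infty}^{2}\,\PP_{0}\bigl(U_{r}^{\ell}\notin\cK^{\varepsilon}\bigr),
\end{equation*}
the first summand being in turn controlled by the tail sum $\sup_{h\in\cK^{\varepsilon}}\sum_{n>N}|h^{n,\pm}|^2$ together with the Lipschitz continuity of the coefficients (this is the point where compactness of $\cK$ and hence uniform decay of Fourier tails on $\cK^{\varepsilon}$ plays a role). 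A parallel decomposition handles the tails of ${\mathfrak F}$ and ${\mathfrak G}$, and this is what ultimately justifies the precise form of the right-hand side in the statement.
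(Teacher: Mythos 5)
Your overall strategy — subtract the two mild formulations, differentiate in the direction $h$, apply Lemma \ref{lem:ibp}, localize to $\cK^{\varepsilon}$ using tightness of the OU flow, and close via a singular Gronwall argument — matches the paper's route, and your decomposition into terminal, source, coefficient-difference and Gronwall pieces is the same (up to grouping) as \eqref{eq:convergence:N:M:proof:1}. The localization mechanism you describe for ${\mathfrak B}^{(N)}-{\mathfrak B}^{(M)}$, ${\mathfrak F}^{(N)}-{\mathfrak F}^{(M)}$ and ${\mathfrak G}^{(N)}-{\mathfrak G}^{(M)}$ is exactly what the paper does (see \eqref{eq:proof:lemma:4.16:5:b}, \eqref{eq:rate:F}).

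However, the Gronwall step as you write it has a genuine gap. You define $\Psi(t)=\sup_{\ell\in\cK}\vvvert D\cU^{(N)}(t,\ell)-D\cU^{(M)}(t,\ell)\vvvert$ and claim the inequality $\Psi(t)\leq C(T-t)^{-1/2}\cR_{N,M}^{1/2}+C\int_t^T(s-t)^{-1/2}\Psi(s)\,ds$. The problem is that after applying Lemma \ref{lem:ibp} to the propagating piece, the right-hand side involves $\EE_0[\,|D(\cU^{(N)}-\cU^{(M)})(s,\cdot)|^2 \text{ evaluated at } U_{s-t}^\ell\,]^{1/2}$ for $\ell\in\cK$; the OU point $U_{s-t}^\ell$ is random and does not lie in $\cK$ (nor, with full probability, in $\cK^{\varepsilon}$). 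Bounding it by $\sup_{\cK}$ at time $s$ is therefore not legitimate, and bounding by $\sup_{\cK^{\varepsilon}}$ makes the fattening grow at each iteration, so the loop does not close. The localization trick you use for the tail terms (split on $\{U_r^\ell\in\cK^{\varepsilon}\}$ versus its complement, using the uniform bound from Lemma \ref{lem:lemma:ibp:UN} on the escape event) is not available here either, because on the escape event you would again be left with a quantity you have no way to re-express in terms of $\Psi$.

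The missing ingredient is the Markov/flow property of the OU semigroup. The paper fixes $\ell\in\cK$, $t_0\in[0,T]$ and a test direction $h$, and runs the Gronwall inequality on the flow-evaluated quantity $\phi(t)=\EE_0\bigl[\|D[\langle(\cU^{(N)}-\cU^{(M)})(t,\cdot),h\rangle]_{\vert\cdot=U_{t-t_0}^\ell}\|^2\bigr]^{1/2}$. Because $U_{s-t}\circ U_{t-t_0}^\ell$ has the law of $U_{s-t_0}^\ell$, the propagating piece at time $s$ reads again $\phi(s)$ for the same $\ell$ and $t_0$, so the inequality closes; taking $t_0=t$ at the end gives the bound on $\cK$. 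This is also why Lemma \ref{lem:4:2:5:bis} (with the $(T-t)^{-1/2}$ prefactor) is the right Gronwall variant here, not Lemma \ref{lem:4:2:5}. Without this flow-evaluated reformulation, your $\Psi$-based Gronwall inequality is not established, and the proof does not go through as stated.
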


\begin{proof}
Throughout the proof, we use the fact that $\cK$ is compact in 
$L^2(\SS^1;\R^d)$ if and only if $\cK$ is closed and, for any $\epsilon >0$, there exists 
$n \in \NN$, such that for all $h \in \cK$, $\sum_{k \geq n}
\vert h^{k,\pm} \vert^2 \leq \varepsilon$. 
\vskip 4pt

\textit{First step.}
Also, we recall that ${\mathfrak G}$ and ${\mathfrak F}$
are continuous from $L^2(\SS^1;\R^d)$ into itself. Hence, 
${\mathfrak G}(\cK)$ and ${\mathfrak F}(\cK)$ are compact subsets of 
$L^2(\SS^1;\R^d)$. In particular, for all $\varepsilon>0$, there exists $n \in \NN^*$, such that for all $h \in \cK$, 
\begin{equation*}
\sum_{k \geq n}
\vert {\mathfrak F}^{k,\pm}(h) \vert^2 \leq \varepsilon, 
\quad 
\sum_{k \geq n}
\vert {\mathfrak G}^{k,\pm}(h) \vert^2 \leq \varepsilon. 
\end{equation*}

Also, we observe that that, for any compact subset $\cK$
and any $\varepsilon >0$, there exists another
compact subset $\cK_{\varepsilon}$ such that, for all 
$\ell \in \cK$, for all $t \in [0,T]$,
\begin{equation}
\label{eq:proof:lemma:4.16:0}
\PP_{0} \bigl[ U_{t}^\ell \in \cK^{\varepsilon} 
\bigr] \geq 1 - \varepsilon. 
\end{equation}
The proof is quite straightforward. We give it for the sake of completeness. Indeed, we recall that:
\begin{equation}
\label{eq:proof:lemma:4.16:1}
\begin{split}
U_{t}^\ell
= \sum_{n \in \NN}
\Bigl(
e^{ - (2 \pi n)^2 t }
\ell^{n,\pm}
+ \int_{0}^t e^{ - (2 \pi n)^2 (t-s)}
dW^{n,\pm}_{s}
\Bigr) e^{n,\pm}(\cdot).
\end{split}
\end{equation}
Obviously, we have, for any $n \in \NN$, 
\begin{equation}
\label{eq:proof:lemma:4.16:2}
\sum_{k \geq n}
\bigl\vert 
e^{ - (2 \pi k)^2 t }
\ell^{k,\pm}
\bigr\vert^2
\leq \sum_{k \geq n}
\bigl\vert 
\ell^{k,\pm}
\bigr\vert^2,
\end{equation}
which can be made as small as desired by choosing $n$ large enough, uniformly in $\ell \in \cK$. Also, for any $n \in \NN$,
\begin{equation*}
\begin{split}
\sum_{k \geq n}
\EE_{0} \biggl[
\biggl\vert 
\int_{0}^t e^{ - (2 \pi k)^2 (t-s)}
dW^{k,\pm}_{s}
\biggr\vert^2 \biggr] = \sum_{k \geq n}
\int_{0}^t e^{ - 2 (2 \pi k)^2 (t-s)} ds \leq \sum_{k \geq n} \frac{1}{2 (2 \pi k)^2}.
\end{split}
\end{equation*}
In particular, we can find a universal constant $c>0$ such that:
\begin{equation}
\label{eq:proof:lemma:4.16:3}
\begin{split}
\sum_{k \geq n}
\EE_{0} \biggl[
\biggl\vert 
\int_{0}^t e^{ - (2 \pi k)^2 (t-s)}
dW^{k,\pm}_{s}
\biggr\vert^2 \biggr]  \leq \frac{c}{n}.
\end{split}
\end{equation}
We deduce that
\begin{equation*}
\PP_{0}
\biggl[ 
\sum_{k \geq n^3}
\biggl\vert 
\int_{0}^t e^{ - (2 \pi k)^2 (t-s)}
dW^{k,\pm}_{s}
\biggr\vert^2
\geq \frac1{n}
\biggr]
\leq \frac{c}{n^2},
\end{equation*}
and then, by Borel-Cantelli's Lemma, 
we obtain:
\begin{equation*}
\begin{split}
&\lim_{p \rightarrow \infty}
\PP_{0}
\biggl(
\bigcap_{n \geq p}
\biggl\{
\sum_{k \geq n^3}
\biggl\vert 
\int_{0}^t e^{- (2 \pi k)^2 (t-s)}
dW^{k,\pm}_{s}
\biggr\vert^2
\leq \frac1{n}
\biggr\}
\biggr)
\\
&=
\PP_{0}
\biggl(
\bigcup_{p \geq 1} \bigcap_{n\geq p}
\biggl\{
\sum_{k \geq n^3}
\biggl\vert 
\int_{0}^t e^{ - (2 \pi k)^2 (t-s)}
\bigr)
dW^{k,\pm}_{s}
\biggr\vert^2
\leq \frac1{n}
\biggr\}
\biggr)
= 1.
\end{split}
\end{equation*}
It remains to observe that, for any $p \geq 1$, the set 
\begin{equation*}
\bigcap_{n \geq p}
\Bigl\{h \in L^2(\SS^1;\R^d) : 
\sum_{k \geq n^3}
\vert h^{k,\pm} \vert^2
\leq \frac1{n}
\Bigr\}
\end{equation*}
is compact in $L^2(\SS^1;\RR^d)$. 
\vskip 4pt

\textit{Second step.}
Following \eqref{eq:FN-F}, we observe that 
there exists a constant $C \geq 0$ such that, 
for all $N \in \NN^*$, $t \in [0,T]$ and $\ell \in \cK$,
\begin{equation}
\label{eq:proof:lemma:4.16:5}
\begin{split}
{\mathbb E}_{0}
\Bigl[
\bigl\| {\mathfrak G}(U_{T-t}^{\ell}) - {\mathfrak G}^{(N)}(U_{T-t}^{\ell}) 
\bigr\|_{L^2(\SS^1;\R^d)}^2
\Bigr] 
&\leq C {\mathbb E}_{0}
\Bigl[
\sum_{n > N}
\bigl| 
(U_{T-t}^{\ell})^{n,\pm}
\bigr|^2
\Bigr] + C \EE_{0} \Bigl[
\sum_{n >N} 
\vert 
{\mathfrak G}^{n,\pm}(U_{T-t}^{\ell}) 
\vert^2
\Bigr].
\end{split}
\end{equation}
By 
\eqref{eq:proof:lemma:4.16:1},
\eqref{eq:proof:lemma:4.16:2}
and
\eqref{eq:proof:lemma:4.16:3}, we have
\begin{equation*}
{\mathbb E}_{0}
\Bigl[
\sum_{n > N}
\bigl| 
(U_{T-t}^{\ell})^{n,\pm}
\bigr|^2
\Bigr] \leq \sum_{n >N} \vert \ell^{n,\pm} \vert^2 + \frac{c}{N}. 
\end{equation*}
Also, using the same notation $\cK^{\varepsilon}$ as in 
\eqref{eq:proof:lemma:4.16:0}, we have, for any $\varepsilon >0$ and for all $N \in \NN^*$, $t \in [0,T]$ and 
$\ell \in L^2(\SS^1;\R^d)$: 
\begin{equation}
\begin{split}
\EE_{0} \Bigl[
\sum_{n >N} 
\vert 
{\mathfrak G}^{n,\pm}(U_{T-t}^{\ell}) 
\vert^2
\Bigr]
&\leq 
\EE_{0} \Bigl[
{\mathbf 1}_{\{U_{T-t}^\ell \in \cK^{\varepsilon}\}}
\sum_{n >N} 
\vert 
{\mathfrak G}^{n,\pm}(U_{T-t}^{\ell}) 
\vert^2
\Bigr]
+  C \PP_{0} \bigl( U_{T-t}^\ell \not \in \cK^{\varepsilon} \bigr)
\\
&\leq \sup_{l \in \cK^{\varepsilon}}
\sum_{n >N} \vert {\mathfrak G}^{n,\pm}(l) \vert^2 
+  C \PP_{0} \bigl( U_{T-t}^\ell \not \in \cK^{\varepsilon} \bigr),
\end{split}
\end{equation}
where we used the fact that 
${\mathfrak G}$ is bounded and where we allowed the constant $C$ to increase from line to line. 

Therefore, 
\eqref{eq:proof:lemma:4.16:5} yields
\begin{equation}
\label{eq:proof:lemma:4.16:5:b}
\begin{split}
&{\mathbb E}_{0}
\Bigl[
\bigl\| {\mathfrak G}(U_{T-t}^{\ell}) - {\mathfrak G}^{(N)}(U_{T-t}^{\ell}) 
\bigr\|_{L^2(\SS^1;\R^d)}^2
\Bigr] 
\\
&\hspace{15pt} \leq C
\Bigl(  
\sup_{l \in \cK}
\sum_{n >N} \vert l^{n,\pm} \vert^2 + 
\sup_{l \in \cK^{\varepsilon}}
\sum_{n >N} \vert {\mathfrak G}^{n,\pm}(l) \vert^2 
+  \PP_{0} \bigl( U_{T-t}^\ell \not \in \cK^{\varepsilon} \bigr)
+ \frac1{N}
\Bigr). 
\end{split}
\end{equation}
Similarly, 
\begin{equation}
\label{eq:rate:F}
\begin{split}
&\sup_{s \in [0,T-t]}
{\mathbb E}_{0}
\Bigl[
\bigl\| {\mathfrak F}(U^{\ell}_{s}) - {\mathfrak F}^{(N)}(U^{\ell}_{s}) 
\bigr\|_{L^2(\SS^1;\R^d)}^2
\Bigr] 
\\
&\hspace{15pt}
\leq 
  C
\Bigl(  
\sup_{l \in \cK}
\sum_{n >N} \vert l^{n,\pm} \vert^2 + 
\sup_{l \in \cK^{\varepsilon}}
\sum_{n >N} \vert {\mathfrak F}^{n,\pm}(l) \vert^2 
+  \sup_{s \in [0,T]}\PP_{0} \bigl( U_{s}^\ell \not \in \cK^{\varepsilon} \bigr)
+ \frac1{N}
\Bigr).
\end{split}
\end{equation}
Obviously, the same bound holds true when replacing 
${\mathfrak F}$ by ${\mathfrak B}$.
We now return to 
\eqref{eq:estimation:fine:1}
and we write:
\begin{equation}
\label{eq:convergence:N:M:proof:1}
\begin{split}
&\big\langle  \bigl( {\mathcal U}^{(N)}-
{\mathcal U}^{(M)}
\bigr)
(t,\cdot), h \big\rangle
\\
&= {\mathcal P}_{T-t}
\Bigl[ 
\bigl\langle
\bigl( {\mathfrak G}^{(N)}
-{\mathfrak G}^{(M)}
\bigr)
(\cdot),h \bigr\rangle_{L^2({\mathbb S}^1;\R^d)}
\Bigr]
+ \int_{t}^T {\mathcal P}_{s-t}
\Bigl[ \big\langle 
\bigl( {\mathfrak F}^{(N)}-
{\mathfrak F}^{(M)}
\bigr)
(\cdot),
h
\big\rangle_{L^2(\SS^1;\R^d)}
\Bigr] ds
\\
&\hspace{15pt}
+
\int_{t}^T {\mathcal P}_{s-t}
\Bigl[  \bigl( {\mathfrak b}_{0}^{(N)}
-
{\mathfrak b}_{0}^{(M)}
\bigr)
(\cdot) \cdot D_{0} \langle {\mathcal U}^{(N)}(s,\cdot),
h \rangle_{L^2({\mathbb S}^1;\R^d)}
\Bigr] ds
\\
&\hspace{15pt}+
\int_{t}^T
{\mathcal P}_{s-t} \Bigl[{\mathfrak b}_{0}^{(M)}
(\cdot)  \cdot D_{0} \langle \bigl( {\mathcal U}^{(N)}-{\mathcal U}^{(M)}\bigr)(s,\cdot),
h \rangle_{L^2({\mathbb S}^1;\R^d)}
\Bigr] ds
\\
&\hspace{15pt}
- \int_{t}^T {\mathcal P}_{s-t}
\Bigl[ \big\langle  D \langle 
( {\mathcal U}^{(N)}- {\mathcal U}^{(M)})(s,\cdot),h
\rangle_{L^2({\mathbb S}^1;\R^d)}, {\mathcal U}^{(N)}(s,\cdot)
 \big\rangle_{L^2({\mathbb S}^1;\R^d)}
\Bigr] ds
\\
&\hspace{15pt}
- \int_{t}^T {\mathcal P}_{s-t}
\Bigl[ \big\langle D \langle {\mathcal U}^{(M)}
(s,\cdot),h
\rangle_{L^2({\mathbb S}^1;\R^d)}, 
\bigl(
{\mathcal U}^{(N)}-
{\mathcal U}^{(M)}
\bigr)
(s,\cdot)
 \big\rangle_{L^2({\mathbb S}^1;\R^d)}
\Bigr] ds. 
\end{split}
\end{equation}
We then make use of 
Lemma \ref{lem:ibp}.
We can find a constant $C$ such that, for all 
$N,M \geq 1$, $\ell \in \cK$, $h \in L^2(\SS^1;\R^d)$ 
with $\|h \|_{L^2(\SS^1;\R^d)} \leq 1$,
 and $t \in [0,T]$, 
\begin{equation*}
\begin{split}
&\Bigl\|
D \Bigl[ {\mathcal P}_{T-t}
\bigl[ 
\bigl\langle
\bigl( {\mathfrak G}^{(N)}
-{\mathfrak G}^{(M)}
\bigr)
(\cdot),h \bigr\rangle_{L^2({\mathbb S}^1;\R^d)}
\bigr]
\Bigr]_{\vert \cdot = \ell}
\Bigr\|_{L^2(\SS^1;\R^d)}
\\
&\leq \frac{C}{\sqrt{T-t}}
\EE_{0}
\Bigl[
\bigl\|
\bigl( {\mathfrak G}^{(N)}
-{\mathfrak G}^{(M)}
\bigr)
(U_{T-t}^\ell)
 \bigr\|_{L^2({\mathbb S}^1;\R^d)}^2 
\Bigr]^{1/2},
\end{split}
\end{equation*}
where we used the fact that 
$\EE_{0}
[
|
\langle
( {\mathfrak G}^{(N)}
-{\mathfrak G}^{(M)}
)
(U_{T-t}^\ell),h \rangle_{L^2({\mathbb S}^1;\R^d)}
|^2 
]^{1/2}$
is less than 
$\EE_{0}
[\|
( {\mathfrak G}^{(N)}
-{\mathfrak G}^{(M)}
)(U_{T-t}^\ell)
\|_{L^2({\mathbb S}^1;\R^d)}^2 
]^{1/2}$. 
If, instead of $\ell$, we choose the realization of the random variable 
$U_{t-t_{0}}^\ell$, for $t_{0} \in [0,t]$, we get by the flow property of the Ornstein-Uhlenbeck process:
\begin{equation}
\label{eq:convergence:N:M:proof:2}
\begin{split}
&\EE_{0} \biggl[ \Bigl\|
D \Bigl[
 {\mathcal P}_{T-t}
\bigl[ 
\bigl\langle
\bigl( {\mathfrak G}^{(N)}
-{\mathfrak G}^{(M)}
\bigr)
(\cdot),h \bigr\rangle_{L^2({\mathbb S}^1;\R^d)}
\bigr]
\Bigr]_{\cdot = U_{t-t_{0}}}
\Bigr\|_{L^2(\SS^1;\R^d)}^2 \biggr]^{1/2}
\\
&\leq
\frac{C}{\sqrt{T-t}}
\EE_{0}
\Bigl[
\bigl\|
\bigl( {\mathfrak G}^{(N)}
-{\mathfrak G}^{(M)}
\bigr)
(U_{T-t_{0}}^\ell)
 \bigr\|_{L^2({\mathbb S}^1;\R^d)}^2 
\Bigr]^{1/2}.
\end{split}
\end{equation}
By \eqref{eq:proof:lemma:4.16:5:b} and \eqref{eq:proof:lemma:4.16:0}, we obtain:
\begin{equation*}
\begin{split}
&{\mathbb E}_{0}
\Bigl[
\bigl\| \bigl( {\mathfrak G}^{(N)} - {\mathfrak G}^{(M)}\bigr)(U_{T-t_{0}}^{\ell}) 
\bigr\|_{L^2(\SS^1;\R^d)}^2
\Bigr] 
\\
&\hspace{15pt} \leq C
\Bigl( 
\sup_{l \in \cK}
\sum_{n >N \wedge M} \vert l^{n,\pm} \vert^2 + 
\sup_{l \in \cK^{\varepsilon}}
\sum_{n >N \wedge M} \vert {\mathfrak G}^{n,\pm}(l) \vert^2 
+
\sup_{l \in \cK}
 \sup_{r \in [0,T]} \PP_{0} \bigl( U_{r}^l \not \in \cK^{\varepsilon} \bigr)
 + \frac{1}{N \wedge M}
\Bigr). 
\end{split}
\end{equation*}
Therefore, 
\begin{equation*}
\begin{split}
&\EE_{0} \biggl[ \Bigl\|
D \Bigl[ {\mathcal P}_{T-t}
\bigl[ 
\bigl\langle
\bigl( {\mathfrak G}^{(N)}
-{\mathfrak G}^{(M)}
\bigr)
(\cdot),h \bigr\rangle_{L^2({\mathbb S}^1;\R^d)}
\bigr]
\Bigr]_{\vert \cdot = U_{t-t_{0}}^\ell}
\Bigr\|_{L^2(\SS^1;\R^d)}^2 \biggr]^{1/2}
\\
&\leq
\frac{C}{\sqrt{T-t}}
\biggl( \sup_{l \in \cK}
\sum_{n >N \wedge M} \vert l^{n,\pm} \vert^2 + 
\sup_{l \in \cK^{\varepsilon}}
\sum_{n >N \wedge M} \vert {\mathfrak G}^{n,\pm}(l) \vert^2 
+
\sup_{l \in \cK}
 \sup_{r \in [0,T]} \PP_{0} \bigl( U_{r}^l \not \in \cK^{\varepsilon} \bigr)
  + \frac{1}{N \wedge M}
\biggr)^{1/2}. 
\end{split}
\end{equation*}
By the same argument, 
\begin{equation*}
\begin{split}
&\EE_{0} \biggl[ \Bigl\|
D \Bigl[ {\mathcal P}_{s-t}
\bigl[ 
\bigl\langle
\bigl( {\mathfrak F}^{(N)}
-{\mathfrak F}^{(M)}
\bigr)
(\cdot),h \bigr\rangle_{L^2({\mathbb S}^1;\R^d)}
\bigr]
\Bigr]_{\vert \cdot =  U_{t-t_{0}}^\ell}
\Bigr\|_{L^2(\SS^1;\R^d)}^2 \biggr]^{1/2}
\\
&\leq
\frac{C}{\sqrt{s-t}}
\biggl( \sup_{l \in \cK}
\sum_{n >N \wedge M} \vert l^{n,\pm} \vert^2 + 
\sup_{l \in \cK^{\varepsilon}}
\sum_{n >N \wedge M} \vert {\mathfrak F}^{n,\pm}(l) \vert^2 
+
\sup_{l \in \cK}
 \sup_{r \in [0,T]} \PP_{0} \bigl( U_{r}^l \not \in \cK^{\varepsilon} \bigr)
   + \frac{1}{N \wedge M}
\biggr)^{1/2}. 
\end{split}
\end{equation*}
Similarly, using 
Lemma \ref{lem:lemma:ibp:UN}, it holds that
\begin{equation*}
\begin{split}
&\EE_{0} \biggl[ \Bigl\|
D \Bigl[ {\mathcal P}_{s-t}
\bigl[
 \bigl( {\mathfrak b}_{0}^{(N)}
-
{\mathfrak b}_{0}^{(M)}
\bigr)
(\cdot) \cdot D_{0} \langle {\mathcal U}^{(N)}(s,\cdot),
h \rangle_{L^2({\mathbb S}^1;\RR^d)}
\bigr]
\Bigr]_{\vert \cdot = U_{t-t_{0}}^\ell}
\Bigr\|_{L^2(\SS^1;\R^d)}^2 \biggr]^{1/2}
\\
&\leq
\frac{C}{\sqrt{s-t}}
\biggl( \sup_{l \in \cK}
\sum_{n >N \wedge M} \vert l^{n,\pm} \vert^2 
+
\sup_{l \in \cK}
 \sup_{r \in [0,T]} \PP_{0} \bigl( U_{r}^l \not \in \cK^{\varepsilon} \bigr)
 + \frac{1}{N \wedge M}
\biggr)^{1/2}. 
\end{split}
\end{equation*}
We now turn to the term on the third line in 
\eqref{eq:convergence:N:M:proof:1}. 
Following 
\eqref{eq:convergence:N:M:proof:2}, we have
\begin{equation*}
\begin{split}
&\EE_{0} \biggl[ \Bigl\|
D \Bigl[ {\mathcal P}_{s-t}
\bigl[ 
{\mathfrak b}_{0}^{(M)}
(\cdot)  \cdot D_{0} \langle \bigl( {\mathcal U}^{(N)}-{\mathcal U}^{(M)}\bigr)(s,\cdot),
h \rangle_{L^2({\mathbb S}^1;\R^d)}
\bigr]
\Bigr]_{\vert \cdot = U_{t-t_{0}}^\ell}
\Bigr\|_{L^2(\SS^1;\R^d)}^2 \biggr]^{1/2}
\\
&\leq
\frac{C}{\sqrt{s-t}}
\EE_{0}
\biggl[
\Bigl\|
D \bigl[ \big\langle  \bigl( {\mathcal U}^{(N)}-
{\mathcal U}^{(M)}
\bigr)
(s,\cdot), h \big\rangle
 \bigr]_{\vert \cdot = U_{s-t_{0}}^\ell}
 \Bigr\|_{L^2({\mathbb S}^1;\R^d)}^2 
\biggr]^{1/2}.
\end{split}
\end{equation*}
Obviously, the same holds for the term on the fourth line in 
\eqref{eq:convergence:N:M:proof:1}.
\begin{equation*}
\begin{split}
&\EE_{0} \biggl[ \Bigl\|
D \Bigl[ {\mathcal P}_{s-t}
\bigl[ 
 \big\langle  D \langle 
( {\mathcal U}^{(N)}- {\mathcal U}^{(M)})(s,\cdot),h
\rangle_{L^2({\mathbb S}^1)}, {\mathcal U}^{(N)}(s,\cdot)
 \big\rangle_{L^2({\mathbb S}^1;\R^d)}
\bigr]
\Bigr]_{\vert \cdot = U_{t-t_{0}}^\ell}
\Bigr\|_{L^2(\SS^1;\R^d)}^2
 \biggr]^{1/2}
\\
&\leq
\frac{C}{\sqrt{s-t}}
\EE_{0}
\biggl[
\Bigl\|
D 
\bigl[
\big\langle  \bigl( {\mathcal U}^{(N)}-
{\mathcal U}^{(M)}
\bigr)
(s,\cdot), h \big\rangle
 \bigr]_{\vert \cdot = U_{s-t_{0}}^\ell}
 \Bigr\|_{L^2({\mathbb S}^1;\R^d)}^2 
\biggr]^{1/2}.
\end{split}
\end{equation*}
Finally, 
\begin{equation*}
\begin{split}
&\EE_{0} \biggl[ \Bigl\|
D \Bigl[ 
{\mathcal P}_{s-t}
\bigl[ 
 \big\langle D \langle {\mathcal U}^{(M)}
(s,\cdot),h
\rangle_{L^2({\mathbb S}^1;\R^d)}, 
\bigl(
{\mathcal U}^{(N)}-
{\mathcal U}^{(M)}
\bigr)
(s,\cdot)
 \big\rangle_{L^2({\mathbb S}^1;\R^d)}
\bigr]
\Bigr]_{\vert \cdot = U_{t-t_{0}}^\ell} 
\Bigr\|_{L^2(\SS^1;\R^d)}^2 \biggr]^{1/2}
\\
&\leq
\frac{C}{\sqrt{s-t}}
\EE_{0}
\Bigl[
\bigl\|
\bigl(
{\mathcal U}^{(N)}-
{\mathcal U}^{(M)}
\bigr)
(s,U_{s-t_{0}}^\ell)
 \bigr\|_{L^2({\mathbb S}^1;\R^d)}^2 
\Bigr]^{1/2}.
\end{split}
\end{equation*}
Collecting all the bounds and plugging them into 
\eqref{eq:convergence:N:M:proof:1}, we get
\begin{equation*}
\begin{split}
&\EE_{0} \biggl[ \Bigl\|
D
\bigl[
\big\langle  \bigl( {\mathcal U}^{(N)}-
{\mathcal U}^{(M)}
\bigr)
(t,\cdot), h \big\rangle
\bigr]_{\vert \cdot = U_{t-t_{0}}^\ell}
\Bigr\|_{L^2(\SS^1;\R^d)}^2
 \biggr]^{1/2}
\\ 
 &\leq 
\frac{C}{\sqrt{T-t}}
\biggl( \sup_{l \in \cK}
\sum_{n >N \wedge M} \vert l^{n,\pm} \vert^2 
+ 
\sup_{l \in \cK^{\varepsilon}}
\sum_{n >N \wedge M} \vert {\mathfrak F}^{n,\pm}(l) \vert^2 
+ 
\sup_{l \in \cK^{\varepsilon}}
\sum_{n >N \wedge M} \vert {\mathfrak G}^{n,\pm}(l) \vert^2 
\\
&\hspace{200pt} +
\sup_{l \in \cK}
 \sup_{r \in [0,T]} \PP_{0} \bigl( U_{r}^l \not \in \cK^{\varepsilon} \bigr)
 + \frac{1}{N \wedge M}
\biggr)^{1/2}
\\
&\hspace{15pt} + \int_{t}^T 
\frac{C}{\sqrt{s-t}}
\EE_{0}
\biggl[
\Bigl\|
D \bigl[ \big\langle  \bigl( {\mathcal U}^{(N)}-
{\mathcal U}^{(M)}
\bigr)
(s,\cdot), h \big\rangle
\bigr]_{\vert \cdot = U_{s-t_{0}}^\ell}
\Bigr\|_{L^2({\mathbb S}^1;\R^d)}^2 
\biggr]^{1/2} ds
\\
&\hspace{15pt} + \int_{t}^T 
\frac{C}{\sqrt{s-t}}
\EE_{0}
\Bigl[
\bigl\|
\bigl(
{\mathcal U}^{(N)}-
{\mathcal U}^{(M)}
\bigr)
(s,U_{s-t_{0}}^\ell)
 \bigr\|_{L^2({\mathbb S}^1;\R^d)}^2 
\Bigr]^{1/2} ds.
\end{split}
\end{equation*}
By Lemma \ref{lem:4:2:5:bis} below, we get
\begin{equation*}
\begin{split}
&\EE_{0} \biggl[ \Bigl\|
D
\bigl[
\big\langle  \bigl( {\mathcal U}^{(N)}-
{\mathcal U}^{(M)}
\bigr)
(t,\cdot), h \big\rangle
\bigr]_{\vert \cdot = U_{t-t_{0}}^\ell} 
\Bigr\|_{L^2(\SS^1;\R^d)}^2
 \biggr]^{1/2}
\\ 
 &\leq 
\frac{C}{\sqrt{T-t}}
\biggl[
\biggl( \sup_{l \in \cK}
\sum_{n >N \wedge M} \vert l^{n,\pm} \vert^2 
+ 
\sup_{l \in \cK^{\varepsilon}}
\sum_{n >N \wedge M} \vert {\mathfrak F}^{n,\pm}(l) \vert^2 
+ 
\sup_{l \in \cK^{\varepsilon}}
\sum_{n >N \wedge M} \vert {\mathfrak G}^{n,\pm}(l) \vert^2 
\\
&\hspace{200pt} +
\sup_{l \in \cK}
 \sup_{r \in [0,T]} \PP_{0} \bigl( U_{r}^l \not \in \cK^{\varepsilon} \bigr)
 + \frac{1}{N \wedge M}
\biggr)^{1/2}
\\
&\hspace{30pt} +  \sup_{s \in [t_{0},T]}
\EE_{0}
\Bigl[
\bigl\|
\bigl(
{\mathcal U}^{(N)}-
{\mathcal U}^{(M)}
\bigr)
(s,U_{s-t_{0}}^\ell)
 \bigr\|_{L^2({\mathbb S}^1;\R^d)}^2 
\Bigr]^{1/2} \biggr].
\end{split}
\end{equation*}
And then, using the boundedness of $\cU^{(N)}$ (and $\cU^{(M)}$), 
we obtain
\begin{equation*}
\begin{split}
&\EE_{0} \biggl[ \Bigl\|
D \bigl[
\big\langle  \bigl( {\mathcal U}^{(N)}-
{\mathcal U}^{(M)}
\bigr)
(t,\cdot), \ell \big\rangle
\bigr]_{\vert \cdot = U_{t-t_{0}}^h} 
\Bigr\|_{L^2(\SS^1;\R^d)}^2
 \biggr]^{1/2}
\\ 
 &\leq 
\frac{C}{\sqrt{T-t}}
\biggl[
\biggl( \sup_{l \in \cK}
\sum_{n >N \wedge M} \vert l^{n,\pm} \vert^2 
+ 
\sup_{l \in \cK^{\varepsilon}}
\sum_{n >N \wedge M} \vert {\mathfrak F}^{n,\pm}(l) \vert^2 
+ 
\sup_{l \in \cK^{\varepsilon}}
\sum_{n >N \wedge M} \vert {\mathfrak G}^{n,\pm}(l) \vert^2 
\\
&\hspace{70pt} +
\sup_{s \in [{0},T]}
\sup_{l \in \cK^{\varepsilon}}
\bigl\|
\bigl(
{\mathcal U}^{(N)}-
{\mathcal U}^{(M)}
\bigr)
(s,l)
 \bigr\|_{L^2({\mathbb S}^1;\R^d)}^2
+
\sup_{l \in \cK}
 \sup_{r \in [0,T]} \PP_{0} \bigl( U_{r}^l \not \in \cK^{\varepsilon} \bigr)
+ \frac1{N \wedge M}
\biggr)^{1/2} \biggr],
\end{split}
\end{equation*}
which completes the proof by taking $t=t_{0}$.
\end{proof}

\begin{corollary}
\label{cor:time:regularity:decoupling:field}
For any compact subset $\cK \subset L^2(\SS^1;\R^d)$, there exists a 
function $w : \RR_{+} \rightarrow \RR_{+}$ satisfying 
$\lim_{\delta \searrow 0} w(\delta) =0$ such that, for any $N \in \NN^*$, any $s,t \in [0,T]$
and any $\ell \in L^2(\SS^1;\R^d)$,
\begin{equation*}
\| \cU^{(N)}(t,\ell) - \cU^{(N)}(s,\ell) \|_{L^2(\SS^1;\R^d)}
\leq C w\bigl( \vert s-t \vert \bigr).  
\end{equation*}
\end{corollary}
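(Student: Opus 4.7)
The plan is as follows. Fix $s < t$ in $[0,T]$ and $\ell \in \cK$. Since for each fixed $N$ the Galerkin system \eqref{eq:mkv:sde:N} reduces to a non-degenerate finite-dimensional FBSDE, the decoupling field $\cU^{(N)}$ is smooth and the flow identity $\cU^{(N)}(r, X_r^{s,\ell,(N)}) = Y_r^{s,\ell,(N)}$ holds for $s \leq r \leq T$, where $(\bX^{s,\ell,(N)}, \bY^{s,\ell,(N)})$ denotes the solution to \eqref{eq:mkv:sde:N} on $[s,T]$ with $X_s = \ell$. Taking expectation in the backward equation between $s$ and $t$ kills the stochastic integral and, noting that $Y_s^{s,\ell,(N)} = \cU^{(N)}(s,\ell)$ is deterministic, yields
\begin{equation*}
\cU^{(N)}(s,\ell) - \cU^{(N)}(t,\ell) = \EE_0\Bigl[ \cU^{(N)}\bigl(t, X_t^{s,\ell,(N)}\bigr) - \cU^{(N)}(t,\ell)\Bigr] + \EE_0 \int_s^t {\mathfrak F}^{(N)}\bigl(X_r^{s,\ell,(N)}\bigr)\, dr.
\end{equation*}
By Lemma \ref{lem:lemma:ibp:UN} on $[0,T)$, together with the $C$-Lipschitzness of ${\mathfrak G}^{(N)}$ at $t = T$ (immediate from Assumption \textbf{(A)}), the map $\cU^{(N)}(t,\cdot)$ is $C$-Lipschitz with $C$ independent of $t \in [0,T]$ and $N$. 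Since $\| {\mathfrak F}^{(N)}(\cdot) \|_{L^2(\SS^1;\R^d)}$ is uniformly bounded (because $\partial_x f$ is bounded), Jensen's inequality in $L^2(\SS^1;\R^d)$ gives
\begin{equation*}
\| \cU^{(N)}(s,\ell) - \cU^{(N)}(t,\ell) \|_{L^2(\SS^1;\R^d)} \leq C\, \EE_0 \bigl\| X_t^{s,\ell,(N)} - \ell\bigr\|_{L^2(\SS^1;\R^d)} + C(t-s).
\end{equation*}

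It remains to establish a short-time estimate of the form
\begin{equation*}
\sup_{\ell \in \cK}\sup_{N \in \NN^*} \EE_0\bigl\| X_t^{s,\ell,(N)} - \ell\bigr\|_{L^2(\SS^1;\R^d)}^2 \leq w_0(t-s)^2,
\end{equation*}
with a modulus $w_0$ depending only on $\cK$. I would apply the variation-of-constants formula mode-by-mode in \eqref{eq:mkv:sde:N}, writing $X_t^{(N),n,\pm} - \ell^{n,\pm}$ as the sum of the linear decay $(e^{-(2\pi n)^2(t-s)}-1)\ell^{n,\pm}$, the drift $\int_s^t e^{-(2\pi n)^2(t-r)}({\mathfrak B}^{(N),n,\pm} - Y_r^{(N),n,\pm})\, dr$, and the stochastic integral $\int_s^t e^{-(2\pi n)^2(t-r)}\, dW_r^{n,\pm}$, and summing the squares over $n$. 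The drift piece contributes $O((t-s)^2)$ via Cauchy-Schwarz and the uniform bounds $\|{\mathfrak b}_0\|_\infty \leq C$ and $\EE\|Y_r^{(N)}\|_{L^2(\SS^1;\R^d)}^2 \leq C$ from Lemma \ref{lem:bound:pp}. The noise piece contributes $d(t-s) + d\sum_{n\geq 1}\frac{1-e^{-2(2\pi n)^2(t-s)}}{2(2\pi n)^2}$, which is $O(\sqrt{t-s})$ by splitting the series at $n \simeq (t-s)^{-1/2}$.

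The only (mild) obstacle is the linear decay term $\sum_n (e^{-(2\pi n)^2(t-s)}-1)^2 |\ell^{n,\pm}|^2$: it vanishes as $t-s \to 0$ for each fixed $\ell$, but without extra structure not uniformly over $L^2(\SS^1;\R^d)$. This is precisely where the compactness of $\cK$ intervenes: compactness in $L^2(\SS^1;\R^d)$ forces uniform decay of Fourier tails, so for any $\eta > 0$ one can find $N_0$ with $\sup_{\ell \in \cK}\sum_{n > N_0}|\ell^{n,\pm}|^2 \leq \eta$; the head $\sum_{n \leq N_0}$ then tends to $0$ uniformly in $\ell \in \cK$ as $t - s \to 0$, yielding the modulus $w_0$. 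Combining all the pieces produces a modulus $w(\delta) = C\bigl(\delta + w_0(\delta)\bigr)$, uniform in $N$ and $\ell \in \cK$, and the case $s > t$ follows by symmetry. The restriction to a compact $\ell$-set is essential here, in contrast to the uniformity in $\ell$ that was achievable in Lemma \ref{lem:lemma:ibp:UN}.
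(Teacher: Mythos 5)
Your argument is correct and follows essentially the same route as the paper: it uses the representation of $\cU^{(N)}(s,\ell)-\cU^{(N)}(t,\ell)$ via the backward equation and the uniform $C$-Lipschitz bound from Lemma \ref{lem:lemma:ibp:UN}, then reduces everything to a mode-by-mode variation-of-constants estimate of $\EE_0\|X^{(N)}_{t} - \ell\|_{L^2(\SS^1;\R^d)}^2$, split into linear-decay, drift, and noise contributions, with the compactness of $\cK$ controlling the linear-decay piece through uniform Fourier-tail decay (the paper uses a moving threshold $n\geq (t-s)^{-1/4}$ where you use a fixed one, but this is only a bookkeeping difference). You are also right that the estimate can only be uniform over $\ell\in\cK$ and not over all of $L^2(\SS^1;\R^d)$, which is indeed how the paper's final inequality reads despite the wording of the statement.
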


\begin{proof}
Without any loss of generality, we can assume $t<s$. 
We then consider the solution $(X^{(N),t,\ell}_{r}(\cdot),Y^{(N),t,\ell}_{r}(\cdot),Z^{(N),t,\ell}_{r}(\cdot))_{t \leq r \leq T}$ of the forward-backward system 
\eqref{eq:mkv:sde:N} on the interval $[t,T]$ with $X_{t}^{(N),t,\ell}(\cdot)=\ell$ as initial condition.

We then have:
\begin{equation*}
\begin{split}
\cU^{(N)}(t,\ell) = \EE_{0}
\biggl[ \cU^{(N)}\bigl(s,X_{s}^{(N),t,\ell}(\cdot)\bigr)
+ \int_{t}^s {\mathfrak F}^{(N)}\bigl(X_{r}^{(N),t,\ell}(\cdot)\bigr) dr \biggr],
\end{split}
\end{equation*}
so that
\begin{equation*}
\begin{split}
\cU^{(N)}(t,\ell) - \cU^{(N)}(s,\ell) = 
\EE_{0}
\biggl[
\Bigl(
 \cU^{(N)}\bigl(s,X_{s}^{(N),t,\ell}(\cdot)\bigr)
-
\cU^{(N)}(s,\ell)
\Bigr)
+ \int_{t}^s {\mathfrak F}^{(N)}\bigl(X_{r}^{(N),t,\ell}(\cdot)\bigr) dr \biggr].
\end{split}
\end{equation*}
Recalling that the functions $({\mathfrak F}^{(N)})_{N \in \NN^*}$ are bounded, uniformly in $N \in \NN^*$, and invoking Lemma 
\ref{lem:lemma:ibp:UN}, we deduce that there exists a constant 
$C$ such that, for any $N \in \NN^*$, any $t \in [0,T]$
and any $\ell \in L^2(\SS^1;\R^d)$,
\begin{equation}
\label{eq:time:regularity:proof:1}
\| \cU^{(N)}(t,\ell) - \cU^{(N)}(s,\ell) \|_{L^2(\SS^1;\R^d)}
\leq C \Bigl( \vert s-t \vert + \EE_{0}
\bigl[ \| X_{s}^{(N),t,\ell} - \ell \|_{L^2(\SS^1;\R^d)} \bigr]
\Bigr). 
\end{equation}
We now recall   the
Fourier expansion of the 
 forward equation in 
\eqref{eq:mkv:sde:N}:
\begin{equation*}
\begin{split}
&dX_{r}^{(N),n,\pm} 
= \Bigl( 
{\mathbf 1}_{(n,\pm)=(0,+)}
{\mathfrak B}^{(N),0,+}\bigl(X_{r}^{(N)}(\cdot)\bigr) 
 - \cU^{(N),n,\pm}\bigl(r,X_{r}^{(N)}(\cdot)\bigr)
- (2 \pi n)^2 X_{r}^{(N),n,\pm}
\Bigr) ds + dW_{r}^{n,\pm}, 
\end{split}
\end{equation*}
for 
$r \in [t,T]$,
where, for the sake of simplicity, we omitted the indices $(t,\ell)$ in the notation and we just indicated the mode indices. We get:
\begin{equation}
\label{eq:time:regularity:proof:2}
\begin{split}
X_{s}^{(N),n,\pm} &= e^{ - (2 \pi n)^2 (s-t)} \ell^{n,\pm}
+ 
 \int_{t}^s 
  e^{  (2 \pi n)^2 (r-s) }
{\mathbf 1}_{(n,\pm)=(0,+)}
{\mathfrak B}^{(N),0,+}\bigl(X_{r}^{(N)}(\cdot)\bigr)
dr
\\
&\hspace{15pt}-  
 \int_{t}^s 
 e^{  (2 \pi n)^2 (r-s) }
 \cU^{(N),n,\pm}\bigl(r,X_{r}^{(N)}(\cdot)\bigr) dr
  + 
\int_{t}^s 
 e^{  (2 \pi n)^2 (r-s) }
dW_r^{n,\pm}. 
\end{split}
\end{equation}
Since the functions ${\mathfrak B}^{(N)}$ and ${\mathcal U}^{(N)}$
can be bounded independently of $N$, we deduce that:
\begin{equation}
\label{eq:time:regularity:proof:3}
\begin{split}
\EE_{0} \bigl[ \|X_{s}^{(N)} - \ell \|_{L^2}^2 \bigr]
&\leq
C \biggl[ \vert s-t \vert^2 
+
 \sum_{n \in \NN} \vert \ell^{n,\pm} \vert^2 
\Bigl( e^{ - (2 \pi n)^2 (s-t)} - 1 \Bigr)^2
+
\sum_{n \in \NN} \int_{t}^s e^{ 2 (2 \pi n)^2 (r-s)} dr
\biggr].
\end{split}
\end{equation}
Now,
\begin{equation}
\label{eq:time:regularity:proof:4}
\begin{split}
\sum_{n \in \NN} \vert \ell^{n,\pm} \vert^2 
\Bigl( e^{ - (2 \pi n)^2 (s-t) } - 1 \Bigr)^2
&\leq 
C \sum_{n \in \NN} \vert \ell^{n,\pm} \vert^2 
\bigl[ 1 \wedge \bigl( n^2 (s-t) \bigr) \bigr]^2
\\
&\leq 
C 
\biggl[ \vert s-t \vert
\sum_{n \in \NN} \vert \ell^{n,\pm} \vert^2 
+
\sum_{n \geq (s-t)^{-1/4}} \vert \ell^{n,\pm} \vert^2 \biggr].
\end{split}
\end{equation}
Also, allowing the constant $C$ to change from line to line, we get
\begin{equation}
\label{eq:time:regularity:proof:5}
\begin{split}
\sum_{n \in \NN} 
\int_{t}^s e^{ - 2 (2 \pi n)^2 (s-r)} dr
&\leq C (s-t) + C 
\int_{0}^{\infty }\int_{t}^s e^{ - 2 (2 \pi x)^2 (s-r)} dr dx
\leq C(s-t)^{1/2}.
\end{split}
\end{equation}
Collecting 
\eqref{eq:time:regularity:proof:1},
\eqref{eq:time:regularity:proof:3},
\eqref{eq:time:regularity:proof:4}
and
\eqref{eq:time:regularity:proof:5}, 
we finally obtain:
\begin{equation*}
\| \cU^{(N)}(t,\ell) - \cU^{(N)}(s,\ell) \|_{L^2(\SS^1;\R^d)}
\leq C 
\bigl( 1 + \sup_{l \in \cK} \| l \|_{L^2(\SS^1;\R^d)}^2
\bigr)
\Bigl( \vert s-t \vert^{1/4} + 
\sup_{l \in \cK}
\sum_{n \geq (s-t)^{-1/4}} \vert l^{n,\pm} \vert^2
\Bigr),
\end{equation*}
which completes the proof.
\end{proof}

Here are now the two variants of Gronwall's lemma we appealed to right above. 

\begin{lemma}
\label{lem:4:2:5}
Consider two bounded measurable functions $g_1,g_2 : [0,T] \rightarrow \R_+$ such that
\begin{equation}
\label{eq:4:2:30}
g_1(t) \leq C_1 + C_2 \int_t^T \frac{g_2(s)}{\sqrt{s-t}} ds,
\end{equation}
for some constants $C_1,C_2 \geq 0$. Then there exist $\lambda,\mu > 0$, depending on $C_2$ and $T$ only, such that
\begin{equation}
\label{eq:4:2:31}
\begin{split}
&\int_0^T  g_1(t) \exp(\lambda t) dt \leq \mu C_1 + \frac{1}{2} \int_0^T g_2(t) \exp(\lambda t) dt,
\\
&\sup_{0 \leq t \leq T} \bigl[g_{1}(t)\bigr] \leq \mu C_1 + 2 C_{2}^2 \int_0^T g_2(t) dt +  \frac{1}{2} \sup_{0 \leq t \leq T} \bigl[g_{2}(t)\bigr].
\end{split}
\end{equation}
In particular, if $g_1=g_2$, then $g_1$ is bounded by $\mu' C_1$, for a constant 
$\mu'$ depending on $C_2$ and $T$ only.
 \end{lemma}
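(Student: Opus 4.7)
My plan is to treat the two inequalities in \eqref{eq:4:2:31} separately, exploiting the singular kernel $1/\sqrt{s-t}$ in two different ways, and then to combine them to handle the case $g_{1}=g_{2}$.

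For the first inequality, I would multiply \eqref{eq:4:2:30} by $e^{\lambda t}$, integrate over $[0,T]$, and exchange the order of integration by Fubini to obtain
\begin{equation*}
\int_{0}^T g_{1}(t) e^{\lambda t} dt
\le C_{1} \frac{e^{\lambda T}-1}{\lambda}
+ C_{2} \int_{0}^T g_{2}(s) e^{\lambda s} \biggl( \int_{0}^s \frac{e^{-\lambda u}}{\sqrt{u}} du \biggr) ds,
\end{equation*}
after the change of variable $u=s-t$. Since $\int_{0}^\infty e^{-\lambda u}/\sqrt{u}\, du = \sqrt{\pi/\lambda}$, choosing $\lambda$ large enough that $C_{2}\sqrt{\pi/\lambda}\le 1/2$ (so $\lambda\ge 4\pi C_{2}^2$) yields the first bound of \eqref{eq:4:2:31} with $\mu=(e^{\lambda T}-1)/\lambda$.

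For the second inequality, I would fix $h\in(0,T)$ (to be chosen later) and split the integral in \eqref{eq:4:2:30} into $\int_{t}^{t+h}+\int_{t+h}^T$. On the first piece, I bound $g_{2}(s)\le \sup g_{2}$ and explicitly compute $\int_{t}^{t+h}(s-t)^{-1/2}ds = 2\sqrt{h}$; on the second piece, I use $(s-t)^{-1/2}\le h^{-1/2}$ and the trivial bound $\int_{t+h}^T g_{2}\le \int_{0}^T g_{2}$. This gives
\begin{equation*}
g_{1}(t) \le C_{1} + 2C_{2}\sqrt{h}\sup_{[0,T]}g_{2} + \frac{C_{2}}{\sqrt{h}}\int_{0}^T g_{2}(s) ds.
\end{equation*}
Choosing $h$ so that $2C_{2}\sqrt{h}=1/2$ (i.e.\ $\sqrt{h}=1/(4C_{2})$) produces the desired second estimate, up to the explicit value of the constant in front of $\int_{0}^T g_{2}$, which reads as $4C_{2}^2$ in this derivation (adjusting $h$ differently allows one to replace $4$ by any larger constant, and the precise numerical value plays no role in the sequel).

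Finally, for the case $g_{1}=g_{2}$, I would first apply the first inequality of \eqref{eq:4:2:31}: since the term $\frac{1}{2}\int_{0}^T g_{1}(t)e^{\lambda t}dt$ can be absorbed in the left-hand side, this gives $\int_{0}^T g_{1}(t)dt \le e^{\lambda T}\int_{0}^T g_{1}(t)e^{\lambda t}dt/e^{\lambda T} \le 2\mu C_{1}$. Plugging this into the second inequality and absorbing $\frac{1}{2}\sup g_{1}$ into the left-hand side yields $\sup_{[0,T]}g_{1}\le \mu' C_{1}$ with $\mu'$ depending only on $C_{2}$ and $T$. There is no real obstacle in this proof; the only subtle point is the choice of $\lambda$ (in the first step) and of $h$ (in the second), both tuned so as to absorb a fraction of the right-hand side back into the left-hand side.
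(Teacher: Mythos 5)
Your argument is correct and is, in essence, the standard one: the paper itself does not prove this lemma (it simply cites \cite[Lemma 2.13]{MR2973334} for the first inequality), but the technique you use -- exponential weighting plus Fubini and the bound $\int_0^\infty e^{-\lambda u}u^{-1/2}\,du=\sqrt{\pi/\lambda}$ for the first claim, and a short/long split of the kernel at $s=t+h$ for the second -- is precisely what one expects, and it matches the proof the paper does spell out for the sister Lemma \ref{lem:4:2:5:bis} (there, the split is at $(t+\varepsilon)\wedge T$).

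Two small remarks. First, your split gives the constant $4C_2^2$ in front of $\int_0^T g_2$ rather than the stated $2C_2^2$; you acknowledge this, and indeed only the existence of some constant depending on $(C_2,T)$ matters for the final conclusion, but your parenthetical comment that one may replace $4$ by ``any larger constant'' does not explain the stated $2$ (one would need a different, slightly sharper split to recover it -- or the stated constant is simply loose). Second, the intermediate display in your last step, ``$\int_0^T g_1(t)\,dt\le e^{\lambda T}\int_0^T g_1(t)e^{\lambda t}\,dt/e^{\lambda T}\le 2\mu C_1$,'' is garbled (the factor $e^{\lambda T}$ cancels with itself); what you mean, and what is correct, is simply $\int_0^T g_1\,dt\le\int_0^T g_1(t)e^{\lambda t}\,dt\le 2\mu C_1$, using $e^{\lambda t}\ge 1$. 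With these cosmetic fixes the proof is complete.
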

 
 \begin{lemma}
\label{lem:4:2:5:bis}
Consider two bounded measurable functions $g_1,g_2 : [0,T] \rightarrow \R_+$ such that
\begin{equation}
\label{eq:4:2:30:bis}
g_1(t) \leq \frac{C_{1}}{\sqrt{T-t}} + C_2 \int_t^T \frac{g_2(s)}{\sqrt{s-t}} ds,
\end{equation}
for some constants $C_1,C_2 \geq 0$. Then there exist $\lambda,\mu > 0$, depending on $C_2$ and $T$ only, such that
\begin{equation}
\label{eq:4:2:31:bis}
\begin{split}
&\int_0^T  g_1(t) \exp(\lambda t) dt \leq \mu C_1 + \frac{1}{2} \int_0^T g_2(t) \exp(\lambda t) dt,
\\
&\sup_{0 \leq t \leq T} \bigl[\sqrt{T-t} \, g_{1}(t)\bigr] \leq \mu C_1 + \mu \int_0^T g_2(t) dt +  \frac{1}{2} \sup_{0 \leq t \leq T} \bigl[\sqrt{T-t} \, g_{2}(t)\bigr].
\end{split}
\end{equation}
In particular, there exists a 
constant 
$\mu'$ depending on $C_2$ and $T$ only such that, whenever $g_{1}=g_{2}$, 
\begin{equation*}
\sup_{0 \leq t \leq T} \bigl[\sqrt{T-t} \, g_{1}(t)\bigr] \leq \mu' C_1.
\end{equation*}
 \end{lemma}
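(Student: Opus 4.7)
The plan is to adapt the proof strategy that establishes the preceding Lemma \ref{lem:4:2:5}, paying attention to the extra singularity $(T-t)^{-1/2}$ present in the hypothesis \eqref{eq:4:2:30:bis}.

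For the first displayed estimate in \eqref{eq:4:2:31:bis}, I would multiply both sides of \eqref{eq:4:2:30:bis} by $e^{\lambda t}$ and integrate over $[0,T]$. Fubini's theorem turns the double integral into
\begin{equation*}
\int_0^T e^{\lambda t} \int_t^T \frac{g_2(s)}{\sqrt{s-t}}\,ds\,dt = \int_0^T g_2(s) \int_0^s \frac{e^{\lambda t}}{\sqrt{s-t}}\,dt\,ds,
\end{equation*}
and the inner integral is bounded by $e^{\lambda s} \int_0^\infty \tau^{-1/2} e^{-\lambda \tau}\,d\tau = e^{\lambda s}\sqrt{\pi/\lambda}$. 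Choosing $\lambda$ large enough (e.g.\ $\lambda \geq 4\pi C_2^2$) so that $C_2\sqrt{\pi/\lambda} \leq 1/2$ absorbs the $g_2$-contribution. The remaining term $C_1 \int_0^T (T-t)^{-1/2} e^{\lambda t}\,dt$ is bounded by $e^{\lambda T}\sqrt{\pi/\lambda}\,C_1$, yielding the desired $\mu C_1$.

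For the second estimate, I would multiply \eqref{eq:4:2:30:bis} by $\sqrt{T-t}$ and split the integral at $\eta = t + \delta(T-t)$ for a small parameter $\delta \in (0,1)$ to be chosen. Setting $M := \sup_{0 \leq s \leq T}[\sqrt{T-s}\,g_2(s)]$, on $[t,\eta]$ I bound $g_2(s) \leq M/\sqrt{T-s}$; the substitution $v=(s-t)/(T-t)$ gives
\begin{equation*}
\int_t^\eta \frac{ds}{\sqrt{(s-t)(T-s)}} = \int_0^\delta \frac{dv}{\sqrt{v(1-v)}} =: c_\delta,
\end{equation*}
which is independent of $t$ and $T$ and tends to $0$ as $\delta \to 0$. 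On $[\eta,T]$ I use $(s-t)^{-1/2} \leq (\delta(T-t))^{-1/2}$, producing a factor $\int_0^T g_2(s)\,ds / \sqrt{\delta(T-t)}$. After multiplying by $\sqrt{T-t}$ this reads
\begin{equation*}
\sqrt{T-t}\,g_1(t) \leq C_1 + C_2 c_\delta \sqrt{T-t}\,M + \frac{C_2}{\sqrt{\delta}}\int_0^T g_2(s)\,ds,
\end{equation*}
and taking the supremum in $t$ gives a coefficient $C_2 c_\delta \sqrt{T}$ in front of $M$. The key point is that this coefficient can be made at most $1/2$ by choosing $\delta$ small enough (depending on $C_2$ and $T$ only), which is exactly what is needed in \eqref{eq:4:2:31:bis}. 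The main delicacy, though purely computational, is to verify that $c_\delta \to 0$ as $\delta \to 0$ at a rate sufficient to accommodate the $\sqrt{T}$ factor.

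For the final assertion, when $g_1 = g_2$ the first inequality of \eqref{eq:4:2:31:bis} yields $\int_0^T g_1(t)e^{\lambda t}\,dt \leq 2\mu C_1$ after absorbing, hence $\int_0^T g_1(t)\,dt \leq 2\mu C_1$. Inserting this into the second inequality and absorbing the $(1/2)\sup[\sqrt{T-t}\,g_1(t)]$ term provides a constant $\mu'$ such that $\sup_{0 \leq t \leq T}[\sqrt{T-t}\,g_1(t)] \leq \mu' C_1$, as required.
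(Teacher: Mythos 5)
Your proof is correct. For the first inequality you use the standard exponential-weight/Fubini argument, which the paper simply delegates to the reference it cites for the preceding lemma. For the second inequality, however, your decomposition is genuinely different from the paper's: you cut the integral at the \emph{relative} point $\eta = t + \delta(T-t)$, whereas the paper cuts at the \emph{absolute} point $(t+\varepsilon)\wedge T$. The relative cut is cleaner: after the substitution $v = (s-t)/(T-t)$, the first piece produces the Beta-type quantity $c_\delta = \int_0^\delta (v(1-v))^{-1/2}\,dv$, which is independent of $t$ and $T$, so the paper's two-case discussion (according to whether $\varepsilon^{1/2} \leq T-t$ or $T-t \leq \varepsilon^{1/2}$, needed because $\varepsilon/(T-t)$ may exceed $1$) is bypassed entirely. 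The residual concern you raise about the decay rate of $c_\delta$ is unfounded: all that is needed is $c_\delta \to 0$ as $\delta \to 0$ (for $\delta \leq 1/2$ one has $c_\delta \leq 2\sqrt{2\delta}$), so a fixed $\delta$ depending only on $C_2$ and $T$ makes $C_2 c_\delta \sqrt{T} \leq 1/2$, which is all the statement requires. The final absorption step when $g_1=g_2$ matches the paper's argument.
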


We just prove the second statement. The proof of the first one may be found 
in \cite[Lemma 2.13]{MR2973334}.

\begin{proof}
The first part of Lemma \ref{lem:4:2:5:bis}
may be proved as in \cite[Lemma 2.13]{MR2973334}.
So, we focus on the second inequality. For any $\varepsilon>0$, \eqref{eq:4:2:30:bis} yields
\begin{equation*}
\begin{split}
&(T-t)^{1/2}  g_1(t) 
\\
&\leq C_1 + C_2 \int_t^{(t+\varepsilon) \wedge T} \frac{(T-t)^{1/2}}{(s-t)^{1/2} (T-s)^{1/2}} (T-s)^{1/2} g_2(s) ds + C_2 \varepsilon^{-1/2} \int_{(t+\varepsilon) \wedge T}^T g_2(s) ds
\\
&\leq C_1 + C_2 \varepsilon^{-1/2} \int_{0}^T g_{2}(s) ds + C_2 
 \sup_{0 \leq s \leq T} \bigl[(T-s)^{1/2} g_2(s)\bigr]
 \int_t^{(t+\varepsilon) \wedge T} \frac{(T-t)^{1/2}}{(s-t)^{1/2} (T-s)^{1/2}}  ds.
\end{split}
\end{equation*}
Now,
\begin{equation*}
\begin{split}
\int_t^{(t+\varepsilon) \wedge T} \frac{(T-t)^{1/2}}{(s-t)^{1/2} (T-s)^{1/2}}  ds
 &= \int_0^{\varepsilon \wedge (T-t)} \frac{(T-t)^{1/2}}{s^{1/2} (T-t-s)^{1/2}}  ds
 \\
&= (T-t)^{1/2} \int_0^{1 \wedge [\varepsilon/(T-t)]} \frac{1}{s^{1/2} (1-s)^{1/2}}  ds
 \end{split}
 \end{equation*}
If $\varepsilon^{1/2} \leq T-t$, then
\begin{equation*}
\begin{split}
\int_t^{(t+\varepsilon) \wedge T} \frac{(T-t)^{1/2}}{(s-t)^{1/2} (T-s)^{1/2}}  ds
 &\leq  T^{1/2} \int_0^{1 \wedge \varepsilon^{1/2}} \frac{1}{s^{1/2} (1-s)^{1/2}}  ds.
 \end{split}
 \end{equation*} 
Otherwise, $T-t \leq \varepsilon^{1/2}$ and 
\begin{equation*}
\begin{split}
\int_t^{(t+\varepsilon) \wedge T} \frac{(T-t)^{1/2}}{(s-t)^{1/2} (T-s)^{1/2}}  ds
 &\leq  \varepsilon^{1/4} \int_0^{1} \frac{1}{s^{1/2} (1-s)^{1/2}}  ds.
 \end{split}
 \end{equation*} 
 So, we can find a function $\delta : \RR_{+} \rightarrow \RR_{+}$ converging to $0$ in $0$ such that 
 \begin{equation*}
\begin{split}
&(T-t)^{1/2}  g_1(t) 
\leq C_1 + C_2 \varepsilon^{-1/2} \int_{0}^T g_{2}(s) ds + C_2 
\delta(\varepsilon)
 \sup_{0 \leq s \leq T} \bigl[(T-s)^{1/2} g_2(s)\bigr].
\end{split}
\end{equation*}
The proof of the second claim is easily completed.
Whenever $g_{1}=g_{2}$, 
\begin{equation*}
\int_{0}^T g_{1}(t) \exp(\lambda t) dt \leq 2 C_{1} \mu,
\end{equation*}
and then, choosing $\varepsilon$ small enough in the second claim, we get by the first part of the statement:
\begin{equation*}
\begin{split}
\sup_{0 \leq t \leq T} \bigl[ \sqrt{T-t} \, g_{1}(t) 
\bigr] &\leq 2 \mu C_{1} + 2 \mu \int_{0}^T g_{1}(t) dt
\\
&\leq 2 \mu C_{1} + 2 \mu \int_{0}^T g_{1}(t) \exp(\lambda t) dt
\leq 2 \mu C_{1} + 4 C_{1} \mu^2,  
\end{split}
\end{equation*}
which completes the proof.
\end{proof}

\subsection{End of the proof of Theorem 
\ref{thm:existence:uniqueness}}
We now turn to the proof of Theorem \ref{thm:existence:uniqueness}. 
To this end, we recall the constant $C$ from Lemma \ref{lem:lemma:ibp:UN}. Without any loss of generality, we assume that the Lipschitz constants of the coefficients 
${\mathfrak b}_{0}$, ${\mathfrak F}$ and ${\mathfrak G}$ are less than the same constant $C$.  
We then call $c$ the constant in the statement 
of Theorem \ref{eq:small:time}
when the Lipschitz constant of the coefficients is less than $C$.

We let 
$N=\lceil T/c\rceil$
and $\tau_{n} = T - (N-n) c$ for $n \in \{1,\dots,N\}$
and $\tau_{0}=0$. 
We know from Theorem \ref{eq:small:time} that, for any 
square-integrable 
$\cF_{0,\tau_{N-1}}$-measurable 
initial condition $X^{(N-1)}(\cdot)$ with values in $L^2(\SS^1;\R^d)$, 
the forward-backward system 
\eqref{eq:mkv:sde}
is uniquely solvable. 
Following Lemma 
\ref{lem:decoupling:field}, this permits to define the decoupling field 
$\cU$ on $[\tau_{N-1},T] \times L^2(\SS^1;\R^d)$. 
By 
\eqref{eq:stability:fbsde:galerkin}, we know that, for any 
$(t,\ell) \in [\tau_{N-1},T] \times L^2(\SS^1;\R^d)$, the sequence  
$(\cU^{(N)}(t,\ell))_{N \in \NN^*}$, defined 
as the sequence of decoupling fields of the systems 
\eqref{eq:mkv:sde:N}, converges to $\cU(t,\ell)$. 
In particular, we deduce from Lemma \ref{lem:lemma:ibp:UN} 
that $\cU$ is $C$-Lipschitz in the space variable on $[\tau_{N-1},T] 
\times L^2(\SS^1;\R^d)$. 

Since $\cU(\tau_{N-1},\cdot)$ is $C$-Lipschitz, we can iterate the argument and apply Theorem \ref{eq:small:time} on the interval 
$[\tau_{N-2},\tau_{N-1}]$. This permits to extend the definition 
of the decoupling field $\cU$ to the set $[\tau_{N-2},\tau_{N-1}] \times L^2(\SS^1;\R^d)$. By invoking \eqref{eq:stability:fbsde:galerkin}
once again but on $[\tau_{N-2},\tau_{N-1}]$, we deduce that, for any $(t,\ell) \in [\tau_{N-2},\tau_{N-1}] \times L^2(\SS^1;\R^d)$, 
the sequence  
$(\cU^{(N)}(t,\ell))_{N \in \NN^*}$ converges to $\cU(t,\ell)$, which permits to iterate the argument and, in the end, to construct a candidate $\cU$ for being the decoupling field on the entire $[0,T] \times L^2(\SS^1;\R^d)$. 
Once $\cU$ has been constructed, the proof is completed as in the finite dimensional case, see for instance 
\cite{Delarue02}
and \cite[Chapter 4]{CarmonaDelarue_book_I}.

\subsection{Proof of Theorem \ref{thm:pde}}

\textit{First Step.}
As a by-product of the 
analysis achieved in the previous subsection to complete 
the proof of Theorem \ref{thm:existence:uniqueness}, we claim that, for any $(t,\ell) \in [0,T] \times L^2(\SS^1;\R^d)$,
\begin{equation*}
\lim_{M,N \rightarrow \infty} \| (\cU^{(N)} - \cU^{(M)})(t,\ell) \|_{L^2(\SS^1;\R^d)} = 0.
\end{equation*}
Recall from Lemmas
\ref{lem:bound:pp}
and  \ref{lem:lemma:ibp:UN}
and Corollary \ref{cor:time:regularity:decoupling:field} that the mappings $(\cU^{(N)})_{N \in \NN^*}$ are uniformly bounded and uniformly continuous on any compact subset of $L^2(\SS^1;\R^d)$. Hence, we have:
\begin{equation*}
\lim_{M,N \rightarrow \infty}
\sup_{t \in [0,T]}
\sup_{\ell \in \cK}
\| (\cU^{(N)} - \cU^{(M)})(t,\ell) \|_{L^2(\SS^1;\R^d)}
= 0.
\end{equation*}
We now invoke Lemma 
\ref{lem:convergence:gradients:galerkin}, from which we deduce that
for any compact subset 
of $[0,T) \times L^2(\SS^1;\R^d)$, il holds that:
\begin{equation*}
\lim_{M,N \rightarrow \infty}
\sup_{(t,\ell) \in \cK}
\bigl\vvvert (D \cU^{(N)} - D\cU^{(M)})(t,\ell) \bigr\vvvert_{L^2(\SS^1;\RR^d) \times L^2(\SS^1;\RR^d)}
= 0,
\end{equation*}
which shows that the sequence $(D \cU^{(N)})_{N \in \NN^*}$ converges, uniformly on compact subsets of $[0,T) \times L^2(\SS^1;\R^d)$. 
Since each $D \cU^{(N)}$ is continuous on 
$[0,T) \times L^2(\SS^1;\R^d)$, we deduce that the limit, denoted by $D \cU$ is continuous and is the Fr\'echet derivative of $\cU$ in the space variable.  Of course, $D \cU$ satisfies Lemma 
\ref{lem:lemma:ibp:UN}.
Passing to the limit in 
\eqref{eq:galerkin:mild:formulation}, we deduce that 
$\cU$ is a mild solution of the system of PDEs \eqref{eq:se:2:PDE:infinite}, as formulated in the statement of Theorem 
\ref{thm:pde}. 
\vskip 4pt

\section{Construction of an approximated Nash equilibrium}
\label{se:construction:approximate:proof}

The purpose of this section is to prove Theorem 
\ref{thm:approx:nash}. To do so, we use the same setting as in
Subsection \ref{subse:connection:game:statement}, a short reminder of which is recalled below.

The game consists of $N A_{N}$ particles that are uniformly distributed along the points
(which we call roots)
$(e^{i 2 \pi k/N})_{k=0,\cdots,N-1}$ of the unit circle, with $i^2=-1$ and  with exactly $A_{N}$ particles per root, where 
$A_{N} \in \NN^*$.  
States of the particles at time $t$ are denoted by 
$(X_{t}^{k,j})_{k=0,\cdots,N-1;j=1,\cdots,A_{N}}$, where $k$ stands for the index of the root occupied by the particle and $j$ for its label among the collection of particles located at the same site. As already explained in Subsection 
\ref{subse:connection:game:statement}, 
we put $X_{t}^{k+\ell N,j}=X_{t}^{k,j}$,
for $k \in \{0,\dots,N-1\}$ and $\ell \in {\mathbb Z}$. 

Each particle $(k,j)$ 
has dynamics of the following form:
 \begin{equation}
 \label{eq:sec:5:definition:game}
 d X_{t}^{k,j} =
 \biggl\{ b \bigl( \bar{\mu}_{t}^N 
 \bigr) + \alpha_{t}^{k,j}
 + N
\sum_{l=1}^{A_{N}}
 \bigl(   X^{k+1,l+1}_{t} +
X^{k+1,l-1}_{t}
 -
2 X^{k+1,l}_{t}
 \bigr) 
 \biggr\} dt + \sqrt N dB_{t}^{k},
 \end{equation}
 for $t \in [0,T]$,  
with the initial condition 
$X_{0}^{k,j} = \bar X_{0}^k$, 
 where $(\bar X_{0}^{k})_{k=0,\cdots,N-1}$
 are given by:
 \begin{equation}
 \label{se:5:approximation:initial:condition}
 \bar X_{0}^k = N \int_{k/N}^{(k+1)/N}
 X_{0}(x) dx, \quad k =0,\cdots,N-1,
 \end{equation}
whilst the noises 
 $(\bmf{B}^{k}= (B_{t}^{k})_{0 \leq t \leq T})_{k=0,\cdots,N-1}$ are independent $d$-dimensional Brownian motions on 
 the interval $[0,T]$ with the following definition:
 \begin{equation*}
 B_{t}^k = \sqrt{N} \int_{k/N}^{(k+1)/N}
W_{t}(dx). 
 \end{equation*}
We recall that $\bar{\mu}^N_{t}$ denotes the empirical distribution: 
 \begin{equation*}
 \bar{\mu}^N_{t} = 
 \frac{1}{N A_{N}}
 \sum_{k=0}^{N-1}
 \sum_{j=1}^{A_{N}}
 \delta_{X_{t}^{k,j}}. 
 \end{equation*}
The processes $(\balpha^{k,j}= (\alpha^{k,j}_{t})_{0 \leq t \leq T})_{k=0,\cdots,N-1;j=1,\cdots,A_{N}}$
 are constructed on $(\Omega_{0},{\mathcal A}_{0},\PP_{0})$
 and are 
$\RR^d$-valued progressively-measurable 
controls with respect to 
 the filtration generated by the cylindrical white noise $(W_{t}(\cdot))_{0 \le t \le T}$
 satisfying the condition:
 \begin{equation*} 
\EE_{0} \int_{0}^T  \vert \alpha_{t}^{k,j} \vert^2 dt < \infty.
\end{equation*}
The cost functional to player $(k,j)$ is then given by: 
\begin{equation*}
J^{k,j}\bigl((\balpha^{k',j'})_{k'=0,\cdots,N-1;j'=1,\cdots,A_{N}}\bigr)
= \E_{0} \biggl[ g\bigl(X_{T}^{k,j},\bar{\mu}^N_{T}\bigr)
+ \int_{0}^T 
\Bigl(
f\bigl(X_{t}^{k,j}, \bar{\mu}^N_{t}\bigr)
+ \frac12 \vert \alpha_{t}^{k,j} \vert^2
\Bigr) dt \biggr].
\end{equation*}
Following the statement of Theorem 
\ref{thm:approx:nash}, we introduce the collection of controls:
\begin{equation}
 \label{eq:sec:5:definition:game:Y}
\alpha^{\star k,j}_{t} = 
\bar Y_{t}^k, \quad 
\bar Y_{t}^k = 
N \int_{(k-1)/N}^{k/N}
Y_{t}(x) dx, \quad t \in [0,T],
\end{equation}
for all $k \in \{0,\cdots,N-1\}$ and $j \in \{1,\cdots,A_{N}\}$. Then, for some
$k_{0} \in \{0,\cdots,N-1\}$ and $j_{0} \in \{1,\cdots,A_{N}\}$ and for some
$\RR^d$-valued 
process ${\boldsymbol \gamma}=(\gamma_{t})_{0 \le t \le T}$ that is 
progressively-measurable 
with respect to 
 the filtration generated by the cylindrical white noise ${\boldsymbol W}(\cdot)=(W_{t}(\cdot))_{0 \le t \le T}$
(that is, the filtration generated by the processes $(\langle W_{t}(\cdot),h \rangle_{L^2(\SS^1;\RR^d)})_{0 \leq t \leq T}$
for $h \in L^2(\SS^1;\R^d)$)  and that satisfies the condition
 \begin{equation*} 
\EE_{0} \int_{0}^T  \vert \gamma_{t} \vert^2 dt < \infty,
\end{equation*}
we let
$\bbeta^{\star k,j} = \balpha^{\star k,j}$, 
for $k \in \{0,\cdots,N-1\}$ and 
$j \in \{1,\cdots,A_{N}\}$, 
with $(k,j) \not = (k_{0},j_{0})$. 
When $k=k_{0}$ and $j=j_{0}$, 
we let 
$\bbeta^{\star k_{0},j_{0}} = {\boldsymbol \gamma}$.

The goal of this section is to prove that there exists a sequence of positive reals $(\varepsilon_{N})_{N \in \NN^*}$, converging to $0$, independent of 
${\boldsymbol \gamma}$, $k_{0}$ and $j_{0}$, such that 
\begin{equation*}
J^{k_{0},j_{0}}\bigl((\bbeta^{\star k,j})_{k=0,\cdots,N-1;j=1,\cdots,A_{N}}\bigr)
\geq 
J^{k_{0},j_{0}}\bigl((\balpha^{\star k,j})_{k=0,\cdots,N-1;j=1,\cdots,A_{N}}\bigr)
- \varepsilon_{N}. 
\end{equation*}

Throughout the analysis, we assume that, 
on top of 
Assumption {\bf (A)}, 
$f$ and $g$ are Lipschitz continuous in $\mu$, uniformly in $x$. 
In particular, $f$ and $g$ are Lipschitz in $(x,\mu)$.

\subsection{Distance between discrete and continuous systems}
Most of the proof relies on a stability property under discretization for SPDEs of the form:
\begin{equation}
\label{eq:SPDE:alpha}
\partial_{t} X_{t}(x) = \alpha_{t}(x)  + \Delta X_{t}(x)  + \dot{W}_{t}(x), \quad (t,x) \in [0,T] 
\times \SS^1,
\end{equation}
with some initial condition $X_{0}(\cdot) \in L^2(\SS^1;\RR^d)$. Above, the process 
$\balpha(\cdot) = (\alpha_{t}(\cdot))_{0 \le t \le T}$
is an $L^2(\SS^1;\R^d)$-valued progressively-measurable process with respect to the filtration generated by $(W_{t}(\cdot))_{0 \leq t \leq T}$. We assume it to satisfy 
\begin{equation*}
\EE_{0} \int_{0}^T \| \alpha_{t}(\cdot) \|^2_{L^2(\SS^1;\R^d)} dt < \infty. 
\end{equation*}
The solution to 
\eqref{eq:SPDE:alpha} will be denoted 
$(X_{t}^{(\balpha)}(\cdot))_{0 \le t \le T}$. For another 
$L^2(\SS^1;\R^d)$-valued
progressively-measurable process
$\bbeta(\cdot) = (\beta_{t}(\cdot))_{0 \le t \le T}$ satisfying 
\begin{equation*}
\EE_{0} \int_{0}^T \| \beta_{t}(\cdot) \|^2_{L^2(\SS^1;\R^d)} dt < \infty, 
\end{equation*}
we let
\begin{equation}
\label{se:5:approximation:beta}
\bar \beta_{t}^k
=
N \int_{k/N}^{(k+1)/N} \beta_{t}(x) dx, 
\quad t \in [0,T], \quad k \in \{0,\cdots,N-1\},  
\end{equation}
and we consider the discretized version 
\begin{equation}
\label{eq:discrete:eq}
d\bar X_{t}^k = \bar \beta_{t}^k dt + N^2
\bigl(\bar X_{t}^{k+1}+\bar   X_{t}^{k-1}-2 \bar X_{t}^k
\bigr) dt + \sqrt{N} dB_{t}^k,
\end{equation}
for 
$t \in [0,T]$ and $k \in \{0,\cdots,N-1\}$,
with the same convention as before that 
$\bar X_{t}^{-1}=\bar X_{t}^{N-1}$
and
$\bar X_{t}^{N}=\bar X_{t}^{0}$. Above the 
initial condition is given by the same approximation as in 
\eqref{se:5:approximation:initial:condition}.
The solution to 
\eqref{eq:discrete:eq}
will be denoted $((\bar X_{t}^{(\bbeta),k})_{k=0,\cdots,N-1})_{0 \le t \le T}$. 
With this solution, we associate the periodic function
\begin{equation*}
\bar X_{t}^{(\bbeta)}(\cdot)
=
\sum_{k=0}^{N-1}
\bar X_{t}^{(\bbeta),k} {\mathbf 1}_{[k/N,(k+1)/N)+{\mathbb Z}}(\cdot), \quad t \in [0,T].
\end{equation*}
Notice that (and this is the key point of the proof) the equation 
\eqref{eq:discrete:eq} is just indexed by the label $k$ of the root (and not by the label $j$ we used before to denote a particle).


\subsubsection*{Mild solution of the discrete equation}
Equation \eqref{eq:discrete:eq} forms a system of stochastic differential equations, the solution of which may be put under a discrete mild form,  the mild formulation being based upon the following operator:
\begin{equation*}
\Delta^{(N)} \Bigl( 
\sum_{k=0}^{N-1}
\bar \lambda ^{k} {\mathbf 1}_{[k/N,(k+1)/N) + \ZZ}(\cdot)
\Bigr)
= 
\sum_{k=0}^{N-1}
N^2 \bigl( \bar \lambda^{k+1}+ \bar \lambda^{k-1} - 
2 \bar \lambda^{k}  \bigr) 
{\mathbf 1}_{[k/N,(k+1)/N) + \ZZ}(\cdot),
\end{equation*}
for any sequence $(\bar \lambda ^{k})_{k=0,\cdots,N-1}$. 
Obviously, $\Delta^{(N)}$ is acting on piecewise constant functions 
from the torus $\SS^1$ into $\RR$
(or, more generally, into $\R^d$)
 with 
$(k/N +  \ZZ)_{k=0,\cdots,N-1}$ as mesh. 
We often identify these functions with piecewise constant functions from $[0,1)$ into $\RR$ (or $\R^d$) with 
$(k/N +  \ZZ)_{k=0,\cdots,N-1}$ as mesh, in which case the above identity becomes
(with a slight abuse of notation):
\begin{equation*}
\Delta^{(N)} \Bigl( 
\sum_{k=0}^{N-1}
\bar \lambda^{k} {\mathbf 1}_{[k/N,(k+1)/N)}(\cdot)
\Bigr)
= 
\sum_{k=0}^{N-1}
N^2 \bigl( \bar \lambda^{k+1}+ \bar \lambda^{k-1} - 
2 \bar \lambda^{k}  \bigr) 
{\mathbf 1}_{[k/N,(k+1)/N)}(\cdot),
\end{equation*}
Throughout the analysis, we shall use the following convention. For a collection of weights 
$(\bar \lambda^k)_{k=0,\cdots,N-1}$ (with values in $\RR$ or in $\R^d$), we call
\begin{equation}
\label{se:5:approximation:lambda}
\bar{\lambda}(\cdot) = \sum_{k=0}^{N-1} \bar{\lambda}^k {\mathbf 1}_{[k/N,(k+1)/N) + \ZZ}(\cdot)
\end{equation}
the corresponding piecewise constant step functions on the torus. Observe that, for the sake of convenience, we omitted to specify the dependence of the function $\bar{\lambda}(\cdot)$ upon the discretization parameter $N$. Also, according to the previous convention, we shall identify the function $\bar \lambda(\cdot)$ with the function
$\sum_{k=0}^{N-1}
\bar{\lambda}^k {\mathbf 1}_{[k/N,(k+1)/N) }(\cdot)$ from $[0,1)$ into $\RR$. 
With this convention of notation, the solution to \eqref{eq:discrete:eq} may be written under the form:
\begin{equation}
\label{eq:discrete:mild:formulation}
\bar{X}_{t}^{(\bbeta)}(\cdot)
= e^{t\Delta^{(N)}} \bar X_{0}(\cdot)
+ \int_{0}^t 
e^{(t-s)\Delta^{(N)}} \bar \beta_{s}(\cdot) ds +
\int_{0}^t 
e^{(t-s)\Delta^{(N)}} 
\Bigl( 
\sum_{n \in {\mathbb N}}
\bar e^{n,\pm}(\cdot) 
dW_{s}^{n,\pm}
\Bigr),
 \end{equation}
with the same   convention as before for the notation $\bar{e}^{n,\pm}(\cdot)$, namely:
\begin{equation*}
\bar{e}^{n,\pm}(\cdot)  = \sum_{k=0}^{N-1}
\bar{e}^{n,\pm,k} 
{\mathbf 1}_{[k/N,(k+1)/N)}
(\cdot), \quad 
\textrm{\rm with}
\quad 
\biggl(
\bar{e}^{n,\pm,k}
=
N \int_{k/N}^{(k+1)/N}e^{n,\pm}(x) 
dx \biggr)_{k=0,\cdots,N-1},
\end{equation*} 
which is to say that $\bar{e}^{n,\pm}(\cdot)$ is the piecewise constant step function associated with the family of weights $(\bar e^{n,k,\pm})_{k=0,\cdots,N-1}$. 
 
The above writing of the stochastic integral is justified by the fact that
 \begin{equation*} 
\sqrt{N} \bigl( \sum_{k=0}^{N-1}
 B_{t}^k 
{\mathbf 1}_{[k/N,(k+1)/N)}(\cdot)
\bigr) = \sum_{n \in \NN} W_{t}^{n,\pm} 
\biggl[
\sum_{k=0}^{N-1} \bar e^{n,\pm,k} {\mathbf 1}_{[k/N,(k+1)/N)}(\cdot)
\biggr],
\end{equation*} 
which follows from a straightforward application of the decomposition of $W$ in Fourier modes, namely
\begin{equation*}
\sqrt{N}
 {B}_{t}^k = N \int_{k/N}^{(k+1)/N}
W_{t}(dx)  = \sum_{n \in \NN} W_{t}^{n,\pm}
\bar{e}^{n,\pm,k}. 
\end{equation*} 

\subsubsection*{Distance between $X^{(\balpha)}$ and $\bar X^{(\bbeta)}$}

For the sake of completeness, we recall the mild formulation of the 
SPDE 
\eqref{eq:SPDE:alpha}:
\begin{equation}
\label{eq:SPDE:mild:formulation}
 {X}_{t}^{(\balpha)}(\cdot)
= e^{t\Delta }  X_{0}(\cdot)
+ \int_{0}^t 
e^{(t-s)\Delta } \alpha_{s}(\cdot) ds +
\int_{0}^t 
e^{(t-s)\Delta} 
\Bigl( 
\sum_{n \in {\mathbb N}}
e^{n,\pm}(\cdot) 
dW_{s}^{n,\pm}
\Bigr), \quad t \in [0,T]. 
\end{equation}

Here is the main statement of this subsection. 
\begin{theorem}
\label{thm:discretization}
There exist a constant $C$ 
together with a sequence $(\varepsilon_{N})_{N \in \NN^*}$, converging to $0$ as $N$ tends to $\infty$,
such that for any initial condition 
$X_{0}(\cdot) \in L^2(\SS^1;\R^d)$, any two square-integrable progressively-measurable process $(\alpha_{t}(\cdot))_{0 \le t \le T}$
and 
$(\beta_{t}(\cdot))_{0 \le t \le T}$
with values in $L^2(\SS^1;\R^d)$
and any integer $N \in \NN^*$, 
it holds
\begin{equation}
\label{eq:discretization:2}
\begin{split}
\sup_{x \in \SS}
\EE_{0} \bigl[
\vert \bar{X}_{t}^{(\bbeta)}(x) - X_{t}^{(\balpha)}(x) 
\vert^2
\bigr] \Bigr\}
&\leq C 
\biggl( 1 + \frac{1}{t^{3/4}}
\| X_{0}(\cdot)
\|_{L^2(\SS^1;\R^d)}^2 + 
\EE_{0} \int_{0}^t 
\| \alpha_{s}(\cdot) \|_{L^2(\SS^1;\R^d)}^2 ds 
\biggr) \varepsilon_{N}
\\
&\hspace{15pt}
+ C 
\EE_{0} \int_{0}^t 
\bigl\| \bigl( \alpha_{s} - \beta_{s}\bigr)(\cdot) \bigr\|_{L^2(\SS^1;\R^d)}^2 ds,
\end{split}
\end{equation}
for all $t \in (0,T]$.
\end{theorem}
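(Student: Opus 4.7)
The strategy is to compare the two mild formulations \eqref{eq:SPDE:mild:formulation} and \eqref{eq:discrete:mild:formulation} term by term, decomposing
\begin{equation*}
\bar X_{t}^{(\bbeta)}(\cdot) - X_{t}^{(\balpha)}(\cdot) = I_{t}^{0} + I_{t}^{\textrm{\rm drift}} + I_{t}^{\textrm{\rm sto}},
\end{equation*}
where $I_{t}^{0} = e^{t\Delta^{(N)}} \bar X_{0}(\cdot) - e^{t\Delta} X_{0}(\cdot)$, and $I_{t}^{\textrm{\rm drift}}$ and $I_{t}^{\textrm{\rm sto}}$ are the corresponding drift and stochastic integral differences. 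The key observation is that the discrete Laplacian $\Delta^{(N)}$ is diagonalized in the discrete Fourier basis with eigenvalues $\lambda_{n}^{(N)} = 2N^{2}(\cos(2\pi n/N) - 1)$, which behaves like $-(2\pi n)^{2}(1 + O(n^{2}/N^{2}))$ for $|n| \ll N$, and stays non-positive (and large in absolute value) otherwise. Thus, both $(e^{t\Delta})_{t \geq 0}$ and $(e^{t\Delta^{(N)}})_{t \geq 0}$ are contractions on $L^{2}$; on any fixed finite band of Fourier modes they agree up to an error $O(N^{-2})$, while on high-frequency modes both are heavily damped.

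The plan for $I_{t}^{0}$ is to expand $X_{0}(\cdot) = \sum_{n} X_{0}^{n,\pm} e^{n,\pm}(\cdot)$, to compute $\bar X_{0}(\cdot)$ by projecting onto piecewise constants, and then to compare mode-by-mode. Cutting the series at a level $N^{\gamma}$ for a well-chosen $\gamma \in (0,1)$, the low-mode contribution is controlled by the discrepancy of the eigenvalues (hence by $N^{-2}$ times a polynomial in the cut-off), and the high-mode contribution is controlled by the exponential decay of $e^{t\Delta}$ and $e^{t\Delta^{(N)}}$, combined with smoothing: the standard heat kernel estimate $\|e^{t\Delta} f\|_{L^{\infty}(\SS^{1})} \leq C t^{-1/4} \|f\|_{L^{2}(\SS^{1})}$ (and its discrete analogue on step functions) absorbs a factor $t^{-1/2}$ when taking squares. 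The precise bookkeeping, together with the piecewise-constant projection error $\|\bar X_{0}(\cdot) - X_{0}(\cdot)\|_{L^{2}}^{2} \leq C N^{-2\min(1,s)} \|X_{0}\|_{H^{s}}^{2}$ for Sobolev data, produces the prefactor $t^{-3/4}\|X_{0}\|_{L^{2}}^{2} \varepsilon_{N}$.

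The plan for $I_{t}^{\textrm{\rm drift}}$ is to insert the discretized version $\bar \alpha_{s}(\cdot)$ of $\alpha_{s}(\cdot)$ via
\begin{equation*}
I_{t}^{\textrm{\rm drift}} = \int_{0}^{t} e^{(t-s)\Delta^{(N)}}(\bar \beta_{s} - \bar \alpha_{s})(\cdot)\, ds + \int_{0}^{t} e^{(t-s)\Delta^{(N)}}(\bar \alpha_{s} - \alpha_{s})(\cdot)\, ds + \int_{0}^{t} \bigl(e^{(t-s)\Delta^{(N)}} - e^{(t-s)\Delta}\bigr) \alpha_{s}(\cdot)\, ds.
\end{equation*}
The first piece, via a contraction argument plus $\|\bar \beta_{s} - \bar \alpha_{s}\|_{L^{2}} \leq \|\beta_{s} - \alpha_{s}\|_{L^{2}}$, directly produces the final term on the right-hand side of \eqref{eq:discretization:2}. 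The second and third pieces are handled exactly like $I_{t}^{0}$ by spectral decomposition (with the approximation error applied to $\alpha_{s}$ instead of $X_{0}$), integrated in $s$, which generates $\varepsilon_{N} \cdot \EE_{0}\int_{0}^{t}\|\alpha_{s}\|_{L^{2}}^{2}\,ds$. For $I_{t}^{\textrm{\rm sto}}$, Itô's isometry in the cylindrical Wiener process reduces matters to comparing the two deterministic kernels mode-by-mode, which is the analogous computation to that done for $I_{t}^{0}$ and yields a time-integrable contribution bounded by $C \varepsilon_{N}$ (the absolute term $C\varepsilon_{N}$ in \eqref{eq:discretization:2}).

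The main obstacle is the pointwise-in-$x$ estimate rather than an $L^{2}$-in-$x$ estimate: controlling $\sup_{x \in \SS^{1}} \EE_{0}[\,|\cdot|^{2}\,]$ requires Sobolev/heat-kernel pointwise bounds on both $e^{t\Delta}$ and $e^{t\Delta^{(N)}}$. For the latter one has to be careful, since the discrete heat kernel on the grid, extended by piecewise constants, converges to the continuous heat kernel only uniformly on compact sets of $\SS^{1}\setminus\{0\}$; a uniform-in-$N$ $L^{\infty}$ smoothing estimate has to be established by summing the Fourier representation, using $|\lambda_{n}^{(N)}| \geq c n^{2}$ uniformly in $N$ on the basic frequency window $|n|\leq N/2$. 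This is the technical heart of the proof; once it is in place the rate $\varepsilon_{N}$ arises from optimizing the cut-off level between the two regimes described above.
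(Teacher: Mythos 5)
Your overall strategy matches the paper's: compare the two mild formulations, diagonalize both semigroups in Fourier, split the modes at a level $N^{\gamma}$, use the $O(1/N)$ discrepancy of the eigenvalues on low modes, and use exponential damping on high modes. The identification of the cylindrical It\^o isometry for the stochastic part and of Cauchy--Schwarz/Parseval for the drift part is also right. Where the paper's route subtly but crucially differs is that it never estimates a \emph{projection error} $\|\bar X_{0} - X_{0}\|_{L^{2}}$ or $\|\bar\alpha_{s} - \alpha_{s}\|_{L^{2}}$: it instead uses the algebraic identity $\bar X_{0}(\cdot) = \sum_{n} X_{0}^{n,\pm}\,\bar e^{n,\pm}(\cdot)$ (and similarly $\bar\alpha_{s}(\cdot) = \sum_{n}\alpha^{n,\pm}_{s}\,\bar e^{n,\pm}(\cdot)$), observes that $\bar e^{n,\pm}$ is an eigenvector of $\Delta^{(N)}$, and compares $\sum_{n}X_{0}^{n,\pm}e^{-2N^{2}[1-\cos(2\pi n/N)]t}\bar e^{n,\pm}$ with $\sum_{n}X_{0}^{n,\pm}e^{-(2\pi n)^{2}t}e^{n,\pm}$ \emph{mode by mode with the same coefficient}, controlling the high-frequency tail of the two series separately via the elementary bound $t^{3/4}e^{-\lambda t}\leq c\lambda^{-3/4}$ applied to $\lambda = 2N^{2}[1-\cos(2\pi n/N)]$ or $\lambda = (2\pi n)^{2}$.

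This matters for two steps in your plan. First, the Sobolev estimate $\|\bar X_{0}-X_{0}\|_{L^{2}}^{2}\leq C N^{-2\min(1,s)}\|X_{0}\|_{H^{s}}^{2}$ is unavailable here: the theorem is for $X_{0}\in L^{2}$ only, and the $L^{2}$ projection error $\|\bar X_{0}-X_{0}\|_{L^{2}}$ can decay arbitrarily slowly, so it cannot enter the bound with a uniform rate $\varepsilon_{N}$. The $t^{-3/4}$ factor is extracted by the time-smoothing trick above, not by a regularity gain in $X_{0}$. Second, the term $\int_{0}^{t}e^{(t-s)\Delta^{(N)}}(\bar\alpha_{s}-\alpha_{s})\,ds$ in your drift decomposition is doubly problematic: $\Delta^{(N)}$ is only defined on piecewise-constant step functions (so $e^{(t-s)\Delta^{(N)}}\alpha_{s}$ is not meaningful for a general $\alpha_{s}\in L^{2}$), and even after reinterpreting it, $\|\bar\alpha_{s}-\alpha_{s}\|_{L^{2}}$ has no uniform rate. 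You should therefore dispense with that split and, as in the paper, write the difference of the two drift integrals directly in Fourier, with the common coefficient $\alpha^{n,\pm}_{s}$ isolated and Cauchy--Schwarz applied against the series of squared weights. A pleasant by-product of this is that the ``main obstacle'' you raise --- uniform-in-$N$ $L^{\infty}$ smoothing for the discrete heat kernel --- disappears: the paper never needs heat-kernel comparison; it only needs $|e^{n,\pm}(x)|\leq\sqrt 2$, $|\bar e^{n,\pm}(x)|\leq\sqrt 2$, $\sin^{2}(\pi n/N)/[1-\cos(2\pi n/N)]=1/2$, and the summability of $\sum_{n\geq N^{1/4}}n^{-3/2}$.
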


\begin{proof}
The proof is split in several steps. The goal 
is to compare \eqref{eq:discrete:mild:formulation}
and 
\eqref{eq:SPDE:mild:formulation}. Basically, each step of the proof corresponds 
to the comparison of a pair of terms in the right-hand sides of 
\eqref{eq:discrete:mild:formulation}
and 
\eqref{eq:SPDE:mild:formulation}.
\vspace{4pt}

\textit{Preliminary Step.}
As a preliminary step, we have the following two standard results, the proofs of which are postponed to the end of the subsection. 

The first identity is
\begin{equation}
\label{eq:preliminary:identity:1}
\begin{split}
&\bar{e}^{n}(\cdot)
=
e^{i \pi \frac{n}{N}}
\frac{\sin(\pi n/N)}{\pi n/N}
\sum_{k=0}^{N-1}
e^{i 2\pi \frac{k n}{N}}
 {\mathbf 1}_{[k/N,(k+1)/N)}(\cdot), \quad \textrm{\rm with} \ \bar{e}^n(\cdot) = \frac{
  \bar{e}^{n,+} + i \bar{e}^{n,-}}{\sqrt{2}}(\cdot),
\end{split}
\end{equation}
and $i^2=-1$.
The second one is
\begin{equation}
\label{eq:preliminary:identity:2}
\begin{split}
&\Delta^{(N)}
\biggl[
\sum_{k=0}^{N-1}
e^{i 2\pi \frac{k n}{N}}
 {\mathbf 1}_{[k/N,(k+1)/N)}(\cdot)
\biggr]
=- 2 N^2 \bigl[ 
1- \cos\bigl( \frac{2 \pi  n}{N}
\bigr) 
\bigr]
\biggl[
\sum_{k=0}^{N-1}
e^{i 2\pi \frac{k n}{N}}
 {\mathbf 1}_{[k/N,(k+1)/N)}(\cdot)
\biggr],
\end{split}
\end{equation}
which shows that the function
$\sum_{k=0}^{N-1}
e^{i 2\pi \frac{k n}{N}}
 {\mathbf 1}_{[k/N,(k+1)/N)}(\cdot)$ is an eigenvector of $\Delta^{(N)}$. In particular, we have
\begin{equation*}
\begin{split}
&e^{(t-s)\Delta^{(N)}}
\biggl[
\sum_{k=0}^{N-1}
e^{i 2\pi \frac{k n}{N}}
 {\mathbf 1}_{[k/N,(k+1)/N)}(\cdot)
\biggr]
=e^{-2N^2 [ 
1- \cos( 2 \pi  n/N) ]
(t-s) }
\biggl[
\sum_{k=0}^{N-1}
e^{i 2\pi \frac{k n}{N}}
 {\mathbf 1}_{[k/N,(k+1)/N)}(\cdot)
\biggr],
\end{split}
\end{equation*} 
for any $s,t \in [0,T]$, with $0 \leq s \leq t$. Combining with the first identity
\eqref{eq:preliminary:identity:1}, we get:
\begin{equation*}
\begin{split}
e^{(t-s)
\Delta^{(N)}} \bar e^{n}(\cdot) 
&= 
e^{i \pi \frac{n}{N}}
\frac{\sin(\pi n/N)}{\pi n/N}
e^{-2 N^2 [ 
1- \cos( 2 \pi  n/N) ]
(t-s) }
\biggl[
\sum_{k=0}^{N-1}
e^{i 2\pi \frac{k n}{N}}
 {\mathbf 1}_{[k/N,(k+1)/N)}(\cdot)
\biggr]
\\
&= 
e^{-2 N^2 [ 
1- \cos( 2 \pi  n/N) ]
(t-s) }
 \bar e^{n}(\cdot),
 \end{split}
\end{equation*}
which shows that $\bar e^{n}(\cdot)$ is also an eigenvector of $\Delta^{(N)}$. 
Taking the real and imaginary parts, the same holds for 
$\bar e^{n,+}(\cdot)$
and
$\bar e^{n,-}(\cdot)$.
\vspace{4pt}

\textit{Second Step.}
We now compare the martingale terms in 
\eqref{eq:discrete:mild:formulation}
and
\eqref{eq:SPDE:mild:formulation}.
We start with \eqref{eq:discrete:mild:formulation}. 
Thanks to the preliminary step, it may be rewritten  under the form:
\begin{equation*}
\begin{split}
&\int_{0}^t 
e^{(t-s)\Delta^{(N)}} 
\Bigl( 
\sum_{n \in \NN}
\bar e^{n,\pm}(\cdot) 
dW_{s}^{n,\pm}
\Bigr)
 = \sum_{n \in {\mathbb N}}
\int_{0}^t 
e^{-2 N^2 [1 - \cos(2\pi n/N)](t-s)}  \bar{e}^{n,\pm}(\cdot)
dW_{s}^{n,\pm}.
\end{split}
\end{equation*}
We then observe that there exists a universal constant $C$ such that 
\begin{equation*}
\sup_{x \in \SS^1}
\EE_{0}\biggl[ 
\biggl\vert
\sum_{n \geq {N}^{1/4}}
\int_{0}^t 
e^{-2 N^2 [1 - \cos(2\pi n/N)](t-s)}  \bar{e}^{n,\pm}(x)  
dW_{s}^{n,\pm}
\biggr\vert^2 \biggr] \leq \frac{C}{{N}^{1/4}}. 
\end{equation*}
Indeed, the left hand side is equal to 
\begin{equation}
\label{eq:approximation:terms:above:N14}
\begin{split}
&\sup_{x \in \SS^1} \EE_{0} \biggl[ \biggl\vert
\sum_{  n  \geq {N}^{1/4}}
\int_{0}^t 
e^{-2 N^2 [1 - \cos(2\pi n/N)](t-s)}  \bar{e}^{n,\pm}(x)  
dW_{s}^{n,\pm}
\biggr\vert^2 \biggr]
\\
&= \sup_{x \in \SS^1} \sum_{n  \geq {N}^{1/4}}
\int_{0}^t 
e^{-4 N^2 [1 - \cos(2\pi n/N)](t-s)}  \vert \bar{e}^{n,\pm}(x)  \vert^2 ds
\\
&= \sum_{n \geq {N}^{1/4}}
\frac{\sin^2(\pi n/N)}{(\pi n/N)^2}
\int_{0}^t 
e^{-4 N^2 [1 - \cos(2\pi n/N)](t-s)}  ds,
\end{split}
\end{equation}
so that 
\begin{equation}
\label{eq:approximation:terms:above:N14:3}
\begin{split}
&\sup_{x \in \SS^1}
\EE_{0} \biggl[ \biggl\vert
\sum_{n  \geq {N}^{1/4}}
\int_{0}^t 
e^{-2 N^2 [1 - \cos(2\pi n/N)](t-s)}  \bar{e}^{n,\pm}(x) 
dW_{s}^{n,\pm}
\biggr\vert^2 \biggr]
\\
&\leq \sum_{\vert n \vert \geq {N}^{1/4}}
\frac{\sin^2(\pi n/N)}{(\pi n/N)^2}
\frac1{4 N^2 [1 - \cos(2\pi n/N)]}.
\end{split}
\end{equation}
We then observe that the function 
$\varrho : \SS^1 \ni x \mapsto \varrho(x) = \sin^2(x)/(1- \cos(2x))$ 
is equal to $1/2$ as $\cos(2x) = 2 \cos^2(x) - 1 =
1 - 2 \sin^2(x)$. So, the above ratio 
$\sin^2(\pi n/N)/[1 - \cos(2\pi n/N)]$ is bounded
 by a universal constant $c$. 
In the sequel, this constant $c$ may vary from line to line as long as it remains universal. Then, 
\begin{equation}
\label{eq:approximation:terms:above:N14:2}
\begin{split}
&\sup_{x \in \SS^1} \EE_{0} \biggl[ \biggl\vert
\sum_{n \geq N^{1/4}}
\int_{0}^t 
e^{-2 N^2 [1 - \cos(2\pi n/N)](t-s)}  \bar{e}^{n,\pm}(x)  
dW_{s}^{n,\pm}
\biggr\vert^2 \biggr]
\leq c \sum_{n  \geq N^{1/4}} \frac1{n^2} 
\leq \frac{c}{{N}^{1/4}}. 
\end{split}
\end{equation} 
Actually, the same bound holds for the solution of the SPDE, namely:
\begin{equation*}
\sup_{x \in \SS^1}
{\mathbb E}_{0}
\biggl[ \biggl\vert
\int_{0}^t 
e^{(t-s)\Delta} 
\biggl( 
\sum_{n \geq {N}^{1/4}}
e^{n,\pm} (\cdot)
dW_{s}^{n,\pm}
\biggr)(x) 
\biggr\vert^2 \biggr] 
\leq \frac{c}{{N}^{1/4}}, 
\end{equation*}
which may be proved in the same way by recalling that 
$e^{(t-s)\Delta} 
e^{n,\pm} 
=
- (2 \pi n)^2 e^{n,\pm}$, 
for all $n \in \NN$. 

We now handle the difference
\begin{equation*}
\sum_{0 \leq n< {N}^{1/4}}
\biggl( \int_{0}^t 
e^{-2 N^2 [1 - \cos(2\pi n/N)](t-s)}  \bar{e}^{n,\pm}(x) 
dW_{s}^{n,\pm}
- \int_{0}^t 
e^{-(2 \pi n)^2 (t-s)} e^{n,\pm}(x) 
dW_{s}^{n,\pm}
\biggr). 
\end{equation*}
Taking the $L^2$ norm of the modulus, we obtain:
\begin{align}
&
\sup_{x \in \SS^1}
{\mathbb E}
\biggl[ \biggl\vert
\sum_{0 \leq n < {N}^{1/4}}
\biggl( \int_{0}^t 
e^{-2 N^2 [1 - \cos(2\pi n/N)](t-s)}  \bar{e}^{n,\pm}(x) 
dW_{s}^{n,\pm}
- \int_{0}^t 
e^{-(2 \pi n)^2 (t-s)} e^{n,\pm}(x) 
dW_{s}^{n,\pm}
\biggr) \biggr\vert^2
\biggr]\nonumber
\\
&=\sup_{x \in \SS^1} \sum_{0 \leq n < {N}^{1/4}}
 \int_{0}^t
\Bigl\vert 
e^{-2 N^2 [1 - \cos(2\pi n/N)](t-s)}  \bar{e}^{n,\pm}(x) 
-
e^{-(2\pi n)^2(t-s)}  {e}^{n,\pm}(x)  
\Bigr\vert^2 ds\nonumber
\\
&\leq 
4 \sum_{0 \leq n < {N}^{1/4}}
 \int_{0}^t
\Bigl\vert 
e^{-2 N^2 [1 - \cos(2\pi n/N)](t-s)}
-
e^{-(2\pi n)^2(t-s)}  
\Bigr\vert^2 ds\label{eq:second:step:approximation:(i)+(ii)}
\\
&\hspace{15pt}+
2	\sup_{x \in \SS^1}
\sum_{0 \leq n < {N}^{1/4}}
 \int_{0}^t
e^{-2(2\pi n)^2(t-s)} 
\bigl\vert  \bar{e}^{n,\pm}(x)  
-
 {e}^{n,\pm} (x)
\bigr\vert^2 ds \nonumber
\\
&= (i) + (ii). \nonumber
\end{align}
As for the first term $(i)$, we proceed as follows. We use the following two facts. First, we observe that, for $0 \leq n 
\leq {N}^{1/4}$,
\begin{equation*}
\begin{split}
N^2\bigl[1 - \cos\bigl(\frac{2 \pi n}{N} \bigr)
\bigr]
&= N^2 \bigl[ \frac12 \bigl( \frac{2 \pi n}{N}\bigr)^2 + O\bigl( \frac{n^4}{N^4} \bigr)\bigr]
= \frac{(2 \pi n)^2}{2} + O \bigl( \frac{1}{N} \bigr).
\end{split}
\end{equation*}
Therefore, for any
$0 \leq s \leq t \leq T$, 
\begin{equation*}
\exp \Bigl(
-2
N^2\bigl[1 - \cos\bigl(\frac{2 \pi n}{N} \bigr)
\bigr] (t-s)
\Bigr)
= \exp \bigl( 
- (2 \pi n)^2 (t-s)
\bigr) \bigl( 1+ O(\frac1N) \bigr),
\end{equation*}
where the Landau symbol is uniform in $s,t \in [0,T]$, with $s \leq t$, and in $0 \leq n \leq N^{1/4}$. 
Therefore,
\begin{equation*}
\vert (i) \vert   \leq
\frac{c}N \sum_{0 \leq n \leq N^{1/4}}
\int_{0}^t \exp\bigl(-(2 \pi n)^2 (t-s)\bigr) ds 
\leq \frac{c}N \Bigl( 1+ \sum_{n \in \NN^*} \frac1{n^2} \Bigr),
\end{equation*}
which is less   than $c/N$. 

In order to handle $(ii)$, we notice 
that, for $0 \leq n \leq N^{1/4}$, 
\begin{equation*}
\sup_{x \in \SS^1}   \vert e^{n,\pm}(x)- \bar{e}^{n,\pm}(x) \vert \leq \frac{c n}{N} \leq \frac{c}{N^{1/4}}.
\end{equation*}
We easily deduce that $\vert (ii)\vert$ is less than 
$c/N^{1/4}$.

So, the conclusion of this second step is that
there exists a sequence $(\varepsilon_{N})_{N \in \NN^*}$, independent of the data, converging to $0$ as $N$ tends to $\infty$, such that 
\begin{equation*}
\begin{split}
&\sup_{0 \le t \le T}
\sup_{x \in \SS^1}
{\mathbb E}_{0}
\biggl[ \biggl\vert
\int_{0}^t 
e^{(t-s)\Delta^{(N)}} 
\Bigl( 
\sum_{n \in {\mathbb N}}
\bar e^{n,\pm}(\cdot) 
dW_{s}^{n,\pm}
\Bigr)(x)
-
\int_{0}^t 
e^{(t-s)\Delta} 
\Bigl( 
\sum_{n \in {\mathbb N}}
e^{n,\pm}(\cdot) 
dW_{s}^{n,\pm}
\Bigr)(x)
 \biggr\vert^2
\biggr]
\\
&= \sup_{0 \le t \le T}
\sup_{x \in \SS^1}
{\mathbb E}_{0}
\biggl[ \biggl\vert
\sum_{n \in \NN}
\biggl( \int_{0}^t 
e^{-2 N^2 [1 - \cos(2\pi n/N)](t-s)}  \bar{e}^{n,\pm}(x)  
dW_{s}^{n,\pm}
- \int_{0}^t 
e^{-(2 \pi n)^2 (t-s)} e^{n,\pm}(x) 
dW_{s}^{n,\pm}
\biggr) \biggr\vert^2
\biggr]
\\
&\leq \varepsilon_{N},
\end{split}
\end{equation*}
which proves that the
two martingale terms in 
\eqref{eq:discrete:mild:formulation}
and
\eqref{eq:SPDE:mild:formulation}
get closer as $N$ tends to $\infty$, uniformly in time (and in the data).  
\vspace{4pt}

\textit{Third Step.} We now provide a similar analysis but for the control terms in
\eqref{eq:discrete:mild:formulation}
and
\eqref{eq:SPDE:mild:formulation}. 
 We start with the case when $\balpha(\cdot)=\bbeta(\cdot)$. 
 To do so, we call $(\alpha^{n,\pm}_{t})_{n \in \NN}$
the sequence of Fourier coefficients of 
each $\alpha_{t}(\cdot)$, seen as a (random) element of 
$L^2(\SS^1;\RR^d)$. 
Similar to
\eqref{se:5:approximation:beta},
we also define the sequence 
$((\bar{\alpha}^k_{t})_{0 \leq t \leq T})_{k=0,\cdots,N-1}$:
\begin{equation*}
\bar \alpha_{t}^k
=
N \int_{k/N}^{(k+1)/N} \alpha_{t}(x) dx, 
\quad t \in [0,T], \quad k \in \{0,\cdots,N-1\},  
\end{equation*} 
and we define $(\bar \alpha_{t}(\cdot))_{0 \leq t \leq T}$ accordingly, see 
\eqref{se:5:approximation:lambda}, namely
\begin{equation*}
\bar{\alpha}_{t}(\cdot) = \sum_{k=0}^{N-1} \bar{\alpha}_{t}^k {\mathbf 1}_{[k/N,(k+1)/N)}(\cdot).
\end{equation*}
With this notation, we have the following identity:
\begin{equation*}
\begin{split}
\bar{\alpha}_{t}(\cdot)
= 
\sum_{n \in \NN}
\alpha^{n,\pm}_{t}
\biggl[
\sum_{k=0}^{N-1}
\biggl( N \int_{k/N}^{(k+1)/N}
e^{n,\pm}(x) dx \biggr)
{\mathbf 1}_{[k/n,(k+1)/N)}(\cdot)
\biggr]
= \sum_{n \in \NN}
\alpha^{n,\pm}_{t} \bar{e}^{n,\pm}(\cdot).
\end{split}
\end{equation*}
So, using the preliminary step, we deduce that, for any $s,t \in [0,T]$ with $s \leq t$, 
\begin{equation*}
e^{(t-s) \Delta^{(N)}}
\bar{\alpha}_{s}(\cdot)
= 
e^{(t-s) \Delta^{(N)}}
\biggl[
\sum_{n \in \NN}
\alpha^{n,\pm}_{s} \bar{e}^{n,\pm}(\cdot)
\biggr]
= 
\sum_{n \in \NN}
\alpha^{n,\pm}_{s}  
e^{-2N^2[1- \cos(2\pi n/N)](t-s)}
\bar{e}^{n,\pm}(\cdot), 
\end{equation*}
and then
\begin{equation*}
\begin{split}
\int_{0}^t 
e^{(t-s) \Delta^{(N)}}
\bar{\alpha}_{s}(\cdot)
ds
&=
\sum_{n \in \NN}
\biggl( 
\int_{0}^t
\alpha^{n,\pm}_{s}  
e^{-2N^2[1- \cos(2\pi n/N)](t-s)}
ds 
\biggr) \bar{e}^{n,\pm}(\cdot).
\end{split}
\end{equation*}
Proceeding as in the second step, we first focus on 
\begin{equation*}
\begin{split}
\sum_{n  \geq N^{1/4}}
\biggl( 
\int_{0}^t
\alpha^{n,\pm}_{s}  
e^{-2N^2[1- \cos(2\pi n/N)](t-s)}
ds 
\biggr) \bar{e}^{n,\pm}(\cdot).
\end{split}
\end{equation*}
By Cauchy Schwartz inequality, 
we have
\begin{equation*}
\begin{split}
&
\sup_{x \in \SS^1}
\biggl\vert
\sum_{ n \geq N^{1/4}}
\biggl( 
\int_{0}^t
\alpha^{n,\pm}_{s}  
e^{-2N^2[1- \cos(2\pi n/N)](t-s)}
ds 
\biggr) \bar{e}^{n,\pm}(x)  \biggr\vert^2
\\
&\leq
\biggl( \sum_{n \geq N^{1/4}}
\int_{0}^t
\vert \alpha^{n,\pm}_{s} \vert^2 ds   \biggr)
\biggl( 
\sup_{x \in \SS^1} \sum_{n \geq N^{1/4}}
\int_{0}^t
e^{-4N^2[1- \cos(2\pi n/N)](t-s)}
\vert \bar{e}^{n,\pm}(x) \vert^2
ds 
\biggr). 
\end{split}
\end{equation*}
Take now expectation and deduce that:
\begin{equation*}
\begin{split}
&
\sup_{x \in \SS^1}
{\mathbb E}_{0}
\biggl[
\biggl\vert
\sum_{n \geq N^{1/4}}
\biggl( 
\int_{0}^t
\alpha^{n,\pm}_{s}  
e^{-2N^2[1- \cos(2\pi n/N)](t-s)}
ds 
\biggr) \bar{e}^{n,\pm}(x) \biggr\vert^2
\biggr]
\\
&\leq {\mathbb E}_{0}\biggl[ \sum_{ n  \geq N^{1/4}}
\int_{0}^t
\vert \alpha^{n,\pm}_{s} \vert^2 ds   \biggr]
\biggl( 
\sup_{x \in \SS^1}
 \sum_{n  \geq N^{1/4}}
\int_{0}^t
e^{-4N^2[1- \cos(2\pi n/N)](t-s)}
\vert \bar{e}^{n,\pm}(x) \vert^2
ds 
\biggr).
\end{split}
\end{equation*}
By Parseval's identity, the first term is bounded by 
$\EE_{0} \int_{0}^t \| \alpha_s(\cdot) \|_{L^2(\SS^1;\R^d)}^2
ds$. The second one may be handled as in 
\eqref{eq:approximation:terms:above:N14}
and 
\eqref{eq:approximation:terms:above:N14:2}.
We deduce that:
\begin{equation*}
\begin{split}
\sup_{x \in \SS^1}
{\mathbb E}_{0}
\biggl[
\biggl\vert
\sum_{n  \geq N^{1/4}}
\biggl( 
\int_{0}^t
\alpha^{n,\pm}_{s}  
e^{-2N^2[1- \cos(2\pi n/N)](t-s)}
ds 
\biggr) \bar{e}^{n,\pm}(x)  \biggr\vert^2
\biggr]
\leq \frac{c}{N^{1/4}} \EE_{0} \int_{0}^t \| \alpha_s(\cdot) \|_{L^2(\SS^1;\RR^d)}^2
ds.
\end{split}
\end{equation*}
Similarly, we have
\begin{equation*}
\int_{0}^t 
e^{(t-s) \Delta} \alpha_{s}(\cdot)
ds = \sum_{n \in \NN} \int_{0}^t 
\alpha^{n,\pm}_{s} e^{-(2 \pi n)^2(t-s)} e^{n,\pm}(\cdot) ds,
\end{equation*}
and then,
\begin{equation*}
\begin{split}
&\sup_{x \in \SS^1}\EE_{0} \biggl[ 
\biggl\vert
\sum_{n \geq N^{1/4}}
\int_{0}^t 
\alpha^{n,\pm}_{s} e^{-(2 \pi n)^2(t-s)} e^{n,\pm}(x)  ds
\biggr\vert^2 \biggr]
\\
&\leq \EE_{0}
\biggl[
\sum_{\vert n \vert \geq N^{1/4}}
\int_{0}^t 
\vert \alpha^{n,\pm}_{s} \vert^2 ds
\biggr]
\biggl( 
\sum_{n \geq N^{1/4}}
\int_{0}^t 
 e^{-2(2 \pi n)^2(t-s)}  ds
 \biggr),
\end{split}
\end{equation*}
and again, it is less than $(c/N^{1/4})
\EE_{0} \int_{0}^t \| \alpha_s(\cdot) \|_{L^2(\SS^1)}^2
ds$. 
We now handle the difference
\begin{equation*}
\begin{split}
&\sum_{0 \leq n < N^{1/4}}
\biggl( 
\int_{0}^t
\alpha^{n,\pm}_{s}  
e^{-2N^2[1- \cos(2\pi n/N)](t-s)}
ds 
\biggr) \bar{e}^{n,\pm}(\cdot)
-
\sum_{0 \leq n  < N^{1/4}}
\biggl(
\int_{0}^t 
\alpha^{n,\pm}_{s} e^{-(2 \pi n)^2(t-s)} ds \biggr) e^{n,\pm}(\cdot) 
\\
&= 
\sum_{\vert n \vert < N^{1/4}}
\int_{0}^t
\alpha^{n,\pm}_{s}  
\Bigl( 
e^{-2N^2[1- \cos(2\pi n/N)](t-s)}
 \bar{e}^{n,\pm}(\cdot)
-
 e^{-(2 \pi n)^2(t-s)} e^{n,\pm}(\cdot) \Bigr) ds.
\end{split}
\end{equation*}
By Cauchy-Schwarz inequality, 
\begin{equation*}
\begin{split}
&
\sup_{x \in \SS^1}
\EE_{0} \biggl[ \biggl\vert
\sum_{0 \leq  n  < N^{1/4}}
\int_{0}^t
\alpha^{n,\pm}_{s}  
e^{-2N^2[1- \cos(2\pi n/N)](t-s)}
 \bar{e}^{n,\pm}(x)
 ds 
\\
&\hspace{150pt} -
\sum_{0 \leq n < N^{1/4}}
\int_{0}^t 
\alpha^{n,\pm}_{s} e^{-(2 \pi n)^2(t-s)} e^{n,\pm}(x)  ds
\biggr\vert^2
\biggr]
\\
&\leq 
\EE_{0} \biggl[
\sum_{0 \leq n  < N^{1/4}}
\int_{0}^t
\vert 
\alpha^{n,\pm}_{s}
\vert^2
ds
\biggr]
\\
&\hspace{15pt} \times
\sup_{x \in \SS^1}
\sum_{0 \leq n < N^{1/4}}
\int_{0}^t
\vert 
e^{-2N^2[1- \cos(2\pi n/N)](t-s)} \bar e^{n,\pm}(x)
-
e^{-(2 \pi n)^2(t-s)} e^{n,\pm}(x)
\vert^2
 ds.
\end{split}
\end{equation*} 
We then follow 
\eqref{eq:second:step:approximation:(i)+(ii)}. We deduce that there exist a constant $C$ and 
a sequence $(\varepsilon_{N})_{N \in \NN^*}$, independent of the data, the sequence $(\varepsilon_{N})_{N \in \NN^*}$ converging to $0$ as $N$ tends to $\infty$, such that, for all 
$t \in [0,T]$, 
\begin{align}
\notag
&\sup_{x \in \SS^1}
\EE_{0}
\biggl[
\biggl\vert 
\sum_{n \in \NN}
\biggl( 
\int_{0}^t
\alpha^{n,\pm}_{s}  
e^{-2N^2[1- \cos(2\pi n/N)](t-s)}
ds 
\biggr) \bar{e}^{n,\pm}(x)
- 
\sum_{n \in \NN}
\biggl( 
\int_{0}^t
\alpha^{n,\pm}_{s}  
e^{-(2 \pi n)^2(t-s)}
ds 
\biggr) {e}^{n,\pm}(x)
\biggr\vert^2 
\biggr] 
\\
&\hspace{15pt} \leq C 
\biggl(
\EE_{0} \int_{0}^t
\| \alpha_{s}(\cdot) \|_{L^2(\SS^1;\R^d)}^2
ds
\biggr) \varepsilon_{N},
\label{eq:bd:by:epsilonN}
\end{align}
which proves in particular that, whenever 
$\balpha(\cdot) = \bbeta(\cdot)$, the control terms in
\eqref{eq:discrete:mild:formulation}
and
\eqref{eq:SPDE:mild:formulation}
get closer as $N$ tends to $\infty$, uniformly in time.  

Now, in order to handle the general case when 
$\balpha(\cdot) \not = \bbeta(\cdot)$, it suffices to handle the term:
\begin{equation*}
\sup_{x \in \SS^1}
\EE_{0}
\biggl[
\biggl\vert 
\sum_{n \in \NN}
\biggl( 
\int_{0}^t
\bigl(
\alpha^{n,\pm}_{s} 
- \beta^{n,\pm}_{s}\bigr) 
e^{-2N^2[1- \cos(2\pi n/N)](t-s)}
ds 
\biggr) \bar{e}^{n,\pm}(x) 
\biggr\vert^2
\biggr]. 
\end{equation*}
By Cauchy-Schwarz' inequality and then by Parseval's identity, it is less than 
\begin{equation*}
\begin{split}
&\sup_{x \in \SS^1}
\EE_{0}
\biggl[
\biggl\vert 
\sum_{n \in \NN}
\biggl( 
\int_{0}^t
\bigl(
\alpha^{n,\pm}_{s} 
- \beta^{n,\pm}_{s}\bigr) 
e^{-2N^2[1- \cos(2\pi n/N)](t-s)}
ds 
\biggr) \bar{e}^{n,\pm}(x)
\biggr\vert^2
\biggr]
\\
&\leq 
\sup_{x \in \SS^1}
\biggl\{
\EE_{0}
\biggl[
\sum_{n \in \NN}
\int_{0}^t
\vert
\alpha^{n,\pm}_{s} 
- \beta^{n,\pm}_{s}
\vert^2 
ds
\biggr]
\biggl[
\sum_{n \in \NN}
\int_{0}^t
e^{-4N^2[1- \cos(2\pi n/N)](t-s)}
\vert 
 \bar{e}^{n,\pm}(x) \vert^2 
 ds
\biggr] \biggr\}
\\
&\leq 
\EE_{0}
\biggl[
\int_{0}^t
\bigl\|
\bigl(\alpha_{s} 
- \beta_{s}\bigr)
(\cdot)
\bigr\|_{L^2(\SS^1)}^2 
ds
\biggr]
\biggl[
\sup_{x \in \SS^1}
\sum_{n \in \NN}
\int_{0}^t
e^{-4N^2[1- \cos(2\pi n/N)](t-s)}
\vert 
 \bar{e}^{n,\pm}(x) \vert^2 
 ds
\biggr].
\end{split}
\end{equation*}
Following 
\eqref{eq:approximation:terms:above:N14}
and
\eqref{eq:approximation:terms:above:N14:2}, 
we can easily bound the second factor. We deduce that 
\begin{equation*}
\begin{split}
&\sup_{x \in \SS^1}
\EE_{0}
\biggl[
\biggl\vert
\sum_{n \in \NN}
\biggl( 
\int_{0}^t
\bigl(
\alpha_{s}^{n,\pm} 
- \beta_{s}^{n,\pm} \bigr) 
e^{-2N^2[1- \cos(2\pi n/N)](t-s)}
ds 
\biggr) \bar{e}^{n,\pm}(x)
\biggr\vert^2
\biggr]
\\
&\hspace{15pt} \leq C
\EE_{0}
\biggl[
\int_{0}^t
\bigl\|
\bigl(\alpha_{s} 
- \beta_{s}\bigr)(\cdot)
\bigr\|_{L^2(\SS^1;\R^d)}^2 
ds
\biggr]. 
\end{split}
\end{equation*}
And then, combining with 
\eqref{eq:bd:by:epsilonN},
\begin{equation*}
\begin{split}
&\sup_{x \in \SS^1}
\EE_{0}
\biggl[
\sum_{n \in \NN}
\biggl( 
\int_{0}^t
\beta^{n,\pm}_{s}  
e^{-2N^2[1- \cos(2\pi n/N)](t-s)}
ds 
\biggr) \bar{e}^{n,\pm}(x) 
- 
\sum_{n \in \NN}
\biggl( 
\int_{0}^t
\alpha^{n,\pm}_{s}  
e^{-(2 \pi n)^2(t-s)}
ds 
\biggr) {e}^{n,\pm}(x) 
\biggr\vert^2 
\biggr] 
\\
&\hspace{15pt} \leq C 
 \varepsilon_{N}
\EE_{0} \int_{0}^t
\| \alpha_{s}(\cdot) \|_{L^2(\SS^1;\R^d)}^2
ds
+ 
C \EE_{0}
\biggl[
\int_{0}^t
\bigl\|
\bigl(\alpha_{s} 
- \beta_{s}\bigr)(\cdot)
\bigr\|_{L^2(\SS^1;\R^d)}^2 
ds
\biggr].
\end{split}
\end{equation*}

\vspace{5pt}

\textit{Fourth Step.}
We now handle the initial condition on the same principle. As before, we denote by $(X_{0}^{n,\pm})_{n \in \NN}$ the Fourier coefficients of $X_{0}(\cdot)$. Then, we let
\begin{equation*}
\bar X_{0}(\cdot) = 
\sum_{k = 0}^N
\biggl( N \int_{k/N}^{(k+1)/N} X_{0}(x)  dx \biggr)
{\mathbf 1}_{[k/N,(k+1)/N)}(\cdot) 
=
\sum_{n \in \NN} X^{n,\pm}_{0} \bar{e}^{n,\pm}(\cdot). 
\end{equation*}
Therefore, 
\begin{equation*}
e^{t \Delta^{(N)}} \bar X_{0}(\cdot) = 
\sum_{n \in \NN}
X^{n,\pm}_{0} e^{-2N^2[1- \cos(2\pi n/N)] t}
\bar{e}^{n,\pm}(\cdot). 
\end{equation*}
Proceeding as above, 
\begin{equation*}
\begin{split}
&\sup_{x \in \SS^1} \biggl\vert 
\sum_{ n  \geq N^{1/4}}
X^{n,\pm}_{0}
e^{-2N^2[1- \cos(2\pi n/N)] t}
\bar{e}^{n,\pm}(x) 
\biggr\vert^2
\\
&\hspace{15pt} \leq 
2 \sum_{n \in \NN}
\vert X^{n,\pm}_{0} \vert^2
\times 
\sum_{n  \geq N^{1/4}}
\frac{\sin^2(\pi n/N)}{(\pi n/N)^2}
e^{-2N^2[1- \cos(2\pi n/N)] t},
\end{split}
\end{equation*}
which yields to a somewhat different bound from what we obtained in the two previous steps. In order to recover the same kind of bounds, we use the following trick:
\begin{equation}
\label{eq:approximations:trick:condition:initiale}
\begin{split}
&\sup_{x \in \SS^1}
\biggl\vert 
\sum_{n  \geq N^{1/4}}
X^{n,\pm}_{0} 
e^{-2N^2[1- \cos(2\pi n/N)] t}
\bar{e}^{n,\pm}(x) 
\biggr\vert^2
\\
&\leq \frac1{t^{3/4}}
\sum_{n \in \NN}
\vert X^{n,\pm}_{0} \vert^2
\cdot
\sum_{n \geq N^{1/4}}
\frac{\sin^2(\pi n/N)}{(\pi n/N)^2}
t^{3/4} e^{-2N^2[1- \cos(2\pi n/N)] t}
\\
&\leq \frac{c}{t^{3/4}}
\sum_{n \in \NN}
\vert X^{n,\pm}_{0} \vert^2
\cdot
\sum_{n  \geq N^{1/4}}
\frac{\sin^2(\pi n/N)}{(\pi n/N)^2}
\frac{1}{(N^2[1- \cos(2\pi n/N)])^{3/4}},
\end{split}
\end{equation}
for a new value of the universal constant $c$. 
Recalling that the function $\RR \ni x \mapsto 
\sin(x)/x$ is bounded by $1$, we deduce that
\begin{equation*}
\begin{split}
&\sup_{x \in \SS^1}
\biggl\vert 
\sum_{n  \geq N^{1/4}}
X^{n,\pm}_{0} 
e^{-2N^2[1- \cos(2\pi n/N)] t}
\bar{e}^{n,\pm}(x) 
\biggr\vert^2
\\
&\leq \frac{c}{t^{3/4}}
\sum_{n \in \NN}
\vert X^{n,\pm}_{0} \vert^2
\cdot
\sum_{n \geq N^{1/4}}
\Bigl( 
\frac{\sin^2(\pi n/N)}{(\pi n/N)^2}
\Bigr)^{3/4}
\frac{1}{(N^2[1- \cos(2\pi n/N)])^{3/4}},
\end{split}
\end{equation*}
and then following the argument used to pass
from 
\eqref{eq:approximation:terms:above:N14:3}
to 
\eqref{eq:approximation:terms:above:N14:2}, 
we deduce that: 
\begin{equation*}
\begin{split}
&\sup_{x \in \SS^1}
\biggl\vert 
\sum_{n  \geq N^{1/4}}
X^n_{0} 
e^{-2N^2[1- \cos(2\pi n/N)] t}
\bar{e}^{n,\pm}(x) 
\biggr\vert^2
\leq \frac{c}{t^{3/4}}
\sum_{n \in \NN}
\vert X^{n,\pm}_{0} \vert^2
\cdot
\sum_{n \geq N^{1/4}}
\frac{1}{n^{3/2}}
\leq \frac{c}{t^{3/4} N^{1/8}}.
\end{split}
\end{equation*}
It is well-checked that a similar bound holds 
true for
\begin{equation*}
\begin{split}
\sup_{x \in \SS^1}
\biggl\vert 
\sum_{n \geq N^{1/4}}
X^{n,\pm}_{0} 
e^{-(2 \pi n)^2 t}
{e}^{n,\pm}(x) 
\biggr\vert^2.
\end{split}
\end{equation*}
So, in order to compare $e^{t \Delta}
X_{0}$ and $e^{t \Delta^{(N)}} \bar X_{0}$, 
see
\eqref{eq:discrete:mild:formulation}
and
\eqref{eq:SPDE:mild:formulation},
it remains to handle the difference
\begin{equation*}
\sum_{0 \leq n  < N^{1/4}}
\Bigl(
X^{n,\pm}_{0} 
e^{-N^2
[1- \cos(2\pi n/N)] t}
\bar{e}^{n,\pm}(\cdot)
-
X^{n,\pm}_{0}
e^{-(2 \pi n)^2 t}
{e}^{n,\pm}(\cdot) \Bigr).
\end{equation*}
By Cauchy-Schwarz inequality, we have the following bound. 
\begin{equation*}
\begin{split}
&\sup_{x \in \SS^1 }  \biggl\vert
\sum_{0 \leq n  < N^{1/4}}
X^{n,\pm}_{0} 
e^{-N^2
[1- \cos(2\pi n/N)] t}
\bar{e}^{n,\pm}(x) 
-
X^{n,\pm}_{0} 
e^{-(2 \pi n)^2 t}
{e}^{n,\pm}(x) 
\biggr\vert^2  
\\
&\leq
\biggl[
\sum_{n \in \NN}
\vert X^n_{0} \vert^2
\biggr]
\sup_{x \in \SS^1} 
\biggl[
\sum_{0\leq n < N^{1/4}}
\bigl\vert 
e^{-N^2
[1- \cos(2\pi n/N)] t}
\bar{e}^{n,\pm}(x) 
-
e^{-(2 \pi n)^2 t}
{e}^{n,\pm}(x) \bigr\vert^2
\biggr].
\end{split}
\end{equation*}
Following the analysis of 
\eqref{eq:second:step:approximation:(i)+(ii)}
and using the same trick as in 
\eqref{eq:approximations:trick:condition:initiale}, we deduce that there exist a constant $C$ and a sequence $(\varepsilon_{N})_{N \in \NN^*}$ converging to $0$ as $N$ tends to $\infty$, both the constant and the sequence being independent of the data, such that 
\begin{equation*}
\sup_{x \in \SS^1}
\bigl\vert
\bigl( e^{t \Delta^{(N)}} \bar X_{0} -  e^{t \Delta } X_{0}\bigr)(x) \bigr\vert^2 \leq 
\frac{\varepsilon_{N}}{t^{3/4}} \| X_{0}(\cdot) \|_{L^2(\SS^1;\RR^d)}^2.
\end{equation*}

\textit{Fifth Step.} By combining the three previous steps, we easily deduce 
\eqref{eq:discretization:2}.
\end{proof}

\subsubsection*{Proof of 
the two auxiliary identities 
\eqref{eq:preliminary:identity:1}
and
\eqref{eq:preliminary:identity:2}}

We now prove the identity
\eqref{eq:preliminary:identity:1}. We start with
\begin{equation*}
\begin{split}
\bar{e}^{n}(\cdot) &=
\sum_{k=0}^{N-1}
\biggl( N \int_{k/N}^{(k+1)/N} e^{i 2\pi n x} dx 
\biggr) {\mathbf 1}_{[k/N,(k+1)/N)}(\cdot)
\\
&=
\biggl( N \int_{0}^{1/N} 
e^{i 2\pi n x} dx 
\biggr)
\sum_{k=0}^{N-1}
e^{i 2\pi \frac{k n}{N}}
 {\mathbf 1}_{[k/N,(k+1)/N)}(\cdot)
 \\
&=
e^{i \pi \frac{n}{N}}
\frac{\sin(\pi n/N)}{\pi n/N}
\sum_{k=0}^{N-1}
e^{i 2\pi \frac{k n}{N}}
 {\mathbf 1}_{[k/N,(k+1)/N)}(\cdot).
\end{split}
\end{equation*}
We now check the second identity \eqref{eq:preliminary:identity:2}. Implementing the definition of $\Delta^{(N)}$, we get:
\begin{equation*}
\begin{split}
\Delta^{(N)}
\biggl[
\sum_{k=0}^{N-1}
e^{i 2\pi \frac{k n}{N}}
 {\mathbf 1}_{[k/N,(k+1)/N)}(\cdot)
\biggr]
&= 
N^2 \sum_{k=0}^{N-1}
\bigl( 
e^{i 2\pi \frac{(k+1) n}{N}}
+
e^{i 2\pi \frac{(k-1) n}{N}}
-
2
e^{i 2\pi \frac{k n}{N}}
\bigr)
 {\mathbf 1}_{[k/N,(k+1)/N)}(\cdot)
\\
&= 
- 2 N^2 \bigl[ 
1- \cos\bigl( \frac{2 \pi  n}{N}
\bigr) 
\bigr]
\biggl[
\sum_{k=0}^{N-1}
e^{i 2\pi \frac{k n}{N}}
 {\mathbf 1}_{[k/N,(k+1)/N)}(\cdot)
\biggr],
\end{split}
\end{equation*}

\subsection{Application to games}
We now return to 
 \eqref{eq:sec:5:definition:game} with 
 $(\balpha^{k,j} = \balpha^{\star k,j} = -  \bar{\boldsymbol Y}^{k})_{k=0,\cdots,N-1;j=1,\cdots,A_{N}}$ as defined in 
 \eqref{eq:sec:5:definition:game:Y}
 where 
  $({\boldsymbol X}(\cdot),{\boldsymbol Y}(\cdot),{\boldsymbol Z}(\cdot))$
  now denotes the solution
  to \eqref{eq:mkv:sde}.
  We denote the corresponding solution by
  $({\boldsymbol X}^{\star,k,j})_{k=0,\cdots,N-1;j=1,\cdots,A_{N}}$. Since 
$\balpha^{\star,k,j}$ does not depend on $j$, we have
${\boldsymbol X}^{\star,k,j} = 
\bar {\boldsymbol X}^{\star,k}$
for any 
$k \in \{0,\cdots,N-1\}$, with 
$\bar {\boldsymbol X}^{\star,k}
=1/A_{N} \sum_{j=1}^{A_{N}}
\bar {\boldsymbol X}^{\star,k,j}$. 

Also, we notice that $(\bar{X}_{t}^{\star,0},\cdots,
\bar{X}_{t}^{\star,N-1})_{0 \le t \le T}$ solves the system of SDEs:
\begin{equation*}
d \bar X_{t}^{\star,k} =
 \Bigl\{ b \bigl( \bar{\mu}_{t}^{\star,N} 
 \bigr) - \bar Y_{t}^k  + N^2
 \bigl( \bar X^{\star,k+1}_{t} + \bar X^{\star,k-1}_{t} - 2 \bar X^{\star,k}_{t}
 \bigr) 
 \Bigr\} dt + \sqrt N dB_{t}^{k},
 \end{equation*}
 for $t \in [0,T]$,  
with the same initial condition $\bar X_{0}^k$ as before and for $k \in \{0,\cdots,N-1\}$. 
The above system fits the form of 
\eqref{eq:discrete:eq}. To make it clear, we use the following notations:
\begin{equation*}
\bar{X}^\star_{t}(\cdot) = \sum_{k=0}^{N-1}
\bar X_{t}^{\star,k}
{\mathbf 1}_{[k/N,(k+1)/N)}(\cdot),
\quad \textrm{\rm and}
\quad 
\bar{\mu}^{\star,N}_{t} = 
\frac1N \sum_{k=0}^{N-1} \delta_{\bar{X}_{t}^{\star,k}},
\end{equation*} 
We then apply Theorem \ref{thm:discretization}
with $\balpha^\star(\cdot)
= b(\textrm{\rm Leb}_{\SS^1} \circ {\boldsymbol X}^{-1}(\cdot)) -{\boldsymbol Y}(\cdot)$
and $\bbeta^\star
= b(\bar{\boldsymbol \mu}^{\star,N}) - {\boldsymbol Y}(\cdot)$
and thus 
$(\bbeta^{\star,k}= b(\bar{\boldsymbol \mu}^{\star,N}) - \bar{\boldsymbol Y}^{k})_{k=0,\cdots,N-1}$. Then, the SPDE 
\eqref{eq:SPDE:alpha}
takes the form:
\begin{equation*}
\partial_{t} X_{t}(x) = b\bigl(\textrm{\rm Leb}_{\SS^1} \circ ({X}_{t}(\cdot))^{-1}\bigr)  - Y_{t} (x)  + \Delta X_{t}(x)  + \dot{W}_{t}(x), \quad (t,x) \in [0,T] 
\times \SS^1,
\end{equation*}
with the same $X_{0}(\cdot)$ as before as initial condition. 
Then, 
by Theorem 
\ref{thm:discretization}, we get for all $t \in (0,T]$, 
\begin{equation*}
\begin{split}
&
\sup_{x \in \SS^1} \EE_{0}
\bigl[
 \vert X_{t}(x) - \bar{X}_{t}^\star(x) \vert^2 \bigr] 
\\
&\leq
C \varepsilon_{N} \Bigl( 1 + \frac1{t^{3/4}}
 \Bigr)
 + 
C  \EE_{0} \int_{0}^t \bigl\vert 
b\bigl(\textrm{\rm Leb}_{\SS^1} \circ 
(X_{s}(\cdot))^{-1} \bigr)
- 
b(\bar \mu^{\star,N}_{s})
\bigr\vert^2 ds
\\
&\leq 
C \varepsilon_{N} \Bigl( 1 + \frac1{t^{3/4}}
 \Bigr)
+ 
C \EE_{0} \int_{0}^t \int_{\SS^1}
\vert  \bar{X}_{s}^\star(x) -X_{s}(x) 
\vert^2 dx ds
+ 
C\EE_{0} \int_{0}^t W_{2}\bigl(\bar{\mu}_{s}^{\star,N},
\textrm{\rm Leb}_{\SS^1} \circ 
(\bar X_{s}^\star(\cdot))^{-1}
\bigr)^2  ds,
\end{split} 
\end{equation*}
where $C$ now depends upon $\EE_{0} \int_{0}^T
\| Y_{s}(\cdot) \|^2_{L^2(\SS^1;\RR^d)}
ds$ and $\| X_{0}(\cdot)\|^2_{L^2(\SS^1;\RR^d)}$.
Since $\bar \mu^{\star,N}_{s}$ coincides with 
$\textrm{\rm Leb}_{\SS^1} \circ (\bar X^\star_{s}(\cdot))^{-1}$, the last term in the above inequality is 0. 
Therefore,
by the general version of Gronwall's lemma, we get, 
for any $t \in (0,T]$, 
\begin{equation}
\label{eq:sec:5:appli:thm24}
\begin{split}
\sup_{x \in \SS^1} 
\EE_{0}
\bigl[
\vert X_{t}(x) - \bar{X}_{t}^\star(x) \vert^2 \bigr] &\leq  C \varepsilon_{N} 
\Bigl( 1 + \frac{1}{t^{3/4}}
\Bigr).
\end{split}
\end{equation}
%
%
%
\vspace{5pt}

In order to show that we have constructed an approximate Nash equilibria, we apply a variant of the sufficiency proof in the Pontryagin principle. 


\subsubsection*{Particle system associated with ${\boldsymbol \beta}$}
Recall the definition of ${\boldsymbol \beta}$ from the introduction of Section 
\ref{se:construction:approximate:proof}:
Fix a pair $(k_{0},j_{0}) \in \{0,\cdots,N-1\} \times \{1,\cdots,A_{N}\}$
and let  
$\bbeta^{\star k,j} = \balpha^{\star k,j}$, 
for $k \in \{0,\cdots,N-1\}$ and 
$j \in \{1,\cdots,A_{N}\}$, 
with $(k,j) \not = (k_{0},j_{0})$; when $k=k_{0}$ and $j=j_{0}$, 
let 
$\bbeta^{\star k_{0},j_{0}} = {\boldsymbol \gamma}$,
for some
$\RR^d$-valued 
process ${\boldsymbol \gamma}=(\gamma_{t})_{0 \le t \le T}$ that is 
progressively-measurable 
with respect to 
 the filtration generated by the cylindrical white noise ${\boldsymbol W}(\cdot)=(W_{t}(\cdot))_{0 \le t \le T}$.
Then,
 we call
$((\chi_{t}^{k,j})_{0 \leq t \leq T})_{k=0,\cdots,N-1;j=1,\cdots,A_{N}}$
the system of particles:
 \begin{equation*}
 d \chi_{t}^{k,j} =
 \Bigl\{ b \bigl( \bar{\nu}_{t}^N 
 \bigr) + \beta_{t}^{k,j}
 + N^2
 \bigl( \bar \chi^{k+1}_{t} + \bar \chi^{k-1}_{t} - 2 \bar \chi^{k}_{t}
 \bigr) 
 \Bigr\} dt + \sqrt N dB_{t}^{k},
 \quad t \in [0,T],
 \end{equation*}
for $k \in \{0,\cdot,N-1\}$ and 
$j \in \{1,\cdots,A_{N}\}$,
with the initial condition
$\chi_{0}^{k,j} = \bar{X}^k_{0}$, and 
with 
\begin{equation*}
\bar{\nu}_{t}^N = 
\frac{1}{N A_{N}} \sum_{k=0}^{N-1}
\sum_{j=1}^{A_{N}}
\delta_{\chi_{t}^{k,j}},
\end{equation*}
and
\begin{equation*}
\bar \chi_{t}^{k} = \frac{1}{A_{N}}
\sum_{j=1}^{A_{N}} \chi_{t}^{k,j},
\quad t \in [0,T].
\end{equation*}
As usual, we let $\bar \chi_{t}(x) = \sum_{k=0}^{N-1}
\bar \chi_{t}^{k}
{\mathbf 1}_{[k/N,(k+1)/N)}(x)$.

\subsubsection*{Pontryagin principle}
For $(k_{0},j_{0})$ as above, we compute
\begin{equation*}
\begin{split}
&d 
\Bigl[
(\chi_{t}^{k_{0},j_{0}}  - \bar{X}_{t}^{\star,k_{0}}) \cdot \bar Y_{t}^{k_{0}}
\Bigr]
\\
&= \biggl[ 
\Bigl(
b\bigl(\bar \nu_{t}^N\bigr) - b\bigl(\bar \mu_{t}^{\star,N} \bigr) 
+
\beta_{t}^{k_{0},j_{0}} + \bar{Y}_{t}^{k_{0}}  
\\
&\hspace{30pt}
+
N^2 \bigl( \bar \chi_{t}^{k_{0}+1} + \bar \chi_{t}^{k_{0}-1} - 2 \bar \chi_{t}^{k_{0}}
- \bar X_{t}^{\star,k_{0}+1}
- \bar X_{t}^{\star,k_{0}-1}
+ 2 \bar X_{t}^{\star,k_{0}} 
\bigr)
\Bigr) 
\cdot
\bar Y_{t}^{k_{0}}  
\\
&\hspace{30pt}
- 
N \int_{k_{0}/N}^{(k_{0}+1)/N}
\Bigl[
\partial_{x} f \bigl(X_{t}(x),\textrm{\rm Leb}_{\SS^1} \circ (X_{t}(\cdot))^{-1} \bigr) 
\cdot
\bigl( \chi_{t}^{k_{0},j_{0}} - \bar X_{t}^{\star,k_{0}} \bigr) 
\Bigr]
dx
\biggr] dt
\\
&\hspace{15pt} 
  + dm_{t}, 
\end{split}
\end{equation*}
where $(m_{t})_{0 \le t \le T}$ is a square integrable martingale. 
Therefore,
\begin{equation}
\label{eq:convergence:pontryagin}
\begin{split}
&d 
\biggl[ \bigl[
(\chi_{t}^{k_{0},j_{0}}  - \bar{X}_{t}^{\star,k_{0}}) \cdot \bar Y_{t}^{k_{0}}
\bigr]
+ \int_{0}^t 
\Bigl(
f\bigl(\chi_{s}^{k_{0},j_{0}},\bar{\nu}^{N}_{s}
\bigr) - f\bigl(\bar X_{s}^{\star,k_{0}},\textrm{\rm Leb}_{\SS^1} \circ (\bar X_{s}^\star(\cdot))^{-1}
\bigr) \Bigr) ds
\\
&\hspace{30pt}
+ \frac12 
\Bigl( \int_{0}^t
 \vert \beta_{s}^{k_{0},j_{0}} \vert^2
 -
  \vert \bar Y_{s}^{k_{0}} \vert^2
 \Bigr) ds \biggr]
\\
&= \biggl[ 
\frac12 \bigl\vert 
\beta_{t}^{k_{0},j_{0}} + \bar{Y}_{t}^{k_{0}}  
\bigr\vert^2 + \delta^N_{t}
\\
&\hspace{15pt} 
+\Bigl[ f\bigl(\chi_{t}^{k_{0},j_{0}},
\textrm{\rm Leb}_{\SS^1} \circ 
(\bar X_{t}^\star(\cdot))^{-1}
 \bigr)
- 
 f\bigl( \bar X_{t}^{\star,k_{0}},\textrm{\rm Leb}_{\SS^1} \circ (\bar X_{t}^\star(\cdot))^{-1} \bigr)
\\
&\hspace{30pt} - 
\partial_{x} f \bigl(\bar X_{t}^{\star,k_{0}},\textrm{\rm Leb}_{\SS^1} \circ 
(\bar X_{t}^\star(\cdot))^{-1} \bigr)
\cdot 
\bigl( \chi_{t}^{k_{0},j_{0}} - \bar X_{t}^{\star,k_{0}} \bigr) 
\Bigr] \biggr] dt 
\\
&\hspace{15pt} + dm_{t},
\end{split}
\end{equation}
where we have let
\begin{equation*}
\begin{split}
\delta^N_{t}
&= f\bigl(\chi_{t}^{k_{0},j_{0}},\bar{\nu}^N_{t}
\bigr)
- 
f\bigl(\chi_{t}^{k_{0},j_{0}},
\textrm{\rm Leb}_{\SS^1} \circ 
(\bar X_{t}^\star(\cdot))^{-1}
\bigr)
\\
&\hspace{15pt} + \Bigl(
b\bigl(\bar \nu_{t}^N\bigr) - b\bigl(\bar \mu_{t}^{\star,N} \bigr) 
+
N^2 \bigl( \bar \chi_{t}^{k_{0}+1} + \bar \chi_{t}^{k_{0}-1} - 2 \bar \chi_{t}^{k_{0}}
- \bar X_{t}^{\star,k_{0}+1}
- \bar X_{t}^{\star,k_{0}-1}
+ 2 \bar X_{t}^{\star,k_{0}} 
\bigr)
\Bigr) 
\cdot
\bar Y_{t}^{k_{0}}  
\\
&\hspace{15pt}
- N \int_{k_{0}/N}^{(k_{0}+1)/N}
\Bigl[\Bigl(
\partial_{x} f \bigl(X_{t}(x),\textrm{\rm Leb}_{\SS^1} \circ ( X_{t}(\cdot))^{-1} \bigr) 
\\
&\hspace{150pt}
-
\partial_{x} f \bigl(\bar X_{t}^{\star,k_{0}},\textrm{\rm Leb}_{\SS^1} \circ 
(\bar X_{t}^\star(\cdot))^{-1} \bigr) 
\Bigr)
\cdot
\bigl(\chi_{t}^{k_{0},j_{0}} - \bar X_{t}^{\star,k_{0}} \bigr) 
\Bigr]
dx.
\end{split}
\end{equation*}
Hence, taking the expectation in 
\eqref{eq:convergence:pontryagin}, using the convexity of $f$ and inserting the terminal costs, we get:
\begin{equation*}
\begin{split}
&\EE_{0} \biggl[ g\bigl(\chi_{T}^{k_{0},j_{0}},\bar \nu_{T}^N \bigr)  +\int_{0}^T 
\Bigl( f\bigl(\chi_{t}^{k_{0},j_{0}},\bar \nu_{t}^N\bigr) 
+ \frac12 \vert \beta_{t}^{k_{0},j_{0}} \vert^2 
\Bigr) dt \biggr] 
\\
&\geq
\EE_{0} \biggl[ g\bigl( \bar X_{T}^{\star,k_{0}}, \textrm{\rm Leb}_{\SS^1} \circ (\bar X_{T}^\star(\cdot))^{-1} \bigr)  +\int_{0}^T 
\Bigl( f\bigl(\bar X_{t}^{\star,k_{0}},
 \textrm{\rm Leb}_{\SS^1} \circ (\bar X^\star_{t}(\cdot))^{-1}
\bigr) 
+ \frac12 \vert \bar Y_{t}^{\star,k_{0}} \vert^2 
\Bigr) dt \biggr] 
\\
&\hspace{15pt} + 
\EE_{0} \biggl[ 
 g\bigl(\chi_{T}^{k_{0},j_{0}},
  \textrm{\rm Leb}_{\SS^1} \circ (\bar X^\star_{T}(\cdot))^{-1} \bigr)
 -
  g\bigl( \bar X_{T}^{\star,k_{0}}, \textrm{\rm Leb}_{\SS^1} \circ (\bar X_{T}^\star(\cdot))^{-1} \bigr)
\\
&\hspace{155pt}   
  - \partial_{x} g \bigl( \bar X_{T}^{\star,k_{0}}, \textrm{\rm Leb}_{\SS^1} \circ (\bar X_{T}^\star(\cdot))^{-1} \bigr)
  \cdot
  \bigl( 
  \chi_{T}^{k_{0},j_{0}}
  - 
  \bar X_{T}^{\star,k_{0}}
  \bigr) 
    \biggr] 
\\
&\hspace{15pt}     
    + 
    \frac12 \EE_{0} \int_{0}^T
     \bigl\vert 
\beta_{t}^{k_{0},j_{0}} + \bar{Y}_{t}^{k_{0}}  
\bigr\vert^2
dt
    +    \EE_{0} \int_{0}^T \delta_{t}^N 
dt
    + \EE_{0} \delta_{N}',
\end{split}
\end{equation*}
where we have let
\begin{equation*}
\begin{split}
\delta_{N}' &= 
g\bigl(\chi_{T}^{k_{0},j_{0}},
\bar \nu^{N}_{T} \bigr)
- g\bigl(\chi_{T}^{k_{0},j_{0}},
  \textrm{\rm Leb}_{\SS^1} \circ 
  (\bar X_{T}^\star
  (\cdot))^{-1} \bigr) 
 \\ 
&\hspace{15pt}  
  - N \int_{k_{0}/N}^{(k_{0}+1)/N}
\Bigl[\Bigl(
\partial_{x} g \bigl(X_{T}(x),\textrm{\rm Leb}_{\SS^1} \circ (X_{T}(\cdot))^{-1} \bigr) 
\\
&\hspace{150pt}
-
\partial_{x} g \bigl(\bar X_{T}^{\star,k_{0}},\textrm{\rm Leb}_{\SS^1} \circ 
(\bar X_{T}^\star(\cdot))^{-1} \bigr) 
\Bigr)
\cdot
\bigl( \chi_{T}^{k_{0},j_{0}} - \bar X_{T}^{\star,k_{0}} \bigr) 
\Bigr]
dx.
\end{split}
\end{equation*}
By convexity of $g$
and from the identity $(\bar \mu_{t}^{\star,N} = \textrm{\rm Leb}_{{\mathbb S}^1} \circ 
(\bar X_{t}^\star(\cdot))^{-1})_{0 \leq t \leq T}$, we end-up with:
\begin{equation}
\label{eq:conclusion:pontryagin}
\begin{split}
J^{k_{0},j_{0}}\bigl((\bbeta^{k,j})_{k=0,\cdots,N-1;j=1,\cdots,A_{N}}\bigr)
&\geq 
J^{k_{0},j_{0}}\bigl((\balpha^{\star k,j})_{k=0,\cdots,N-1;j=1,\cdots,A_{N}}\bigr)
\\
&\hspace{15pt}
+
    \frac12 \EE_{0} \int_{0}^T
     \bigl\vert 
\beta_{t}^{k_{0},j_{0}} + \bar{Y}_{t}^{k_{0}}  
\bigr\vert^2
dt
 + \EE_{0} \int_{0}^T \delta_{t}^N dt  + \EE_{0}\delta_{N}'.
\end{split}
\end{equation}

\subsubsection*{Proving the convergence of the remainder}
We now investigate the two sequences
$( \delta_{N}')_{N \ge 1}$ and 
$( \int_{0}^T \delta^N_{t} dt )_{N \in \NN^*}$. 
Using once again 
the
identity $(\bar \mu_{t}^{\star,N} = \textrm{\rm Leb}_{{\mathbb S}^1} \circ 
(\bar X_{t}^\star(\cdot))^{-1})_{0 \leq t \leq T}$
together with the regularity properties of the coefficients, 
we have
\begin{equation*}
\begin{split}
&\EE_{0} \bigl[ \vert \delta_{N}'
\vert \bigr] + \EE_{0} \int_{0}^T \vert \delta^N_{t} \vert
dt 
\\
&\leq C 
\biggl(
\EE_{0} \bigl[ W_{2}\bigl(\bar \nu_{T}^N,
\textrm{\rm Leb}_{\SS^1} \circ 
(\bar X_{T}^\star(\cdot))^{-1}\bigr) \bigr]
+ \int_{0}^T
\EE_{0} \bigl[ W_{2}\bigl(\bar \nu_{t}^N,
\textrm{\rm Leb}_{\SS^1} \circ (\bar X_{t}^\star(\cdot))^{-1}\bigr) \bigr]
dt 
\biggr)
\\
&\hspace{15pt} + C \sup_{x \in \SS^1}
\EE_{0} \bigl[ \vert X_{T}(x) - \bar X_{T}^\star(x) 
\vert^2 \bigr]^{1/2}
\Bigl( 
1+
\EE_{0} \bigl[ \vert \chi_{T}^{k_{0},j_{0}}
- 
\bar X_{T}^{\star,k_{0}}
\vert^2 \bigr]^{1/2}  \Bigr) \phantom{\biggr)}
\\
&\hspace{15pt}
+ C 
\biggl(
\int_{0}^T 
 \sup_{x \in \SS^1}
\EE_{0} \bigl[ \vert X_{t}(x) - \bar X_{t}^\star(x) 
\vert^2 \bigr]
dt \biggr)^{1/2}
\biggl[
1+
\biggl( \int_{0}^T
\EE_{0} \bigl[ \vert \chi_{t}^{k_{0},j_{0}}
- 
\bar X_{t}^{\star,k_{0}}
\vert^2 \bigr] dt \biggr)^{1/2}
\biggr]
\\
&\hspace{15pt}
+  C \, 
\EE_{0}
\int_{0}^T 
 \sup_{x \in \SS^1}
 \bigl\vert 
 \Delta^{(N)}(  \bar \chi_{t} - \bar X_{t}^\star )(x)
\bigr\vert 
dt,
\end{split}
\end{equation*}
where, in the last line, we used the fact that the process
$(\bar Y_{t}^{k_{0}})_{0 \leq t \leq T}$
is bounded independently of $k_{0}$, see for instance 
Lemma \ref{lem:bound:pp}. 

Observe from 
\eqref{eq:sec:5:appli:thm24}
 that 
we can find a sequence $(\varepsilon_{N})_{N \in \NN^*}$, converging to $0$ as $N$ tends to $\infty$, such that
\begin{equation*}
\sup_{x \in \SS^1}
\EE_{0} \bigl[ \vert X_{T}(x) - \bar X_{T}^\star(x) 
\vert^2 \bigr]^{1/2}
+
\biggl(
\int_{0}^T 
 \sup_{x \in \SS^1}
\EE_{0} \bigl[ \vert X_{t}(x) - \bar X_{t}^\star(x) 
\vert^2 \bigr]
dt \biggr)^{1/2}
\leq \varepsilon_{N}. 
\end{equation*}
Now, for any $t \in [0,T]$,
\begin{equation*}
W_{2}\bigl(\bar \nu_{t}^N,
\textrm{\rm Leb}_{\SS^1} \circ 
(\bar X_{t}^\star(\cdot))^{-1}\bigr) 
\leq 
\biggl(
\frac1N \frac1{A_{N}}
\sum_{k=0}^{N-1}
\sum_{j=1}^N \vert \chi_{t}^{k,j} - \bar{X}_{t}^{\star,k} \vert^2 \biggr)^{1/2}. 
\end{equation*}
So, we end up with:
\begin{equation}
\label{eq:bound:deltaN:deltaN'}
\begin{split}
&\EE \bigl[\vert \delta_{N}'
\vert
\bigr]  + \EE \int_{0}^T \vert \delta^N_{t} \vert
dt 
\\
&\hspace{15pt}\leq \varepsilon_{N}
\Bigl( 1+ 
\sup_{0 \leq t \leq T}
\EE \bigl[ \vert \chi_{t}^{k_{0},j_{0}}  
- 
\bar{X}_{t}^{\star,k_{0}}
\vert^2
\bigr]^{1/2}
\Bigr)
\\
&\hspace{30pt}
 + 
C
\sup_{0 \leq t \leq T}
\EE_{0} \biggl[
\frac1N \frac1{A_{N}}
\sum_{k=0}^{N-1}
\sum_{j=1}^N 
\vert \chi_{t}^{k,j} - \bar{X}_{t}^{\star,k} \vert^2
\biggr]^{1/2}
+  C \, 
\EE_{0}
\int_{0}^T 
 \sup_{x \in \SS^1}
 \bigl\vert 
 \Delta^{(N)}(  \bar \chi_{t} - \bar X_{t}^\star )(x)
\bigr\vert 
dt.
\end{split}
\end{equation}
Now, for any $t \in [0,T]$, 
\begin{equation*}
\begin{split}
\frac1N \frac1{A_{N}}
\sum_{k=0}^{N-1}
\sum_{j=1}^N \vert \chi_{t}^{k,j} - \bar{X}_{t}^{\star,k} \vert^2
&\leq C \int_{0}^t 
\frac1N \frac1{A_{N}}
\sum_{k=0}^{N-1}
\sum_{j=1}^N \vert \chi_{s}^{k,j} - \bar{X}_{s}^{\star,k} \vert^2 ds 
\\
&\hspace{15pt} + 
\frac{C}{N A_{N}}
\int_{0}^T \vert \gamma_{s} + \bar Y_{s}^{k_{0}} \vert^2 ds 
+ 
C \biggl( \int_{0}^T 
\sup_{x \in \SS^1}
\bigl\vert \Delta^{(N)} \bigl( \bar X_{s}^\star - \bar \chi_{s} \bigr) (x) \bigr\vert ds \biggr)^2,
\end{split}
\end{equation*}
so that, by Gronwall's lemma,
\begin{equation}
\label{eq:stability:barmu:barnu}
\begin{split}
&\sup_{0 \le t \le T}
\frac1N \frac1{A_{N}}
\sum_{k=0}^{N-1}
\sum_{j=1}^N \vert \chi_{t}^{k,j} - \bar{X}_{t}^{\star,k} \vert^2
\\
&\hspace{15pt} \leq  
\frac{C}{N A_{N}}
\int_{0}^T \vert \gamma_{s}+ \bar Y_{s}^{k_{0}} \vert^2 ds 
+ C 
\biggl( \int_{0}^T 
\sup_{x \in \SS^1}
\bigl\vert \Delta^{(N)} \bigl( \bar X_{s}^\star - \bar \chi_{s} \bigr) (x) \bigr\vert ds \biggr)^2.
\end{split}
\end{equation}
We then claim from Proposition 
\ref{stability:interaction} below that there exists a constant $c$, only depending on $T$, such that
\begin{equation*}
\EE\biggl[
\biggl(
\int_{0}^T
\sup_{x \in \SS^1}
\bigl\vert \Delta^{(N)} (\bar X_{t}^\star - \bar \chi_{t})(x) 
\bigr\vert 
dt
\biggr)^2 
\biggr]
\leq \frac{c}{A_{N}^2}\EE_{0}\int_{0}^T \vert \gamma_{t} + \bar Y_{t}^{k_{0}}\vert^2 dt, 
\end{equation*} 
from which we deduce that 
\begin{equation*}
\begin{split}
\sup_{0 \le t \le T}
\EE_{0} \biggl[
\biggl(
\frac1N \frac1{A_{N}}
\sum_{k=0}^{N-1}
\sum_{j=1}^N \vert \chi_{t}^{k,j} - \bar{X}_{t}^{\star,k} \vert
\biggr)^2
\biggr]
& \leq \frac{C}{\min(N,A_{N})^2} 
\EE_{0} \int_{0}^T \vert \gamma_{t}
+ \bar Y_{t}^{k_{0}}
 \vert^2 dt,
\end{split}
\end{equation*}
the constant $C$ being allowed to vary from line to line. 
By a similar argument, but without averaging, we obtain
\begin{equation*}
\begin{split}
\sup_{0 \le t \le T}
\EE_{0} \Bigl[ \vert \chi_{t}^{k_{0},j_{0}} - \bar{X}_{t}^{\star,k_{0}} \vert^2
\Bigr]
& \leq C 
\EE_{0} \int_{0}^T \vert \gamma_{t}
+ \bar Y_{t}^{k_{0}}
 \vert^2 dt. 
\end{split}
\end{equation*}
Returning to 
\eqref{eq:bound:deltaN:deltaN'}, this yields to
\begin{equation*}
\EE_{0} \bigl[
\vert \delta_{N}'
\vert \bigr] + \EE_{0} \int_{0}^T \vert \delta^N_{t} \vert
dt 
\leq \varepsilon_{N} + C 
\varepsilon_{N}
\biggl( 
\EE_{0} \int_{0}^T \vert \gamma_{t}
+ \bar Y_{t}^{k_{0}} \vert^2 dt
 \biggr)^{1/2},
\end{equation*}
and then, inserting into 
\eqref{eq:conclusion:pontryagin}, we get:
\begin{align}
\notag
J^{k_{0},j_{0}}\bigl((\bbeta^{k,j})_{k=0,\cdots,N-1;j=1,\cdots,A_{N}}\bigr)
&\geq 
J^{k_{0},j_{0}}\bigl((\balpha^{\star k,j})_{k=0,\cdots,N-1;j=1,\cdots,A_{N}}\bigr)
+
    \frac12 \EE_{0} \int_{0}^T
     \bigl\vert \gamma_{t} + \bar{Y}_{t}^{k_{0}}  
\bigr\vert^2
dt
\\
&\hspace{15pt}
-  
\varepsilon_{N}
\biggl[ 1+  
\biggl( 
\EE_{0} \int_{0}^T \vert \gamma_{s}
+ \bar Y_{s}^{k_{0}}
 \vert^2 ds
 \biggr)^{1/2} \biggr],
 \label{eq:conclusion:pontryagin:2}
\end{align}
the sequence $(\varepsilon_{N})_{N \in \NN^*}$ being now allowed to depend upon 
$(A_{N})_{N \in \NN^*}$. 

Obviously, we can a find constant $a>0$, independent of $N$, such that the sum of the last two terms in the right-hand side is positive whenever 
$\EE_{0} \int_{0}^T \vert \gamma_{s} \vert^2 ds$
is greater than $a$. In such a case, we have 
\begin{equation*}
J^{k_{0},j_{0}}\bigl((\bbeta^{k,j})_{k=0,\cdots,N-1;j=1,\cdots,A_{N}}\bigr)
\geq 
J^{k_{0},j_{0}}\bigl((\balpha^{\star k,j})_{k=0,\cdots,N-1;j=1,\cdots,A_{N}}\bigr),
\end{equation*}
which is the required inequality. 

Now, if 
$\EE_{0} \int_{0}^T \vert \gamma_{s} \vert^2 ds \leq a$, \eqref{eq:conclusion:pontryagin:2} yields
\begin{equation*}
\begin{split}
J^{k_{0},j_{0}}\bigl((\bbeta^{k,j})_{k=0,\cdots,N-1;j=1,\cdots,A_{N}}\bigr)
&\geq 
J^{k_{0},j_{0}}\bigl((\balpha^{\star k,j})_{k=0,\cdots,N-1;j=1,\cdots,A_{N}}\bigr)
-  
\varepsilon_{N} \bigl( 1+ a^{1/2} \bigr),
\end{split}
\end{equation*}
and the result follows easily.

\subsubsection*{Stability of the interaction}
In order to complete the proof, it remains to evaluate the distance between
$
\Delta^{(N)} \bar{\boldsymbol X}^\star(\cdot)$
and
$\Delta^{(N)} \bar{\boldsymbol \chi}(\cdot)$, 
which is the purpose of the next statement.
\begin{proposition}
\label{stability:interaction}
There exists a constant $C$, only depending on $T$, such that, 
with the same notations as before,
\begin{equation*}
\EE_{0} \biggl[
\biggl(
\int_{0}^T
 \sup_{x \in \SS^1}
\bigl\vert \Delta^{(N)} (\bar X_{t}^\star - \bar \chi_{t}) 
(x)\bigr\vert
dt
 \biggr)^2
\biggr] \leq \frac{C}{A_{N}^2}  \EE_{0} \int_{0}^T \vert \gamma_{t} 
+ \bar Y_{t}^{k_{0}}
\vert^2 dt. 
\end{equation*} 
\end{proposition}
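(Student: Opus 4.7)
The strategy is to derive a closed (random) linear equation for the difference $\phi_t^k := \bar \chi_t^k - \bar X_t^{\star,k}$, solve it in mild form with respect to the discrete semi-group $(e^{r \Delta^{(N)}})_{r \geq 0}$, and then exploit the explicit spectral diagonalization of $\Delta^{(N)}$ already recorded in \eqref{eq:preliminary:identity:1}--\eqref{eq:preliminary:identity:2} to control the $L^\infty$-norm of $\Delta^{(N)} \phi_t(\cdot)$. First I would subtract the two discrete systems: since both $\bar \chi_t^k$ and $\bar X_t^{\star,k}$ are driven by the same $\sqrt{N}\, dB_t^k$ and start from the same initial condition, the stochastic terms and initial conditions cancel. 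Since $\bar \beta_t^k + \bar Y_t^k = 0$ for $k \neq k_0$ and equals $A_N^{-1}(\gamma_t + \bar Y_t^{k_0})$ at $k=k_0$ (this is where the factor $1/A_N$, crucial to the whole argument, enters the picture), $\boldsymbol \phi$ satisfies, in the piecewise constant function formalism,
\begin{equation*}
d \phi_t(\cdot) = \Bigl\{ \bigl[ b(\bar \nu_t^N) - b(\bar \mu_t^{\star,N})\bigr]\mathbf 1(\cdot) + \Delta^{(N)} \phi_t(\cdot) + \frac{1}{A_N}\bigl(\gamma_t + \bar Y_t^{k_0}\bigr) \mathbf 1_{[k_0/N,(k_0+1)/N)}(\cdot) \Bigr\} dt,
\end{equation*}
with $\phi_0(\cdot)=0$, where $\mathbf 1(\cdot)$ denotes the constant function equal to $1$.

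Next I would write this out in mild form against the discrete semi-group and apply the operator $\Delta^{(N)}$. Because $\mathbf 1(\cdot)$ belongs to the kernel of $\Delta^{(N)}$ (and hence of $e^{r\Delta^{(N)}}$ for any $r \geq 0$), the constant term involving $b(\bar \nu_t^N) - b(\bar \mu_t^{\star,N})$ disappears upon differentiation and we are left with
\begin{equation*}
\Delta^{(N)} \phi_t(\cdot) = \frac{1}{A_N} \int_0^t \bigl(\gamma_s + \bar Y_s^{k_0}\bigr)\, \Delta^{(N)} e^{(t-s)\Delta^{(N)}} \mathbf 1_{[k_0/N,(k_0+1)/N)}(\cdot)\, ds.
\end{equation*}
At this point the problem reduces to a purely deterministic estimate on the kernel $r \mapsto \Delta^{(N)} e^{r\Delta^{(N)}} \mathbf 1_{[k_0/N,(k_0+1)/N)}$.

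The heart of the proof is then step three, and this is where I expect the only real work to be: obtaining a kernel bound which is uniform in $N$. I would decompose $\mathbf 1_{[k_0/N,(k_0+1)/N)}$ in the orthonormal basis of $\Delta^{(N)}$-eigenvectors $\phi^n(\cdot) = \sum_{k=0}^{N-1} e^{i 2\pi kn/N}\mathbf 1_{[k/N,(k+1)/N)}(\cdot)$, $n=0,\dots,N-1$ (these are the eigenvectors identified in \eqref{eq:preliminary:identity:2}, with eigenvalues $\lambda_n = -2N^2[1-\cos(2\pi n/N)]$). A direct computation gives the Fourier coefficient $N^{-1} e^{-i 2\pi k_0 n/N}$ in this basis, from which
\begin{equation*}
\sup_{x \in \SS^1} \bigl\vert \Delta^{(N)} e^{r\Delta^{(N)}} \mathbf 1_{[k_0/N,(k_0+1)/N)}(x) \bigr\vert \leq \frac{1}{N} \sum_{n=1}^{N-1} |\lambda_n| e^{-|\lambda_n| r},
\end{equation*}
using $|\phi^n(x)|=1$ pointwise. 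Integrating this over $r \in [0,T]$ and using the elementary identity $\int_0^T |\lambda_n| e^{-|\lambda_n| r} dr = 1 - e^{-|\lambda_n| T} \leq 1$, I obtain
\begin{equation*}
\int_0^T \sup_{x \in \SS^1} \bigl\vert \Delta^{(N)} e^{r\Delta^{(N)}} \mathbf 1_{[k_0/N,(k_0+1)/N)}(x) \bigr\vert dr \leq \frac{N-1}{N} \leq 1,
\end{equation*}
a bound which is crucially independent of $N$ (this uniformity, and hence the fact that the sum of the $N-1$ eigenvalue contributions is rescued by the $1/N$ prefactor coming from the localized source, is the only delicate point of the argument).

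Finally I would combine everything by Fubini and Cauchy--Schwarz: exchanging the order of integration in $s$ and $t$ and using the kernel bound above,
\begin{equation*}
\int_0^T \sup_{x \in \SS^1} \bigl\vert \Delta^{(N)} \phi_t(x) \bigr\vert dt \leq \frac{1}{A_N} \int_0^T \bigl\vert \gamma_s + \bar Y_s^{k_0}\bigr\vert ds \leq \frac{\sqrt T}{A_N} \Bigl( \int_0^T \bigl\vert \gamma_s + \bar Y_s^{k_0}\bigr\vert^2 ds \Bigr)^{1/2}.
\end{equation*}
Squaring and taking expectation under $\PP_0$ yields the announced estimate with constant $C=T$.
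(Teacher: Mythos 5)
Your proof is correct, and the overall architecture matches the paper's: you go to the mild formulation for the difference, observe that the drift-difference term lies in the kernel of $\Delta^{(N)}$ and hence disappears after applying $\Delta^{(N)}$, localize the source to a single site with the crucial prefactor $1/A_N$, and then reduce to a deterministic integrated kernel bound followed by Fubini and Cauchy--Schwarz. The point where you genuinely diverge from the paper is in the kernel estimate itself. The paper expands the piecewise-constant source $\bar\varrho_s$ in the \emph{continuum} Fourier basis $(e^{n,\pm})_{n\in\NN}$ (so over \emph{all} frequencies, not just $n<N$), applies Parseval after introducing an auxiliary family $(h_s^{n,\pm})_{n}$, and then controls the resulting $N$-dependent factor $N\sum_{n\in\NN}\sin^2(\pi n/N)/(\pi n)^2$ by splitting the sum at $n=N$. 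You instead expand directly in the \emph{finite} orthonormal eigenbasis $\{\phi^n\}_{n=0}^{N-1}$ of $\Delta^{(N)}$, compute the Fourier coefficient $N^{-1}e^{-i2\pi k_0 n/N}$ of the localized indicator exactly, bound $|\phi^n(x)|\equiv 1$, and exploit the elementary identity $\int_0^T|\lambda_n|e^{-|\lambda_n|r}\,dr\le 1$ to get the $N$-uniform bound $\int_0^T\sup_x|\Delta^{(N)}e^{r\Delta^{(N)}}\mathbf 1_{[k_0/N,(k_0+1)/N)}(x)|\,dr\le 1$. This is tidier: it avoids the overcomplete family $(\bar e^{n,\pm})_{n\in\NN}$, the Parseval detour, and the high-frequency tail estimate, and it hands you the explicit constant $C=T$ rather than an unspecified $C$. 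Both arguments hinge on the same two mechanisms --- the $1/N$ prefactor produced by projecting a one-site source onto the normalized eigenbasis, and the time integral $\int_0^T|\lambda_n|e^{-|\lambda_n|r}\,dr\le 1$ that neutralizes the $O(N^2)$ eigenvalues --- but your version exposes them more transparently.

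One small remark, purely editorial: the paper has a sign inconsistency between the statement of Theorem \ref{thm:approx:nash} (where $\alpha^{\star k,j}_t=\bar Y_t^k$) and the computations of Section \ref{se:construction:approximate:proof} (where $\balpha^{\star k,j}=-\bar{\boldsymbol Y}^k$, consistent with the Pontryagin principle). You silently use the second convention, which is the one the paper's proof also uses, so your source identification $\bar\beta_t^{k_0}+\bar Y_t^{k_0}=A_N^{-1}(\gamma_t+\bar Y_t^{k_0})$ is correct; it simply differs by an overall sign from the paper's $\bar\varrho_s(\cdot)$, which is harmless since everything is in absolute value.
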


\begin{proof}
By 
\eqref{eq:SPDE:mild:formulation}, we notice that, for any $t \in [0,T]$, 
\begin{equation}
\label{eq:proof:stability:interaction}
\begin{split}
\bar{X}_{t}(\cdot)
- 
\bar{\chi}_{t}(\cdot)
&=
\int_{0}^t 
e^{(t-s) \Delta^{(N)}} \bigl( b(\bar \mu_{s}^{\star,N} ) 
- b( \bar \nu_{s}^{N}) 
\bigr) ds + 
\int_{0}^t
e^{(t-s) \Delta^{(N)}} \bigl( 
\bar{\alpha}_{s}^\star(\cdot)
- 
\bar{\beta}_{s}(\cdot)
\bigr) 
ds
\\
&= (i)_{t} + (ii)_{t}, 
\end{split}
\end{equation}
where we have let
\begin{equation*}
\bar \alpha_{t}^\star(\cdot) = \sum_{k=0}^{N-1}
\alpha_{t}^\star {\mathbf 1}_{[k/N,(k+1)/N)}(\cdot), 
\quad 
\bar \beta_{t}(\cdot) = \sum_{k=0}^{N-1}
\biggl( \frac1{A_{N}}
\sum_{j=1}^{A_{N}}
\beta_{t}^{k,j}
\biggr) {\mathbf 1}_{[k/N,(k+1)/N)}(\cdot).
\end{equation*}
As for $(i)$, using the fact that 
both $b(\bar \mu^{\star,N}_{s})$ and $b(\bar\nu_{s}^{N})$ are constant functions of $x \in \SS^1$ for each $s \in [0,T]$, 
it is absolutely obvious that 
\begin{equation*}
\begin{split}
&(i)_{t} = \int_{0}^t 
\bigl( b(\bar \mu_{s}^{\star,N} ) 
- b( \bar \nu_{s}^{N}) 
\bigr) ds, 
\end{split}
\end{equation*}
and then
$\Delta^{(N)} (i)_{t} = 0$. 
Returning to 
\eqref{eq:proof:stability:interaction},
it suffices to focus on $(ii)_{t}$.

Letting $(\bar \varrho_{t}(\cdot) = (\bar \alpha_{t}^\star - 
\bar \beta_{t})(\cdot))_{0 \le t \le T}$
and following the third step in the proof of Theorem 
\ref{thm:discretization}, we have
\begin{equation*}
\begin{split}
\int_{0}^t e^{(t-s) \Delta^{(N)}} \bigl( 
\bar{\alpha}_{s}^\star
- 
\bar{\beta}_{s}
\bigr)(\cdot) ds
&= \sum_{n \in \NN}
\biggl( \int_{0}^t
\bar \varrho^{n,\pm}_{s} e^{-2N^2[1 - \cos(2 \pi n/N)](t-s)}
ds \biggr) \bar{e}^{n,\pm}(\cdot), 
\end{split}
\end{equation*}
where $(\bar \varrho^{n,\pm}_{s})_{n \in \NN}$ denote the 
Fourier coefficients of the function $\bar \varrho_{s} \in 
L^2(\SS^1;\R^d)$. Here we used the identity
\begin{equation*}
\bar \varrho_{s}(\cdot) = \sum_{n \in \NN}
\sum_{k=0}^{N-1}
\biggl( N \int_{k/N}^{(k+1)/N}
\bar \varrho_{s}(x) dx
\biggr)
{\mathbf 1}_{[k/N,(k+1)/N)}(\cdot)
= 
\sum_{n \in \NN} \bar \varrho^{n,\pm}_{s}
\bar{e}^{n,\pm}(\cdot),
\end{equation*}
which follows from the fact that $\bar \varrho_{s}(\cdot)$ is constant on each $[k/N,(k+1)/N)$. Then,
\begin{equation*}
\Delta^{(N)} (ii)_{t} = \sum_{n \in \NN}
\biggl(
\int_{0}^t
 \bar \varrho^{n,\pm}_{s} 
\bigl( -
 2N^2[1 - \cos(2 \pi n/N)] 
 \bigr)
 e^{-2N^2[1 - \cos(2 \pi n/N)](t-s)}
ds \biggr) \bar{e}^{n,\pm}(\cdot). 
\end{equation*}
We deduce that 
\begin{equation*}
\sup_{x \in \SS^1}
\bigl\vert \Delta^{(N)} (ii)_{t}(x)
\bigr\vert 
\leq 2 \sum_{n \in \NN}
\int_{0}^t
\vert \bar \varrho^{n,\pm}_{s}\vert
\frac{\vert \sin(\pi n/N)\vert}{\pi n/N}
 \bigl( 2N^2[1 - \cos(2 \pi n/N)]
 \bigr) 
 e^{-2N^2[1 - \cos(2 \pi n/N)](t-s)}
ds,
\end{equation*}
which we rewrite
\begin{equation*}
\sup_{x \in \SS^1}
\bigl\vert \Delta^{(N)} (ii)_{t}(x)
\bigr\vert 
\leq \sum_{n \in \NN}
\int_{0}^t
\bar \varrho^{n,\pm}_{s} \cdot h^{n,\pm}_{s} ds,
\end{equation*}
where we have let
\begin{equation*}
\begin{split}
h^{n,\pm}_{s} = \textrm{\rm sign}\bigl(\bar \varrho^{n,\pm}_{s}
  \bigr) 
\bigl( 2N^2[1 - \cos(2 \pi n/N)] \bigr) \frac{\vert \sin(\pi n/N) \vert}{\pi n/N}
 e^{-2N^2[1 - \cos(2 \pi n/N)](t-s)},
\end{split}
\end{equation*}
where $\textrm{\rm sign}(x)$ is understood as 
$(\textrm{\rm sign}(x_{1}),\cdots,\textrm{\rm sign}(x_{d}))$
for $x \in \RR^d$. 
Obviously, 
\begin{equation*}
\sup_{0 \leq s \leq t}
\sum_{n \in \NN} \vert h^{n,\pm}_{s} \vert^2 < \infty,
\end{equation*}
so that, by Parseval's identity, 
\begin{equation*}
\sup_{x \in \SS^1}
\bigl\vert \Delta^{(N)} (ii)_{t}(x)
\bigr\vert 
\leq \int_{0}^t  \int_{\SS^1}
\bar \varrho_{s}(x) \cdot h_{s}(x) dx,
\end{equation*}
with 
\begin{equation*}
h_{s}(\cdot) = \sum_{n \in \NN} h_{s}^{n,\pm} e^{n,\pm}(\cdot). 
\end{equation*}
In fact $\bar \varrho_{s}(\cdot) = [(\gamma_{s} + \bar Y_{s}^{k_{0}})/A_{N}] {\mathbf 1}_{[k_{0}/N,(k_{0}+1)/N)}(\cdot)$. Hence, we have
\begin{equation*}
\sup_{x \in \SS^1}
\bigl\vert \Delta^{(N)} (ii)_{t}(x)
\bigr\vert 
\leq
\frac1{A_{N}}
\biggl\vert
\int_{0}^t  
\bigl( \gamma_{s} + \bar Y_{s}^{k_{0}}
\bigr)
\cdot
\biggl( \int_{k_{0}/N}^{(k_{0}+1)/N}
h_{s}(x) dx
\biggr) ds
\biggr\vert .
\end{equation*}
Clearly, by 
\eqref{eq:preliminary:identity:1},
\begin{equation*}
\begin{split}
\biggl\vert \int_{k_{0}/N}^{(k_{0}+1)/N} h_{s}(x) dx 
\biggr\vert
&= 
\biggl\vert \sum_{n \in \NN} h^{n,\pm}_{s} \int_{k_{0}/N}^{(k_{0}+1)/N} e^{n,\pm}(x) dx
\biggr\vert
\leq 2 \sum_{n \in \NN} \vert h^{n,\pm}_{s} \vert \frac{\vert \sin(\pi n/N) \vert}{\pi n}. 
\end{split}
\end{equation*}
Then,
\begin{equation*}
\begin{split}
&\int_{0}^T
\sup_{x \in \SS^1}
\bigl\vert \Delta^{(N)} (ii)_{t}(x) 
\bigr\vert dt
\\
&\leq \frac4{A_{N}} \sum_{n \in \NN}
\int_{0}^T \int_{0}^t \vert \gamma_{s} +  \bar Y_{s}^{k_{0}} \vert 
\frac{\sin^2(\pi n/N)}{(\pi n)^2}
\bigl(
N^3[1-\cos(2\pi n/N)]
\bigr) e^{-2 N^2[1- \cos(2 \pi n/N)](t-s)}
ds \, dt
\\
&= \frac4{A_{N}} \sum_{n \in \NN}
\int_{0}^T  \vert \gamma_{s} + \bar Y_{s}^{k_{0}} \vert 
\frac{\sin^2(\pi n/N)}{(\pi n)^2}
\bigl(
N^3[1-\cos(2\pi n/N)]
\bigr)
\biggl( \int_{s}^T
 e^{-2 N^2[1- \cos(2 \pi n/N)](t-s)}
dt\biggr)  ds. 
\end{split}
\end{equation*}
We thus have
\begin{equation*}
\begin{split}
\int_{0}^T
\sup_{x \in \SS^1}
\bigl\vert \Delta^{(N)} (ii)_{t}(x) 
\bigr\vert dt
&\leq 
\frac2{A_{N}}
\biggl( \int_{0}^T
 \vert \gamma_{s} +   \bar Y_{s}^{k_{0}} \vert
 ds \biggr)
 \biggl( 
N \sum_{n \in \NN}
\frac{\sin^2(\pi n/N)}{(\pi n)^2}
\biggr)
\\
&\leq \frac2{A_{N}}
 \biggl( 
\int_{0}^T \vert \gamma_{s} +  \bar Y_{s}^{k_{0}} \vert ds \biggr) 
\biggl( 
\frac1N
\sum_{n=0}^N
\frac{\sin^2(\pi n/N)}{(\pi n/N)^2}
+
 N \sum_{n > N} 
\frac{1}{n^2}
\biggr). 
\end{split}
\end{equation*}
So, there exists a constant $C$, only depending on $T$, such that 
\begin{equation*}
\begin{split}
&{\mathbb E}_{0}
\biggl[
\biggl\vert
\int_{0}^T
\sup_{x \in \SS^1}
\bigl\vert \Delta^{(N)} (ii)_{t}(x) 
\bigr\vert dt
\biggr\vert^2
\biggr] 
\leq 
 \frac{C}{A_{N}^2}
 \EE_{0}  
\int_{0}^T \vert \gamma_{t} + \bar Y_{t}^{k_{0}} \vert^2 dt, 
\end{split}
\end{equation*}
which completes the proof.
\end{proof}

\bibliographystyle{plain}
\bibliography{restore}

\end{document}